\documentclass[11pt,a4paper,twoside,reqno]{amsart}
\usepackage[
  a4paper,
  left=4cm, right=4cm,
  top=4cm,  bottom=4cm,
  includeheadfoot,   
  headheight=14pt,   
  footskip=18pt      
]{geometry}

\usepackage[all]{xy}
\usepackage{amsmath,amssymb,amsfonts,amsthm}
\usepackage[toc,page]{appendix}
\usepackage{verbatim} 
\usepackage{graphicx} 

\usepackage{amsfonts}
\usepackage{amsthm}
\usepackage{amsmath}
\usepackage{stmaryrd}
\usepackage{amscd}
\usepackage{mathtools}
\usepackage[latin2]{inputenc}
\usepackage{t1enc}
\usepackage[mathscr]{eucal}
\usepackage{indentfirst}
\usepackage{graphicx}
\usepackage{graphics}
\usepackage{pict2e}
\usepackage{epic}
\usepackage{faktor}
\usepackage{xfrac}   
\numberwithin{equation}{section}
\usepackage{epstopdf} 

\usepackage{hyperref}
\usepackage{lipsum}
\usepackage{tikz-cd}
\usepackage{pict2e}
\usepackage{epic}
\usepackage{float}
\usepackage{caption}
\usepackage{amssymb}
\numberwithin{equation}{section}
\usepackage{epstopdf} 
\usepackage{comment}
\usepackage{tikz-cd}

\usepackage{microtype}

\usepackage[backend=bibtex,
style=numeric,
citestyle=numeric-comp,
sorting=nty]{biblatex}

\hypersetup{
    colorlinks=true,
    linkcolor=blue,
    citecolor=blue,
    urlcolor=blue
}

\theoremstyle{plain}
\newtheorem{Th}{Theorem}[section]
\newtheorem{Lemma}[Th]{Lemma}
\newtheorem{Cor}[Th]{Corollary}
\newtheorem{Prop}[Th]{Proposition}

\newtheorem{claim}[Th]{Claim}

\theoremstyle{definition}
\newtheorem{Def}[Th]{Definition}

\newtheorem{Rem}[Th]{Remark}
\newtheorem{?}[Th]{Problem}
\newtheorem{Ex}[Th]{Example}

\author{Hoan Nguyen}
\address{Department of Mathematics, The University of Chicago, Chicago, IL 60615}
\email{nth@uchicago.edu}

\numberwithin{equation}{section}
\begin{document}

\title[Min-Max theory and minimal foliations]
{A Min-Max Gap Characterization of Minimal Foliations on the Torus}

\begin{abstract}
We extend an energy introduced by Mather to the setting of Almgren-Pitts min-max theory and obtain a parametric, higher-dimensional analogue of Mather's variational barrier theory for twist maps and geodesics on tori. 
We use this energy to establish several criteria for the existence of foliations of the $n$-torus by minimal hypersurfaces. We show that for a generic metric, whenever a lamination by area-minimizing hypersurfaces of the $n$-torus contains a gap, there exists a minimal hypersurface inside the gap that is not area-minimizing. This hypersurface is a higher-dimensional analogue of the secondary minimax orbit appearing in Aubry-Mather theory. 

\end{abstract}
\maketitle
\setcounter{tocdepth}{1}
\tableofcontents

\section{Introduction and main results}

    In this paper, we study minimal hypersurfaces in the $n$-torus $\mathbb{T}^n=\mathbb{R}^n/\mathbb{Z}^n$ equipped with a metric $g$. This problem may be viewed as a parametric version of the Aubry-Mather theory, in which area-minimizing hypersurfaces play a role analogous to that of minimal orbits in convex Hamiltonian systems. In two dimensions, most of the results on Hamiltonian twist maps carry over to the case of minimizing geodesics on a two-torus (cf. \cite{nonrecurrent}).

    In $1986$, Moser \cite{moser} first generalized the Aubry-Mather theory to higher dimensions. He studied minimizing graphs $u: \mathbb{R}^n\to \mathbb{R}$ of a $\mathbb{Z}^n$-periodic variational integral.
    This may be regarded as a PDE analogue of the Aubry-Mather theory. A related problem, concerning minimizers of the elliptic integral on Caccioppoli sets, was studied by Caffarelli and de la Llave in \cite{planelike}. In both settings, the authors established the existence of \textit{plane-like} minimizers, namely minimizers contained in a strip of uniform bounded width between two parallel affine planes. 

    On a torus with an arbitrary metric, however, area-minimizing hypersurfaces need not be graphs. Auer and Bangert \cite{bangertauer} and Auer \cite{auer} observed that tools from geometric measure theory could provide a natural framework in this setting. Roughly speaking, after passing to the universal cover, each area-minimizing hypersurface with the Birkhoff property (see Section \ref{section4}) admits a unique homological direction $\alpha\in H_{n-1}(\mathbb{T}^n, \mathbb{R})$. Let $\mathcal{M}_\alpha$ denote the family of such hypersurfaces with direction $\alpha$. They proved that hypersurfaces in $\mathcal{M}_\alpha$ with the same asymptotic behavior form a lamination on $\mathbb{R}^n$ whose projection is a minimal lamination on the torus (see Sections \ref{section4} and \ref{section5}). A fundamental and important question, however, is whether such a lamination is a foliation. This happens in some exceptional cases, such as flat tori, but fails in general for certain metrics constructed by Bangert in \cite{bangertgap}.\\

A main goal of this paper is to use the Almgren-Pitts min-max theory to derive criteria for the existence of minimal foliations by hypersurfaces in $\mathcal{M}_\alpha$. For an integral class $\alpha$, a standard Lusternik-Schnirelmann theory-type argument (see Appendix \ref{appendix:a}) shows that $\mathbb{T}^n$ is foliated by a one-parameter family of closed minimal hypersurfaces in the class $\alpha$ exactly when \[\omega(\alpha)-S(\alpha)=0,\]
where $\omega(\alpha)$ is the min-max width of the class $\alpha$ and $S(\alpha)$ is its stable norm. We refer readers to Sections \ref{section2} and \ref{section4} below for the corresponding definitions. 

We aim to extend this picture to irrational homology classes, where the width is not defined. To overcome this, we study the asymptotic behavior of the quantity $\omega(r)-S(r)$ as the integer class $r$ approaches $\alpha$.

In the case of geodesics on a two-dimensional torus, such a criterion was obtained by Mather in \cite{mather}: given a rational direction $r=[p,q]\in H_1(\mathbb{T}^2, \mathbb{Z})$ with $p,q$ relatively prime, let $\Delta W_r$ be the infimum of all $\epsilon>0$ such that any two periodic geodesics $\gamma_0, \gamma_1 \in \mathcal{M}_r$ can be joined by a continuous family $\gamma_t$ of $(p,q)$-periodic curves satisfying 
\[ L(\pi(\gamma_t))\leq L(\pi(\gamma_0))+\epsilon \quad \text{for all $t$}, \]
where $\pi: \mathbb{R}^2\to \mathbb{T}^2$ is the projection.
Mather proved that for every irrational direction $\alpha\in H_1(\mathbb{T}^2, \mathbb{R})$, $\mathcal{M}_\alpha$ is a foliation if and only if:
\[\Delta W_\alpha:=\lim_{p/q\to \alpha}\Delta W_r=0.\]

 Mather's proof was analytic and was formulated in the context of twist maps on a torus, which has no immediate generalization to higher dimensions. Our first main theorem utilizes the tools from geometric measure theory and min-max theory to generalize it to the case of hypersurfaces on the higher-dimensional $n$-torus, with $3\leq n\leq 7$. The dimensional restriction comes from the regularity theory for minimal hypersurfaces. 
\begin{Def}
    Throughout the paper, we say that a sequence $\{r_k\}_k$ of primitive integer homology classes converges to a class $\alpha\neq 0$ if the normalized sequence $\dfrac{r_k}{S(r_k)}$ converges to $\dfrac{\alpha}{S(\alpha)}$ on the unit ball of the stable norm.
\end{Def}

In light of Mather's criterion, we define the following:
\begin{Def}
    For an integer class $r\in H_{n-1}(\mathbb{T}^n, \mathbb{Z})$, we define its \textit{Mather-type energy barrier} as 
    \[\Delta W_r:=\omega(r)-S(r).\]
    
    We extend this energy to irrational classes as follows:
    \[
\Delta W_\alpha
:= \liminf_{\substack{r\in H_{n-1}(\mathbb{T}^n,\mathbb{Z})\\ r\to\alpha}}
\bigl(\omega(r)-S(r)\bigr).
\]

    Here, the finiteness of the right-hand side follows from Proposition \ref{upperbound}.
\end{Def}

    This should be viewed as a higher-dimensional analogue of the $\epsilon$ above. As discussed, this energy conceptually measures the obstruction to moving through the class $r$ by almost area-minimizing hypersurfaces. Our first main result shows that, for totally irrational classes, the vanishing of this gap asymptotically characterizes minimal foliations. 
\begin{Th}\label{maintheorem1}
    Suppose that $\alpha \in H_{n-1}(\mathbb{T}^n, \mathbb{R})$ is totally irrational (see Definition \ref{totallyirrational}). The following statements are equivalent.
    \begin{enumerate}
        \item $\lim_{k\to \infty} \Delta W_{r_k}=0$ for all sequences of integer classes $\{r_k\}_k$ converging to $\alpha$.
        \item $\lim_{k\to \infty} \Delta W_{r_k}=0$ for some sequence of integer classes $\{r_k\}_k$ converging to $\alpha$.
        \item The elements of $\mathcal{M}_\alpha$ form a foliation of $\mathbb{R}^n$ by area-minimizing hypersurfaces. Moreover, this foliation is invariant under the deck group and descends to a minimal foliation of $\mathbb{T}^n$.
\end{enumerate}
\end{Th}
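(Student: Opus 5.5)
The implications split naturally: (1)$\Rightarrow$(2) is trivial, so the content is (2)$\Rightarrow$(3) and (3)$\Rightarrow$(1). Both rest on one compactness principle. Take primitive integer classes $r_k\to\alpha$ and minimal hypersurfaces in $\mathbb{T}^n$ realizing the width $\omega(r_k)$ (Almgren--Pitts, with $3\le n\le 7$ so they are smooth). Lift each to a periodic hypersurface in $\mathbb{R}^n$ with the Birkhoff property, and normalize by translation so that it meets a fixed compact fundamental domain. Local area bounds hold because on any ball the area is controlled by $\omega(r_k)\le S(r_k)+C$, using Proposition \ref{upperbound}. Schoen--Simon compactness then gives subsequential smooth limits on compact sets. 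Since $r_k/S(r_k)\to\alpha/S(\alpha)$, such limits have homological direction $\alpha$ in the sense of Auer--Bangert. The facts about $\mathcal{M}_\alpha$ from Sections \ref{section4} and \ref{section5} are taken as given: it is a lamination, it is totally ordered, and, since $\alpha$ is totally irrational, it has a unique asymptotic class and is invariant under the deck group.

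\textbf{(3)$\Rightarrow$(1).} Suppose $\mathcal{M}_\alpha$ is a foliation, and suppose for contradiction that $\Delta W_{r_k}\ge\delta>0$ along some sequence $r_k\to\alpha$.
\begin{itemize}
\item[(a)] For each integer class $r_k$, the minimizers in $\mathcal{M}_{r_k}$ form a lamination. If this were a foliation we would have $\Delta W_{r_k}=0$ by Appendix \ref{appendix:a}, so there is a gap bounded by two adjacent minimizers $\Sigma^-_k<\Sigma^+_k$.
\item[(b)] The min-max sweepout of the class can be chosen to move monotonically through the minimizers and use a mountain-pass path only inside one gap. So the width is detected there: there is a min-max hypersurface $\Gamma_k$ inside a gap with area at least $S(r_k)+\delta$.
\item[(c)] Lifting and passing to a limit as above, $\Sigma^\pm_k$ converge to elements of $\mathcal{M}_\alpha$. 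Because $\mathcal{M}_\alpha$ foliates, the gap widths tend to zero locally uniformly.
\item[(d)] The excess area must come from somewhere. In the quotient by the period lattice of $r_k$, the area excess of $\Gamma_k$ over $\Sigma^-_k$ is the sum of local contributions over a fundamental domain. Squeezing $\Gamma_k$ between two minimizers that are $C^{1}$-close (Allard/Schoen--Simon curvature estimates) makes the per-unit-area excess tend to zero.
\end{itemize}
The difficulty in (d) is that the fundamental domain grows with $S(r_k)$, so a per-unit-area bound alone is not enough. I would instead use a calibration argument. The foliation of $\mathbb{R}^n$ gives a closed unit $(n-1)$-form $\phi$, namely the unit normal field of the leaves, which calibrates area in direction $\alpha$. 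Averaging $\phi$ under the deck group and perturbing it to be exactly closed in class $r_k$, I would build a sweepout of $\mathbb{T}^n$ by level sets that are almost calibrated. This directly gives $\omega(r_k)\le S(r_k)+o(1)$, bypassing the gap analysis. Controlling this perturbation uniformly is where I expect the main technical work.

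\textbf{(2)$\Rightarrow$(3).} Suppose $\Delta W_{r_k}\to0$ along some sequence, and suppose $\mathcal{M}_\alpha$ has a gap between adjacent leaves $L^-<L^+$ in $\mathbb{R}^n$. By the Auer--Bangert structure theory, the gap region $G$ has a definite positive volume per fundamental domain. Approximate $L^\pm$ by minimizers $\Sigma^\pm_k$ of class $r_k$; they border a region $G_k$ converging to $G$.
\begin{itemize}
\item[(a)] The width is an upper bound for the cost of crossing $G_k$: any sweepout of $\mathbb{T}^n$ in class $r_k$ must cross $G_k$, so some slice of the sweepout crossing $G_k$ has area at most $\omega(r_k)\le S(r_k)+o(1)$.
\item[(b)] Taking a limit of these optimal sweepouts restricted to the gap, I would obtain a continuous family joining $L^-$ to $L^+$ through slices of asymptotic excess zero per period. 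The limiting objects are then area-minimizing, and they lie strictly inside $G$, contradicting that $L^\pm$ are adjacent.
\item[(c)] The danger in (b) is that the excess could be zero per period yet the crossing still be concentrated somewhere. To rule this out, I would first localize: in a gap, the strict non-degeneracy of adjacent minimizers (Bangert's gap estimate) gives a fixed positive energy $c>0$ required to pass any hypersurface through a fundamental piece of $G$. Then, using deck invariance, this cost is incurred at least once per fundamental domain of the $r_k$-periodic quotient meeting the gap, giving $\Delta W_{r_k}\ge c$. That contradicts the hypothesis.
\end{itemize}
Finally, once $\mathcal{M}_\alpha$ has no gaps it is a foliation, and deck-invariance gives the descent to $\mathbb{T}^n$.

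The main obstacle is this positive lower bound $c$ on crossing a gap, uniform in $k$. My first attempt would argue by contradiction with min-max compactness. Crossings with excess tending to zero would produce, in the limit, a minimal hypersurface with the Birkhoff property and direction $\alpha$ lying inside $G$. Such a hypersurface is area-minimizing, so it belongs to $\mathcal{M}_\alpha$, which contradicts that $G$ is a gap.
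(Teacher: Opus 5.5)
\textbf{(3)$\Rightarrow$(1).} This direction is not proved, and the calibration route does not remove the obstacle you correctly identify in (d). You need a sweepout in class $r_k$ whose slices exceed $S(r_k)$ by an \emph{absolute} error $o(1)$, even though their total area tends to infinity.
\begin{itemize}
\item A calibration only gives lower bounds. The form $\phi$ built from the $\alpha$-foliation certifies only $\mathbf{M}(T)\ge\langle[\phi],r_k\rangle$ for $T\in r_k$.
\item Already on a flat torus, where $\Delta W_{r_k}=0$, this lower bound falls short of $S(r_k)=|r_k|$ by about $|r_k|\theta_k^2/2$. Here $\theta_k$ is the angle between $r_k$ and $\alpha$, and this shortfall diverges along slowly converging sequences.
\item An ``almost calibrated'' sweepout with relative error $\epsilon$ has absolute error $\epsilon S(r_k)$. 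So the ``uniform control of the perturbation'' that you postpone is the entire difficulty.
\end{itemize}
The paper instead gets an absolute bound from minimality.
\begin{itemize}
\item Take consecutive lifts $M_k<N_k=\tau_{t_k}M_k$ of one $r_k$-minimizer. The region $H_k$ between them is a fundamental domain of fixed volume. By total irrationality it spreads out in every direction, so away from boundedly many balls it is a thin slab whose transverse cross-sections have area tending to zero.
\item Sweep $H_k$ by increasing pieces $P_{k,j}$. Minimality of $N_k$ gives $\mathbf{M}(N_k\llcorner P_{k,j})\le\mathbf{M}(M_k\llcorner P_{k,j})+\mathbf{M}(S(P_{k,j}))$. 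Hence $\mathbf{M}(M_k+\partial P_{k,j})\le\mathbf{M}(M_k)+2\mathbf{M}(S(P_{k,j}))$, where $S(P_{k,j})$ is the small part of $\partial P_{k,j}$ lying in the gap.
\item The foliation $\mathcal{M}_\alpha$ is needed only in the boundedly many balls where $M_k$ and $N_k$ stay apart.
\item Corollary \ref{interpolation} then turns the discrete family into a continuous sweepout with $\sup\mathbf{M}\le S(r_k)+\epsilon$.
\end{itemize}

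\textbf{(2)$\Rightarrow$(3).} Your closing idea is the paper's: take a ball $U\subset G$, pass to a limit of slices crossing it, and obtain an element of $\mathcal{M}_\alpha$ inside $G$. However, your reason that the limit is area-minimizing is wrong. A minimal hypersurface in $G$ with the Birkhoff property and direction $\alpha$ need not be minimizing; Theorem \ref{nonminimizer} produces exactly such hypersurfaces inside gaps. So limits of min-max critical points, or ``excess zero per period'', do not suffice.

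The correct input is the absolute bound $\mathbf{M}(\Psi_k(t))\le S(r_k)+2/k$ for \emph{every} slice of a near-optimal sweepout. If the flat limit $\Psi$ of slices $\Psi_k(t_k)$ could be improved by $\delta>0$ in some ball, pasting that improvement into $\Psi_k(t_k)$ would give a cycle in $r_k$ of mass below $S(r_k)$, which is impossible. You also need two further choices:
\begin{itemize}
\item Choose $t_k$ so that $\Psi_k(t_k)$ bisects $U$; then the limit meets $U$ by the isoperimetric inequality.
\item Use sweepouts by level sets of Morse functions (Chambers--Liokumovich), so the slices are embedded and the limit inherits the Birkhoff property.
\end{itemize}
With these, the uniform crossing cost $c$ and the per-fundamental-domain localization in your step (c) are unnecessary. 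As written, the transfer from a local crossing cost to $\Delta W_{r_k}\ge c$ is also unjustified.
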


The following is a direct consequence of Theorem \ref{maintheorem1}.
\begin{Cor}
    For $\alpha\in H_{n-1}(\mathbb{T}^n, \mathbb{R})$ that is totally irrational, the torus is foliated by area-minimizing hypersurfaces in $\mathcal{M}_\alpha$ if and only if $\Delta W_\alpha=0$.
\end{Cor}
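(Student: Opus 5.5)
The corollary should follow from Theorem \ref{maintheorem1} by comparing the definition of $\Delta W_\alpha$ as a $\liminf$ with statements (1)--(3). The only additional fact needed is that $\Delta W_r=\omega(r)-S(r)\ge 0$ for every integer class $r$. This holds because every sweepout of the class $r$ contains a slice in the class $r$, and that slice has mass at least $S(r)$. I expect this fact to be recorded alongside Proposition \ref{upperbound}, and if not, it follows directly from the definitions in Sections \ref{section2} and \ref{section4}.

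For the direction ``foliated $\Rightarrow \Delta W_\alpha=0$'': suppose the torus is foliated by area-minimizing hypersurfaces in $\mathcal{M}_\alpha$. This is statement (3) of Theorem \ref{maintheorem1}, so statement (1) holds: $\Delta W_{r_k}\to 0$ along every sequence $r_k\to\alpha$. Primitive integer classes converging to $\alpha$ in the normalized sense exist, since rational directions are dense in the unit sphere of the stable norm. Hence the $\liminf$ is taken over a nonempty family of sequences, each of which tends to $0$, and so $\Delta W_\alpha=0$.

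For the converse: suppose $\Delta W_\alpha=0$. By definition of the $\liminf$, and using $\Delta W_r\ge 0$, there is a sequence of integer classes $r_k\to\alpha$ with $\Delta W_{r_k}\to 0$. One can extract it by choosing $r_k$ within distance $1/k$ of $\alpha$ in the normalized sense and with $\Delta W_{r_k}<1/k$. This is statement (2) of Theorem \ref{maintheorem1}, which therefore yields statement (3), i.e.\ a minimal foliation of $\mathbb{T}^n$ by hypersurfaces of $\mathcal{M}_\alpha$.

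There is no genuine obstacle here. The one point to check is that the $\liminf$ in the definition is taken over primitive classes in the sense of the convergence definition, so that the extracted sequence is admissible in Theorem \ref{maintheorem1}(2). It also relies on the nonnegativity $\Delta W_r\ge 0$, which is what lets the value $0$ be realized as an actual limit.
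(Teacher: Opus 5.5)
Your proposal is correct and takes the same approach as the paper, which records the corollary as an immediate consequence of Theorem \ref{maintheorem1}. The implication (3)$\Rightarrow$(1) gives $\Delta W_\alpha=0$. Conversely, $\Delta W_\alpha=0$ together with the nonnegativity $\omega(r)\ge S(r)$ from Proposition \ref{upperbound} yields a sequence as in (2), and hence (3); your care in extracting that sequence and about primitivity only fills in details the paper leaves implicit.
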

We also mention the work of Aubry and Le Daeron \cite{aubry-daeron} in connection with the Hamiltonian twist maps. Motivated by a problem in solid-state physics, they introduced a quantity called the \textit{Peierls's barrier} and obtained a conclusion similar to that of Mather and ourselves. They showed that, for a given frequency, this quantity vanishes if and only if there is an invariant circle. Indeed, the Peierls's barrier is a lower bound of the Mather energy barrier defined above. 

We highlight several points in Theorem \ref{maintheorem1}. First, the implication $(1)\implies(2)$ is immediate. Second, it will follow from the proof in Section \ref{section6} that the implication $(2)\implies (3)$ holds for any homology class $\alpha\in H_{n-1}(\mathbb{T}^n, \mathbb{R})$. Finally, the implication $(3)\implies (1)$ does not hold in general if $\alpha$ is rationally dependent, as demonstrated in the following theorem.

\begin{Th}\label{maintheorem2}
    For any nonzero homology class $\alpha\in H_{n-1}(\mathbb{T}^n, \mathbb{R})$ such that $\alpha^\perp \cap (\mathbb{Z}^n \setminus \{0\})\neq \emptyset$, there exists a Riemannian metric $g$ on $\mathbb{T}^n=\mathbb{R}^n/\mathbb{Z}^n$ and a sequence $\{r_k\}_k$ of integer classes converging to $\alpha$ such that \[\liminf_{r_k\to \alpha}\Delta W_{r_k}>0,\] yet $\mathbb{R}^n$ admits a foliation by area-minimizing hypersurfaces in $\mathcal{M}_\alpha$.
\end{Th}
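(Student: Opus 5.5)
We will construct the metric explicitly as a warped or conformal modification of the flat metric, so that it is invariant under translations in the directions of $\alpha$ and "sees" only the rational direction $v\in\alpha^\perp\cap(\mathbb{Z}^n\setminus\{0\})$. After a change of basis of $\mathbb{Z}^n$ by an element of $GL_n(\mathbb{Z})$, we may assume $v$ is primitive and we can write coordinates $(x',x_n)$ so that $v$ is a coordinate direction in which $\alpha$ has no component. Then every hypersurface in $\mathcal{M}_\alpha$ contains the direction of $v$ asymptotically.

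We take $g=\phi(x)^2 g_{\mathrm{flat}}$ with $\phi$ depending only on the single coordinate $t=\langle v,x\rangle$ (periodic, hence well defined on $\mathbb{T}^n$). Equivalently, we could use a metric of the form $g=\phi(t)^2dt^2+h$. We choose $\phi$ to have a strict nondegenerate minimum at $t=0$ and to be otherwise positive and nonconstant.

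\textbf{Foliation for $\alpha$.} Since $v\perp\alpha$, the hypersurfaces with direction $\alpha$ are ruled by translates in the $v$-direction. In the universal cover, the hyperplanes $P_c=\{\langle\beta,x\rangle=c\}$, with $\beta$ dual to $\alpha$, are invariant under the $t$-dependence. We will show they are area-minimizing for $g$ via a calibration argument: the closed $(n-1)$-form $\iota_{N}\mathrm{dvol}_g$, with $N$ the $g$-unit normal field to the $P_c$, is closed precisely because the metric is translation invariant along $\beta$. Consequently $\{P_c\}$ foliates $\mathbb{R}^n$ by area minimizers with direction $\alpha$, which gives the foliation statement. We must also check these lie in $\mathcal{M}_\alpha$ (the Birkhoff property is clear for parallel hyperplanes).

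\textbf{Positive gap along a sequence.} Choose integer classes $r_k\to\alpha$ whose dual vectors $\beta_k$ satisfy $\langle \beta_k ,v\rangle\neq0$. Such classes exist by perturbing toward $\alpha$ while tilting slightly in the $v$-direction. For these classes, minimizers are no longer translation-invariant in the $t$-direction. The area functional restricted to the class then behaves like a one-dimensional problem in $t$. Minimizers concentrate their tilted part near $t=0$, where $\phi$ is smallest, so the minimizers in class $r_k$ are essentially isolated, up to translations along the invariant directions. Any sweepout through class $r_k$ must therefore pass hypersurfaces crossing the region where $\phi$ is large. We will bound $\omega(r_k)-S(r_k)$ from below by a quantity like $c\,(\max\phi-\min\phi)\cdot|\langle\beta_k,v\rangle|\cdot(\text{area of a fundamental cross section})$. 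We will then arrange that the product stays bounded below, by choosing $r_k$ with $|\langle \beta_k,v\rangle|$ normalized appropriately relative to $S(r_k)$ (the convergence is only projective).

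\textbf{Main obstacle.} The main obstacle is this lower bound on the width. We would prove it via a Lusternik--Schnirelmann/isoperimetric argument: a sweepout must contain a slice whose "tilted band" sits at $t$ far from $0$. A coarea/calibration estimate with the $v$-dependent form then shows that such a slice has area exceeding $S(r_k)$ by a definite amount. This relies on the characterization $\omega=S$ iff foliation given in Appendix~\ref{appendix:a} and on Proposition~\ref{upperbound} for the comparison.
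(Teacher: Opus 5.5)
Your foliation argument is fine, but the positive-gap step fails. For your metric the claimed lower bound is false: $\Delta W_{r_k}=0$ for all large $k$, along \emph{every} sequence $r_k\to\alpha$.

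The cause is the very symmetry you use to calibrate the $P_c$. Your metric is invariant under all real translations $x\mapsto x+sw$ with $w\perp v$, in particular $w=\beta$. These descend to isometries of $\mathbb{T}^n$ isotopic to the identity. Since $r_k/S(r_k)\to\alpha/S(\alpha)$ and $\alpha\cdot\beta\neq0$, we have $r_k\cdot\beta\neq 0$ for large $k$. Let $\Sigma_k$ be homologically area-minimizing in $r_k$. Then:
\begin{enumerate}
\item each translate $\Sigma_k+s\beta$ is again area-minimizing in $r_k$;
\item the translates are pairwise disjoint or equal, by Hass's theorem (Appendix \ref{appendix:a});
\item their lifts stay within bounded distance of $\{x\cdot r_k=c+s\,(r_k\cdot\beta)\}$, so they foliate $\mathbb{T}^n$.
\end{enumerate}
Hence $\omega(r_k)=S(r_k)$. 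This is the higher-dimensional version of the Liouville metric in Example \ref{everydirectionexceptone}, where only the class with normal $v$ keeps a barrier.

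Your heuristic breaks at ``any sweepout must pass hypersurfaces crossing the region where $\phi$ is large'' with extra area. Translation along $v^\perp$ moves the hypersurface transversally while its tilted band stays at $t=0$, at zero cost.

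There is also a structural obstruction. The $P_c$ are invariant under all of $\alpha^\perp\cap\mathbb{Z}^n$, so they lie in $\mathcal{M}(\alpha)$, and $\mathcal{M}(\alpha)$ foliates. Theorem \ref{maintheorem3} then forces $\Delta W_{r_k}\to 0$ for every sequence. Any valid example must therefore have $\mathcal{M}(\alpha)$ \emph{not} foliating, with the foliation of $\mathcal{M}_\alpha$ made of less periodic, heteroclinic leaves.

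The paper's construction does exactly this:
\begin{enumerate}
\item It takes $g=h\times d\theta^2$ on $\mathbb{T}^2\times S^1$, where $h$ is Bangert's metric with no minimal foliation (Proposition \ref{nofoliation}). It sets $\alpha=[\alpha_1,\alpha_2,0]$ and $\beta=[0,0,1]\in\alpha^\perp$. The continuous symmetry is along the rational direction $\beta$ itself, not along $v^\perp$.
\item $\mathcal{M}(\alpha)$ consists of products $\gamma\times\mathbb{R}$ and does not foliate. Each of its gaps is foliated by the $\beta$-translates of a heteroclinic minimizer in $\mathcal{M}(\alpha,\beta)$, so $\mathcal{M}_\alpha$ foliates.
\item The sequence is $r_k=[a_k,b_k,0]$, with $r_k\cdot\beta=0$. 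This is the opposite of your tilting choice: with $r_k\cdot\beta\neq0$, the $\beta$-translations would again give a free sweepout.
\item The lower bound is not a quantitative width estimate but a contradiction argument. If $\Delta W_{r_k}\to0$, the proof of Theorem \ref{suffient} gives a foliation by limits of sweepout slices in $r_k$. By Lemma \ref{lift}, each slice lifts to a $\tau_\beta$-invariant hypersurface, so every limit leaf is $\tau_\beta$-invariant. The foliation would then consist of elements of $\mathcal{M}(\alpha)$, which is impossible.
\end{enumerate}
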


Nonetheless, for such $\alpha$, the convergence of $\Delta W_{r_k}$ to $0$ does hold under a stronger condition that the foliation in $\mathcal{M}_\alpha$ is unique. Let $\mathcal{M}(\alpha)\subset \mathcal{M}_\alpha$ be the subset consisting of the elements with maximal periodicity (see Section \ref{section4}).

\begin{Th}\label{maintheorem3}
    For any homology class $\alpha\in H_{n-1}(\mathbb{T}^n, \mathbb{R})$, if $\mathcal{M}(\alpha)$ is a foliation, then for any sequence of integer classes $\{r_k\}_k$ converging to $\alpha$, we have $\lim\limits_{k\to\infty} \Delta W_{r_k}=0$.
\end{Th}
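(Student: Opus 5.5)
The approach is to build, for each $r_k$ close to $\alpha$, an explicit sweepout of $\mathbb{T}^n$ by hypersurfaces in the class $r_k$. The sweepout will be modeled on the leaves of the foliation $\mathcal{M}(\alpha)$, and its maximal area will exceed $S(r_k)$ by an amount that tends to $0$. Since $\omega(r_k)\le \sup_t \mathbf{M}(\Phi_k(t))$ for any admissible sweepout $\Phi_k$, and $\omega(r_k)\ge S(r_k)$ always holds, this yields $\Delta W_{r_k}\to 0$.

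\textbf{Step 1: compactness of $\mathcal{M}_{r_k}$.} Choose periodic minimizers $M_k\in\mathcal{M}(r_k)$ in the universal cover. Normalize them by deck translations and pass to a subsequence. By the compactness of area-minimizing boundaries with the Birkhoff property (Sections \ref{section4}--\ref{section5}, Auer--Bangert), any limit is an element of $\mathcal{M}_\alpha$ with maximal periodicity, hence a leaf of $\mathcal{M}(\alpha)$. Arguing by contradiction, suppose $\Delta W_{r_k}\ge\epsilon>0$ along a subsequence. Since $\mathcal{M}(\alpha)$ is a foliation, its leaves have no gaps. The plan is to show that, for $k$ large, every fundamental strip between consecutive translates of $M_k$ is Hausdorff-close to a thin slab foliated by leaves of $\mathcal{M}(\alpha)$. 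The uniqueness of the foliation (the hypothesis) enters here: every limit of $r_k$-minimizers must lie in the single foliation, so the gaps of the $r_k$-lamination shrink uniformly. Equivalently, the maximal width of the gaps of $\mathcal{M}(r_k)$, measured in the universal cover, tends to $0$.

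\textbf{Step 2: the sweepout.} Fill each gap of the lamination by $r_k$-minimizers, between neighbors $M^-$ and $M^+$, with a continuous family. One candidate is the family of level sets of a function interpolating between signed distance functions. Another is to take the boundaries $\partial(E^-\cup (E^+\cap\{f\le t\}))$ of sets, where $E^\pm$ are the regions bounded by $M^\pm$ and $f$ is a Lipschitz function adapted to the gap. The area of such an interpolant is at most $\mathbf{M}(M^-)$ plus an error controlled by the area of the gap region times a Lipschitz bound. Using the small gap width from Step 1, the monotonicity formula, and uniform curvature estimates for minimizers with $3\le n\le 7$, this error should be $o(1)$ after projecting to the torus. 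Note that the areas on the torus grow like $S(r_k)$, but the gap region has volume at most the area times the width. So I will instead compare, in the torus, with cut-and-paste competitors: $\mathbf{M}(\partial(E^-\cup F))\le \mathbf{M}(M^-)+\mathbf{M}(\partial F\cap \text{gap})$. I will then choose $F$ so that this last term is small, using the fact that nearby leaves of the $\alpha$-foliation cross the gap with area concentrated near $M^\pm$. Concatenating the families over all gaps gives a sweepout $\Phi_k$ in the class $r_k$ with $\sup\mathbf{M}\le S(r_k)+o(1)$.

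\textbf{Main obstacle.} The hardest part is making the area error in Step 2 uniformly small even though $S(r_k)\to\infty$. A naive interpolation error proportional to $S(r_k)\cdot(\text{gap width})$ need not vanish. I would try first to exploit the fact that the $\alpha$-leaves are almost $r_k$-periodic on large domains. This lets me write a leaf $L$ of $\mathcal{M}(\alpha)$ lying in the gap as close to a cut-and-paste of $M^-$ and $M^+$, with the transition confined to a bounded region whose area is $O(1)$ and shrinks as the gap closes. I would then use the resulting Mather-type energy estimate from earlier sections (the analogue of Proposition \ref{upperbound}) to bound the barrier by the area of this transition region.
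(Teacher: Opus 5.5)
Your overall plan is the right kind of argument: sweep a fundamental domain between consecutive lifts of an $r_k$-minimizer and show the mass excess tends to zero. However, the step that uses the hypothesis is not proved.

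In Step 1 you pass from ``every limit of $r_k$-minimizers lies in the foliation'' to ``the gaps of the $r_k$-lamination shrink uniformly''. This does not follow. Consecutive lifts $M_k<N_k=\tau_{t_k}M_k$ may converge to two \emph{distinct} leaves $M<N$. The fact that $M$ and $N$ belong to a foliation says nothing about the width of the region between $M_k$ and $N_k$.

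For totally irrational $\alpha$ (covered by the statement, since then $\mathcal{M}(\alpha)=\mathcal{M}_\alpha$) this case is genuinely possible and must be handled. The paper notes that the region between $M$ and $N$ projects injectively and has finite volume. It then sweeps that region using the leaves $\varphi_t$ of the foliation inside boundedly many balls.

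For rationally dependent $\alpha$, the missing idea is a volume count:
\begin{itemize}
\item By Theorem~\ref{structure}, the hypothesis forces $\mathcal{M}_\alpha=\mathcal{M}(\alpha)$. So $M$ and $N$ are invariant under some $\beta\in\mathbb{Z}^n\setminus\{0\}$ with $\beta\cdot\alpha=0$.
\item If $M\neq N$, they therefore bound a region of infinite volume.
\item But the region between $M_k$ and $N_k$ is a fundamental domain of volume $\operatorname{Vol}_g(\mathbb{T}^n)$. For large $k$ it would contain almost all of that infinite-volume region inside any fixed compact set, which is a contradiction.
\end{itemize}
Only after this does a translation-and-compactness argument (Claim~\ref{claim6.3}) give uniform closeness of $N_k$ to $M_k$. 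The operative input is periodicity of the leaves plus the fixed volume of a fundamental domain, not ``uniqueness'' of the foliation.

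The mass estimate in Step 2 is also not usable as stated. Let $F$ be the part of the gap lying over a region $P$. Then $\partial(E^-\cup F)$ contains $M^+\llcorner P$, whose area is comparable to that of $M^-\llcorner P$. So the term $\mathbf{M}(\partial F\cap\text{gap})$ cannot be kept small once $F$ is large; and if ``gap'' means the open gap, the inequality is simply false.

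The missing ingredient is the minimality of $M^+$ on the swept region:
\begin{itemize}
\item One has $\mathbf{M}(M^+\llcorner P)\le\mathbf{M}(M^-\llcorner P)+\mathbf{M}(S(P))$, where $S(P)$ is the lateral wall of $P$ inside the gap.
\item This yields $\mathbf{M}(M^-+\partial P)\le\mathbf{M}(M^-)+2\mathbf{M}(S(P))$, as in (\ref{massbound}).
\item One then adds the pieces $Q_{k,j}$ of an ordered partition of the fundamental domain one at a time, keeping every wall $S(P_{k,j})$ and every $\partial Q_{k,j}$ small.
\item Consecutive stages are joined by Corollary~\ref{interpolation}, which is what makes the family mass-continuous with controlled mass.
\end{itemize}

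Your fallback in the ``main obstacle'' paragraph does not replace this. Proposition~\ref{upperbound} only gives the $O(1)$ bound $\Delta W_r\le\omega(\mathbb{T}^n)$, and a transition region of area $O(1)$ yields no vanishing excess without further argument.

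A smaller point: it is simpler to sweep the single fundamental domain between $M_k$ and $\tau_{t_k}M_k$, which descends to an $r_k$-sweep-out of the torus. Concatenating over all gaps of the $r_k$-lamination is harder, since there may be infinitely many of them.
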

For integral classes $\alpha$, the necessity of $\mathcal{M}(\alpha)$ being a foliation is immediate: one may simply take the constant sequence $r_k=\alpha$ for all $k$. We emphasize, however, that the converse is false. Indeed, one can construct a metric on $\mathbb{T}^n$ with the following property: for every primitive homology class $r\in H_{n-1}(\mathbb{T}^n, \mathbb{Z})$, except one, there exists a foliation of $\mathbb{T}^n$ by closed, homologically area-minimizing hypersurfaces in $r$. See Example \ref{everydirectionexceptone}.\\

In the case where the minimizing hypersurfaces do not fill all of $\mathbb{R}^n$ (that is, they form a genuine lamination with gaps), we prove that inside every gap, there is a complete minimal hypersurface that is not area-minimizing. This is an analogue of the following fact from the Aubry-Mather theory for the double-well potential shown in \cite{doublewell}: if the set of minimizing solutions (also called the ground states) does not attain all possible values at a point, then there is another non-minimizing critical point in between. In the case of variational monotone lattice recurrent relations, Mramor and Rink \cite{ghostcircle} proved a similar result: when an Aubry-Mather set has a gap, then this gap must be foliated by minimizers or contain a non-minimizing critical point.  These critical points are often viewed as secondary laminations inside the gap. 

Recall that a Riemannian metric $g$ on a closed manifold $M$ is called \textit{bumpy} if every immersed, closed minimal hypersurface is non-degenerate (having no non-trivial Jacobi field). Brian White proved in \cite{white} that bumpy metrics are generic in the sense of the Baire category.

\begin{Th}\label{nonminimizer}
    Suppose that the metric $g$ on $\mathbb{T}^n$ is bumpy and $\alpha$ is a totally irrational class. Assume further that elements of $\mathcal{M}_\alpha$ do not foliate $\mathbb{R}^n$. Then for any two consecutive minimizers $\Sigma_0$ and $\Sigma_1$ in $\mathcal{M}_\alpha$ that bound a gap $G$, there exists a complete, non-compact, embedded minimal hypersurface lying entirely inside $G$ that is not area-minimizing and has index at most one.
\end{Th}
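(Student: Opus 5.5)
The idea is to approximate the gap $G$ by compact situations in integer classes and pass to a limit. Since $\mathcal{M}_\alpha$ is not a foliation, Theorem \ref{maintheorem1} gives $\Delta W_\alpha>0$. More precisely, for every sequence of primitive integer classes $r_k\to\alpha$, $\omega(r_k)-S(r_k)$ stays bounded away from zero along a subsequence. We may also use the upper bound of Proposition \ref{upperbound}.

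\textbf{Step 1: min-max in integer classes.} Fix $r_k\to\alpha$. For each $k$, the Almgren--Pitts min-max theory (Section \ref{section2}), applied to one-parameter sweepouts of $\mathbb{T}^n$ by cycles in the class $r_k$, produces a closed embedded minimal hypersurface $\Gamma_k$. Its area is $\omega(r_k)$, possibly counted with multiplicity, and its index is at most one. Bumpiness lets us use the multiplicity-one and index bounds available for generic metrics, so we may take $\Gamma_k$ two-sided with multiplicity one and index at most one. The sweepout is chosen to interpolate between consecutive minimizers of $\mathcal{M}(r_k)$ that bound a gap $G_k$. We choose the $G_k$ to converge, after lifting to $\mathbb{R}^n$ and translating by deck transformations, to the given gap $G$ between $\Sigma_0$ and $\Sigma_1$. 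This uses the lamination structure of Sections \ref{section4} and \ref{section5}: limits of elements of $\mathcal{M}(r_k)$ lie in $\mathcal{M}_\alpha$. A Lusternik--Schnirelmann argument as in Appendix \ref{appendix:a} then places $\Gamma_k$ (a lift of it) inside $G_k$. Indeed, if no min-max hypersurface lay strictly between the two minimizers, a pull-tight/deformation argument would produce a sweepout with max area near $S(r_k)$, contradicting $\Delta W_{r_k}\ge c>0$.

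\textbf{Step 2: local convergence.} Lift $\Gamma_k$ to $\tilde\Gamma_k\subset\mathbb{R}^n$. Index at most one gives curvature estimates away from at most one point, by Schoen--Simon stability together with the usual index-one covering argument. Since $3\le n\le 7$, a subsequence converges smoothly and locally in $\mathbb{R}^n$, away from a discrete set, to a complete embedded minimal hypersurface $\Gamma\subset\overline{G}$. By the maximum principle, $\Gamma$ lies in $G$ unless it coincides with $\Sigma_0$ or $\Sigma_1$. Index is lower semicontinuous, so $\Gamma$ has index at most one. Since $\alpha$ is totally irrational, $\Gamma$ cannot be compact or periodic in a rational direction, so it is non-compact.

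\textbf{Step 3 (main obstacle): ruling out degenerate limits.} Two things must be excluded: that $\tilde\Gamma_k$ collapse onto the boundary leaves $\Sigma_0,\Sigma_1$ (or escape to infinity), and that $\Gamma$ is area-minimizing, i.e.\ an element of $\mathcal{M}_\alpha$, which is impossible inside a gap between consecutive minimizers. Both are handled through the energy. The area excess of $\Gamma_k$ over the minimizers is $\Delta W_{r_k}\ge c>0$ per fundamental domain of the period lattice, and this excess must be concentrated somewhere.

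I first normalize by a deck translation so that a fixed fraction of the excess lies in a fixed compact ball $B$. I then argue that if $\tilde\Gamma_k\cap B$ converged to pieces of $\Sigma_0$ or $\Sigma_1$, or to any minimizer, the excess in $B$ would tend to zero. This uses smooth convergence plus a calibration-type comparison: the minimizers bounding $G$ foliate nearby regions well enough to give local area comparison. That would be a contradiction.

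To guarantee that the excess cannot spread thinly, I expect to need the Birkhoff property and monotonicity arguments for the min-max hypersurfaces. This part, controlling where the energy gap lives and preventing it from dissipating at infinity, is the hardest step. Once it is in place, $\Gamma$ is a non-minimizing minimal hypersurface in $G$ with index at most one, as required.
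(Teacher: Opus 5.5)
\textbf{There is a genuine gap.} Your Step 3 is not a technical loose end; it is the whole content of the theorem. Steps 1--2 cannot supply it, because the energy you use is global. The bound $\Delta W_{r_k}\ge c$ is an inf--sup over sweepouts of the entire torus in the class $r_k$, so it only says that \emph{some} region carries mountain-pass excess. The lamination may have several $\mathbb{Z}^n$-orbits of gaps, and the theorem asks for a hypersurface in \emph{every} gap $G$. This breaks your Lusternik--Schnirelmann step. If no min-max hypersurface lies in $G_k$, the width need not drop to $S(r_k)$, because the min-max hypersurface can sit in a translate of a different gap. Nothing forces the lift of $\Gamma_k$ to lie between $M_k$ and $N_k$ at all. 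For the same reason, after your deck-translation normalization, the limit lands in whichever gap carries the excess, not necessarily in $G$. Even within the right orbit, a fixed excess $\Delta W_{r_k}$ spread over period domains whose size tends to infinity can have vanishing density in every fixed ball. The local limit may then be $\Sigma_0$ or $\Sigma_1$, and the Birkhoff property and monotonicity do not rule this out.

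The missing idea is a mountain-pass bound that is specific to the gap $G$ and localized in a fixed compact set. The paper obtains it by working directly inside $G$:
\begin{enumerate}
\item $G$ has finite volume because it projects injectively (Lemma \ref{gap}), and the Harnack decay Lemma \ref{asymptotic} then gives $\operatorname{Vol}_{n-1}(\partial B_R\cap G)\to 0$.
\item Proposition \ref{widthgap} shows, by contradiction, that every path from $\Sigma_0\cap B_R$ to $\Sigma_1\cap B_R$ has excess at least some $C>0$; vanishing excess would foliate $G$ by elements of $\mathcal{M}_\alpha$.
\item Proposition \ref{2plateau} chooses boundary data $\sigma^R\subset\partial B_R\cap G$ with two Plateau solutions that inherit this bound.
\item Montezuma's prescribed-boundary min-max (Theorem \ref{camillo}) then yields hypersurfaces $\Gamma^{R_k}$ of index at most one.
\item Claim \ref{keyclaim} cuts and reglues sweepouts along the small slices $\partial B_{R'}\cap G$, which traps the excess in a fixed ball $B_{R'}$ where it survives Sharp's compactness.
\end{enumerate}

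You also ignore closed minimal hypersurfaces inside $G$. Your assertion that $\Gamma$ cannot be compact ``since $\alpha$ is totally irrational'' is unjustified. Closed, contractible minimal hypersurfaces may exist in $G$, and the paper has to treat this case. They can appear as components of the limit or absorb the min-max excess. The paper handles them first under assumption $(\star)$, and in general by excising a maximal finite family of closed stable minimal hypersurfaces (Proposition \ref{cutstable}). That finiteness is where bumpiness is really used, not merely for multiplicity one.
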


For geodesics on a two-torus, the existence of a similar non-area-minimizing critical point inside every gap was proved by Mather in \cite{mather}. His construction, in a more general setting of a monotone twist map on tori, used a minimax principle formulated by Birkhoff. Mather interpreted this critical point as a secondary orbit that is \textit{homoclinic} to the original Cantor set (the lamination formed by recurrent minimizers). The minimal hypersurface constructed in Theorem \ref{nonminimizer} is a higher-dimensional analogue of this secondary orbit.

Nevertheless, our approach to constructing a mountain-pass type critical point inside the gap is different from Mather's. Roughly speaking, we construct such hypersurfaces as limits of sequences of index-one minimal hypersurfaces with prescribed boundaries on larger and larger compact exhaustions. This strategy is similar to the one employed by Carlotto and De Lellis \cite{minmaxgeodesic} in their construction of min-max geodesics on asymptotically conical surfaces. \\

Although the minimal hypersurface constructed in Theorem \ref{nonminimizer} is non-compact, we can still describe its area asymptotically inside a large compact ball, in comparison to that of an area-minimizer. Let $B_R=B_R(p)$ be the ball centered at a point $p$ of radius $R$ with respect to a metric $\Tilde{d}$ (see Section \ref{section9} for the definition). 

\begin{Th}\label{areaofnonminimizer}
    Under the assumptions of Theorem \ref{nonminimizer}, suppose that $\Gamma$ is a non-area-minimizing minimal hypersurface constructed inside the gap $G$. The following holds.
    \begin{enumerate}
        \item If there is no closed minimal hypersurface inside $G$, then
        \[\limsup_{R\to \infty} \big(\mathbf{M}(\Gamma\llcorner B_R(p)) -\mathbf{M}(\Sigma_0 \llcorner B_R(p))\big)\geq \Delta W_\alpha.\]
        \item Otherwise, there exists a disjoint collection of closed, connected, stable minimal hypersurfaces $\mathcal{S}=\{S_1, S_2,\ldots, S_j\}$ inside $G$ and pairwise disjoint open sets $U_{S_1}, U_{S_2}, \ldots, U_{S_j}$ with $S_i=\partial U_{S_i}$ such that either
        \[\sum_{i=1}^j W_\partial(U_{S_i})\geq \Delta W_\alpha\]
        or 
        \[\limsup_{R\to \infty} \left(\mathbf{M}(\Gamma\llcorner B_R)-\mathbf{M}(\Sigma_0\llcorner B_R)\right)\geq \Delta W_\alpha -\sum_{i=1}^j \mathbf{M}(S_i).\]
        Here, $W_\partial(U_{S_i})$ denotes the width of the manifold $U_{S_i}$ with the strictly stable boundary $S_i$, as defined in Section \ref{section2}.
    \end{enumerate}
\end{Th}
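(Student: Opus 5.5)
The plan is to revisit the construction behind Theorem \ref{nonminimizer} and track areas. There, $\Gamma$ arises as a limit of index-one minimal hypersurfaces $\Gamma_k$ in compact regions $G_k = G\cap K_k$ of an exhaustion, with boundary data $\partial\Gamma_k$ prescribed by the traces of $\Sigma_0,\Sigma_1$ on $\partial K_k$. Each $\Gamma_k$ is produced by a relative min-max over sweepouts of $G_k$ from $\Sigma_0\llcorner K_k$ to $\Sigma_1\llcorner K_k$. I would choose $K_k$ adapted to integer classes $r_k\to\alpha$, namely fundamental domains of the sublattice of deck transformations preserving $r_k$, so that the relative width of $G_k$ is comparable to $\omega(r_k)$ and the minimizers compare to $S(r_k)$. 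I will use $B_R=B_R(p)$ with the metric $\tilde d$ of Section \ref{section9}, and pass along a sequence $R_k\to\infty$ with $K_k\subset B_{R_k}$ and with boundary effects of size $o(1)$.

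\textbf{Step 1 (lower bound for the relative widths).} I want to show
\[
\liminf_k \big(W(G_k)-\mathbf{M}(\Sigma_0\llcorner K_k)\big)\geq \Delta W_\alpha .
\]
Argue by contradiction. A sweepout of $G_k$ with maximal excess $<\Delta W_\alpha-\epsilon$ is glued, using the lamination of $\mathcal M_\alpha$ outside $G$ and the deck translates of $G$, into a sweepout of $\mathbb T^n$ in a class $r_k$ close to $\alpha$. Total irrationality makes the translates of $G$ cover the relevant gaps in a controlled way, and the interpolation/gluing errors tend to zero. The glued sweepout would give $\omega(r_k)-S(r_k)<\Delta W_\alpha-\epsilon/2$, contradicting the definition of $\Delta W_\alpha$ as a liminf.

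\textbf{Step 2 (case (1): no closed minimal hypersurface in $G$).} The min-max $\Gamma_k$ achieves the relative width, by the regularity and the index bound used in Theorem \ref{nonminimizer}. Without closed minimal hypersurfaces in $G$, no mass can detach as a compact component in the limit. Lower semicontinuity then fails only through loss of mass at infinity, which the excess formulation avoids, since the difference of masses is controlled in $B_R$ by the monotonicity formula and uniform area bounds. This gives $\mathbf M(\Gamma\llcorner B_{R_k})-\mathbf M(\Sigma_0\llcorner B_{R_k})\geq W(G_k)-\mathbf M(\Sigma_0\llcorner K_k)-o(1)$, and taking $\limsup$ together with Step 1 yields (1).

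\textbf{Step 3 (case (2)).} Here the $\Gamma_k$ may converge with multiplicity onto, or lose area to, closed minimal hypersurfaces in $G$. By bumpiness these are finitely many up to deck translation in a compact fundamental region. Passing to outermost ones, I take them stable, and for stable ones strictly stable, giving the disjoint collection $S_1,\dots,S_j$ bounding regions $U_{S_i}$. In the limit the width splits. Either the optimal sweepouts must pass through the pieces $U_{S_i}$, in which case the excess is absorbed by $\sum_i W_\partial(U_{S_i})$, which must then be $\geq\Delta W_\alpha$. Or the noncompact part $\Gamma$ carries the rest, with at most $\sum_i\mathbf M(S_i)$ lost to the closed components, giving the second inequality.

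\textbf{Main obstacle.} The hardest part is making this dichotomy precise: showing that the mass lost in the limit $\Gamma_k\to\Gamma$ is accounted for exactly by the $S_i$ (at most $\mathbf M(S_i)$ each, by multiplicity one) or by the widths $W_\partial(U_{S_i})$. I would first try a cut-and-paste argument. One replaces portions of sweepouts near each $S_i$ by optimal sweepouts of $U_{S_i}$, which is possible thanks to strict stability, as in the free-boundary/strictly-stable-boundary width theory of Section \ref{section2}. One then compares maximal masses.
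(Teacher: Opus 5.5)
Your Step 2 has a genuine gap, and it is the central difficulty of the theorem, more so than the case (2) bookkeeping you single out. The asserted inequality $\mathbf{M}(\Gamma\llcorner B_{R_k})-\mathbf{M}(\Sigma_0\llcorner B_{R_k})\geq W(G_k)-\mathbf{M}(\Sigma_0\llcorner K_k)-o(1)$ compares the limit $\Gamma$ with the approximant $\Gamma_k$ of the same index on the growing ball $B_{R_k}$. But $\Gamma_k\to\Gamma$ only on compact sets: on a given ball, $\Gamma$ is close to $\Gamma_{k'}$ only for $k'$ large depending on that ball. Nothing in your argument stops the excess of $\Gamma_k$, which is at least $\Delta W_\alpha$, from sitting in a region that drifts to infinity. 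It is then lost in the limit, and $\Gamma$ could even be $\Sigma_0$ or $\Sigma_1$; the paper flags exactly this collapse. The claim that ``the excess formulation avoids loss of mass at infinity'' is not true. Neither monotonicity nor local area bounds give a lower bound on the excess of the limit. Knowing that $\Gamma$ is not area-minimizing only gives some positive excess, not $\Delta W_\alpha$.

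The missing idea is a localization uniform in $k$. Claim \ref{keyclaim} gives a fixed $R'$ with $\mathbf{M}(\Gamma^{R_k}\llcorner B_{R'})\geq\max\{\mathbf{M}(\Sigma_0^{R'}),\mathbf{M}(\Sigma_1^{R'})\}+C$ for all large $k$. This holds because $\Gamma_0^{R_k},\Gamma_1^{R_k}$ are flat-close outside $B_{R'}$ and $\mathbf{M}(\partial B_{R'}\cap G)$ is small. Sweep-outs can therefore be modified outside $B_{R'}$ and reconnected along $\partial B_{R'}\cap G$, which reduces to Proposition \ref{widthgap} with $C=\Delta W_\alpha$ (Remark \ref{constantC}). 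Only on this fixed ball does varifold convergence carry the bound to $\Gamma$. One then extends to all large $R$ by an annulus comparison. By Lemma \ref{asymptotic}, the slices $\Gamma\cap\partial B_R$ and $\Sigma_0\cap\partial B_R$ are flat-close, so minimality of $\Sigma_0$ gives $\mathbf{M}(\Gamma\llcorner(B_R\setminus B_{R'}))\geq\mathbf{M}(\Sigma_0\llcorner(B_R\setminus B_{R'}))-2\epsilon$. The same localization is needed again in case (2) to pass from the approximants $W_k$ to $\Gamma$.

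There are three further problems.

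\begin{enumerate}
\item The min-max you describe, from $\Sigma_0\llcorner K_k$ to $\Sigma_1\llcorner K_k$, has two different boundaries, so Theorem \ref{camillo} does not apply to it. One needs a common boundary $\sigma^R\subset\partial B_R\cap G$ with two Plateau solutions satisfying the mountain-pass bound (Proposition \ref{2plateau}). Your Step 1 bound must then be transferred to $W_{\sigma^R}(\Gamma_0^R,\Gamma_1^R)$.
\item In Step 3, finiteness does not come from deck translations: $G$ projects injectively, so no nontrivial deck translation preserves it. Nor does it come from bumpiness alone. The paper takes a maximal family of closed stable minimal hypersurfaces. It is finite because each $U_{S_i}$ has volume at least $v_0$ (Lemma \ref{volumebound}) while $\operatorname{Vol}(G)<\infty$. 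Bumpiness is used only to exclude infinite nested chains.
\item The $S_i$ do not enter as mass lost in the limit $\Gamma_k\to\Gamma$. For each fixed $k$, the paper runs min-max in $\Omega_{R_k}=G_{R_k}\setminus\bigcup_i U_{S_i}$ between $\Gamma_0^{R_k}$ and $\Gamma_1^{R_k}+\sum_i S_i$. It excludes other closed components via Proposition \ref{cutstable} and Claim \ref{widthbound1}, and obtains $\Gamma^{R_k}=W_k+\sum_i n_iS_i$ with $n_i\in\{0,1\}$. It then concatenates a near-optimal sweep-out of $\Omega_{R_k}$ with near-optimal sweep-outs of the $U_{S_i}$. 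The concatenated family must reach $\mathbf{M}(\Gamma_0^{R_k})+\Delta W_\alpha$, so one of the two phases carries the excess; this is exactly the stated dichotomy. Your cut-and-paste idea points this way but lacks this finite-$k$ setup.
\end{enumerate}
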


\subsection{Examples and related works}
\subsubsection{Minimal lamination}
On a flat torus, every direction determines a minimal foliation by parallel hyperplanes. The question of whether such foliations exist for general metrics was introduced by Moser in \cite{moser} and Bangert in \cite{bangert1}.

In \cite{stability}, Moser was interested in studying the stability of such minimal foliations. He called a minimal foliation $F_g$ \textit{stable} if there exists a $C^\infty$-neighborhood $U$ of the metric $g$ such that for every $g'\in U$, there exists a foliation $F_{g'}$ that is minimal with respect to $g'$ and diffeomorphic to $F_g$. 

On a flat torus, when $\alpha$ is an integral class, the minimal foliation by hyperplanes $x\cdot \alpha= \beta, \beta\in \mathbb{R}$ is not stable: arbitrarily small perturbations can break it into only finitely many closed area-minimizing hypersurfaces in the same homology class. In fact, this finite behavior is generic for primitive integral classes.

On the other hand, Moser proved that for $\alpha =(\alpha_1, \alpha_2, \ldots, \alpha_n) \in \mathbb{R}^n$ which satisfies the following Diophantine condition
\begin{equation}\label{diophantine}
    \sum_{i, j=1}^{n} \left(\alpha_id_j-\alpha_jd_i\right)^2\geq \gamma \left(\sum_{j=1}^{n} d_j^2\right)^{-\tau}, \forall d=(d_1, d_2, \ldots, d_n)\in \mathbb{Z}^n \setminus \{0\},
\end{equation}
for some constants $\gamma, \tau>0$, the minimal foliation by affine hyperplanes in the direction $\alpha$ is indeed stable. 
Geometrically, this condition means that all integer points stay uniformly away from the line spanned by $\alpha$.

We also note that Bangert \cite{bangertgap} constructed $\mathbb{Z}^n$-periodic metrics for which no minimal foliation exists. For these metrics, it follows from Theorem \ref{maintheorem1} that there exists a constant $c>0$ such that $\omega(r)-S(r)>c$ for all $r\in H_{n-1}(\mathbb{T}^n, \mathbb{Z})$.

A related inverse question is when a given foliation can be made minimal for some Riemannian metric, that is, when it is \textit{taut}. Sullivan \cite{taut} proved that a codimension-one oriented foliation admits such a metric if and only if every compact leaf is cut by a closed transversal curve.

\subsubsection{Differentiability of the stable norm}
In \cite{stablenorm} and \cite{hannes}, Auer, Bangert, and Junginger-Gestrich investigated the differentiability properties of the stable norm 
\[S: H_{n-1}(M, \mathbb{R})\to \mathbb{R}.\] 

They showed that the restriction of $S$ to the minimal subspace spanned by the integer classes and $\alpha$ (see Definition \ref{totallyirrational}) is differentiable at $\alpha$. 
In particular, it is always differentiable at totally irrational classes. For rationally dependent classes, $S$ is differentiable at $\alpha$ exactly when $\mathcal{M}(\alpha)$ is a minimal foliation. More precisely, if $\mathcal{M}(\alpha)$ is not a foliation, heteroclinic minimizers in different directions in $\mathcal{M}_\alpha$ give rise to different subderivatives of $S$ at $\alpha$.

It is a consequence of these works that we can detect minimal foliations in rationally dependent classes consisting of the most periodic minimizers by examining the differentiability of the stable norm. It remains unclear, however, what the corresponding picture is for totally irrational classes.

\subsubsection{Unstable minimal hypersurfaces on torus}

In a flat three-torus, index-one minimal surfaces have been extensively studied. Ross \cite{ross} showed that the classical Schwarz $\mathcal{P}$ and $\mathcal{D}$ minimal surfaces, as well as A. Schoen's gyroid surface, all have Morse index one on the flat torus given by the quotient of the flat $\mathbb{R}^3$ by the corresponding lattice groups. In \cite{hpr}, the authors constructed (possibly non-orientable) minimal surfaces with index less than or equal to one on any flat torus. On the other hand, Ritor\'{e} and Ros \cite{ros} showed that the moduli spaces of orientable index-one minimal surfaces with genus greater than one in (non-fixed) flat three-tori are compact. As a consequence, on flat tori with prescribed volume, orientable index-one minimal surfaces do not exist when the injectivity radius is sufficiently small. 

All known index-one examples listed above are homologically trivial. Indeed, by a theorem of Meeks \cite{meeks}, any closed two-sided minimal surface that is not totally geodesic in a flat three-torus is a Heegaard surface, hence null homologous (the same statement holds for any closed manifold of nonnegative Ricci curvature). On the other hand, totally geodesic minimal surfaces on a flat three-torus are flat two-tori, and they foliate the torus for each non-trivial homology class.

One can also see the non-existence of a connected, unstable minimal surface in any non-trivial homology class by the maximum principle. 

It is currently unknown if there exist infinitely many connected, embedded, two-sided, index-one minimal surfaces on the cubic flat three-torus. 

\subsection{Plan of the paper}
In Section \ref{section2}, we briefly review the necessary notation and the basic framework of one-parameter Almgren-Pitts min-max theory. We use that to define the \textit{width} associated with an integer homology class. In Section \ref{section3}, we reformulate an interpolation theorem due to Marques-Neves in a form that allows us to construct a mass-continuous map interpolating between two currents that are close in the mass norm. This is a crucial tool in the proofs of Theorems \ref{maintheorem1} and \ref{maintheorem3}. The detailed proofs of these results are carried out in Section \ref{section6}.

We briefly summarize the theory of area-minimizing hypersurfaces in tori in Section \ref{section4}, with a continued focus on irrational homological direction in Section \ref{section5}. In Section \ref{section7}, we use a Lusternik-Schnirelmann-type argument to derive a sufficient condition for the existence of a foliation in a given homology class. Examples of metrics that satisfy Theorem \ref{maintheorem2} are constructed in Section \ref{section8}.

Section \ref{section9} contains the proof of Theorem \ref{nonminimizer} and Theorem \ref{areaofnonminimizer}, in which we construct and analyze the asymptotic area of complete, non-compact minimal hypersurfaces that are not area-minimizing inside the gaps of an irrational lamination, using a version of min-max theory with boundary developed by Montezuma. We conclude with a discussion of some open problems in the last section.
\\

\textbf{Acknowledgements}. I am deeply grateful to my advisor, Andr\'{e} Neves, for suggesting the research questions that led to this work, and for his many helpful discussions, guidance, and support throughout the project.

\section{Almgren-Pitts min-max theory}\label{section2}
\subsection{The width of an integer homology class}

In this section, we recall some standard definitions from geometric measure theory. Most of these can be found in the book of Simon \cite{simon}. Let $ M^n$  be a closed $ n$-dimensional manifold equipped with a Riemannian metric $g$. We assume that $(M^n,g)$ is isometrically embedded in some Euclidean space $\mathbb{R}^J$. For $0\leq k \leq n$, denote by $\mathcal{I}_{k}(M)$ the space of $k$-dimensional flat chains with integer coefficients in $M$ and let $\mathcal{Z}_{n-1}(M)$ be the space of all $(n-1)$-dimensional integral cycles $T$ in $\mathcal{I}_{n-1}(M)$ satisfying $T=\partial Q$ for some $Q\in \mathcal{I}_{n}(M)$. This is precisely the connected component of the space of integral currents without boundary that contains the zero current. 

For any $T\in \mathcal{I}_k(M)$, we write $|T|$ and $||T||$ for the integral varifold and the Radon measure in $M$ associated with $T$. The mass of a current $T$ is defined as:
\[\mathbf{M}(T)=\sup \{T(\phi): \phi\in \mathcal{D}^k(\mathbb{R}^J), ||\phi||\leq 1\},\]
where $\mathcal{D}^k(\mathbb{R}^J)$ is the space of smooth $k$-forms with compact support. 

The flat metric on $\mathcal{I}_k(M)$ is defined as: 
\[\mathcal{F}(S,T)=\inf\{\mathbf{M}(P)+\mathbf{M}(Q): S-T=P+\partial Q, P\in \mathcal{I}_k(M), Q\in \mathcal{I}_{k+1}(M)\}.\]

The $\mathbf{F}$-metric on the closure $\mathcal{V}_k(M)$ of the space of $k$-dimensional rectifiable varifolds with support contained in $M$ is defined by:
\[\mathbf{F}(V, W)=\sup \{V(f)-W(f): f\in C_c(G_k(\mathbb{R}^J)), |f|\leq 1, \operatorname{Lip}(f)\leq 1 \}.\]

This induces the following $\mathbf{F}$-metric on $\mathcal{I}_k(M)$:
\[\mathbf{F}(S,T)=\mathcal{F}(S,T)+\mathbf{F}(|S|,|T|).\]

A continuous map in the $\mathcal{F}$-topology 
\[\Phi: S^1 \rightarrow \mathcal{Z}_{n-1}(M), \quad \Phi(0)=\Phi(2\pi)=0\] 
is called a \textit{sweep-out} if it has no concentration of mass and represents a nontrivial element in $\pi_1(\mathcal{Z}_{n-1}(M), \{0\})$. The first condition means the following:
\[\limsup_{r\to 0} \{||\Phi(x)||(B_r(p)): x\in S^1, p\in M\} =0\]
and the second condition is equivalent to the image of $\Phi$ under the Almgren isomorphism \[F: \pi_1(\mathcal{Z}_{n-1}(M), \{0\})\to H_{n}(M, \mathbb{Z})\] being the fundamental class $[M]$. Let $\mathcal{P}$ denote the collection of all such sweep-outs. The width of $M$ is then defined as follows:
\begin{equation}\label{onewidth}
    \omega(M):=\inf_{\mathcal{P}} \sup_{\theta \in S^1} \textbf{M}(\Phi(\theta)).
\end{equation}

It can be deduced from the isoperimetric inequality that the width $\omega(M)$ is always positive. Almgren \cite{almgren} established the existence of a stationary integral varifold realizing $\omega(M)$, and Pitts \cite{pitts} later proved that this varifold is actually realized by an embedded minimal hypersurface. Moreover, on a manifold equipped with a generic metric, this min-max minimal hypersurface is unstable and has Morse index one. 

This setting can be applied in a similar manner to find an unstable index-one minimal hypersurface in any integer homology class. Let $\mathcal{Z}_{n-1}(M, r)$ denote the space of all integral cycles that represent the same homology class $r\in H_{n-1}(M, \mathbb{Z})$ in $M$. This is the connected component containing any fixed cycle $S \in r$ of the space of all cycles. Naturally, $\mathcal{Z}_{n-1}(M, r)$ is homeomorphic to $\mathcal{Z}_{n-1}(M)$ via the translation map sending $T$ to $T-S$.  

Let $\mathcal{P}_r$ be the collection of all sweep-outs of $M$ by cycles     in $\mathcal{Z}_{n-1}(M, r)$. The \textit{min-max width of the class $r$} is defined as follows:
\[\omega(r):=\inf_{\mathcal{P}_r}\sup_{\theta\in S^1}\textbf{M}(\Phi(\theta)).\]

If we want to emphasize the dependence on the Riemannian metric $g$, we shall write $\omega_g(r)$ instead.

It follows from the works of Marques-Neves \cite{MNindex} that the following holds:
\begin{Th}\label{minmax}
    Let $(M^{n}, g)$, with $3\leq n\leq 7$, be a closed manifold and $r$ be a fixed primitive homology class. Then there exists a stationary integral varifold $V$ whose support is a closed embedded minimal hypersurface $\Sigma$ of index bounded by one, such that
    \[||V||(M)=\omega(r).\]
    
    Furthermore, we have the decomposition \[\Sigma=n_1\Sigma_1+\cdots+n_k\Sigma_k\] where each $\Sigma_i$ is a closed, connected, and embedded minimal hypersurface and $n_i\in \mathbb{Z}_{>0}$. Any two $\Sigma_i, \Sigma_j$ with $i\neq j$ are disjoint, and:
    \[\operatorname{Ind}(\Sigma)=\sum_{i=1}^{k}\operatorname{Ind}(\Sigma_i)\leq 1.\]
\end{Th}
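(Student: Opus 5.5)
We reduce Theorem \ref{minmax} to the standard one-parameter Almgren--Pitts theory on $\mathcal{Z}_{n-1}(M)$, using the translation homeomorphism $T\mapsto T-S$ to compare sweep-outs in $\mathcal{Z}_{n-1}(M,r)$ with ordinary ones.

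\textbf{Step 1: reduction to the homotopy class.} Fix a cycle $S\in r$. The class $\mathcal{P}_r$ is the image under $T\mapsto T+S$ of the loops based at $0$ that represent the generator $[M]$ under the Almgren isomorphism. Equivalently, $\mathcal{P}_r$ is a homotopy class of loops in the space of integral cycles homologous to $r$. The cycles in $\mathcal{Z}_{n-1}(M,r)$ may be taken as boundaries $\partial[\![U]\!]+S$, or as $\partial$ of an $n$-current plus $S$. Thus $\omega(r)$ is the min-max value of a homotopically nontrivial family in a space carrying the usual Almgren--Pitts structure. The no-concentration-of-mass condition and the discretization/interpolation theorems of Marques--Neves apply verbatim, because they are local statements insensitive to the additive constant $S$.

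\textbf{Step 2: nontriviality $\omega(r)>S(r)$ or at least $\omega(r)>0$.} Every cycle in the class has mass at least $\mathbf{M}$ of a homologically area-minimizing representative, which is positive since $r\neq 0$. So the width is positive and the min-max sequence cannot collapse to the zero varifold. This replaces the isoperimetric positivity argument.

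\textbf{Step 3: existence and regularity.} Apply the pull-tight procedure to a minimizing sequence $\{\Phi_i\}\subset\mathcal{P}_r$ so that every min-max limit varifold is stationary. Then run Pitts' combinatorial argument to find an almost-minimizing (in small annuli) min-max sequence converging to a stationary integral varifold $V$ with $\|V\|(M)=\omega(r)$. Pitts' regularity theory, together with Schoen--Simon for $3\le n\le 7$, shows that $\operatorname{supp}V$ is a smooth closed embedded minimal hypersurface. Writing $V=\sum n_i|\Sigma_i|$ gives the decomposition with disjoint connected components.

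\textbf{Step 4: the index bound.} This is the main obstacle. Following Marques--Neves \cite{MNindex}, one shows that if every candidate limit with the right mass had index $\ge 2$, one could deform the one-parameter family away from it using a two-dimensional unstable family. Because the parameter space is one-dimensional, the general position/deformation argument lowers the max mass, contradicting the definition of $\omega(r)$.

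The delicate point is that the deformation must stay inside $\mathcal{Z}_{n-1}(M,r)$ and preserve the homotopy class. Since the deformations are isotopies (ambient flows) applied to the cycles, homology classes and the homotopy class are preserved automatically. We first prove the index bound for bumpy metrics, where the relevant set of minimal hypersurfaces is countable. We then pass to an arbitrary metric by approximating by bumpy metrics (White \cite{white}), using the continuity of $\omega_g(r)$ in $g$ and compactness of minimal hypersurfaces with bounded index and area (Sharp). Lower semicontinuity of index under smooth convergence with multiplicity preserves $\sum\operatorname{Ind}(\Sigma_i)\le 1$.
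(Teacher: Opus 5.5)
Your overall route is the one the paper intends. The paper gives no argument beyond invoking \cite{MNindex}, and your Steps 3--4 are exactly the content of that citation: pull-tight and Pitts' almost-minimizing argument run in the component $\mathcal{Z}_{n-1}(M,r)$, the Marques--Neves unstable-direction deformation for bumpy metrics, then White's genericity, continuity of $\omega_g(r)$ in $g$, and Sharp compactness with lower semicontinuity of the index.

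The one step that does not go through as written is the nontriviality check in Step~2. In Step~1 you define $\mathcal{P}_r$ as translates by a fixed cycle $S$ of loops based at $0$, i.e.\ loops \emph{based at $S$}. That is a relative homotopy class, with $Z=\{\text{base point}\}$. For such classes the Almgren--Pitts/Marques--Neves machinery needs $\omega(r)>\mathbf{M}(S)$, the hypothesis $\mathbf{L}(\Pi)>\max_Z\mathbf{M}$ in \cite{MNindex}. The reason is that the pull-tight and the index deformations must leave the base point fixed, which requires it to stay away from the critical set. Positivity of $\omega(r)$ says nothing about this. Moreover, the strict inequality genuinely fails in cases covered by the theorem. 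By Appendix~\ref{appendix:a}, if the manifold is foliated by minimizers in $r$ (e.g.\ a flat torus), then $\omega(r)=S(r)\le\mathbf{M}(S)$ for every choice of $S$. The same example shows that your Step~2 heading ``$\omega(r)>S(r)$'' is false in general.

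The repair is easy, but it has to be made explicitly. One option is to read $\mathcal{P}_r$ as a free homotopy class of loops in $\mathcal{Z}_{n-1}(M,r)$. This is topologically harmless, since the space is a coset of the abelian topological group $\mathcal{Z}_{n-1}(M)$; then $Z=\emptyset$ and $\omega(r)\ge S(r)>0$ is indeed all you need. The other option is to base the loops at a mass-minimizer $T_r$, as in the proof of Proposition~\ref{upperbound}, and split into cases. If $\omega(r)>S(r)$, the relative theory applies. If $\omega(r)=S(r)$, then $|T_r|$ itself (smooth by Federer, stable, of mass $S(r)$) already satisfies the theorem with index $0$.

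A smaller point in Step~4: the Marques--Neves deformation does not lower $\sup\mathbf{M}$ below $\omega(r)$. The critical set may contain stationary, non-regular varifolds that the deformation never sees. The contradiction instead comes from rerunning Pitts' almost-minimizing argument on the deformed min-max sequence. This produces a regular varifold in its critical set, which for a bumpy metric must be one of the countably many the deformation avoided, and so has index at most one.
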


By the definition of the stable norm, we have $\textbf{M}(\Phi(\theta))\geq S(r)$ for all $\theta\in S^1$ and $\Phi\in \mathcal{P}_r$. As a consequence \[\omega(r) \geq S(r), \quad \forall r\in H_{n-1}(M, \mathbb{Z}).\] 

A standard Lusternik-Schnirelmann theory-type argument shows that equality occurs if and only if $M$ is foliated by area-minimizing hypersurfaces in the class $r$. See Appendix \ref{appendix:a} for more details.

Because we are considering one-parameter families of hypersurfaces lying in the homology class $r$, the support of the min-max limit $\Sigma$ necessarily contains at least one closed hypersurface in $r$. This is made precise in the following proposition:
\begin{Prop}\label{spt}
    In the setup of Theorem \ref{minmax}, the support of the min-max minimal hypersurface $\Sigma$ contains a closed embedded minimal hypersurface (not necessarily connected) that represents the homology class $r$.
\end{Prop}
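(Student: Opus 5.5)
The plan is to show that the min-max varifold, viewed as a limit of cycles in $\mathcal{Z}_{n-1}(M,r)$, has an integral current representative supported in $\operatorname{spt}\Sigma$ and lying in $r$. Then we extract from its components a subcollection representing $r$.

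First, take a min-max (critical) sequence $\Phi_i(\theta_i)=T_i\in\mathcal{Z}_{n-1}(M,r)$ with $|T_i|\to V$ as varifolds and $\mathbf{M}(T_i)\to\omega(r)$. Masses are uniformly bounded, so by Federer--Fleming compactness a subsequence converges in the flat topology to an integral cycle $T$. Flat convergence preserves homology classes, because $T_i-T=P_i+\partial Q_i$ with small masses and, for $i$ large, $P_i$ is homologically trivial by the isoperimetric inequality. Hence $[T]=r$. Lower semicontinuity and varifold convergence give $\|T\|\le\|V\|$ as Radon measures. Therefore $\operatorname{spt}T\subset\operatorname{spt}\|V\|=\Sigma_1\cup\dots\cup\Sigma_k$.

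Next, use the constancy theorem. On each smooth connected embedded hypersurface $\Sigma_j$, an integral cycle supported in $\bigcup\Sigma_j$ restricts, since the $\Sigma_j$ are disjoint and closed, to a cycle supported on $\Sigma_j$. By the constancy theorem this restriction equals $m_j[\![\Sigma_j]\!]$ for some integer $m_j$, which requires $\Sigma_j$ to be orientable when $m_j\neq0$. Thus $T=\sum_j m_j[\![\Sigma_j]\!]$ with $|m_j|\le n_j$. Consequently $r=\sum_j m_j[\Sigma_j]$ in $H_{n-1}(M,\mathbb{Z})$.

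Finally, produce an honest embedded hypersurface in $r$. The natural candidate is the union of the components $\Sigma_j$ with $m_j\ne0$, each taken with multiplicity. To get a multiplicity-one embedded hypersurface, I would take parallel copies of $\Sigma_j$ inside the tubular neighbourhood. These are not minimal, though, so instead I will exploit that $r$ is primitive. I do not expect this to be guaranteed in general, so I would settle for the statement as a current: $\operatorname{spt}\Sigma$ contains $\bigcup_{m_j\ne0}\Sigma_j$, and this set carries the integral cycle $T\in r$.

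In the torus setting, with $r$ primitive and $H_{n-1}$ torsion-free, I would try to prove $|m_j|\le1$. The approach is to argue by cutting: replacing $T$ by a cycle with $m_j\in\{0,\pm1\}$ while staying in $r$ uses that the two-sided separating structure lets one choose a region $Q$ with $\partial Q=T-S$ for fixed $S$. I expect this multiplicity reduction to be the main obstacle, and it may be that "represents $r$" is meant only with integer multiplicities.
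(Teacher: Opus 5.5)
Your argument is correct and is essentially the paper's: take a min-max sequence $\Phi_i(x_i)\in\mathcal{Z}_{n-1}(M,r)$, extract a flat limit $T$ by Federer--Fleming compactness, note $[T]=r$, and use the varifold convergence to get $\operatorname{spt}T\subset\operatorname{spt}\Sigma$. Your constancy-theorem step $T=\sum_j m_j\llbracket\Sigma_j\rrbracket$ with $|m_j|\le n_j$ only makes explicit what the paper leaves implicit, and like the paper you obtain the conclusion at the level of integral currents, which is all that is proved there, so the multiplicity-reduction speculation in your last paragraphs is not needed.
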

\begin{proof}
    The statement follows from examining the convergence of a min-max sequence in two different senses. Suppose that $\{|\Phi_i(x_i)|\}_i$ is a min-max sequence converging in the varifold sense to \[|\Sigma|=\sum\limits_{i=1}^{l} m_i|\Sigma_i|.\] 
    
    It follows from the continuity of mass with respect to the varifold convergence that $\mathbf{M}(\Phi_i(x_i))$ is uniformly bounded. Therefore, the weak compactness theorem for integral currents \cite{simon} implies that there is a subsequence (still denoted by $\Phi_i(x_i)$) that converges in the flat topology to some integral current $T$:
    \[\lim_{i\to \infty}\Phi_i(x_i) =T.\]
    
    Varifold convergence also implies that the associated Radon measure $||\Phi_i(x_i)||$ converges to $||\Sigma||$ weakly. Therefore, the support of $T$ must be contained in the support of $\Sigma$, meaning
    \[\operatorname{spt}(T)\subset \operatorname{spt}(\Sigma)= \displaystyle\bigcup_{i=1}^{l} \Sigma_i.\]

    Since $\Phi_i(x_i)$ represents the same homology class $r$ for every $i$, the same is true for $T$. The proposition follows.
\end{proof}

\begin{Rem}
    A basic difficulty that arises when trying to produce an index-one minimal hypersurface in a prescribed homology class $r$ by this method is the following. It may happen that the minimal hypersurface in $r$ realizing the width is not connected and instead consists of several disjoint, connected components. In such a situation, one component may be stable and represent the homology class $r$, while another component may have index one but be homologically trivial.
    
    To illustrate this phenomenon, consider a flat torus to which a large sphere is attached (see Figure \ref{fig:torus} below). For any nontrivial class $r \in H_{n-1}(\mathbb{T}^n, \mathbb{Z})$, the minimal hypersurface that realizes $\omega(r)$ has two connected components: one is an area-minimizing hypersurface in $r$, and the other is an index-one great equator arising from the spherical part. Consequently, the one-parameter family min-max construction does not yield a connected, index-one minimal surface in any integer class. 
    \begin{figure}[H]
    \centering
    \includegraphics[scale = 0.4]{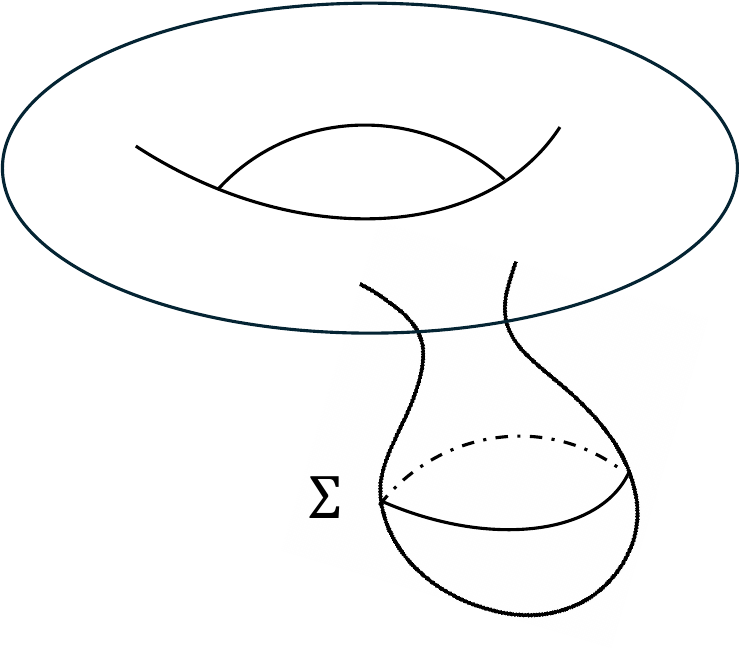}
    \caption{Torus with a sphere attached. The component $\Sigma$ represents an index-one equator arising from the spherical region.}
    \label{fig:torus}
    \end{figure} 
\end{Rem}
Nonetheless, when the quantity $\Delta W_r$ is sufficiently small, there is not enough excess area for an additional closed component. Consequently, the min-max minimal hypersurface in the class $r$ must consist of a single connected component representing $r$. 
\begin{Prop}
    Suppose that $3\leq n \leq 7$ and that $g$ is a Riemannian metric on $\mathbb{T}^n$. There exists a positive constant $C$, depending only on the metric $g$, with the following property: for every primitive homology class $r\in H_{n-1}(\mathbb{T}^n, \mathbb{Z})$ satisfying $\Delta W_r<C$, there exists a connected minimal hypersurface $\Sigma_r$ representing the class $r$ for which the min-max width $\omega(r)$ is achieved by $\Sigma_r$. 
\end{Prop}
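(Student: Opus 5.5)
We take the min-max varifold $V=\sum_i m_i|\Sigma_i|$ given by Theorem \ref{minmax}, with $\|V\|(\mathbb{T}^n)=\omega(r)$. By Proposition \ref{spt} its support contains a closed embedded minimal hypersurface $T$ representing $r$. Write $T=\sum_i a_i[\Sigma_i]$ as an integral current with $\operatorname{spt}T\subset\bigcup\Sigma_i$. The approach is to compare masses and show that, once $\Delta W_r$ is small, there is no room for anything in $V$ beyond a single connected component carrying the class $r$ with multiplicity one.

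First we need a uniform lower bound on the area of any closed embedded minimal hypersurface in $(\mathbb{T}^n,g)$. By the monotonicity formula, any closed minimal hypersurface $\Sigma$ through a point $p$ satisfies $\|\Sigma\|(B_{\rho}(p))\geq c_0\rho^{n-1}$ for $\rho$ below the injectivity/convexity radius. Hence there is $A_0>0$, depending only on $g$, with $\mathcal{H}^{n-1}(\Sigma)\geq A_0$ for every closed minimal hypersurface. We then set $C:=A_0$.

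Next is the mass comparison. Since $T$ represents $r$, we have $\mathbf{M}(T)\geq S(r)$. Moreover
\[
\omega(r)=\sum_i m_i\mathcal{H}^{n-1}(\Sigma_i)\geq \mathbf{M}(T)+\sum_i (m_i-|a_i|)\mathcal{H}^{n-1}(\Sigma_i),
\]
and the second sum is nonnegative, since the flat limit $T$ of a varifold-convergent min-max sequence has $\|T\|\leq\|V\|$ (lower semicontinuity of mass measures). Therefore $\sum_i(m_i-|a_i|)\mathcal{H}^{n-1}(\Sigma_i)\leq \omega(r)-S(r)=\Delta W_r<A_0$, which forces $m_i=|a_i|$ for all $i$. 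So $V$ is exactly the varifold of $T$, and $T$ is in fact homologically area-minimizing up to error $\Delta W_r$.

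It remains to rule out $T$ having several components, or a component with multiplicity $\geq2$. Suppose $T=\sum a_i\Sigma_i$ with more than one term counted with multiplicity. Each $\Sigma_i$ is two-sided: it is embedded and separating in the appropriate cover, or we handle one-sided components via their double cover and area bounds. Then $r=\sum a_i[\Sigma_i]$. Here we use the stable norm, which is a norm on $H_{n-1}(\mathbb{T}^n,\mathbb{R})$ and satisfies $S(\sum a_i[\Sigma_i])\leq\sum|a_i|S([\Sigma_i])$. If some $[\Sigma_i]=0$, that component contributes area at least $A_0$ beyond $S(r)$, giving $\Delta W_r\geq A_0$, a contradiction. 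If all classes are nonzero, this is the step I expect to be the main obstacle: showing that splitting a primitive class $r$ into several nonzero pieces costs a definite amount of area. I would try two arguments. The first uses disjointness: disjoint embedded hypersurfaces with nonzero classes in $\mathbb{T}^n$ must have classes that are nonnegative multiples of a common primitive class. This holds because their intersection numbers with closed curves have consistent signs, via lifting to the cyclic cover where they are disjoint graphs-like separating hypersurfaces. Since $r$ is primitive and $r=\sum a_i[\Sigma_i]$ with all $[\Sigma_i]=k_i r_0$, we get $r_0=\pm r$. Then each $\Sigma_i$ is in $\pm r$ and has area at least $S(r)$. Two or more terms, or cancelling signs, would give mass at least $2S(r)-$, hence $\Delta W_r\geq S(r)\geq \min_{r\neq0}S(r)>0$. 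Replacing $C$ by $\min(A_0,\min_{r\neq 0}S(r))$, which is positive because $S$ is a norm and integer classes are discrete, we conclude that $T=\Sigma_r$ is connected with multiplicity one, represents $r$, and realizes $\omega(r)$.
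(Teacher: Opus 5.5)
Your proof is correct and uses the same ingredients as the paper's: the monotonicity lower bound $A_0$ on areas, Proposition \ref{spt}, and the fact that disjoint closed hypersurfaces in $\mathbb{T}^n$ have collinear classes. That last fact is cleanest via $\phi_i\wedge\phi_j=0$ in $H^2(\mathbb{T}^n;\mathbb{Z})\cong\Lambda^2\mathbb{Z}^n$ for their Poincar\'e duals; the multiples can have either sign, not only nonnegative, but your count already allows for that. The paper is slightly more economical: it picks one component whose class is a nonzero multiple of $r$ and counts $\|V\|$ directly, so it needs neither $\|T\|\le\|V\|$ nor the extra constant $\min_{r\neq0}S(r)$, which is $\ge A_0$ anyway because $S(r)$ is realized by a closed minimal hypersurface. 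You can also drop your aside on two-sidedness, since $|a_i|=m_i\geq 1$ already forces every component to be orientable.
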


\begin{proof}
    By the monotonicity formula for minimal submanifolds, there exists a constant $C=C(g)$ such that for every closed minimal hypersurface $\Sigma$ on $\mathbb{T}^n$, we have: 
    \[\mathbf{M}(\Sigma)>C.\]
    
    We will show that this constant satisfies the proposition. Suppose that $\Delta W_r=\omega(r)-S(r)<C$. By Theorem \ref{minmax}, there exist connected, pairwise disjoint  minimal hypersurfaces $\Sigma_1, \Sigma_2, \ldots, \Sigma_k$ and positive integers $m_1, m_2, \ldots, m_k$ such that 
    \[\omega(r)=m_1\mathbf{M}(\Sigma_1)+\cdots+m_k\mathbf{M}(\Sigma_k). \]
    
    Since $r\in H_{n-1}(\mathbb{T}^n, \mathbb{Z})$ is primitive, it follows from Proposition \ref{spt} and the fact that any two hypersurfaces representing independent homology classes intersect (see Appendix \ref{appendix:c}), at least one of the $\Sigma_i, 1\leq i\leq k$ must belong to the class $r$. Without loss of generality, assume that $\Sigma_1\in r$. From the definition of the stable norm, we have that $S(r)\leq \mathbf{M}(\Sigma_1)$. As a result:
    \begin{equation}
    \begin{split}
    \Delta W_r=\omega(r)-S(r) &=(\mathbf{M}(\Sigma_1)-S(r))+(m_1-1)\mathbf{M}(\Sigma_1)+\sum_{i=2}^{k}\mathbf{M}(\Sigma_i)\\
    &\geq (m_1-1)\mathbf{M}(\Sigma_1)+\sum_{i=2}^{k}\mathbf{M}(\Sigma_i).
    \end{split}
    \end{equation}
    
    By the choice of $C$, we have that $\mathbf{M}(\Sigma_i)>C$ for all $i=1,2, \ldots, k$. Together with the inequality $\Delta W_r<C$, it must happen that $k=m_1=1$ and we conclude that $\omega(r)=\mathbf{M}(\Sigma_1)$. The proof is complete. 
\end{proof}

It is therefore of interest to bound the quantity $\Delta W_r$ from above. Recall that $\omega(M)$ denotes the usual width of $M$ (or, in this context, the width of the trivial class). We have the following upper bound that holds for any Riemannian metric on any Riemannian manifold of any dimension. 
\begin{Prop}\label{upperbound}
    For any integer homology class $r\in H_{n-1}(M, \mathbb{Z})$, we have:
    \[0\leq \omega(r)-S(r) \leq \omega(M).\]
\end{Prop}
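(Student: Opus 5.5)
The lower bound $0\leq \omega(r)-S(r)$ has already been observed: every cycle in $\mathcal{Z}_{n-1}(M,r)$ has mass at least $S(r)$. For the upper bound, the plan is to take an almost optimal sweep-out of $M$ for the trivial class and translate it by an area-minimizing cycle in $r$. By the translation homeomorphism $\mathcal{Z}_{n-1}(M)\to\mathcal{Z}_{n-1}(M,r)$, $T\mapsto T+S$, this produces a sweep-out in $\mathcal{P}_r$ whose masses exceed those of the original family by at most $\mathbf{M}(S)$. Concretely, the steps are as follows.

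\emph{Step 1.} Choose an integral cycle $S\in r$ with $\mathbf{M}(S)=S(r)$. Here we use that, for integer classes in top codimension, the stable norm of $r$ is achieved by an integral minimizer. This follows from Federer's result that the stable norm equals the integral mass-minimum for codimension-one classes, together with compactness for integral currents in a fixed homology class. If one prefers to avoid this, take $S\in r$ with $\mathbf{M}(S)\le S(r)+\epsilon$ instead. Such an $S$ exists because $S(r)$ is the infimum of $\mathbf{M}(kr\text{-cycles})/k$, and in codimension one Federer shows this infimum is attained at $k=1$.

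\emph{Step 2.} Take $\Phi\in\mathcal{P}$ with $\sup_\theta\mathbf{M}(\Phi(\theta))\le \omega(M)+\epsilon$, and define $\Psi(\theta)=\Phi(\theta)+S$. Then $\Psi$ is $\mathcal{F}$-continuous, since translation is an isometry for $\mathcal{F}$. We check that $\Psi\in\mathcal{P}_r$.
\begin{itemize}
\item Nontriviality: it transfers through the homeomorphism, and the Almgren class is unchanged, since that class is computed from the differences $\Psi(\theta)-\Psi(\theta')=\Phi(\theta)-\Phi(\theta')$.
\item No concentration of mass: we have $\|\Psi(\theta)\|(B_\rho(p))\le \|\Phi(\theta)\|(B_\rho(p))+\|S\|(B_\rho(p))$. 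The first term tends to $0$ uniformly by assumption. The second tends to $0$ uniformly in $p$, because $S$ is a fixed current of finite mass and its measure has no atoms; uniformity follows from compactness of $M$. By the monotonicity formula, $\|S\|(B_\rho(p))\le C\rho^{n-1}$ if $S$ is minimizing.
\end{itemize}

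\emph{Step 3.} Conclude $\omega(r)\le\sup_\theta\mathbf{M}(\Psi(\theta))\le \omega(M)+\epsilon+S(r)$, using $\mathbf{M}(\Phi(\theta)+S)\le\mathbf{M}(\Phi(\theta))+\mathbf{M}(S)$. Letting $\epsilon\to0$ gives the claim.

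The main obstacle is Step 1: identifying the stable norm with the least mass of an integral cycle in $r$ itself, rather than of a real or multiple-class cycle. The paper's definition of $S$ in Section \ref{section4} may be the real-coefficient minimum. I would invoke Federer's theorem that in codimension one the real and integral minima coincide for integer classes (via slicing a real minimizer into integral layers). Everything else is a direct check of the definitions.
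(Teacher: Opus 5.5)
Your proposal is correct and follows the paper's proof: you translate a near-optimal sweep-out of the trivial class by a fixed mass-minimizing integral cycle in $r$, whose mass equals $S(r)$ by Federer's codimension-one result, and then use $\mathbf{M}(\Phi(t)+T_r)\le \mathbf{M}(\Phi(t))+S(r)$. Your explicit checks that the translated family has no concentration of mass and the same Almgren class are details the paper leaves implicit.
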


\begin{proof}
    For a given $\epsilon>0$, we can choose a sweep-out $\Phi: S^1\to \mathcal{Z}_{n-1}(M)$ such that \[\sup_{t\in S^1} \mathbf{M}(\Phi(t))\leq \omega(M)+\epsilon.\]
    Consider the map $\phi$ defined as 
    \[\phi(t)=\Phi(t)+T_r,\]
    where $T_r\in r$ is a fixed mass-minimizing integer current in $r$. Since $[\Phi(t)]=0 \in H_{n-1}(M, \mathbb{Z})$, we have $\phi(t)\in r$. It follows that $\phi$ is a sweep-out of $M$ by currents in $\mathcal{Z}_{n-1}(M, r)$. 
    Note that 
    \begin{equation}
    \begin{split}
    \textbf{M}(\phi(t)) & =\textbf{M}(\Phi(t)+T_r) \\
    & \leq \textbf{M}(\Phi(t))+\mathbf{M}(T_r) \\ &\leq \sup \textbf{M}(\Phi(t))+S(r)\\ & \leq \omega(M)+\epsilon+S(r)
    \end{split}
    \end{equation}
    for all $t\in [0,1]$. 
    Therefore:
    \[\omega(r) \leq \sup \textbf{M}(\phi(t))\leq \omega(M)+\epsilon+S(r).\]
    Sending $\epsilon$ to $0$, we establish the desired inequality.
\end{proof}

A similar argument yields the following more general inequality: 
\[\omega((k+1)r)-S((k+1)r)\leq \omega(kr)-S(kr)\]
where $kr \in H_{n-1}(M, \mathbb{Z})$ is an integer multiple of the class $r$. 

It follows that for any sequence $\{r_k\}_k$ of integer classes, the sequence 
\[\Delta W_{r_k}=\omega(r_k)-S(r_k)\]
is bounded and thus has a convergent subsequence. Theorem \ref{maintheorem1} states that if $\alpha$ is totally irrational and $\mathcal{M}_\alpha$ is a foliation, then for any sequence of integer classes $r_k\to \alpha$, we have $\Delta W_{r_k} \to 0$. Conversely, if there exists one sequence $r_k\to \alpha$ for which $\Delta W_{r_k}\to 0$, then the same conclusion holds for every such sequence. 

We expect that, more generally, the limit of $\Delta W_{r_k}$ as $r_k\to \alpha$ exists and is independent of the particular sequence, provided that $\alpha$ is totally irrational. In other words, we hope to replace the limit inferior in the definition of the Mather energy barrier with the regular limit.
In dimension two, the analogous result for $\mathbb{T}^2$ with geodesics was proved by Mather in \cite{mather}.

\subsection{A local min-max construction on manifolds with boundary}
We give a brief overview of a local min-max construction that is discussed in \cite{songdichotomy}, Appendix B. It is essentially contained in \cite{MNindex}. Let $N^{n}$ be a compact, connected, orientable Riemannian manifold. Suppose that \[\partial N=\partial_0N\cup \partial_1N\] 
where $\partial_0N$ and $\partial_1N$ (which can be empty) are two disjoint, closed components. 

We are interested in sweep-outs of $N$ that connect $\partial_0N$ to $\partial_1N$:
\[\Phi: [0,1]\to \mathcal{Z}_{n-1}(N;\mathbf{F},\mathbb{Z}), \quad \Phi(0)=\partial_0N,\text{ } \Phi(1)=\partial_1N.\]

Let $\Pi$ be the collection of all continuous sweep-outs $\Phi'$ that are homotopic to $\Phi$  in the flat topology, with fixed endpoints $\partial_0N$ and $\partial_1N$. The \textit{width} of $\Pi$ is defined as follows:
\[L(\Pi)=\inf_{\Phi\in \Pi} \sup_{t\in [0,1]}\mathbf{M}(\Phi(t)).\]

The width of $(N,g)$ is then defined as the infimum of $L(\Pi)$ over all possible homotopy classes of sweep-outs as above.
\[W_\partial(N)=\inf_{\Pi} L(\Pi).\]

 One should not confuse this width with the one-width $\omega_1(N,g)$ defined using relative cycles. 
 
The following min-max theorem is essentially contained in the work of Marques-Neves \cite{MNindex}. See also \cite{songdichotomy}, Appendix B, for a proof. 
\begin{Th}
    Let $3\leq n\leq 7$ and $N^{n}$ be a compact, connected, orientable manifold endowed with a bumpy metric $g$. Assume that $\partial N$ is a strictly stable minimal hypersurface. Then there exists a stationary integral varifold $V$ whose support is a smooth embedded minimal hypersurface $\Sigma\in N$ of index at most one, such that
    \[||V||(N) =W_\partial (N).\]
    Furthermore, at least one of the components of $\Sigma$ is contained in the interior of $N$.
\end{Th}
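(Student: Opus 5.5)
The plan is to run the Marques--Neves min-max machinery inside $N$ and use the strict stability of $\partial N$ to keep the min-max hypersurface from collapsing onto the boundary. The argument has three steps.

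\textbf{Step 1: a mountain pass.} First I show that $W_\partial(N) > \max\{\mathbf{M}(\partial_0 N), \mathbf{M}(\partial_1 N)\}$. Since $\partial N$ is strictly stable and $g$ is bumpy, $\partial_0N$ and $\partial_1N$ are strict local minima of area.

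By the standard argument using the first eigenfunction of the Jacobi operator, there is a one-sided neighborhood of each boundary component with the following properties:
\begin{itemize}
\item it is foliated by hypersurfaces whose mean curvature vectors point toward the boundary;
\item the nearest-point retraction along this foliation is area non-increasing;
\item it gives a quantitative isoperimetric-type statement: any cycle that is $\mathcal{F}$-close to $\partial_iN$ but not equal to it has area at least $\mathbf{M}(\partial_iN) + c\,\mathcal{F}(\,\cdot\,,\partial_iN)^2$, for some $c > 0$.
\end{itemize}
Every sweep-out in $\Pi$ connects $\partial_0N$ to $\partial_1N$ continuously, so it must cross the flat-distance sphere of some small radius $\delta$ around $\partial_0N$. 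This crossing gives
\[
L(\Pi) \ge \mathbf{M}(\partial_0 N) + c\delta^2 ,
\]
and likewise at $\partial_1 N$. The infimum over homotopy classes remains bounded away from the boundary masses, because the lower bound does not depend on $\Pi$.

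\textbf{Step 2: existence and index.} Next I fix a homotopy class $\Pi$ with $L(\Pi)$ close to $W_\partial(N)$. Since $\partial N$ is minimal, $N$ has a mean-convex (in fact minimal) boundary, so I can apply the Marques--Neves min-max theorem for one-parameter families with fixed endpoints. I use the version for manifolds with minimal boundary, obtained either via the maximum principle or by extending $N$ slightly across $\partial N$ to a larger manifold and using the area-decreasing retraction. This produces a stationary integral varifold $V$ with $\|V\|(N) = L(\Pi)$. Its support is a smooth embedded minimal hypersurface, by Pitts and Schoen--Simon in dimensions $3 \le n \le 7$, and it has index at most one, by \cite{MNindex}.

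For the equality $\|V\|(N) = W_\partial(N)$, I will argue that only finitely many values of $L(\Pi)$ lie below any bound. By compactness (Sharp) and bumpiness, there are only countably many minimal hypersurfaces of index at most one with bounded area, and hence countably many possible masses. Combined with $L(\Pi)$ being realized by such a hypersurface, this shows the infimum over $\Pi$ is attained along a sequence whose limiting varifolds converge. The limit is then again a minimal hypersurface of index at most one by compactness, and its mass is $W_\partial(N)$.

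\textbf{Step 3: an interior component.} Finally, I show that $V$ is not supported entirely on $\partial N$. If it were, then $\|V\|(N)$ would equal $a\,\mathbf{M}(\partial_0N) + b\,\mathbf{M}(\partial_1N)$ for nonnegative integers $a, b$. This is the main obstacle: ruling out multiplicity or combinations of the boundary components. The strict inequality from Step 1 excludes $a + b \le 1$. For larger values I plan to use the pull-tight and the strict stability. Near the boundary the retraction deforms the min-max sequence, and an almost-minimizing property in annuli, together with the strict stability of $\partial N$, rules out min-max limits concentrating on a strictly stable boundary. I will follow \cite{songdichotomy}, Appendix~B, which treats exactly this case; there the relevant min-max sequence stays a definite flat distance away from $\partial N$, so its limit cannot be supported only on $\partial N$. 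By the maximum principle, any component touching $\partial N$ must coincide with a component of $\partial N$, so the remaining component lies in the interior.
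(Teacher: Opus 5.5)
The paper does not prove this theorem itself. It attributes the result to \cite{MNindex} and refers to \cite{songdichotomy}, Appendix B. Your Steps 1 and 2 are a reasonable reconstruction of the standard ingredients, with one correction: in Step 2 you need finiteness, not countability. Bumpiness and Sharp's compactness give finitely many minimal hypersurfaces of index at most one below any area bound, so $W_\partial(N)=L(\Pi)$ for some $\Pi$; an infimum over a countable set need not be attained.

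The genuine gap is Step 3, which is the only part of the statement beyond standard min-max. The limits you must exclude are boundary-supported varifolds with multiplicity, and none of the tools you invoke detects them. For example, take $\partial_0N+\partial\llbracket S\rrbracket$ with $S$ a thin slab between two leaves of your collar foliation. It is homologous and flat-close to $\partial_0N$, yet varifold-close to $3|\partial_0N|$, with mass about $3\mathbf M(\partial_0N)$. That can exceed $\max\{\mathbf M(\partial_0N),\mathbf M(\partial_1N)\}$.

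Your mountain-pass estimate only constrains slices on the flat $\delta$-sphere around $\partial_0N$, where it gives the bound $\mathbf M(\partial_0N)+c\delta^2\le L(\Pi)$. It says nothing about high-mass slices inside the $\delta$-ball. So the claim that ``the min-max sequence stays a definite flat distance away from $\partial N$'' is the crux of the matter, not something you have established.

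Even granting that claim, read as distance from the cycles $\partial_0N$ and $\partial_1N$, your conclusion does not follow. The varifold $V=2|\partial_0N|+|\partial_1N|$ with flat limit $2\partial_0N-\partial_1N$ is homologically admissible and lies far from both endpoints. Nor do almost-minimality and strict stability exclude such limits. A strictly stable hypersurface with integer multiplicity is a perfectly regular almost-minimizing limit, and ruling it out is exactly the multiplicity problem.

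What is missing is a real multiplicity or index input. The cleanest route is the lower index bound for one-parameter min-max in bumpy metrics, namely index exactly one, from the Marques--Neves and Zhou multiplicity-one theory; you would need to check it in this relative setting with boundary. Then $\Sigma$ has an unstable component. That component cannot be one of the strictly stable components of $\partial N$, so by the maximum principle it lies in the interior.

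Alternatively, your retraction idea can be made rigorous, but only after splitting into cases according to whether the flat limit $T$ of the min-max sequence loses mass.
\begin{itemize}
\item If $\mathbf M(T)<\|V\|(N)$: the area non-increasing projection onto $\partial N$ cancels oppositely oriented sheets. A deformation cut off in the sweep-out parameter then pushes the slices near $V$ down to mass close to $\mathbf M(T)<L(\Pi)$, which removes $V$ from the critical set.
\item If $\mathbf M(T)=\|V\|(N)$: Step 1 gives $T\neq\partial_0N,\partial_1N$. So $T$ is a boundary-supported strict local minimum of mass at level $L(\Pi)$, and running your Step 1 mountain pass around $T$ instead of $\partial_0N$ contradicts the optimality of the sweep-outs.
\end{itemize}
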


\subsection{A mountain pass theorem for minimal hypersurfaces with prescribed boundary}
We conclude this section by recalling the following version of min-max theory for minimal hypersurfaces with a prescribed boundary, developed by Montezuma in \cite{rafael}. Roughly speaking, he showed that, under suitable additional hypotheses, between two strictly stable minimal hypersurfaces spanning the same boundary, there is a third minimal hypersurface of the mountain-pass type with the same boundary. This construction will play a central role in the proof of Theorem \ref{nonminimizer}.

We now describe in detail the geometric setting in which the result applies. For $3\leq n\leq 7$, let $M^{n}$ be a compact Riemannian manifold whose boundary $\partial M$ is strictly convex. Let $\gamma^{n-2} \subset \partial M$ be a smooth, oriented, closed $(n-2)$-dimensional submanifold. Assume further that there exist two distinct, smooth, oriented, embedded minimal hypersurfaces $\Sigma_0$ and $\Sigma_1$ satisfying
\[\partial \Sigma_0=\partial \Sigma_1=\gamma,\]
and suppose that both are strictly stable and homologous. 

In this version of min-max theory, one considers paths in the space $\mathcal{Z}_{n-1}(M, \gamma)$ of codimension-one integral currents in $M$ with boundary supported in $\gamma$. For a homotopy class $\Pi$ of such paths in $\mathcal{Z}(M, \gamma)$ joining $\Sigma_0$ and $\Sigma_1$, the min-max invariant is defined by
\[\textbf{L}(\Pi):=\inf_{\{\Sigma_t\}\in \Pi} \sup_{t\in [0,1]}\textbf{M}(\Sigma_t).\]
Taking the infimum over all possible homotopy classes $\Pi$, one can define the following width:
\[W_\gamma(\Sigma_0, \Sigma_1) =\inf_{\Pi}\textbf{L}(\Pi).\]

The following theorem is essentially a combination of Theorem 1.3 in \cite{rafael} and a prescribed boundary version of Sharp's compactness theorem for minimal hypersurfaces with bounded Morse index (cf. \cite{morsetheory}, Theorem 4.1). In fact, a standard modification shows that one only needs to assume that $H_{\Sigma_0}, H_{\Sigma_1}\geq 0$ with respect to the inward-pointing normal vectors (cf. \cite{nevesduke}). 

\begin{Th}\label{camillo}
    Let $M^{n}$ and $\gamma^{n-2}$ be as above. Assume that 
    \[W_\gamma(\Sigma_0, \Sigma_1) > \max \{\mathbf{M}(\Sigma_0), \mathbf{M}(\Sigma_1)\}.\]
    Then there exists an integral varifold $V \in \mathcal{V}(M)$ whose support is the closure of an embedded smooth minimal hypersurface $\Gamma$ with $\dim (\operatorname{Sing}(\Gamma)) \leq n-8$. If $\Gamma_1, \Gamma_2, \ldots, \Gamma_k$ denote the connected components of $\Gamma$, then:
    \begin{enumerate}
        \item $\operatorname{Sing}(\Gamma_i) \cap M=\emptyset$, for every $i=1,2,\ldots, k$;
        \item each $\partial \Gamma_i$ is either the empty set or a union of components of $\gamma$. Moreover $\bigcup_{i=1}^{k}\partial \Gamma_i=\gamma$;
        \item there are positive integers $m_1, m_2, \ldots, m_k,$ such that $V=\sum_{i=1}^{k} m_i\overline{\Gamma_i}$. The multiplicity of the component $\Gamma_i$ with a non-empty boundary $\partial \Gamma_i$ is $m_i=1$; 
        \item $W_\gamma(\Sigma_0, \Sigma_1)=||V||(M)=\mathbf{M}(\Gamma)=\sum\limits _{i=1}^{k} m_i \mathbf{M}(\Gamma_i)$.
    \end{enumerate}
\end{Th}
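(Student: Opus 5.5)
The plan is to reduce Theorem \ref{camillo} to Montezuma's mountain pass theorem \cite{rafael} and to supply, in the prescribed boundary setting, the few ingredients that are not stated there verbatim. The first step is to set up the variational problem in $\mathcal{Z}_{n-1}(M,\gamma)$ with the $\mathbf{F}$-topology. Strict stability of $\Sigma_0$ and $\Sigma_1$ gives a mass-neighbourhood of each in which it is the unique mass minimizer among currents with boundary $\gamma$. Hence every path in $\Pi$ must leave these neighbourhoods. Combined with the hypothesis
\[W_\gamma(\Sigma_0,\Sigma_1)>\max\{\mathbf{M}(\Sigma_0),\mathbf{M}(\Sigma_1)\},\]
this shows that the critical set of a minimizing sequence of sweep-outs is separated from the endpoints. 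This is the mountain-pass condition: min-max sequences cannot accumulate on $\Sigma_0$ or $\Sigma_1$.

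Next I would run the Almgren--Pitts machinery relative to $\gamma$.
\begin{enumerate}
\item Apply a pull-tight deformation by vector fields tangent to $\partial M$ and vanishing on $\gamma$. This produces a critical sequence whose varifold limits are stationary in the interior and stationary for variations fixing $\gamma$.
\item Use strict convexity of $\partial M$ and the maximum principle (in the form of \cite{nevesduke}, which only needs $H_{\Sigma_0},H_{\Sigma_1}\ge 0$ after the standard modification) to keep the limit varifold $V$ off $\partial M\setminus\gamma$.
\item Use the combinatorial argument of Pitts to find a min-max sequence that is almost minimizing in small annuli around every point of $M$, including points of $\gamma$.
\end{enumerate}
Interior regularity then follows from Schoen--Simon, giving $\dim\operatorname{Sing}\le n-8$; this already yields item (1) away from $\gamma$.

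The main obstacle is the behaviour at $\gamma$: boundary regularity and the multiplicity-one statement in item (3). Here I would use the almost-minimizing property near $\gamma$ to produce replacements that are minimizing with fixed boundary. By Allard-type boundary regularity (Hardt--Simon for minimizers near a smooth boundary), any tangent cone at a point of $\gamma$ is a half-plane with multiplicity $m$, so the density there is $m/2$. A parity and constancy argument shows that this half-plane carries multiplicity $1$, because the boundary of the limit current is exactly $\gamma$ with multiplicity one. This gives regularity up to $\gamma$ and forces $m_i=1$ for each component with nonempty boundary. It also shows that the boundary components of the $\Gamma_i$ are unions of components of $\gamma$ that together exhaust $\gamma$, which is item (2).

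Finally, I would decompose $V=\sum_i m_i\overline{\Gamma_i}$, where the closed components may carry higher multiplicity. Item (4) follows from continuity of mass along the min-max sequence. If one also wants the index bound used later, I would first prove the result for bumpy metrics and then pass to the limit using the prescribed boundary version of Sharp's compactness theorem (\cite{morsetheory}, Theorem 4.1), which controls the convergence of such $\Gamma$ with index at most one.
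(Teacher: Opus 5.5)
Deferring to Montezuma's Theorem 1.3 is exactly what the paper does. It gives no independent proof, only the citation of \cite{rafael}, together with the prescribed-boundary version of Sharp's compactness theorem \cite{morsetheory}. Citing that theorem already covers items (1)--(4), including the multiplicity-one assertion in (3).

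The trouble is the step you single out as the main obstacle and argue yourself: multiplicity one at $\gamma$ does not follow from a parity and constancy argument. The condition $\partial T_j=\llbracket\gamma\rrbracket$ and the constancy theorem determine the multiplicity of the flat limit current, not of the varifold limit $V$. A min-max sequence can carry extra even multiplicity from oppositely oriented sheets that cancel in the current limit. For example, take a sheet spanning $\gamma$ that folds back on itself twice, with the folds collapsing onto a half-sheet $H$ bounded by $\gamma$. It converges as a current to $\llbracket H\rrbracket$, with $\partial\llbracket H\rrbracket=\llbracket\gamma\rrbracket$, but as a varifold to $3|H|$, which is stationary for every variation fixing $\gamma$. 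So parity only shows that $m$ is odd. Your preceding claim, that every tangent cone of $V$ at a point of $\gamma$ is a single half-plane, has the same problem. Allard's boundary regularity needs density close to $\tfrac12$ as an input. A varifold that is only stationary for variations fixing $\gamma$ can a priori have tangent cones made of several half-planes through $T_p\gamma$.

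What actually forces $m=1$ is the almost-minimizing property. Near $p\in\gamma$, the replacements are limits of locally area-minimizing currents with boundary $\llbracket\gamma\rrbracket$. For minimizers, flat convergence implies convergence of mass, so the replacement varifold is the varifold of the limit current and no cancellation can occur. Hardt--Simon boundary regularity then says that this limit, near $\gamma$, is one of two things:
\begin{enumerate}
\item a multiplicity-one sheet with boundary $\gamma$;
\item a two-sided sheet containing $\gamma$, carrying multiplicities $k$ and $k+1$ on its two sides (as for two concentric coplanar circles with the same orientation).
\end{enumerate}
The second alternative is excluded by the strict convexity of $\partial M$, via the maximum principle. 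Finally, this information has to be transferred from the replacements on annuli to $V$ at $p$ itself. That transfer is the substance of the boundary regularity in De Lellis--Ramic and in \cite{rafael}. You should invoke it rather than replace it with a parity argument.
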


\section{An interpolation lemma}\label{section3}
In this section, we recall and slightly reformulate an interpolation theorem due to Marques-Neves in \cite{willmore}. Its primary purpose is to approximate a discrete map of sufficiently small fineness (a notion defined below) by a continuous map in the mass norm. This is especially useful when one wishes to construct a one-parameter family of currents with controlled mass connecting two given currents $T_1, T_2$. In particular, if $T_1$ and $T_2$ are mass-minimizing currents that are close in the flat topology, we can interpolate from one to the other continuously by a family of currents of similar masses. 

This result will be applied in the proof of the implication $(3)\implies(1)$ in Theorem \ref{maintheorem1}, where we need to construct near-optimal sweep-outs by integral currents in the class $r_k$ to estimate the width $\omega(r_k)$.

We begin by recalling some additional notation that was introduced in \cite{willmore}. For integers $m,j \geq 1$, let $I(m,j)$ be the cell complex on the unit $m$-dimensional cube $I^m$:
\[I(m,j)=I(1,j)\otimes I(1, j)\otimes\cdots\otimes I(1,j) \quad (\text{with } m \text{ factors}).\]
Here $I(1,j)$ is the cube complex on $I^1=[0,1]$ whose $1$-cells and $0$-cells are given by the subdivisions
\[[0, 3^{-j}], \ldots, [1-3^{-j}, 1] \quad \text{and} \quad [0], [3^{-j}],\ldots, [1].\]

The cube complex $X(j)$ is the union of all the cells of $I(m,j)$ whose support is contained in some cell of $X$, and $X(j)_0$ is the set of all $0$-cells (vertices) in $X(j)$. 

Given a discrete map $\phi: X(j)_0\to \mathcal{Z}_{n-1}(M)$, we define the \textit{fineness} of $\phi$ to be 
\[\mathbf{f}(\phi)=\sup \{\mathbf{M}(\phi(x)-\phi(y)): x, y \text{ adjacent vertices in } X(j)_0\}.\]
\begin{Th}\label{discretetocontinuous}
    There exist positive constants $C_0$ and $\delta_0$ depending only on $(M^n, g)$ and $m$ such that for every map
    \[\psi: I(m,j)_0 \to \mathcal{Z}_{n-1}(M)\]
    with $\mathbf{f}(\psi)<\delta_0$, we can find a continuous map in the mass norm 
    \[\Psi: I^m\to \mathcal{Z}_{n-1}(M; \mathbf{M})\]
    such that
    \begin{enumerate}
        \item $\Psi(x)=\psi(x)$ for all $x\in I(m,j)_0$;
        \item for every $\alpha\in I(m, j)_p$, we have the following:
        \[\sup\{\mathbf{M}(\Psi(x)-\Psi(y)): x, y \in \alpha \} \leq C_0\mathbf{f}(\psi).  \]
    \end{enumerate}
\end{Th}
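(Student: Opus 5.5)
This is essentially the Marques--Neves interpolation theorem from \cite{willmore} (their Theorem 14.1 and its proof), specialised to the cube $I^m$ with fineness measured in the mass norm. My plan is to follow their cell-by-cell extension. We build $\Psi$ inductively on the skeleta of $I(m,j)$, setting $\Psi=\psi$ on $0$-cells. At each stage we keep the invariant that on every $p$-cell $\alpha$ the oscillation $\sup\{\mathbf{M}(\Psi(x)-\Psi(y)):x,y\in\alpha\}$ is at most $C_p\mathbf{f}(\psi)$. Here $C_p$ depends only on $(M,g)$ and $p$, and we take $C_0:=C_m$.

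The basic tool is a local isoperimetric and filling lemma in the mass norm. There are $\delta_0,c>0$ with the following property. If $T\in\mathcal{Z}_{n-1}(M)$ has $\mathbf{M}(T)<\delta_0$, then $T=\partial Q$ for a unique $Q\in\mathcal{I}_n(M)$ with $\mathbf{M}(Q)\le c\,\mathbf{M}(T)^{n/(n-1)}$; uniqueness holds because $\mathbf{M}(Q)$ is small compared with $\mathrm{vol}(M)$. Given this $Q$, a mass-continuous path from $S$ to $S+T$ whose mass oscillation is bounded by $c'\mathbf{M}(T)$ is obtained by slicing. Take a Lipschitz function $f$, for instance the distance to a point or a coordinate of the embedding $M\subset\mathbb{R}^J$, and set $t\mapsto S+\partial(Q\llcorner\{f<t\})$. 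Mass continuity of this path, and not merely flat continuity, needs care: we use the Marques--Neves device of choosing a suitable foliation by level sets, together with the fact that $\mathbf{M}(Q\llcorner\{f<t\})$ is continuous in $t$. The key estimate we want is
\[\mathbf{M}\bigl(\partial(Q\llcorner\{f<t\})\bigr)\le\mathbf{M}(T)+\mathbf{M}\bigl(\langle Q,f,t\rangle\bigr).\]
The slice term is controlled by the coarea formula for a good choice of $f$. If necessary we refine through an auxiliary fine grid, as in \cite{willmore}, Lemma 14.1.

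For the $1$-cells, apply the basic tool with $T=\psi(y)-\psi(x)$ for adjacent vertices $x,y$; this gives $C_1$. For a $p$-cell $\alpha$ with $p\ge2$, $\Psi$ is already defined on $\partial\alpha\cong S^{p-1}$ with small oscillation. Fix a vertex $v$ of $\alpha$. Then $\partial\alpha\ni x\mapsto\Psi(x)-\psi(v)$ is a mass-continuous family of cycles of mass at most $C_{p-1}\mathbf{f}$. We fill each of them by its unique small $Q_x$; uniqueness ensures that $x\mapsto Q_x$ is continuous. We then cone radially: for $x\in\partial\alpha$ and $s\in[0,1]$, set $\Psi$ at the point $(1-s)v_\alpha+sx$ to be $\psi(v)+\partial\bigl(Q_x\llcorner\{f<\tau(s)\}\bigr)$, where $v_\alpha$ is the centre of $\alpha$ and $\tau$ runs from $\min f$ to $\max f$. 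With this choice, points near the centre of the cell give $\psi(v)$ and points of the boundary give $\Psi(x)$. The oscillation estimate of the basic tool gives $C_p\le c''C_{p-1}$, provided $\delta_0$ is shrunk so that $C_{p-1}\mathbf{f}<\delta_0$ at every stage. This is possible since $m$ is fixed.

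The main obstacle is making the coned map continuous in the mass norm jointly in $(s,x)$. Cutting $Q_x$ along a fixed family of level sets can create mass jumps when $\|Q_x\|$ charges a level set. I would handle this as Marques--Neves do. First discretise further, taking a subdivision $I(m,j')$ with $j'\gg j$ and defining everything on its vertices with fineness still $\lesssim\mathbf{f}(\psi)$. Then use their Lemma 14.1 on extending mass-continuous maps from a fine discrete map, which is proved by a careful averaging of slicing functions. In practice I would cite \cite{willmore}, Theorem 14.1 directly for this technical core. The constants $C_0,\delta_0$ depend only on $(M,g)$, through the isoperimetric constants, and on $m$, through the number of induction steps.
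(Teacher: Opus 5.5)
Your proposal ends up resting on what the paper itself uses. The paper gives no independent proof of this statement: it quotes the Marques--Neves interpolation theorem from \cite{willmore}, which you also identify, specialised to the complex $I(m,j)$ and to codimension-one boundaries in $M^n$. Your closing sentence, citing that theorem directly, is therefore the whole argument, and it agrees with the paper's. You should not present the scaffolding around the citation as the mechanism, however, because its central step fails.

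Your ``basic tool'' is false. The path $t\mapsto S+\partial(Q\llcorner\{f<t\})$ is flat-continuous but not mass-continuous. For a.e.\ $s<t$ the two slices are carried by the disjoint level sets $\{f=s\}$ and $\{f=t\}$, so
\[\mathbf{M}\bigl(\partial(Q\llcorner\{f<t\})-\partial(Q\llcorner\{f<s\})\bigr)=\mathbf{M}\bigl(T\llcorner\{s<f<t\}\bigr)+\mathbf{M}\langle Q,f,s\rangle+\mathbf{M}\langle Q,f,t\rangle .\]
Hence mass continuity fails at every level $t$ with $\langle Q,f,t\rangle\neq 0$. For $Q\neq 0$ and $f$ a distance or coordinate function, coarea shows these levels form a set of positive measure. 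The problem has nothing to do with $\|Q\|$ charging level sets: being top-dimensional, $\|Q\|$ is absolutely continuous with respect to volume. Continuity of $t\mapsto\mathbf{M}(Q\llcorner\{f<t\})$ also says nothing about the boundaries. Moreover, coarea controls $\mathbf{M}\langle Q,f,t\rangle$ only on average in $t$, so the pointwise oscillation bound you claim along the path is not justified for every $t$ either.

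Your proposed repair does not close the gap. If a mass-continuous $\Psi$ exists, uniform continuity on $I^m$ forces its restrictions to $I(m,j')_0$ to have fineness tending to $0$ as $j'\to\infty$. A refinement whose fineness is merely of order $\mathbf{f}(\psi)$ therefore gains nothing. Producing refinements whose fineness goes to zero, and which converge to a mass-continuous map with oscillation at most $C_0\mathbf{f}(\psi)$ on each cell, is exactly the assertion you then cite.

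So the skeleton-by-skeleton cone construction, including the recursion $C_p\le c''C_{p-1}$, carries no weight. What you do establish correctly is the uniqueness of the small filling via the constancy theorem, the resulting continuity of $x\mapsto Q_x$, and the radial coning. Together these yield at best a flat-continuous extension. The mass continuity and the oscillation bound in the statement come entirely from \cite{willmore}, just as in the paper.
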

Specializing this to the unit interval $I=[0,1]$, we have the following:
\begin{Cor}\label{interpolation}
    For every $0<\delta<\delta_0$ and for any $T_0, T_1 \in \mathcal{Z}_{n-1}(M)$ with $\mathbf{M}(T_0-T_1)<\delta$, we can find a continuous map in the mass norm
    \[\Psi: [0,1]\to \mathcal{Z}_{n-1}(M, \mathbf{M})\]
    such that
    \begin{enumerate}
        \item $\Psi(0)=T_0, \Psi(1)=T_1$;
        \item for all $t\in [0,1]$ we have
        \[\mathbf{M}(\Psi(t))\leq \mathbf{M}(T_0)+  C_0\delta. \]
    \end{enumerate}
\end{Cor}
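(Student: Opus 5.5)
We specialize Theorem \ref{discretetocontinuous} to $m=1$, taking $j=1$ for definiteness. The cube complex $I(1,1)$ has vertices $0, \tfrac13, \tfrac23, 1$. Define the discrete map $\psi: I(1,1)_0\to \mathcal{Z}_{n-1}(M)$ by
\[
\psi(0)=T_0, \qquad \psi(1/3)=\psi(2/3)=\psi(1)=T_1 .
\]
Two adjacent vertices carry either equal currents or the pair $T_0,T_1$. Hence $\mathbf{f}(\psi)=\mathbf{M}(T_0-T_1)<\delta<\delta_0$, so the theorem applies.

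Theorem \ref{discretetocontinuous} produces a mass-continuous map $\Psi:[0,1]\to \mathcal{Z}_{n-1}(M;\mathbf{M})$ with $\Psi=\psi$ on vertices. In particular $\Psi(0)=T_0$ and $\Psi(1)=T_1$, which gives (1). For (2), take any $t\in[0,1]$ and let $\alpha$ be a $1$-cell of $I(1,1)$ containing $t$. Pick a vertex $v$ of $\alpha$. Property (2) of the theorem gives
\[
\mathbf{M}(\Psi(t)-\psi(v))\le C_0\,\mathbf{f}(\psi)<C_0\delta .
\]
If $\psi(v)=T_0$, the triangle inequality gives $\mathbf{M}(\Psi(t))\le \mathbf{M}(T_0)+C_0\delta$ directly.

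The only mild point is cells whose vertices all carry $T_1$, since then the comparison is with $T_1$ rather than $T_0$. There are two ways to handle this. One option is to use the first cell $[0,1/3]$, whose vertex $0$ carries $T_0$, for every comparison. A cleaner option is to apply the estimate to $\Psi(t)-\Psi(0)$ whenever $t$ and $0$ lie in a common cell. For the remaining cells, $\mathbf{M}(\Psi(t))\le \mathbf{M}(T_1)+C_0\delta\le \mathbf{M}(T_0)+(C_0+1)\delta$.

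The cleanest fix is the simplest: take $I(1,j)$ but regard the whole of $[0,1]$ as the relevant cell. Equivalently, use $\psi$ with just the two endpoint vertices, which is the degenerate case $j=0$ of the construction in \cite{willmore}. Then a single cell contains both $0$ and $t$, so $\mathbf{M}(\Psi(t)-T_0)\le C_0\delta$ and (2) follows exactly. If one insists on $j\ge1$, replace $C_0$ by $C_0+1$, which is harmless since it is still a constant depending only on $(M,g)$. The main (minor) obstacle is this bookkeeping of constants. Everything else is immediate from Theorem \ref{discretetocontinuous}.
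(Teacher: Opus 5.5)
Your final argument is the paper's: take the two-vertex complex $I(1,0)$ with $\psi(0)=T_0$ and $\psi(1)=T_1$, so that $\mathbf{f}(\psi)=\mathbf{M}(T_0-T_1)<\delta_0$; since $0$ and $t$ lie in the single cell, property (2) gives $\mathbf{M}(\Psi(t)-T_0)\le C_0\delta$, and the triangle inequality finishes. Your observation that this uses $j=0$, outside the stated range $j\ge 1$, applies equally to the paper's proof, and your $j\ge 1$ fallback with constant $C_0+1$ is valid, whereas the ``first option'' of comparing every $t$ against the cell $[0,1/3]$ is not, since property (2) only controls pairs lying in a common cell.
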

\begin{proof}
    Consider the cell complex $I(1, 0)$ on $I^1$ with two vertices $[0]$ and $[1]$. Let $\psi(0)=T_0$ and $\psi(1)=T_1$. From the definition of fineness, we have 
    \[\mathbf{f}(\psi)=\mathbf{M}(T_1-T_0)<\delta_0. \]
    It follows from Theorem \ref{discretetocontinuous} that we can extend $\psi$ to a mass-continuous map \[\Psi: [0,1]\to \mathcal{Z}_{n-1}(M; \mathbf{M})\] so that 
    \[|\mathbf{M}(\Psi(t))-\mathbf{M}(\Psi(0))| \leq |\mathbf{M}(\Psi(t)-\Psi(0))|\leq C_0\delta\]
    for all $t\in [0,1]$. Since $\Psi(0)=T_0$, we are done. 
\end{proof}
\section{Aubry-Mather theory for area-minimizing hypersurfaces on tori}\label{section4}
We now describe the setting in detail. Consider the $n$-dimensional torus $\mathbb{T}^n=\mathbb{R}^n/\mathbb{Z}^n$ equipped with a Riemannian metric $g$ and fix a Riemannian universal covering $\pi: \mathbb{R}^n\to \mathbb{T}^n$. The metric $g$ lifts to a $\mathbb{Z}^n$-periodic metric (still denoted by $g$) on $\mathbb{R}^n$. Under the Hurewicz homomorphism, we have the identification $\pi_1(\mathbb{T}^n)\cong H_1(\mathbb{T}^n, \mathbb{Z})$, which acts as the group of deck transformations of $\pi$. The deck transformation associated with the element $k\in\pi_1(\mathbb{T}^n)$ is denoted by $\tau_k$ and we have $\tau_k(x)=x+k$ for all $x\in \mathbb{R}^n$.

A hypersurface $S$ on $\mathbb{R}^n$ is said to have the \textit{Birkhoff property} or is called \textit{self-intersection free} if its projection to $\mathbb{T}^n$ has no transverse self-intersection. This is equivalent to the fact that for any integer translation $\tau_k$ of $\mathbb{R}^n$, either $S$ is $\tau_k$-invariant or $S$ and $\tau_k(S)$ are disjoint.\\

We are interested in understanding the structure of the following set:
\[
  \mathcal{M}: = \left\{ S=\partial \llbracket E\rrbracket\ \middle\vert \begin{array}{l}
    \text{S is self-intersection free} \\
    \text{and area-minimizing and has connected support}
  \end{array}\right\}
.\]

Here $E \subset \mathbb{R}^n$ is an open set of locally finite perimeter and $S$ is area-minimizing if for all bounded open sets $U\subset \mathbb{R}^n$ we have 
\[\mathbf{M}_U(S)\leq \mathbf{M}_U(S+T)\]
for all $T\in \mathcal{R}_{n-1}(U) \text{ with } \operatorname{spt}(T)\subset U \text{ and }  \partial T=0$. Here, we always choose $E$ consistently with the orientation of $S$.

There is the following natural partial order on $\mathcal{M}$: 
\[\Sigma_0=\partial E_0<\Sigma_1=\partial E_1 \iff E_0\subset E_1.\]

For any $S=\partial \llbracket E\rrbracket \in \mathcal{M}$, it was proved by Moser in \cite{moser} in the non-parametric case and Auer in \cite{auer} in the parametric analogue that there exists a homological direction $\alpha \in H_{n-1}(\mathbb{T}^n, \mathbb{R}) \cong \mathbb{R}^n$, unique up to scalar multiplication, such that the following holds:
\begin{equation}\label{birkoff1}
    \tau_k (E) \subset E \text{ if } k\cdot \alpha>0,
\end{equation}
\begin{equation}\label{birkoff2}
    E \subset \tau_k(E) \text{ if } k\cdot \alpha <0 
\end{equation}
for any $k\in \pi_1(\mathbb{T}^n)=\mathbb{Z}^n$. Let $\mathcal{M}_\alpha$ be the subset of $\mathcal{M}$ consisting of elements having direction $\alpha$. Their results showed even more: an area-minimizing hypersurface in $\mathcal{M}_\alpha$ always stays within a universally bounded distance from an affine hyperplane. 

Since the metric is periodic, there are positive constants $\lambda$ and $\Lambda$ depending only on $g$ such that \[\lambda g_{flat}\leq g \leq \Lambda g_{flat}.\] 
\begin{Th}\label{slab}
    There exists a constant $C>0$ depending only on $\lambda$ and $\Lambda$ such that for every area-minimizing hypersurface $S=\partial E\in \mathcal{M}_\alpha$, there exists a positive constant $K=K(S)$ such that
    \[\operatorname{spt}(S)\subset \left\{x\in \mathbb{R}^n: \text{ } x\cdot \frac{\alpha}{S(\alpha)} \in [K, K+C]\right\}.\]
\end{Th}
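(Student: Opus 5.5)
Our plan follows Auer--Bangert and Moser: the Birkhoff property (\ref{birkoff1})--(\ref{birkoff2}) controls the shape of $E$ up to one lattice period, and the minimizing property rules out long excursions.

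\emph{Step 1: the oscillation of $S$ in the $\alpha$-direction is finite.} For $k\in\mathbb{Z}^n$ with $k\cdot\alpha>0$, (\ref{birkoff1}) gives $\tau_k(E)\subset E$. Hence $E$ contains every lattice translate $x+k$ of a point $x\in E$ with $k\cdot \alpha>0$, and the complement contains every $y-k$ of a point $y\notin E$. Fix a fundamental domain $D$ of diameter $\sqrt n$ (flat). The lattice points $k$ with $k\cdot\alpha/|\alpha|\geq \sqrt n$ give translates $D+k$ that cover the whole half-space $\{x\cdot \hat\alpha \geq c\}$, up to a bounded error. So if $E$ contains a point $x_0$, then $E\supset\{x: (x-x_0)\cdot\hat\alpha> 2\sqrt n\}$, and symmetrically for points of the complement. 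Since $S=\partial E$ is nonempty and separates, we may take $x_0$ near $S$. We then set
\[K_+=\sup\{x\cdot\hat\alpha: x\notin E\}, \qquad K_-=\inf\{x\cdot\hat\alpha: x\in E\}.\]
Both are finite, and every point of $\operatorname{spt} S$ lies in $[K_-,K_+]$ after projecting onto $\hat\alpha$. Here $\hat\alpha=\alpha/|\alpha|$, and the normalisation by $S(\alpha)$ versus $|\alpha|$ only changes constants by the equivalence of norms on $\mathbb{R}^n$, which depends on $\lambda,\Lambda$.

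\emph{Step 2: a uniform bound on $K_+-K_-$.} This is the main obstacle, since Step 1 only gives finiteness. We would argue by minimality. Suppose the width $w=K_+-K_-$ is large. By Step 1, the slab $\{x\cdot\hat\alpha\in [K_-+2\sqrt n,K_+-2\sqrt n]\}$ is where $E$ is neither full nor empty. Because $E$ is invariant in the ordered sense under all translations $\tau_k$ with $k\cdot\alpha=0$ (which holds up to closure when $\alpha$ is rationally dependent, and by density of $\{k\cdot\alpha\}$ otherwise), each hyperplane slice of this slab meets both $E$ and $E^c$ in a "periodic" fashion. A density argument then gives a lower bound on the area of $S$ inside a large ball $B_R$ of the slab: by the relative isoperimetric inequality applied to cubes of size one, each unit cube in the middle slab that meets both $E$ and $E^c$ in definite proportion carries area at least $c(\lambda,\Lambda)$. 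Together with Step 1's monotonicity, this gives roughly $c\, w\, R^{n-1}$ area in a cylinder of radius $R$. On the other hand, we compare with the competitor obtained by replacing $E$ inside the cylinder with a half-space $\{x\cdot\hat\alpha<K\}$ and patching along the cylinder wall. This competitor has area at most $\Lambda^{(n-1)/2}\omega_{n-1}R^{n-1}+C w R^{n-2}$. For $R\gg w$ and $w$ larger than a constant $C(\lambda,\Lambda)$, this contradicts area-minimality. To justify the "definite proportion" claim, we would use the monotonicity formula for area-minimizing boundaries: every point of $\operatorname{spt} S$ has $\|S\|(B_1(x))\geq c$. We would also use the fact that, by Step 1, one can find points of $\operatorname{spt} S$ at every height in $[K_-,K_+]$. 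Indeed, along a line in direction $\hat\alpha$, one moves from $E^c$ to $E$; more carefully, translates by $k$ with $k\cdot\alpha$ small but positive produce nested shifts of $S$ at all intermediate heights. Disjoint unit balls centered at such points in each column then give the area $\geq c\,wR^{n-1}$.

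\emph{Step 3: conclusion.} We set $K=K_-$ (rescaled by $|\alpha|/S(\alpha)$) and let $C$ be the resulting uniform bound, which depends only on $\lambda,\Lambda$ through the isoperimetric and monotonicity constants and the comparison of norms. This yields the slab containment in Theorem \ref{slab}.
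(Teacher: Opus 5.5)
The paper does not prove this statement; it quotes it from Moser, Auer and Auer--Bangert. Your argument therefore has to stand on its own, and as written it has two genuine gaps.

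\textbf{Step~1 is false.} For a single point $x_0\in E$, (\ref{birkoff1}) only gives the discrete orbit $\{x_0+k: k\cdot\alpha>0\}\subset E$. Covering a half-space by the cells $D+k$ would require the whole cell $D$ to lie in $E$. Concretely, $E=\{x\in\mathbb{R}^n: x_n>A\sin 2\pi x_1\}$ satisfies (\ref{birkoff1})--(\ref{birkoff2}) with $\alpha=e_n$ for every $A$. Yet $(\tfrac34,0,\dots,0,-A+\tfrac12)\in E$ while $(\tfrac14,0,\dots,0,A-\tfrac12)\notin E$. Since Step~1 uses only the Birkhoff property, this refutes it once $2A-1>2\sqrt n$. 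So not even the finiteness of $K_\pm$ is proved, and without a slab the cylinder competitor of Step~2 is unavailable.

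\textbf{The column claim in Step~2 is unjustified.} The decisive claim is that $\operatorname{spt}S$ has points at every height in every column, and neither justification you give works. A line in direction $\hat\alpha$ must cross $S$, but possibly only once, as for any graph. Knowing that every height in $[K_-,K_+]$ is attained somewhere on $S$ does not place such points in each column. The nested translates $\tau_kS$ with small $k\cdot\alpha>0$ are other hypersurfaces, so they put no point of $S$ itself in a given column. Finally, the appeal to translations with $k\cdot\alpha=0$ is vacuous when $\alpha$ is totally irrational.

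\textbf{The missing idea.} Transport the two extreme points of $S$ by translations with \emph{large} $k\cdot\alpha>0$, using the monotonicity of $E$ rather than of $S$.
Let $z,y\in\operatorname{spt}S$ with $(y-z)\cdot\hat\alpha=W$. Let $Q$ be any cube of side $2\sqrt n$ whose centre $c_Q$ has height in $[z\cdot\hat\alpha+\sqrt n,\ y\cdot\hat\alpha-\sqrt n]$.
Every ball of radius $\sqrt n/2$ contains a lattice point, so there are $k,k'\in\mathbb{Z}^n$ with $|z+k-c_Q|\le\sqrt n/2$ and $|y-k'-c_Q|\le\sqrt n/2$. Automatically $k\cdot\alpha>0$ and $k'\cdot\alpha>0$.
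Points of $E$ within $\sqrt n/2$ of $z$ are moved into $\tau_kE\cap Q\subset E\cap Q$. By (\ref{birkoff2}), points of $\mathbb{R}^n\setminus E$ within $\sqrt n/2$ of $y$ are moved by $-k'$ into $Q\setminus E$. Since $Q$ is convex, $Q\cap\operatorname{spt}S\neq\emptyset$. This is the correct version of your column claim, and it needs no a priori slab.

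\textbf{Finishing the argument.} Take a grid of such cubes, at mutual distance at least $2$, with centres in a ball of radius $W/4$ about a point at mid-height. Use the lower density bound $\|S\|(B_1(p))\ge c$ for $p\in\operatorname{spt}S$. Derive it from the isoperimetric inequality, so that $c$ depends only on $n,\lambda,\Lambda$, rather than from the Riemannian monotonicity formula.
For $W$ large this gives area at least $cW^n$ in the concentric ball of radius $W/2$. Minimality, by comparison with half of the sphere, bounds that area by $C\Lambda^{(n-1)/2}W^{n-1}$. Hence $W\le C(n,\lambda,\Lambda)$, which gives finiteness and the uniform width at once.
Your comparison $\lambda^{(n-1)/2}|\alpha|\le S(\alpha)\le\Lambda^{(n-1)/2}|\alpha|$ then handles the normalisation by $S(\alpha)$.
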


We now briefly discuss the existence of area-minimizing hypersurfaces in each $\mathcal{M}_\alpha$. On manifolds with non-trivial homology groups, one can produce minimal submanifolds by minimizing the area functional within a fixed homology class. The existence and regularity for such minimizers were established by Federer in \cite{federerbook}.
\begin{Th}\label{homology}
    Let $(M^{n}, g)$ be a closed Riemannian manifold, and let $\alpha \in H_k(M, \mathbb{Z})$ be a non-trivial homology class. Then there exists a closed minimal submanifold $\Sigma$ smooth away from a singular set of dimension at most $k-7$ that minimizes the area among all representatives of $\alpha$. Moreover, the support of $\Sigma$ consists of disjoint embedded minimal submanifolds, possibly occurring with multiplicities.
\end{Th}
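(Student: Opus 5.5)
This is Federer's classical existence and regularity theorem for homologically area-minimizing integral currents; I would prove it by the direct method of geometric measure theory, working with integral currents in $\mathcal{I}_k(M)$ as set up in Section~\ref{section2}. \textbf{Step 1 (existence).} Represent $\alpha$ by a smooth cycle, so the class of integral cycles $T$ with $[T]=\alpha$ is nonempty. Set $m=\inf\{\mathbf{M}(T): [T]=\alpha\}$ and choose a minimizing sequence $T_i$. Since $\partial T_i=0$ and $\mathbf{M}(T_i)\le m+1$, the Federer--Fleming compactness theorem gives a subsequence converging in the flat metric $\mathcal{F}$ to an integral cycle $T$. Lower semicontinuity of mass under flat convergence gives $\mathbf{M}(T)\le m$.

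\textbf{Step 2 ($T$ lies in $\alpha$).} I would use the standard fact that on a compact Lipschitz neighborhood retract, any two integral cycles that are sufficiently $\mathcal{F}$-close are homologous; this is the deformation theorem, applied after pushing forward by a nearest-point retraction of a tubular neighborhood of $M\subset\mathbb{R}^J$. Then for large $i$ we get $T-T_i=\partial Q+P$ with $\mathbf{M}(P)+\mathbf{M}(Q)$ small, and the small cycle $P$ bounds. Hence $[T]=[T_i]=\alpha$, so $\mathbf{M}(T)=m>0$, and $T$ is a homological minimizer. Because $\alpha\neq 0$, the limit cannot be zero.

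\textbf{Step 3 (regularity and structure).} $T$ minimizes mass among all $T+\partial Q$ with $Q$ supported in small balls, so it is locally area-minimizing in $M$. Almgren--Federer interior regularity then shows that $\operatorname{spt}T$ is a smooth embedded minimal submanifold away from a closed singular set of Hausdorff dimension at most $k-7$ in the codimension-one case. This is the step I expect to be the main obstacle. The hypersurface case is the De Giorgi--Federer--Simons theory, with dimension reduction giving the $k-7$ bound. In higher codimension Almgren's big regularity theorem is needed, and it yields only codimension two for the singular set. So the statement as phrased is really meant for the case $k=n-1$ used in this paper, and I would restrict to it. In that case I would invoke the decomposition of a codimension-one integral current into nested boundaries of Caccioppoli sets, $T=\sum_j \partial\llbracket E_j\rrbracket$ with $E_{j+1}\subset E_j$ (Federer's lemma). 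Each $\partial\llbracket E_j\rrbracket$ is itself minimizing, since mass is additive in this decomposition. The maximum principle for minimizing boundaries then shows that distinct leaves either coincide or are disjoint. Grouping coincident leaves yields finitely many disjoint embedded minimal hypersurfaces carrying integer multiplicities, as claimed.
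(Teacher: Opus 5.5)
The paper does not prove this statement; it cites Federer \cite{federerbook}. Your Steps 1--3 are the standard argument behind that citation: Federer--Fleming compactness and lower semicontinuity, the fact that flat-close integral cycles in a compact Lipschitz neighborhood retract are homologous, and then De Giorgi--Federer--Simons regularity with dimension reduction.

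Your restriction to $k=n-1$ is not just convenient but necessary, because in codimension two the statement as written is false. In a K\"ahler surface, take a nodal holomorphic curve of negative self-intersection, e.g.\ the proper transform of a nodal plane cubic after blowing up $\mathbb{CP}^2$ at ten smooth points of it. By Wirtinger's inequality and positivity of intersections it is the unique mass minimizer in its integral class, yet it is neither smooth nor embedded. The paper only ever uses the hypersurface case.

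One step needs repair. You cannot write $T=\sum_j\partial\llbracket E_j\rrbracket$ globally on $M$: every $\partial\llbracket E_j\rrbracket$ is a boundary, so the sum would be null-homologous, whereas $[T]=\alpha\neq 0$. The decomposition must be done locally:
\begin{itemize}
\item In a small ball $B$, where $T$ bounds, write $T=\partial\llbracket\theta\rrbracket$ in $B$ with $\theta$ an integer-valued BV function, and set $E_j=\{\theta\ge j\}$.
\item Mass additivity makes each $\partial\llbracket E_j\rrbracket$ minimizing in $B$.
\item Monotonicity shows that only finitely many of them meet a smaller concentric ball.
\item The maximum principle shows that two of these nested leaves coincide near any point they share.
\end{itemize}
Hence near every point $\operatorname{spt}T$ is a single embedded minimal hypersurface with locally constant integer multiplicity. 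A monotonicity lower bound on the mass of each connected component of $\operatorname{spt}T$ then shows there are only finitely many components. This gives the structure asserted in the theorem.
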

In this paper, unless otherwise stated, we always work under the classical assumption $3\leq n\leq 7$, so that area-minimizing hypersurfaces are smooth. Under the equivalence between singular homology and the homology of a chain complex of normal currents, see Theorem 5.11 of \cite{normalcurrent}, Federer also proved that an area-minimizing hypersurface $\Sigma$ minimizes the mass among all closed, normal currents representing $\alpha$. 

\begin{Def}
Following \cite{federerbook}, the \textit{stable norm} $S(\alpha)$ of a real homology class $\alpha\in H_{m}(M,\mathbb{R})$ is defined as the infimum of the volume of all real Lipschitz cycles 

\[c=\sum r_i\sigma_i, \quad r_i\in \mathbb{R}, \text{ }\sigma_i: \Delta_i\to M\]
representing $\alpha$. It was shown in \cite{stablenorm} that
\[S(\alpha)=\min \{\mathbf{M}(T)| T\in \mathbf{N}_m(M), \partial T=0, [T]=\alpha\},\]
where $\mathbf{N}_m(M)$ is the space of all normal currents in $M$. Recall that a current $T$ is normal if both $T$ and $\partial T$ have finite mass.
\end{Def}

In the context of the $n$-torus, we have that $H_{n-1}(\mathbb{T}^n, \mathbb{Z})=\mathbb{Z}^n$. Furthermore, it was proved by Bangert and Auer in \cite{bangertauer} that a homologically area-minimizing hypersurface $\Sigma$ in a primitive class (defined as not being an integer multiple of another integer class) is connected. Specifically, they proved that the stable norm on a torus is always strictly convex:
\begin{equation}\label{convex}
    S(\alpha+\beta)<S(\alpha)+S(\beta) 
\end{equation}
for all $ \alpha, \beta \in H_{n-1}(\mathbb{T}^n, \mathbb{R})$ that are linearly independent. If a homologically area-minimizing hypersurface $\Sigma$ in a primitive class $\gamma$ decomposes as $\Sigma_1+\Sigma_2$, then $\Sigma_1$ and $\Sigma_2$ minimize area in their respective homology classes. Let $[\Sigma_1]=\alpha$ and $[\Sigma_2]=\beta$. We have: 
\[S(\alpha+\beta)=\textbf{M}(\Sigma_1+\Sigma_2)=\textbf{M}(\Sigma_1)+\textbf{M}(\Sigma_2)=S(\alpha)+S(\beta),\] 
contradicting (\ref{convex}). A more direct argument showing the connectedness of $\Sigma$ is given in Appendix \ref{appendix:c}.

Therefore, for each $\alpha \in H_{n-1}(\mathbb{T}^n, \mathbb{Z})$, we can lift $\Sigma$ to the universal cover $\mathbb{R}^n$ to obtain a connected hypersurface $\widetilde{\Sigma}$ in the direction $\alpha$:
\begin{Def}
    Assume that $p: \widehat{M}^n\to M^n$ is a normal Riemannian covering and $T$ is an $m$-current of locally finite mass on $M^n$. We can define the lift $p^\#T$ of $T$ as a current of locally finite mass on the cover as follows: for every open set $U\subset \widehat{M}^n$ such that $p|_U$ is injective and for every compactly supported $m$-form $\omega$ on $U$, set
    \[p^\#T(\omega)=\left((p|_U)^{-1}\right)_\#(T)(\omega),\]
    where $(p|_U)^{-1}$ denotes the local inverse of $p$, and $_\#$ is the standard push-forward operation of a current. We can extend this definition by a partition of unity. 
\end{Def}

It follows immediately from the definition that $\widetilde{\Sigma}$ is area-minimizing on every compact set that projects injectively to the torus. In fact, using a result of Federer \cite{federerbook}, one can show that $\widetilde{\Sigma}$ is area-minimizing on every compact subset of $\mathbb{R}^n$:
\begin{Prop}
    Let $U\subset \mathbb{R}^n$ be a bounded open set and $S$ be a current with support contained inside $U$ and $\partial S=0$. Then \[\mathbf{M}_U(\widetilde{\Sigma})\leq \mathbf{M}_U(\widetilde{\Sigma}+S).\] 
\end{Prop}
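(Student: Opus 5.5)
The plan is to reduce the inequality in $\mathbb{R}^n$ to a comparison of homology classes on a large quotient torus, where $\widetilde{\Sigma}$ projects to a homologically area-minimizing hypersurface. Let $\Sigma$ be homologically area-minimizing in the primitive class $\alpha\in H_{n-1}(\mathbb{T}^n,\mathbb{Z})$. Fix a bounded open set $U$ and a cycle $S$ with $\operatorname{spt}(S)\subset U$. Since $S$ is a compactly supported $(n-1)$-cycle in $\mathbb{R}^n$, we have $S=\partial Q$ for a compactly supported $n$-current $Q$; we may take $Q$ supported in a bounded set containing $U$ (the bounded components of the complement of $\operatorname{spt} S$ together with $U$).

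Choose $N$ large so that this bounded set projects injectively under the covering $p_N:\mathbb{R}^n\to \mathbb{R}^n/N\mathbb{Z}^n=:\mathbb{T}^n_N$. This is possible because $U$ is bounded. On $\mathbb{T}^n_N$ consider the lift $\Sigma_N$ of $\Sigma$ under the finite cover $\mathbb{T}^n_N\to\mathbb{T}^n$, and the cycle $\Sigma_N+(p_N)_\#S$. This cycle is homologous to $\Sigma_N$, since the difference is $\partial (p_N)_\#Q$.

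The key fact to invoke is Federer's theorem that lifts of homologically mass-minimizing cycles to finite covers remain mass-minimizing in their homology class. Equivalently, minimizers of codimension one in tori remain minimizing under finite covers, which is the result of \cite{federerbook} alluded to before the statement. One way to see this is via calibrations/normal currents. Since $\Sigma$ minimizes mass among normal currents in $\alpha$ and $S(\alpha)=\mathbf{M}(\Sigma)$, there is a closed comass-one $(n-1)$-form (a calibration in the sense of Federer's duality for the stable norm) $\phi$ with $\phi(\vec\Sigma)=1$ $\|\Sigma\|$-a.e. Its pullback to $\mathbb{T}^n_N$ calibrates $\Sigma_N$. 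Then for $T$ homologous to $\Sigma_N$,
\[\mathbf{M}(T)\geq T(\phi)=\Sigma_N(\phi)=\mathbf{M}(\Sigma_N).\]

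The main obstacle is the existence of this calibration. I would obtain it from Hahn–Banach applied to the stable norm: pick a supporting linear functional of the stable norm ball at $\alpha/S(\alpha)$ and represent it by a closed form of comass $\le 1$, using Federer's duality between the stable norm and the comass norm on cohomology. Equality of masses then forces $\phi(\vec\Sigma)=1$ a.e.

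To finish, localize. Since $\Sigma_N+(p_N)_\#S$ and $\Sigma_N$ agree outside $p_N(U)$, the global inequality $\mathbf{M}(\Sigma_N+(p_N)_\#S)\geq \mathbf{M}(\Sigma_N)$ reduces to
\[\mathbf{M}_{p_N(U)}(\Sigma_N+(p_N)_\#S)\geq \mathbf{M}_{p_N(U)}(\Sigma_N).\]
Because $p_N$ is an isometry on $U$ and $p_N^\#\Sigma_N=\widetilde{\Sigma}$, this is exactly $\mathbf{M}_U(\widetilde\Sigma)\leq\mathbf{M}_U(\widetilde\Sigma+S)$.
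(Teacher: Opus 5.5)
Your argument is correct, up to one identification that the paper's proof also makes (see the second paragraph). Its key step, however, differs from the paper's.

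\textbf{Comparison of routes.} The paper fills $S=\partial W$ and enlarges $U$ to a union of $k$ fundamental domains. It then pushes everything down to the base torus by the non-injective projection $\pi$ and quotes Federer's codimension-one fact that $k\Sigma$ is homologically minimizing in $k[\Sigma]$. You instead pass to the intermediate cover $\mathbb{T}^n_N$, on which $U$ embeds isometrically, and prove minimality of the lift there with a calibration.

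Your choice of quotient makes the final localization an exact mass identity. In the paper, $\mathbf{M}(k\Sigma+\partial\pi(W))=\mathbf{M}_U(\widetilde\Sigma+\partial W)$ is really only ``$\le$'', since $\pi_\#$ can cancel mass, though that direction is harmless. You are also more careful than the paper about where the filling lives, since it need not lie in $U$.

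The price is the calibration, which you state a little too strongly. Federer's duality defines the comass norm of a cohomology class as an infimum over smooth closed representatives. What you directly get is therefore a smooth closed $\phi_\epsilon$ of comass $\le 1+\epsilon$ with $\Sigma(\phi_\epsilon)=S(\alpha)$. That suffices after letting $\epsilon\to0$, and the pointwise identity $\phi(\vec\Sigma)=1$ is never needed.

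In fact the whole detour collapses to one line. For $T$ homologous to $\Sigma_N$ and $p:\mathbb{T}^n_N\to\mathbb{T}^n$ the covering map,
\[
\mathbf{M}(T)\ge \mathbf{M}(p_\#T)\ge S(N^n\alpha)=N^nS(\alpha)=N^n\mathbf{M}(\Sigma)=\mathbf{M}(\Sigma_N).
\]
This uses only $\mathbf{M}(\Sigma)=S(\alpha)$ and homogeneity of the stable norm, which is exactly the Federer input behind the paper's ``$k\Sigma$ is minimizing''.

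\textbf{The identification to fix.} $p_N^\#\Sigma_N$ is the full preimage $\pi^\#\Sigma$, i.e.\ all the parallel sheets $\tau_k\widetilde\Sigma$. It is not the connected lift $\widetilde\Sigma\in\mathcal{M}_\alpha$. The paper's proof has the same conflation: its identity $\mathbf{M}_U(\widetilde\Sigma)=\mathbf{M}(k\Sigma)$ holds only for the full preimage.

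In your setting the repair is easy: use the single closed component $C=p_N(\widetilde\Sigma)$ of $\Sigma_N$.
\begin{enumerate}
\item Its stabilizer is $N\Lambda$ with $\Lambda=\alpha^\perp\cap\mathbb{Z}^n$, so $C$ is an $N^{n-1}$-sheeted cover of $\Sigma$.
\item The estimate above with $N^{n-1}$ in place of $N^n$ then shows $C$ is homologically minimizing in $\mathbb{T}^n_N$.
\item Take $N$ also large enough that $U$ meets no other lift $\tau_{Nk}\widetilde\Sigma$ (with $k\cdot\alpha\neq0$) of $C$. This is possible because these sheets are displaced by at least $N/|\alpha|$ in the normal direction and $\widetilde\Sigma$ lies in a slab of bounded width (Theorem \ref{slab}).
\end{enumerate}
Your localization step then gives the inequality for $\widetilde\Sigma$ itself.
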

\begin{proof}
    Since $H_{n-1}(\mathbb{R}^n, \mathbb{Z})=0$, there exists $W$ with support contained inside $U$ and $S=\partial W$. 
    
    Recall that $\widetilde{\Sigma}$ is a lift of a homologically area-minimizing hypersurface $\Sigma$ on $\mathbb{T}^n$. Let $H\subset \mathbb{R}^n$ be a fundamental domain. Since we can choose a sufficiently large integer $k$ such that $U$ is contained inside the union of $k$ isometric copies of $H$, we can assume that $U$ is such a union. It follows from a standard result of Federer \cite{federerbook} that $k\Sigma$ is homologically area-minimizing in the class $k[\Sigma]$. Therefore
    \[\textbf{M}_U(\widetilde{\Sigma})=\textbf{M}(k\Sigma)\leq \textbf{M}(k\Sigma+\partial \pi (W))=\textbf{M}_U(\widetilde{\Sigma}+\partial W)=\textbf{M}_U(\widetilde{\Sigma}+S).\]
    
    This completes the proof.
\end{proof}

The remaining fact that the homological direction of $\Tilde{\Sigma}$ is $\alpha$ up to rescaling follows from the following claim: 

\begin{Lemma} \label{lift}
(\text{\cite{stablenorm}, Lemma 5.4}).
Suppose that $\alpha\in H_{n-1}(\mathbb{T}^n, \mathbb{R})$ is a real homology class, and let $T\in \mathbf{N}_{n-1}(\mathbb{T}^n)$ be a closed normal current representing $\alpha$. Let $\pi: \mathbb{R}^n\to \mathbb{T}^n$ be the universal covering map realizing $H_{n-1}(\mathbb{T}^n, \mathbb{R})\cong \mathbb{R}^n$. Then there exists a function $f$ of locally bounded variation on $\mathbb{R}^n$ such that the following hold:
\begin{enumerate}
    \item for any $(n-1)$-form $\omega$ on $\mathbb{R}^n$ with compact support, we have:
    \[T(\omega)=\int_{\mathbb{R}^n} fd\omega,\]
    \item for all $x\in \mathbb{R}^n$ and $k\in \pi_1(\mathbb{T}^n)\cong \mathbb{Z}^n$, we have:
    \[f(\tau_k x)=f(x)-k\cdot \alpha.\] 
\end{enumerate}   
In particular, every leaf $\Tilde{T}$ of $\pi^\#T$ has the Birkhoff property and satisfies 
    \[ \Tilde{T}<\tau_k \Tilde{T} \quad \text{if and only if} \quad k\cdot \alpha>0,\]
   \[ \Tilde{T}=\tau_k \Tilde{T} \quad \text{if and only if} \quad k\cdot \alpha=0.\]
\end{Lemma}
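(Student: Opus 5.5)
We plan to construct $f$ by integrating the lifted current against a primitive on $\mathbb{R}^n$, starting from a smooth representative of the class and correcting by a boundary. Identify $H_{n-1}(\mathbb{T}^n,\mathbb{R})\cong\mathbb{R}^n$ by Poincaré duality, so that $\alpha$ corresponds to the constant $1$-form $\alpha\cdot dx$. Fix the model cycle $T_\alpha$, defined as the $(n-1)$-current dual to the closed constant form $\alpha\cdot dx$, i.e.\ $T_\alpha(\omega)=\int_{\mathbb{T}^n}(\alpha\cdot dx)\wedge\omega$ up to sign convention. Its lift is $\pi^\#T_\alpha(\omega)=\int_{\mathbb{R}^n} d(-x\cdot\alpha)\wedge\omega=\pm\int_{\mathbb{R}^n}(-x\cdot\alpha)\,d\omega$ after integrating by parts. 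Thus for $T_\alpha$ the function $f_0(x)=-x\cdot\alpha$ works, and it satisfies $f_0(x+k)=f_0(x)-k\cdot\alpha$, which is property (2).

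For a general closed normal current $T$ with $[T]=\alpha$, the difference $T-T_\alpha$ is a closed normal current that is null-homologous with real coefficients. By the equivalence of normal-current homology with real singular homology (Federer, as cited before Theorem~\ref{homology}), there is a normal $n$-current $Q$ on $\mathbb{T}^n$ with $T-T_\alpha=\partial Q$. On the top-dimensional torus, $Q=\llbracket\mathbb{T}^n\rrbracket\llcorner g$ for some $g\in BV(\mathbb{T}^n)$, since normal $n$-currents in an $n$-manifold are exactly BV functions times the fundamental class. Lifting gives $\pi^\#Q=\llbracket\mathbb{R}^n\rrbracket\llcorner(g\circ\pi)$, and because lifting commutes with $\partial$ (it is local), $\pi^\#T(\omega)=\pi^\#T_\alpha(\omega)+\pi^\#Q(d\omega)=\int (f_0+g\circ\pi)\,d\omega$. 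Setting $f=f_0+g\circ\pi$, this $f$ is locally BV. It still satisfies (2), because $g\circ\pi$ is $\mathbb{Z}^n$-periodic. The care needed here lies in the signs and orientation conventions in the duality; we will choose them so that (1) holds exactly.

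For the Birkhoff consequences, we interpret the "leaves" of $\pi^\#T$ as the boundaries $\partial\llbracket\{f>t\}\rrbracket$. By the coarea formula for BV functions, $\pi^\#T=\int_{\mathbb{R}}\partial\llbracket\{f>t\}\rrbracket\,dt$, up to sign. Property (2) gives $\tau_k\{f>t\}=\{f>t-k\cdot\alpha\}$. Superlevel sets are nested, so if $k\cdot\alpha>0$ the translate is strictly larger, which gives the order $\tilde T<\tau_k\tilde T$ (with the paper's orientation convention). If $k\cdot\alpha=0$, the leaf is invariant. The level sets of $f$ never cross, so the leaves are ordered and the translates cannot intersect transversally, which is the Birkhoff property.

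The main obstacle is the strictness of the order. Distinct $t$ give nested sets, but they could coincide up to null sets on intervals of $t$ where $f$ skips values. To handle this, we identify leaves with their sets up to measure zero and define "$<$" as inclusion. Then $\tau_k\tilde T=\tilde T$ with $k\cdot\alpha\neq0$ would force the translation-invariance $\{f>t\}=\{f>t+c\}$ for all multiples of $c=k\cdot\alpha$. Since $f-f_0$ is bounded, this is impossible, because $\{f>t\}$ lies within bounded distance of the half-space $\{-x\cdot\alpha>t\}$. That boundedness argument yields both "if and only if" statements.
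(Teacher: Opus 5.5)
The paper gives no proof of this lemma; it quotes it from \cite{stablenorm}. Your proposal is correct in outline, with one false step (the boundedness of $f-f_0$) that needs replacing; the fix is given at the end.

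\textbf{Comparison of routes.} The natural comparison is with the standard argument, which works on the cover from the start. There, $\pi^\#T$ is a closed, locally normal current on the contractible space $\mathbb{R}^n$, so $\pi^\#T=\partial(\llbracket\mathbb{R}^n\rrbracket\llcorner f)$ with $f\in BV_{loc}$ unique up to an additive constant. Deck invariance of $\pi^\#T$ and the constancy theorem then force $f\circ\tau_k-f$ to be a constant $c(k)$, additive in $k$, which is identified with $-k\cdot\alpha$ by pairing with closed forms.

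You reverse the order. You split $T=T_\alpha+\partial(\llbracket\mathbb{T}^n\rrbracket\llcorner g)$ on the torus and then lift, which produces $f=-x\cdot\alpha+g\circ\pi$ explicitly. This makes (2) immediate, and it holds pointwise rather than almost everywhere. The price is Federer's identification of normal-current homology with real homology on $\mathbb{T}^n$, or an equivalent Fourier argument. The lifting route needs only the Poincar\'e lemma on $\mathbb{R}^n$, but it must compute the homomorphism $c$ separately. Your coarea description of the leaves, the identity $\tau_k\{f>t\}=\{f>t-k\cdot\alpha\}$, and the resulting nestedness (the Birkhoff property) are all fine.

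\textbf{The false step.} In the last paragraph, $f-f_0=g\circ\pi$ need not be bounded. A $BV$ function on $\mathbb{T}^n$ with $n\ge 2$ can blow up: take $g$ equal to $|x-p|^{-\beta}$ near a point $p$, with $0<\beta<n-1$. Then $T_\alpha+\partial(\llbracket\mathbb{T}^n\rrbracket\llcorner g)$ is a legitimate closed normal current in $\alpha$. For it, $\{f>t\}$ contains a neighbourhood of every lattice translate $p+k$, $k\in\mathbb{Z}^n$, so it lies within bounded distance of no half-space. Your strictness argument therefore fails as written.

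\textbf{The fix.} You only need $f\in L^1_{loc}$. Write $E_s=\{f>s\}$ and suppose $c=k\cdot\alpha\neq 0$ with $\tau_k\tilde T=\tilde T$. By the constancy theorem, and since $\tau_k$ preserves Lebesgue measure, this means $\tau_kE_t=E_t$ up to a null set. Iterating in both directions gives $E_{t+mc}=E_t$ a.e.\ for all $m\in\mathbb{Z}$. Hence, up to null sets,
$E_t=\bigcap_{m\ge 0}E_{t+m|c|}=\{f=+\infty\}$ and $\mathbb{R}^n\setminus E_t=\bigcap_{m\ge0}\{f\le t-m|c|\}=\{f=-\infty\}$.
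Both sets are null because $f$ is locally integrable, which is a contradiction. With this replacement, strictness holds for $k\cdot\alpha\neq 0$, and both ``if and only if'' statements follow exactly as you describe.
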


This shows that $\Tilde{\Sigma}\in \mathcal{M}_\alpha$. In fact, it belongs to the subset consisting of the most periodic elements in the hierarchy described below. 

On the other hand, when the class $\alpha$ is irrational (see the definition below), we can take a sequence of integer classes $\{\alpha_i\}_i$ converging to $\alpha$ and a sequence of corresponding periodic minimizers $\Sigma_i \in \mathcal{M}(\alpha_i)$ that intersect a fixed compact set (this is possible by considering the integer translations of a fixed minimizer). The following standard compactness theorem for area-minimizing hypersurfaces shows that we can take the limit of the sequence $\{\Sigma_i\}_i$ to obtain an element in $\mathcal{M}_\alpha$:

\begin{Th}\label{compactness}
    For $3\leq n\leq 7$, the set of minimizing hypersurfaces $\mathcal{M}$ is compact with respect to the $C^k$ convergence on compact sets, for every $k$. Equivalently, suppose that $\Sigma_i\in \mathcal{M}_{\alpha_i}$ is a sequence of such hypersurfaces, all of which intersect a fixed bounded set $K\subset \mathbb{R}^n$. Then there exists a subsequence (still denoted by $\{\Sigma_i\}_i$) that converges to a minimizing hypersurface $\Sigma\in \mathcal{M}$. 
    
Furthermore, the homological vectors respect this convergence: $\Sigma\in \mathcal{M}_\alpha$ where $\alpha$ is the limit of the sequence $\{\alpha_i\}_i$.
\end{Th}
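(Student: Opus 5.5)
The argument splits into the two claims of Theorem \ref{compactness}: first extract a convergent subsequence whose limit lies in $\mathcal{M}$, then identify the direction of the limit.

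\textbf{Compactness and minimality.} Write $\Sigma_i=\partial\llbracket E_i\rrbracket$. Since each $\Sigma_i$ is area-minimizing in $\mathbb{R}^n$ with $\lambda g_{flat}\le g\le\Lambda g_{flat}$, comparing with the boundary of a ball gives the uniform local mass bound $\mathbf{M}_{B_R}(\Sigma_i)\le C(R,\lambda,\Lambda)$. For every $R$, the Federer--Fleming compactness theorem applied to the currents $\llbracket E_i\rrbracket\llcorner B_R$ (or to the $BV$ functions $\chi_{E_i}$) gives a subsequence converging in $L^1_{loc}$. A diagonal argument then yields a Caccioppoli set $E$ with $\chi_{E_i}\to\chi_E$ in $L^1_{loc}$ and $\Sigma_i\to\Sigma:=\partial\llbracket E\rrbracket$ as currents. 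The usual cut-and-paste argument for minimizing boundaries shows that $\Sigma$ is area-minimizing and that masses converge locally; this is the standard fact that limits of minimizing boundaries are minimizing. Because $3\le n\le 7$, all $\Sigma_i$ and $\Sigma$ are smooth and satisfy uniform curvature estimates (Schoen--Simon--Yau, or Allard combined with multiplicity-one convergence). Hence the convergence is smooth, in $C^k$ on compact sets for every $k$, as graphs over $\Sigma$. Since every $\Sigma_i$ meets $K$, monotonicity gives a lower density bound, so $\Sigma\cap\overline{K}\ne\emptyset$ and $\Sigma\neq0$.

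\textbf{Birkhoff property, connectedness, direction.} For $k\in\mathbb{Z}^n$, each $\Sigma_i$ satisfies either $\tau_kE_i\subset E_i$, or $E_i\subset\tau_kE_i$, or equality. Passing to a further subsequence for each $k$ (a diagonal argument over countably many $k$) makes one alternative hold for all large $i$, and the inclusions survive $L^1_{loc}$ limits. So $E$ and $\tau_kE$ are always nested. The strong maximum principle for smooth minimal hypersurfaces, applied to nested ones, then gives that $\Sigma$ and $\tau_k\Sigma$ are either disjoint or equal, which is the Birkhoff property. Connectedness of the support follows similarly: smooth convergence with multiplicity one shows that the components of $\Sigma$ are limits of pieces of the connected $\Sigma_i$. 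The slab bound of Theorem \ref{slab} and the ordering between components prevent $\Sigma$ from splitting into several parallel sheets, since the $\Sigma_i$ lie in slabs of uniform width $C$. Thus $\Sigma\in\mathcal{M}$.

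For the direction, normalize $\alpha_i/S(\alpha_i)\to\alpha/S(\alpha)$. If $k\cdot\alpha>0$, then $k\cdot\alpha_i>0$ for large $i$, so $\tau_kE_i\subset E_i$ and in the limit $\tau_kE\subset E$; the case $k\cdot\alpha<0$ is symmetric. Uniqueness up to scaling of the direction (Moser/Auer) then forces the direction of $\Sigma$ to be $\alpha$, provided the weak inclusions are compatible with strict behaviour. This is best checked through the slab picture: Theorem \ref{slab} places $\Sigma_i$ in $\{x\cdot\alpha_i/S(\alpha_i)\in[K_i,K_i+C]\}$. Since the $\Sigma_i$ meet $K$, the $K_i$ are bounded, and the slabs converge to a slab of width $C$ normal to $\alpha$ that contains $\Sigma$. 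A hypersurface in $\mathcal{M}_\beta$ lies in a slab normal to $\beta$ and cannot stay in a bounded-width slab normal to a non-parallel $\alpha$. Hence $\beta\parallel\alpha$, and the sign is fixed by the inclusions above.

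\textbf{Main obstacle.} I expect the hardest step to be the direction identification when $\alpha$ is rationally dependent, where $k\cdot\alpha=0$ for some $k\ne0$. There the limit inclusions are only weak, and the slab argument is what pins the direction down. Connectedness of the limit is a secondary technical point.
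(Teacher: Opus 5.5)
The paper quotes this result as standard and gives no proof, so I am measuring your sketch against the standard argument. Your compactness and minimality steps are fine, and so is the direction identification. The direction is in fact easier than you expect: (\ref{birkoff1})--(\ref{birkoff2}) only require inclusions when $k\cdot\alpha\neq 0$. For such $k$ the sign of $k\cdot\alpha_i$ eventually agrees with that of $k\cdot\alpha$, so the inclusions pass directly to the $L^1_{loc}$ limit, and your slab argument rules out equality.

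\textbf{The gap is connectedness of $\operatorname{spt}\Sigma$.} This is part of membership in $\mathcal{M}$, and your Birkhoff step silently depends on it. Neither reason you give works. Local smooth limits of connected hypersurfaces are often disconnected: think of two sheets joined by a neck that escapes to infinity. A slab of width $C$ also easily contains several disjoint sheets, so the slab bound and the ``ordering'' exclude nothing by themselves. Likewise, nestedness of $E$ and $\tau_kE$ plus the maximum principle only shows that each component of $\Sigma$ equals or misses each component of $\tau_k\Sigma$. If $\Sigma$ were disconnected, $\tau_k$ could map one component onto another, and the Birkhoff property would fail.

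\textbf{What is missing is an appeal to the minimality of the limit boundary $\partial\llbracket E\rrbracket$.} One way to do it:
\begin{enumerate}
\item By $L^1_{loc}$ convergence, $E$ contains one complementary half-space of the limit slab $P$ and misses the other, since each $E_i$ does this for its own slab.
\item A connected open set with connected exterior has connected smooth boundary. So if $\operatorname{spt}\Sigma$ were disconnected, $\mathbb{R}^n\setminus\operatorname{spt}\Sigma$ would have a component $D$ besides the two containing these half-spaces. Then $D\subset P$, and $D$ lies entirely in $E$ or entirely outside it.
\item Using $E\triangle(D\cap B_r)$ as a competitor gives $\mathcal{H}^{n-1}(\partial D\cap B_r)\lesssim\mathcal{H}^{n-1}(\partial B_r\cap D)$ for a.e.\ $r$.
\item The isoperimetric inequality in a slab, $\operatorname{Vol}(\Omega)\lesssim\mathcal{H}^{n-1}(\partial\Omega)$ for $\Omega\subset P$, turns this into $V(r)\lesssim V'(r)$ for $V(r)=\operatorname{Vol}(D\cap B_r)$.
\item Hence $V$ grows exponentially, which is impossible for a subset of a slab.
\end{enumerate}
For two separating sheets, this reduces to the $R^{n-1}$ versus $R^{n-2}$ comparison used in the proof of Proposition \ref{lamination}. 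Once connectedness is established, your maximum principle argument does yield the Birkhoff property, and the rest of your proof goes through.
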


In particular, $\mathcal{M}_\alpha\neq \emptyset$ for all $\alpha$ and we have the following decomposition:
\[\mathcal{M}=\bigsqcup\limits_{S(\alpha)=1} \mathcal{M}_\alpha.\]

The detailed structure of each $\mathcal{M}_\alpha$ depends heavily on the \textit{rank} of $\alpha$, defined as follows:

\begin{Def}\label{totallyirrational}
   
    For each $\alpha\in H_{n-1}(\mathbb{T}^n, \mathbb{R})$, let $K(\alpha)$ be the set of all $k \in H_1(\mathbb{T}^n, \mathbb{Z})$ such that $I(\alpha, k)=0$, where \[I: H_{n-1}(\mathbb{T}^n, \mathbb{R})\times H_1(\mathbb{T}^n, \mathbb{R})\rightarrow \mathbb{R}\] is the homology intersection pairing: \[I(\alpha, k)=\operatorname{PD}(\alpha)(k),\] with $\operatorname{PD}(\alpha)\in H^1(\mathbb{T}^n, \mathbb{R})$ being the Poincar\'e dual of $\alpha$.
    For the torus, this is just the dot product. 
    
    Let $V(\alpha)$ be the smallest subspace of $H_{n-1}(\mathbb{T}^n, \mathbb{R})$ that is spanned by the integer classes and contains $\alpha$. As in \cite{stablenorm}, this is equivalent to
    \[V(\alpha)=\{\beta\in H_{n-1}(\mathbb{T}^n, \mathbb{R}): \text{ } I(\beta, k)=0 \text{ for all } k\in K(\alpha)\}.\]

   We then define $\operatorname{rank}_\mathbb{Q}(\alpha):=\operatorname{dim} V(\alpha)$. If $\operatorname{rank}_\mathbb{Q}(\alpha)=1$, we say that $\alpha$ is \textit{rational}. Otherwise, $\alpha$ is called \textit{irrational}. In the extreme case where $\operatorname{rank}_\mathbb{Q}(\alpha)=n$, $\alpha$ is said to be \textit{rationally independent} or \textit{totally irrational}.\\
   
   Note that $\alpha$ is rational if and only if it is a scalar multiple of a class in $H_{n-1}(\mathbb{T}^n, \mathbb{Z})$.
\end{Def}

\begin{Def}
    A collection of minimizers $F$ with connected supports is called a \textit{lamination} if:
\begin{enumerate}
    \item $\Sigma_1\in F$ and $\Sigma_2\in F$ implies $\Sigma_1\cap \Sigma_2=\emptyset$ or $\Sigma_1=\Sigma_2$;
    \item $\sqcup_{\Sigma\in F} \Sigma$ is a nonempty, closed subset of $\mathbb{R}^n$.
\end{enumerate}

The lamination $F$ is called a \textit{foliation} if \[\bigsqcup_{\Sigma\in F}\Sigma=\mathbb{R}^n.\]
The connected components of $\mathbb{R}^n\setminus \cup_{\Sigma\in F}\Sigma$ are called the \textit{gaps} of $F$. Each gap of $F$ is an open set of $\mathbb{R}^n$, and each lamination contains at most countably many gaps.
\end{Def}
For example, for every area-minimizing hypersurface $S\in \mathcal{M}$, the closure of the set $\{\tau_k(S): k\in \pi_1(\mathbb{T}^n)\cong \mathbb{Z}^n\}$ is a lamination of $\mathbb{R}^n$.
\\

The following structure theorem is due to Bangert in \cite{bangert1} and \cite{uniquenessbangert} (for the non-parametric case) and Junginger-Gestrich (\cite{hannes}) (in the parametric case).
\begin{Th}\label{structure}
    For every minimizing hypersurface $S\in \mathcal{M}$, there exist a positive integer $t$ and a collection of homology classes $\{\alpha_1, \alpha_2, \ldots, \alpha_t\}$ canonically associated to $S$, defined as follows:
    \begin{enumerate}
        \item Set $\alpha_1=\alpha$ and $\Gamma_1=\mathbb{Z}^n$.
        \item Define inductively the sub-lattices $\Gamma_{s+1}=\alpha_s^{\perp}\cap \Gamma_s$, provided that this intersection is not empty. If $S$ is not $\Gamma_{s+1}$-periodic, then there exists $\alpha_{s+1}\in span (\Gamma_{s+1})$ such that for all $k\in \Gamma_{s+1}$, properties (\ref{birkoff1}) and (\ref{birkoff2}) hold. If $S$ is $\Gamma_{s+1}$-periodic, this procedure terminates. 
    \end{enumerate}
    
    The integer $t=t(S)$ and the set of classes $\alpha_1, \ldots, \alpha_t$ (unique up to multiplying by a scalar) are invariants of $S$, and we write
    \[\mathcal{M}(\alpha_1, \ldots, \alpha_t):=\{S\in \mathcal{M} : \text{ } t(S)=t, \alpha_i(S)=\alpha_i, \forall 1\leq i\leq t\}.\]
 
    Furthermore, for every such $\alpha_1, \alpha_2, \ldots, \alpha_t$, the following union 
    \[\mathcal{M}(\alpha_1)\cup \mathcal{M}(\alpha_1, \alpha_2)\cup \cdots \cup \mathcal{M}(\alpha_1, \alpha_2, \ldots, \alpha_t) \subset \mathcal{M}_{\alpha_1}\]
    is pairwise disjoint and totally ordered (they form a lamination of $\mathbb{R}^n$). In particular, if $\alpha$ is totally irrational, $\mathcal{M}_\alpha$ is totally ordered. 
\end{Th}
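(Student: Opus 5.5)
The plan is to build the invariants $(t;\alpha_1,\dots,\alpha_t)$ inductively from the Birkhoff property, and then to prove disjointness and total order with an Aubry-type non-crossing argument.

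\textbf{Step 1: a preorder on the lattice.} Fix $S=\partial\llbracket E\rrbracket\in\mathcal{M}_\alpha$, so that (\ref{birkoff1})--(\ref{birkoff2}) hold for $\alpha_1=\alpha$ and $\Gamma_1=\mathbb{Z}^n$. Put $\Gamma_2=\alpha^\perp\cap\mathbb{Z}^n$. For $k\in\Gamma_2$ the Birkhoff property still gives exactly one of three cases: $\tau_kE\subsetneq E$, $E\subsetneq\tau_kE$, or $\tau_kE=E$. I will encode this as a relation on $\Gamma_2$, writing $k\succ 0$ when $\tau_kE\subsetneq E$.

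\textbf{Step 2: the next direction $\alpha_2$.} The relation is compatible with addition: if $\tau_kE\subset E$ and $\tau_lE\subset E$, then $\tau_{k+l}E=\tau_k(\tau_lE)\subset\tau_kE\subset E$. So the stabilizer $\Gamma_2^S=\{k:\tau_kE=E\}$ is a subgroup, and $P=\{k\succ0\}$ is the positive cone of a total order on $\Gamma_2/\Gamma_2^S$. If $S$ is $\Gamma_2$-periodic we stop with $t=1$. Otherwise I want a nonzero $\alpha_2\in\operatorname{span}(\Gamma_2)$ with $k\cdot\alpha_2>0\Rightarrow k\succ0$; I take the closed convex cone $\overline{\operatorname{cone}(P)}\subset\operatorname{span}\Gamma_2$.
\begin{itemize}
\item $P\cap(-P)=\emptyset$, so this cone is not the whole space. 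Convexity plus integrality then give a supporting half-space $\{x:x\cdot\alpha_2\ge0\}$ containing $P$ (a Hahn--Banach/H\"older-type argument for ordered lattices).
\item Lattice points with $k\cdot\alpha_2>0$ lie in the interior of that half-space, and a lattice point there cannot be in $-P$, since otherwise $-P$ would meet the interior of $\overline{\operatorname{cone}(P)}$. Hence such $k$ lie in $P$, which is (\ref{birkoff1})--(\ref{birkoff2}) for $\Gamma_2$.
\item Uniqueness up to scalar: two non-proportional choices of $\alpha_2$ would produce some $k$ with $k\cdot\alpha_2>0>k\cdot\alpha_2'$, a contradiction.
\end{itemize}

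\textbf{Step 3: iteration and invariance.} I then set $\Gamma_3=\alpha_2^\perp\cap\Gamma_2$ and repeat. Since $\alpha_2\ne0$ in $\operatorname{span}\Gamma_2$, the rank strictly drops at each step, so the process terminates. The order structure that $\alpha_{s+1}$ is read off from is intrinsic to $S$, which is why $t$ and the $\alpha_i$ are invariants of $S$.

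\textbf{Step 4: disjointness and total order of the strata.} For this I use the standard cut-and-paste lemma. If $S=\partial E$ and $S'=\partial E'$ are minimizers in $\mathcal{M}_{\alpha_1}$, then $\partial(E\cap E')$ and $\partial(E\cup E')$ have total local mass at most that of $S+S'$. Both are again minimizing, by comparison on large balls together with the slab bound of Theorem \ref{slab}, which controls the boundary error. If $S$ and $S'$ cross, these new minimizers touch $S$ from one side, and the strong maximum principle for smooth minimal hypersurfaces ($n\le7$) forces $S=S'$ on a connected piece, hence everywhere.

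For elements of different strata $\mathcal{M}(\alpha_1,\dots,\alpha_s)$, I additionally compare their asymptotics along the $\Gamma_2$-directions and use Step 2 to order them. If $S'$ is $\Gamma_{s+1}$-periodic and $S$ is not, then $S$ is asymptotic to two periodic leaves from the same lamination and must lie in a gap between them. Finally, if $\alpha$ is totally irrational, then $\Gamma_2=\{0\}$ and every element lies in $\mathcal{M}(\alpha)$. Total order then follows from non-crossing combined with density of $\{k\cdot\alpha\}$: some translate $\tau_kS'$ lies strictly above $S$ and some lies strictly below, so crossing is the only alternative to comparability.

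\textbf{Main obstacle.} I expect the hard part to be Step 4's minimality of $\partial(E\cap E')$ on noncompact $\mathbb{R}^n$, together with ruling out tangential touching without crossing for heteroclinic elements. For the minimality, I would first try the slab bound with a large-ball mass comparison, where the boundary error is $O(R^{n-2})$.
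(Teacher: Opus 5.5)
The paper does not prove this theorem; it quotes it from Bangert \cite{bangert1,uniquenessbangert} and Junginger-Gestrich \cite{hannes}, so your proposal has to stand on its own. Steps 1--3 are the standard ordered-group extraction of $\alpha_{s+1}$ and are essentially right, but two details need filling in. First, $\overline{\operatorname{cone}(P)}\neq\operatorname{span}\Gamma_2$ needs the observation that a positive real combination of lattice vectors equal to a lattice vector can be replaced by a rational one, so that $\operatorname{cone}(P)\cap(-P)=\emptyset$. Second, when $k\cdot\alpha_2>0$ you must also exclude $k\in\Gamma_2^S$. This holds because $(p+mh)/|m|\to\pm h$ gives $\pm\Gamma_2^S\subset\overline{\operatorname{cone}(P)}$, i.e.\ $\Gamma_2^S\subset\alpha_2^\perp$.

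The genuine gap is Step 4. You use the lemma that for arbitrary $S=\partial E$, $S'=\partial E'$ in $\mathcal{M}_{\alpha_1}$, both $\partial(E\cap E')$ and $\partial(E\cup E')$ are again minimizing. This is false: with the maximum principle it would make all of $\mathcal{M}_{\alpha_1}$ totally ordered, whereas the theorem only orders a single chain $\mathcal{M}(\alpha_1)\cup\cdots\cup\mathcal{M}(\alpha_1,\ldots,\alpha_t)$.

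\emph{Counterexample.} Take a gap between consecutive $S^-<S^+$ in $\mathcal{M}(\alpha_1)$. It contains heteroclinics $S\in\mathcal{M}(\alpha_1,\alpha_2)$ and $S'\in\mathcal{M}(\alpha_1,-\alpha_2)$ (Figure \ref{fig:heteroclinic}, or $\pm\beta$ in the product example of Section \ref{section8}). $S$ approaches $S^-$ in the $-\alpha_2$ direction and $S^+$ in the $+\alpha_2$ direction, while $S'$ does the reverse, so they cross.

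\emph{Why your argument fails.} The failure is quantitative. The slab bound of Theorem \ref{slab} only controls the boundary error on $\partial B_R$ by $O(R^{n-2})$, which does not tend to zero. So a fixed mass defect $\delta>0$ of $\partial(E\cap E')$ in one ball contradicts nothing. Notice also that your Step 4 never uses that the two surfaces share $\alpha_2,\ldots,\alpha_s$, which is exactly the hypothesis that has to enter.

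\emph{What is needed.} There are two ways to beat the boundary error.
\begin{enumerate}
\item Replicate the defect so that it grows like $r^{n-1}$ against the $Cr^{n-2}$ error. This is what the paper does for recurrent minimizers in Lemmas \ref{boundinball}--\ref{boundincylinder}, using recurrence and compactness.
\item Compare on domains whose boundary error is $o(1)$. Pass to the quotient by $\Gamma_{s+1}$, under which all strata up to level $s$ are periodic. Then use that elements with the same chain lying between the same consecutive lower-level leaves are asymptotic to the same leaves at infinity.
\end{enumerate}

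\emph{Other gaps.} Your assertions that a non-periodic $S$ ``must lie in a gap'' between periodic leaves, and that in the totally irrational case total order ``follows from non-crossing'', presuppose exactly this missing non-crossing. In the totally irrational case, one route is to first order the recurrent elements. Then place the non-recurrent ones in finite-volume gaps, where $\operatorname{Vol}_{n-1}(\partial B_R\cap G)\to 0$ (Lemma \ref{asymptotic}) makes the cut-and-paste error vanish. Finally, you do not address closedness of the union, which the lamination claim requires.
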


\begin{figure}[H]
    \centering
    \includegraphics[scale = 0.4]{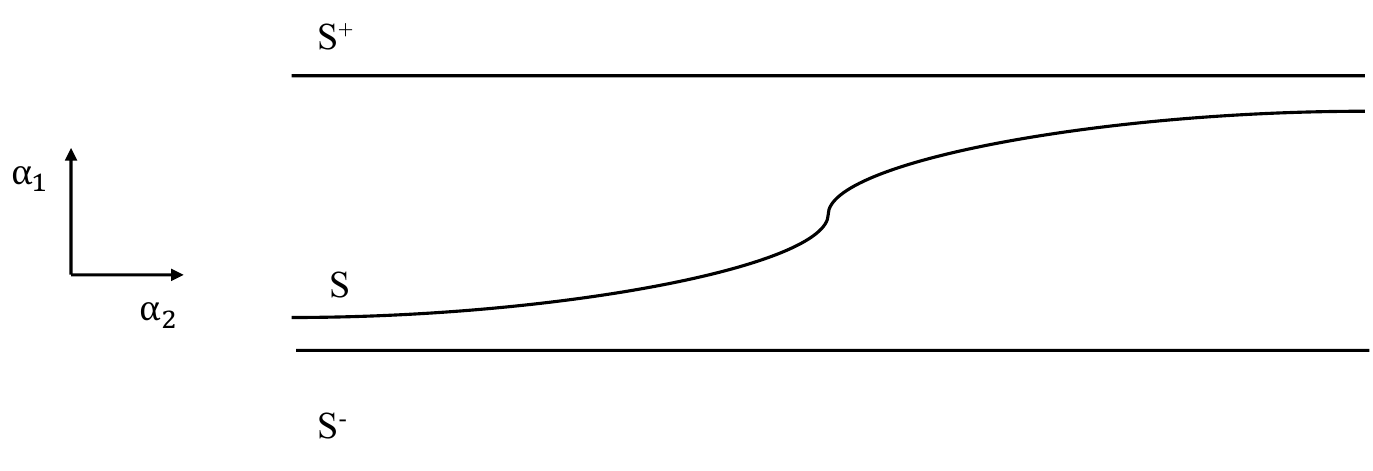}
    \caption{When there is a gap bounded by $S^-, S^+ \in \mathcal{M}(\alpha_1)$ that contains no other element in $\mathcal{M}(\alpha_1)$, for any $\alpha_2\in \alpha_1^\perp\cap \mathbb{Z}^n$, there exists a heteroclinic minimizer $S\in \mathcal{M}(\alpha_1, \alpha_2)$ inside the gap. $S$ is periodic in integral direction orthogonal to both $\alpha_1$ and $\alpha_2$, asymptotic to $S^-$ in the negative $\alpha_2$ direction and asymptotic to $S^+$ in the positive $\alpha_2$ direction.}
    \label{fig:heteroclinic}
\end{figure}

\begin{Rem}
The area-minimizing hypersurfaces in $\mathcal{M}(\alpha_1, \ldots, \alpha_t)$ with $t>1$ are called \textit{heteroclinic minimizers}. They are constructed inside the gaps formed by two consecutive elements in $\mathcal{M}(\alpha_1, \ldots, \alpha_{t-1})$ (see Figure $2$ above). 

It is helpful to keep in mind that the elements in $\mathcal{M}(\alpha_1, \ldots, \alpha_{t-1})$ have higher periodicity than the elements in $\mathcal{M}(\alpha_1, \ldots, \alpha_t)$, with $\mathcal{M}(\alpha)$ consisting of the most periodic element in $\mathcal{M}_\alpha$. When $\alpha$ is an integer class, $\mathcal{M}(\alpha)$ consists of all the lifts of all closed area-minimizing hypersurfaces in $\alpha$. If $\alpha$ is totally irrational, we have $\mathcal{M}(\alpha)=\mathcal{M}_\alpha$. In other cases, generically, the inclusion is strict.
\end{Rem}

\section{Laminations in an irrational class}\label{section5}

In this section, we study area-minimizing hypersurfaces with irrational directions. We begin by establishing the well-order property for $\mathcal{M}_\alpha$, in the case where $\alpha$ is irrational. This result is essentially contained in \cite{auer}. The analogue in the non-parametric setting for an elliptic integral was proved earlier by Bangert in \cite{uniquenessbangert}. 

Since the metric $g$ is invariant under integer translations, the actions of elements in $\mathbb{Z}^n$ preserve the area-minimizing property. The fact that the homological vector is preserved is immediate. Therefore, $\mathbb{Z}^n$ acts on each set $\mathcal{M}_\alpha$.  When the homology class $\alpha$ is irrational, we denote by $\mathcal{M}_\alpha^{rec}$ the subset of $\mathcal{M}_\alpha$ consisting of minimizing hypersurfaces that can be approximated by their own translates. More precisely, we say that an element $\Sigma\in \mathcal{M}_\alpha$ can be approximated from above if 
\begin{equation}\label{recurrent1}
    \Sigma=\inf_{k\in \mathbb{Z}^n} \{\tau_{k} \Sigma: \text{ } I(k, \alpha)>0 \}
\end{equation}
and from below if 
\begin{equation}\label{recurrent2}
\Sigma=\sup_{k\in \mathbb{Z}^n} \{\tau_{k} \Sigma: \text{ } I(k, \alpha)<0\}.
\end{equation} 

We now show that the set of recurrent minimizers is well-ordered:
\begin{Prop}\label{lamination}
    For irrational $\alpha$, any two distinct elements in $\mathcal{M}_\alpha^{rec}$ are disjoint. 
\end{Prop}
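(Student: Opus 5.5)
Take $\Sigma, \Sigma' \in \mathcal{M}_\alpha^{rec}$ with $\Sigma\neq\Sigma'$, and suppose $\operatorname{spt}\Sigma\cap\operatorname{spt}\Sigma'\neq\emptyset$. Write $\Sigma=\partial E$ and $\Sigma'=\partial E'$. The plan has three steps: use the cut-and-paste argument for minimizers to get crossing, use recurrence to transfer the crossing to translates of $\Sigma$, and contradict the Birkhoff property of $\Sigma$.

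\textbf{Step 1: the two hypersurfaces cross.} I would first show that $\Sigma$ and $\Sigma'$ must cross transversally somewhere, meaning neither $E\subset E'$ nor $E'\subset E$ holds. If, say, $E\subset E'$, then $\Sigma$ and $\Sigma'$ are two smooth minimal hypersurfaces ($3\le n\le7$), one lying on one side of the other and touching it. The strong maximum principle, together with the connectedness of the supports, then forces $\Sigma=\Sigma'$, which is excluded. So $E\setminus \overline{E'}$ and $E'\setminus\overline{E}$ are both nonempty open sets. This is the standard cut-and-paste input: $\partial(E\cap E')$ and $\partial(E\cup E')$ are again minimizing on compact sets, so a one-sided touching point is a smooth minimal touching point.

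\textbf{Step 2: transfer the crossing to translates of $\Sigma$.} Near a transverse crossing point $p$, pick points $q\in \Sigma'$ lying strictly inside $E$ and $q'\in\Sigma'$ lying strictly outside $\overline E$, both close to $p$. Since $\Sigma$ is recurrent, (\ref{recurrent1}) and (\ref{recurrent2}) give translates $\tau_k\Sigma$ converging to $\Sigma$ from above and below. The convergence is smooth on compact sets by Theorem \ref{compactness}, and the translates are strictly ordered relative to $\Sigma$ because $\alpha$ is irrational and Lemma \ref{lift} applies. Transversality is stable under $C^1$-small perturbation, so for $k$ with $I(k,\alpha)>0$ close enough, $\tau_k\Sigma$ still crosses $\Sigma'$ near $p$. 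The same holds for some $k'$ with $I(k',\alpha)<0$.

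\textbf{Step 3: reach the contradiction.} Now do the symmetric thing with $\Sigma'$, using its own recurrence. Its translates $\tau_m\Sigma'$ approximate $\Sigma'$, so $\Sigma'$ lies between $\Sigma$ and its translates and also crosses them. Iterating this gives a translate $\tau_j\Sigma$ with $j\ne0$ that crosses $\Sigma$ itself. That contradicts the Birkhoff property (Lemma \ref{lift}): $\Sigma$ and $\tau_j\Sigma$ are either equal or disjoint and strictly ordered.

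The cleanest route to this contradiction, and the one I would try first, is the Aubry crossing lemma. Two minimizers in $\mathcal{M}_\alpha$ whose sets $E,E'$ are not nested cannot both be asymptotically ordered compatibly. Concretely, using Theorem \ref{slab}, $\Sigma$ and $\Sigma'$ lie in slabs of width $C$ in the same direction $\alpha$. If they cross, then $\partial(E\cap E')$ and $\partial(E\cup E')$ are minimizers; one of them is a non-smooth minimizer wherever the crossing occurs, unless it coincides with $\Sigma$ or $\Sigma'$. Its kinks violate regularity, or equivalently it touches $\Sigma$ tangentially, and the maximum principle then gives equality. So the comparison-surface argument shows that crossing is impossible for the two hypersurfaces and their translates.

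\textbf{Main obstacle.} I expect the hardest step to be making the crossing/regularity argument rigorous for non-compact hypersurfaces. The mass of the region where the sets $E,E'$ differ is infinite, so one cannot directly compare global areas. To handle this I would localize, comparing on large balls $B_R$ and using the slab estimate to bound the boundary error by $O(R^{n-2})$. That controls the error, while the gain at a crossing is a fixed positive amount per kink. Recurrence supplies infinitely many kinks along a sublattice, so the gain grows like a positive density in $R^{n-1}$ and beats the $O(R^{n-2})$ boundary error.
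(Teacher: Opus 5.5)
There is a genuine gap. Steps 2--3 do not lead to a contradiction, because crossing is not transitive. Knowing that $\tau_k\Sigma$ crosses $\Sigma'$ and that $\Sigma'$ crosses $\Sigma$ tells you nothing about $\tau_k\Sigma$ versus $\Sigma$. Those two are nested by the Birkhoff property, and that is entirely compatible with both of them crossing $\Sigma'$, so no iteration produces a translate of $\Sigma$ crossing $\Sigma$.

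The ``crossing lemma'' route fails for a different reason. Cut-and-paste shows that $\partial(E\cap E')$ and $\partial(E\cup E')$ are minimizing only for closed homologous cycles. For complete hypersurfaces in $\mathbb{R}^n$ and a ball $K$, the current $\partial(E\cup E')\llcorner K$ has a different boundary on $\partial K$ than $\Sigma\llcorner K$ or $\Sigma'\llcorner K$. So it is not an admissible competitor, and the mass inequality gives nothing. Your Step 1 only needs the maximum principle, so it is fine. In fact no argument that avoids the recurrence of both hypersurfaces can work. For irrational but rationally dependent $\alpha$, when $\mathcal{M}(\alpha)$ has a gap, heteroclinic minimizers in $\mathcal{M}(\alpha,\beta)$ and $\mathcal{M}(\alpha,-\beta)$ inside that gap both lie in $\mathcal{M}_\alpha$, and they do cross.

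Your last paragraph has the right strategy; it is the skeleton of the paper's proof. Compare $S^+=\partial(E\cup E')$ and $S^-=\partial(E\cap E')$ with Plateau solutions in a cylinder $Z(0,r)$. Use Theorem \ref{slab} to bound the boundary error for $\Sigma,\Sigma'$ by $O(r^{n-2})$, and beat it with a defect of order $r^{n-1}$.

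What is missing is the step that produces that defect. You need the uniform bound of Lemma \ref{boundinball}: $D(S^\pm,B(p,r))\ge\epsilon$ for \emph{every} $p\in P$ and every $r\ge\rho$. Packing $cr^{n-1}$ disjoint such balls into $Z(0,r)$ then gives Lemma \ref{boundincylinder}. ``Recurrence supplies infinitely many kinks along a sublattice'' does not give this, for three reasons:
\begin{enumerate}
\item Exact periods lie in $K(\alpha)$, which has rank $n-\operatorname{rank}_\mathbb{Q}(\alpha)\le n-2$. They yield only $O(r^{n-2})$ kinks in $Z(0,r)$, the same order as the error.
\item Approximate returns are not a lattice, and you give no reason why they should be relatively dense in $P$.
\item Moving the crossing at $p$ to $\tau_k p$ gives a crossing of $\Sigma$ with $\Sigma'$ only if the same $\tau_k$ brings \emph{both} hypersurfaces back close to themselves.
\end{enumerate}

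The paper gets the uniform bound by contradiction and compactness. Suppose $D(S^+,B(p_i,r_i))\to 0$ with $r_i\to\infty$. Translate to the unit cube and extract limits $\tilde S_1,\tilde S_2$. Monotonicity and lower semicontinuity of $D$ make $\partial(\tilde E_1\cup\tilde E_2)$ minimizing, so the maximum principle gives $\tilde S_2\le\tilde S_1$. Then approximate $S_1$ from below by translates $\tau_{h_i}\tilde S_1$. A lemma of Bangert guarantees that the same translates satisfy $\tau_{h_i}\tilde S_2\to S_2$, which transfers the order back to $S_2\le S_1$, a contradiction. This simultaneous approximation is where recurrence is really used, and your proposal has no substitute for it.
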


We first introduce some notation. For $p\in \mathbb{R}^n$ and $r>0$, let $Z(p,r)$ be the cylinder with axis $p+\mathbb{R} \alpha$ and radius $r$. Let $P=\alpha^\perp$ be the hyperplane orthogonal to $\alpha$. For every ball $B(q,r)\subset \mathbb{R}^n$, let $B_P(q,r)=B(q,r)\cap P$. 

Denote by $\mathcal{R}_{n-1}^{loc}(\mathbb{R}^n)$ the space of integral rectifiable currents of locally finite mass on $\mathbb{R}^n$.
For $S\in \mathcal{R}^{loc}_{n-1}(\mathbb{R}^n)$ and $W\subset \mathbb{R}^n$ a bounded open set, define the following:
\begin{equation}
\begin{split}
    D(S,W)&:=\sup\{\mathbf{M}(S\llcorner W)-\mathbf{M}(R): \text{ } R\in \mathcal{R}^{loc}_{n-1}(\mathbb{R}^n), \text{ }\partial R=\partial(S\llcorner W) \} \\
    &=\mathbf{M}(S\llcorner W)-\inf\{\mathbf{M}(R\llcorner W): \text{ }R\in \mathcal{R}^{loc}_{n-1}(\mathbb{R}^n), \text{ }\partial R=\partial(S\llcorner W) \},
\end{split}
\end{equation}
which measures the difference between the mass of $S\llcorner W$ and the mass of the Plateau solution filling $\partial(S\llcorner W)$. It follows from a standard geometric measure theory argument that the function \[S\rightarrow D(S,W)\] is mass lower-semicontinuous with respect to the flat topology on $\mathcal{R}^{loc}_{n-1}(\mathbb{R}^n)$.\\

This quantity is monotonically increasing in the following sense:
\begin{Lemma}\label{monotone}
    If $W_1\subseteq W_2$, then 
    \[D(S,W_1) \leq D(S, W_2).\]
\end{Lemma}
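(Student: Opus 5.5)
The plan is to compare competitors directly. Given any competitor for the smaller set, I will build one for the larger set whose mass deficit is exactly the same. Fix $W_1\subseteq W_2$. Let $R_1\in\mathcal{R}^{loc}_{n-1}(\mathbb{R}^n)$ be any current with $\partial R_1=\partial(S\llcorner W_1)$. Since $D(S,W_1)$ is a supremum over such $R_1$, it suffices to produce an admissible $R_2$ for $W_2$ with
\[\mathbf{M}(S\llcorner W_2)-\mathbf{M}(R_2)\geq \mathbf{M}(S\llcorner W_1)-\mathbf{M}(R_1).\]
Taking the supremum over $R_1$ then gives the lemma.

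The natural candidate is obtained by replacing the piece of $S$ inside $W_1$ by $R_1$ and keeping the rest of $S\llcorner W_2$. Concretely, I set
\[R_2:=R_1+S\llcorner(W_2\setminus W_1).\]

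Checking admissibility of $R_2$:
\begin{itemize}
\item Since $S\llcorner W_2=S\llcorner W_1+S\llcorner(W_2\setminus W_1)$, we get $\partial R_2=\partial(S\llcorner W_1)+\partial(S\llcorner(W_2\setminus W_1))=\partial(S\llcorner W_2)$.
\item $R_2$ is integral rectifiable of locally finite mass, since both summands are. I will make sure $W_2\setminus W_1$ is a Borel set, so that the restriction makes sense. It need not be open, but the restriction $S\llcorner A$ is defined for any Borel set $A$, which is all the second definition of $D$ requires.
\end{itemize}

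For the mass estimate, subadditivity gives $\mathbf{M}(R_2)\leq \mathbf{M}(R_1)+\mathbf{M}(S\llcorner(W_2\setminus W_1))$. Additivity of $\|S\|$ over disjoint Borel sets gives $\mathbf{M}(S\llcorner W_2)=\mathbf{M}(S\llcorner W_1)+\mathbf{M}(S\llcorner(W_2\setminus W_1))$. Subtracting the first from the second yields the desired inequality. All masses involved are finite because $S$ has locally finite mass and $W_2$ is bounded, so the subtraction is legitimate.

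The only delicate point I anticipate is a bookkeeping issue between the two formulations of $D$: the first definition uses $\mathbf{M}(R)$, the second uses $\mathbf{M}(R\llcorner W)$. With the first formulation the argument above is complete as written. If one insists on the second, then $R_2\llcorner W_2$ must be controlled. Since $\|R_2\llcorner W_2\|\leq\|R_2\|$, the deficit computed with $\mathbf{M}(R_2\llcorner W_2)$ is at least as large as the one computed with $\mathbf{M}(R_2)$, which is already at least $\mathbf{M}(S\llcorner W_1)-\mathbf{M}(R_1)$. For $W_1$, the supremum over $R_1$ in the first formulation equals the one in the second, because that equality is asserted in the definition. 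So the inequality passes through in either version, and there is no genuine obstacle.
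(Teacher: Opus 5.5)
Your argument is correct and matches the paper's: the paper only says the lemma follows by turning each competitor for $W_1$ into a competitor for $W_2$ and omits the details, and your choice $R_2=R_1+S\llcorner(W_2\setminus W_1)$ supplies exactly those details. One caution about your last paragraph: the paper's second formula for $D$ is not actually equivalent to the first, because for a ball $W$ a filling $R$ supported on the sphere $\partial W$ has $\mathbf{M}(R\llcorner W)=0$, so your proof should be read for the first formulation, which is the one the paper uses later (e.g.\ in the cylinder estimate for Proposition \ref{lamination}).
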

The proof follows by modifying every competitor of $S$ inside $W_1$ to get a competitor of $S$ inside $W_2$. We omit it here.\\

Fix $S_1=\partial E_1$ and $S_2=\partial E_2$ in $\mathcal{M}_\alpha^{rec}$. Assume, for contradiction, that $S_1\neq S_2$ and they intersect. We can assume further that both of them can be approximated from below as follows. Since they are recurrent, there exist two sequences of recurrent minimizers $S_1^i\to S_1$ and $S_2^i\to S_2$ such that every $S_1^i$ and $S_2^i$ can be approximated from below. Since $S_1\neq S_2$ and they intersect, it follows that for sufficiently large $i$, we have $S_1^i\neq S_2^i$ and $S_1^i\cap S_2^i\neq \emptyset$. We can thus work with $S_1^i$ and $S_2^i$ instead of $S_1$ and $S_2$.

Define the following:
\[S^+=\max(S_1, S_2)=\partial(E_1\cup E_2),\]
\[S^-=\min(S_1, S_2)=\partial (E_1\cap E_2).\]

\begin{Lemma}\label{boundinball}
    There exist $\epsilon>0$ and $\rho >0$ such that for all $p\in P$ and $r\geq \rho$, we have 
    \[D(S^+, B(p,r))\geq \epsilon \text{ and }D(S^-, B(p,r))\geq \epsilon. \]
\end{Lemma}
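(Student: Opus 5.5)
We argue by the standard cut-and-paste argument for minimizers, combined with the regularity of area-minimizing hypersurfaces and the recurrence structure of $\mathcal{M}_\alpha^{rec}$.

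\emph{Step 1: local positivity.} Since $S_1\neq S_2$ intersect and both are smooth minimal hypersurfaces ($3\le n\le 7$), the strong maximum principle rules out one-sided tangency. Hence they cross transversally somewhere, or more generally $S^+$ and $S^-$ have a corner along a nonempty part of $S_1\cap S_2$. In particular $S^+$ is not smooth near some point $q\in S_1\cap S_2$, so it cannot be area-minimizing in a small ball $B(q,r_0)$. Indeed, if it were, it would be a smooth minimal hypersurface coinciding with $S_1$ on an open set, hence equal to $S_1$ by unique continuation, contradicting that it also coincides with $S_2$ on an open set. Thus $\epsilon_0:=\min\{D(S^+,B(q,r_0)),D(S^-,B(q,r_0))\}>0$. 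Alternatively one rounds off the corner explicitly to produce a competitor with strictly smaller mass.

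\emph{Step 2: spreading via translations.} We want to transport this defect to every ball $B(p,r)$ with $p\in P$ and $r\ge\rho$. Theorem \ref{slab} confines both $S_i$ to slabs of bounded width in direction $\alpha$, so the whole configuration lives in a slab $\{x\cdot\alpha/S(\alpha)\in[K,K+C']\}$. Because $\alpha$ is irrational and the $S_i$ are recurrent, approximable from below by their translates, we can find, for every $p\in P$, a lattice vector $k$ with $\tau_k q$ within a uniform distance $R_0$ of $p$, and with $\tau_kS_i$ very close to $S_i$ in $C^2$ on a large neighbourhood of $\tau_k q$. A transverse crossing is a $C^2$-open condition, and $D(\cdot,W)$ is lower semicontinuous in the flat topology. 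It follows that $S^\pm$ near $\tau_k q$ are small perturbations of $\tau_k$-translates of $S^\pm$ near $q$, so $D(S^\pm,B(\tau_kq,r_0))\ge\epsilon_0/2$. Concretely, $\tau_k S^+=\max(\tau_kS_1,\tau_kS_2)$ is close to $S^+$ near $\tau_kq$, and a contradiction argument by compactness (Theorem \ref{compactness}) gives the uniform $\epsilon$. Taking $\rho=R_0+r_0$, we have $B(\tau_kq,r_0)\subset B(p,r)$ for $r\ge\rho$, and Lemma \ref{monotone} yields $D(S^\pm,B(p,r))\ge\epsilon:=\epsilon_0/2$.

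The main obstacle is Step 2: producing a translation that is simultaneously close for both $S_1$ and $S_2$, and lands near an arbitrary $p\in P$. Recurrence of each $S_i$ separately only gives closeness along possibly different subsequences of $k$. I would first try a compactness/contradiction formulation. Suppose $p_j\in P$ with $D(S^+,B(p_j,j))\to0$. Translate by lattice vectors $k_j$ bringing $p_j$ into a fixed fundamental domain, and extract limits $\tau_{-k_j}S_i\to S_i'$ via Theorem \ref{compactness}. By lower semicontinuity, $\max(S_1',S_2')$ is then area-minimizing on all of $\mathbb{R}^n$. Finally use recurrence and the total order of limits of translates (each $S_i'$ lies in the closure of the orbit of $S_i$) to show that $S_1'$ and $S_2'$ still cross, which contradicts Step 1 applied to $S_1',S_2'$.
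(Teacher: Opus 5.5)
Your Step 1 is fine, and your fallback contradiction argument matches the paper's proof up to the point where $\max(S_1',S_2')$ (the paper's $\widetilde S^+$) satisfies $D(\cdot,B(0,r))=0$ for all $r$, hence is area-minimizing. The step that carries the real content, however, is only gestured at in both of your routes, and it is exactly the obstacle you flag. The direct route needs, for \emph{every} $p\in P$, a lattice vector $k$ with $\tau_kq$ uniformly close to $p$ and with $\tau_{-k}S_1,\tau_{-k}S_2$ \emph{simultaneously} close to $S_1,S_2$ near $q$. That is uniform (syndetic) recurrence of the pair under the diagonal action, and nothing available gives it. The fallback route needs $S_1',S_2'$ to still cross. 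This again requires returning the pair $(S_1',S_2')$ to $(S_1,S_2)$ along one common sequence of translations. Your appeal to ``the total order of limits of translates'' does not supply this. The orbit closure of a single Birkhoff minimizer is totally ordered, but that says nothing about how $S_1'$ (a limit of translates of $S_1$) compares with $S_2'$ (a limit of translates of $S_2$). Total order of $\mathcal{M}^{rec}_\alpha$ is precisely Proposition \ref{lamination}, which this lemma is used to prove, so invoking it would be circular.

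The paper closes the argument with one specific external input. It does not show the limits cross; it applies the maximum principle to the area-minimizing $\widetilde S^+$ to conclude that the limits are \emph{ordered}, say $\widetilde S_2\le\widetilde S_1$. Using the earlier reduction that $S_1,S_2$ are approximable from below, it picks $h_i$ with $\tau_{h_i}\widetilde S_1\le S_1$ and $\tau_{h_i}\widetilde S_1\to S_1$. It then invokes Lemma 4.7 of \cite{uniquenessbangert}: along the \emph{same} sequence, $\tau_{h_i}\widetilde S_2\to S_2$. The order $\tau_{h_i}\widetilde S_2\le\tau_{h_i}\widetilde S_1$ passes to the limit and gives $S_2\le S_1$. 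By the maximum principle this contradicts $S_1\neq S_2$ intersecting. This simultaneous-convergence lemma is the missing idea: convergence of one member of the pair along a sequence of translations forces convergence of the other. Without it, or a proof of it, neither version of your Step 2 goes through.
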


\begin{proof}
    We prove the statement for $S^+$ by contradiction; the one for $S^-$ is completely analogous. Assume the contrary. Then we can find a sequence of points $p_i\in P$ and radii $r_i\to \infty$ such that \[D(S^+, B(p_i,r_i))\to 0.\] 
    
    There exists a corresponding sequence of integer translations $\tau_{k_i}\in \pi_1(\mathbb{T}^n)$ such that 
    \[q_i=\tau_{k_i}(p_i)=p_i+k_i\in [0,1]^n.\]
    
    This choice of translations ensures that the distance from $q_i$ to the plane $P$ is uniformly bounded. As a result, for $j=1,2$, elements of the sequence $\{\tau_{k_i}S_j\}_i$ always lie within a bounded distance from $P$. By the compactness property of the minimizers (Theorem \ref{compactness}), we have $\tau_{k_i}S_j\to \widetilde{S}_j=\partial \widetilde{E}_j$ for $j=1,2$. This means that 
    \[\tau_{k_i}S^+\to \widetilde{S}^+=\partial(\widetilde{E}_1\cup \widetilde{E}_2).\] 
    
    Therefore
    \[\lim_{i\to \infty}D(\tau_{k_i}S^+, B(q_i, r_i))=\lim_{i\to\infty} D(S^+, B(p_i, r_i))=0.\]
    
    For every $r>0$, we can choose $i$ sufficiently large so that $B(0, r)\subset B(q_i, r_i)$. It follows from the monotonicity lemma \ref{monotone} that \[D(\tau_{k_i}S^+, B(0,r))\to 0.\]
    From the lower-semicontinuity of the mass, we get $D(\widetilde{S}^+, B(0,r))=0$ for all $r$. Therefore $\widetilde{S}^+$ is area-minimizing. 

    Note that $\widetilde{E}_1\subset \widetilde{E}_1\cup \widetilde{E}_2$. Since both $\widetilde{S}_1$ and $\widetilde{S}^+$ are area-minimizing, it follows from the maximum principle (cf. \cite{leonsimon}) that $\widetilde{S}_1=\widetilde{S}^+$ or $\widetilde{S}_1\cap \widetilde{S}^+=\emptyset$. Therefore, $\widetilde{S}_1=\widetilde{S}_2$ or $\widetilde{S}_1\cap \widetilde{S}_2=\emptyset$. In both cases we have $\widetilde{S}_2\leq \widetilde{S}_1$. 
    Since $S_1\in \mathcal{M}^{rec}_\alpha$ can be approximated from below, we can find a sequence of integer translations $\{\tau_{h_i}\}_i$ such that 
    \[ \tau_{h_i}\widetilde{S}_1\leq S_1, \text{  }\forall i\in \mathbb{N} \quad  \text{and} \quad \tau_{h_i}\widetilde{S}_1\to S_1. \]
    
    By Lemma 4.7 in \cite{uniquenessbangert}, we have that $\tau_{h_i}\widetilde{S}_2\to S_2$. Since $\widetilde{S}_2\leq \widetilde{S}_1$, it follows that 
    \[\tau_{h_i}\widetilde{S}_2\leq \tau_{h_i}\widetilde{S}_1, \quad \forall i\in \mathbb{N}.\]
    Consequently $S_2\leq S_1$. We get a contradiction. 
\end{proof}
    \begin{Lemma}\label{boundincylinder}
        There exist $\delta>0$ and $r_0>0$ such that
        \[D(S^+, Z(0, r))>\delta r^{n-1} \text{ and } D(S^-, Z(0,r))>\delta r^{n-1}\]
        for all $r>r_0$.
    \end{Lemma}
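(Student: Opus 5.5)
We will deduce Lemma \ref{boundincylinder} from Lemma \ref{boundinball} by a packing argument. The point is that $D(\cdot,W)$ is superadditive over disjoint open sets. If $W_1,\dots,W_N\subset W$ are pairwise disjoint bounded open sets, then
\[D(S,W)\geq \sum_{i=1}^N D(S,W_i).\]
To prove this, take near-optimal competitors $R_i$ with $\partial R_i=\partial(S\llcorner W_i)$, which we may assume supported in $\overline{W_i}$. Replace $S\llcorner W_i$ by $R_i$ inside $S\llcorner W$. The resulting current has the same boundary as $S\llcorner W$, and its mass is $\mathbf{M}(S\llcorner W)-\sum_i(\mathbf{M}(S\llcorner W_i)-\mathbf{M}(R_i))$. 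This is the same cut-and-paste argument as in Lemma \ref{monotone}. One technical point: the slices $\partial(S\llcorner W_i)$ should be well behaved. For balls this holds after perturbing the radius slightly, since slicing is good for almost every radius, and Lemma \ref{boundinball} holds for every $r\geq\rho$.

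Now fix $\rho$ and $\epsilon$ from Lemma \ref{boundinball}. Since $0\in P$, the hyperplane section of the cylinder $Z(0,r)$ through the origin is the $(n-1)$-dimensional disk $B_P(0,r)$. For $r>r_0:=4\rho$ we choose a maximal $2\rho$-separated set of points $p_1,\dots,p_N$ in $B_P(0,r-\rho)\subset P$. The balls $B(p_i,\rho)$ are then pairwise disjoint. Each of them lies in $Z(0,r)$, because every point within $\rho$ of $p_i$ is at distance less than $r$ from the axis $\mathbb{R}\alpha$. A volume count in $P\cong\mathbb{R}^{n-1}$ gives
\[N\geq c_n\left(\frac{r-\rho}{\rho}\right)^{n-1}\geq c_n'\left(\frac{r}{\rho}\right)^{n-1}.\]

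Combining monotonicity and superadditivity with Lemma \ref{boundinball} at radius exactly $\rho$ (with $p_i\in P$), we get
\[D(S^\pm,Z(0,r))\geq \sum_{i=1}^N D(S^\pm,B(p_i,\rho))\geq N\epsilon\geq c_n'\epsilon\rho^{1-n}r^{n-1}.\]
Choosing $\delta<c_n'\epsilon\rho^{1-n}$ gives the strict inequality claimed in Lemma \ref{boundincylinder}.

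There is a subtlety: $Z(0,r)$ is unbounded, while $D$ is defined only for bounded $W$. We will either interpret $D(S,Z(0,r))$ as the supremum over bounded truncations $Z(0,r)\cap\{|x\cdot\alpha|<L\}$, which is monotone in $L$, or note that $S^\pm$ lies in a bounded slab by Theorem \ref{slab}, so $S\llcorner Z$ has finite mass. Either way, the superadditivity step is carried out inside a bounded truncation containing all the balls.

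We expect the main obstacle to be the rigorous superadditivity, in particular ensuring that the competitors $R_i$ can be localized to $\overline{W_i}$. If a competitor leaves $W_i$, we will replace it by its restriction or by a projection onto $\overline{B(p_i,\rho)}$; the nearest-point retraction onto a ball is $1$-Lipschitz and so does not increase mass. Alternatively, we will use the infimum formulation of $D$ in terms of $\mathbf{M}(R\llcorner W)$. Once this is in place, the rest is routine counting.
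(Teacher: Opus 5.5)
Your proposal is correct and is essentially the paper's argument. You pack on the order of $r^{n-1}$ disjoint balls of radius $\rho$, centered in $P$, into $Z(0,r)$, use superadditivity of $D$ over disjoint open sets, and apply Lemma \ref{boundinball} on each ball; the paper does the same, except that it sums over the disjoint cylinders $Z(p_i,\rho)$ and then passes to the balls via Lemma \ref{monotone}. Your superadditivity step also needs no localization of the competitors $R_i$, since the supremum form of $D$ imposes no support condition on $R$; this matters, because your Euclidean nearest-point-retraction fallback would not control $g$-mass.
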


    \begin{proof}
        Fix the constant $\rho$ as in Lemma \ref{boundinball}. There exists a positive constant $c>0$ such that, for all sufficiently large $r$, the ball $B_P(0,r)$ contains at least $N(r):=c r^{n-1}$ pairwise disjoint open balls of radius $\rho$, which we shall denote by $B_P(p_i, \rho)$ for $i=1,2,\ldots, N(r)$. It follows that the Euclidean balls $B(p_i, \rho)$ are also disjoint and contained in the cylinder $Z(0, r)$ for all $i$. Therefore
        \[D(S^+, Z(0,r))\geq \sum_{i=1}^{N(r)} D(S^+, Z(p_i, \rho))\geq \epsilon c r^{n-1}.\]
        
        The second inequality follows from Lemma \ref{boundinball} and the fact that the balls are contained inside the cylinders. The desired inequality follows by choosing $\delta=\epsilon c$.
    \end{proof}

\begin{proof}[We can now complete the proof of Proposition \ref{lamination}]
By Theorem 
\ref{slab}, we can choose a slab of bounded width $B$ that contains all points of bounded distance $C$ to $P$. In particular, $B$ contains both $S_1$ and $S_2$. 

Choose $r$ sufficiently large. By Lemma \ref{boundincylinder}, we can find competitors $T_r^+, T_r^-$ with $\partial T_r^\pm=\partial (S^\pm \llcorner Z(0,r))$ such that
\[\mathbf{M}(T_r^+) \leq \mathbf{M}(S^+ \llcorner Z(0,r))-\delta r^{n-1} \quad \text{and}\]
\[\mathbf{M}(T_r^-) \leq \mathbf{M}(S^- \llcorner Z(0,r))-\delta r^{n-1}.\]
    Adding these two inequalities, we get
    \begin{equation}\label{contradiction}
    \begin{split}
    \mathbf{M}(T_r^+)+\mathbf{M}(T_r^-) &\leq \mathbf{M}(S^+\llcorner Z(0,r))+\mathbf{M}(S^-\llcorner Z(0,r))-2\delta r^{n-1} \\
    &=\mathbf{M}(S_1\llcorner Z(0,r))+\mathbf{M}(S_2\llcorner Z(0,r))-2\delta r^{n-1}.
    \end{split}
    \end{equation}
    
    Since $S_1$ and $S_2$ are area-minimizing and belong to the bounded slab $B$, their masses satisfy 
    \[\mathbf{M}(S_1\llcorner Z(0,r))\leq \mathbf{M}(T_r^+)+\mathbf{M}(\partial(Z(0,r)\llcorner B))\leq \mathbf{M}(T_r^+)+Cr^{n-2}\]
    for some constant $C>0$. Similarly, 
    \[\mathbf{M}(S_2\llcorner Z(0,r))\leq \mathbf{M}(T_r^-)+Cr^{n-2}.\]
    
    Combining these inequalities with (\ref{contradiction}), we obtain
    \[\mathbf{M}(T_r^+)+\mathbf{M}(T_r^-)\leq \mathbf{M}(T_r^+)+\mathbf{M}(T_r^-) +2Cr^{n-2}-2\delta r^{n-1}.\]
    
    Rearranging gives \[\delta r^{n-1}\leq C r^{n-2},\] which is impossible when $r$ is sufficiently large. We reach a contradiction, and this completes the proof of Proposition \ref{lamination}.
\end{proof}

\begin{Rem}
    A variant of the preceding argument yields an even stronger conclusion in the case where $\alpha$ is totally irrational: any two distinct elements in $\mathcal{M}_\alpha$ are disjoint. In other words, $\mathcal{M}_\alpha$ is totally ordered.
    In addition to that, Bangert (cf. \cite{uniquenessbangert}) proved the following stronger uniqueness result: the set $\mathcal{M}_\alpha^{rec}$ is the unique minimal set of $\mathcal{M}_\alpha$, under the action of the group of integer translations. This gives a dynamical characterization of the subset consisting of recurrent minimizers.
\end{Rem}

It is well-known that when $\alpha$ is irrational, the set of all recurrent minimizers $\mathcal{M}_\alpha^{rec}$ either forms a foliation of $\mathbb{R}^n$ or is homeomorphic to a Cantor set in $\mathbb{R}$. In the latter case, the open intervals belonging to the complement of the Cantor set are called the gaps of the lamination, with its endpoints corresponding to elements in $\mathcal{M}^{rec}_\alpha$ that can only be approximated from above or below, i.e., only one of (\ref{recurrent1}) or (\ref{recurrent2}) holds. The remaining uncountably many other minimizers are those that can be approximated both from above and from below.

The following lemma provides a universal finite upper bound for the volumes of all such gaps in the lamination.

\begin{Lemma}\label{gap}
Suppose that $\alpha$ is totally irrational and $G$ is a gap in the lamination generated by recurrent minimizers in $\mathcal{M}_\alpha$. Then the projection $\pi: \mathbb{R}^n \to \mathbb{T}^n$ is injective when restricted to $G$. Consequently,
\[
\operatorname{Vol}_g(G) \le \operatorname{Vol}_g(\mathbb{T}^n)<\infty.
\]
\end{Lemma}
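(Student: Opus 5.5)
The plan is to show that $G$ is disjoint from all of its nontrivial integer translates, $\tau_k(G)\cap G=\emptyset$ for every $k\in\mathbb{Z}^n\setminus\{0\}$. This is exactly injectivity of $\pi|_G$, and the volume bound then follows at once: $\pi|_G$ is an isometric embedding of $G$ into $\mathbb{T}^n$, so $\operatorname{Vol}_g(G)=\operatorname{Vol}_g(\pi(G))\le \operatorname{Vol}_g(\mathbb{T}^n)$.

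First I would fix the boundary data. The gap $G$ is bounded by two consecutive recurrent minimizers $\Sigma_0=\partial E_0<\Sigma_1=\partial E_1$ in $\mathcal{M}_\alpha^{rec}$. We may write $G=E_1\setminus\overline{E_0}$, and no element of $\mathcal{M}_\alpha^{rec}$ lies strictly between $\Sigma_0$ and $\Sigma_1$. I will use Proposition \ref{lamination}, together with the Remark that $\mathcal{M}_\alpha$ is totally ordered when $\alpha$ is totally irrational. I also use that $\mathbb{Z}^n$ acts on $\mathcal{M}_\alpha^{rec}$ preserving the order, so each $\tau_k$ maps gaps of the lamination to gaps. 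Hence $\tau_k(G)$ is a gap, bounded by the consecutive pair $\tau_k\Sigma_0<\tau_k\Sigma_1$.

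Second, I show that two gaps are either equal or disjoint. If $\tau_k(G)\cap G\neq\emptyset$, then since both are connected components of the complement of the same closed lamination, $\tau_k(G)=G$. Consequently $\tau_k\Sigma_0=\Sigma_0$; otherwise the boundary leaves would be ordered strictly, and one of them would sit inside the other gap.

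Third, I exclude $\tau_k\Sigma_0=\Sigma_0$ for $k\ne0$. Total irrationality means $k\cdot\alpha\neq 0$ for every nonzero integer $k$. By Lemma \ref{lift} and properties (\ref{birkoff1})--(\ref{birkoff2}), this forces $\tau_k(E_0)\subsetneq E_0$ or $E_0\subsetneq\tau_k(E_0)$. For this step I would invoke Theorem \ref{slab}, which confines $\Sigma_0$ to a slab of width $C$ in the $\alpha$ direction, and then argue by iteration. If $\tau_k\Sigma_0=\Sigma_0$, then $\tau_{mk}\Sigma_0=\Sigma_0$ for every $m$. But $\tau_{mk}$ shifts the slab by $m\,k\cdot\alpha/S(\alpha)$, which is unbounded in $m$, so this contradicts the bounded-slab confinement. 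Thus $\tau_k\Sigma_0\ne\Sigma_0$, and so $\tau_k(G)\cap G=\emptyset$.

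The step I expect to be delicate is the second one: checking that $\tau_k(G)$ meeting $G$ really forces equality of the boundary leaves. This needs the total order of $\mathcal{M}_\alpha$ (so that translates of $\Sigma_0,\Sigma_1$ are comparable with them) and the consecutiveness of $\Sigma_0,\Sigma_1$. Concretely, suppose $\Sigma_0<\tau_k\Sigma_0<\Sigma_1$. Then a recurrent minimizer lies strictly inside $G$, which contradicts $G$ being a gap. The other cases are symmetric, using $\tau_k\Sigma_1$ and $\tau_{-k}$.
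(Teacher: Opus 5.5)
Your proposal is correct and follows essentially the paper's argument. Because $k\cdot\alpha\neq 0$ for $k\neq 0$, each translate $\tau_k\Sigma_0$ is a recurrent minimizer distinct from, hence disjoint from, $\Sigma_0$; so $\tau_k(G)$ is a different gap of the same lamination and is therefore disjoint from $G$. You spell out two steps the paper compresses into an appeal to the Birkhoff property: the equal-or-disjoint dichotomy for gaps, and the slab-iteration argument that rules out $\tau_k\Sigma_0=\Sigma_0$.
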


\begin{proof}
    Let $\Sigma^- <\Sigma^+$ be elements in $\mathcal{M}_\alpha^{rec}$ that form the boundary of $G$. Since $G$ is a gap of the lamination, $\Sigma^\pm$ are consecutive, meaning that no other recurrent minimizers can be found between them. 
    
    Recall that the set $\mathcal{M}_\alpha^{rec}$ is totally ordered and invariant under integer translations. As a result, for every translation $\tau_k\in \mathbb{Z}^n$, $\tau_k(\Sigma^{\pm})$ are again recurrent minimizers. Furthermore, since $\alpha$ is totally irrational, for any nonzero $k\in\mathbb{Z}^n$ we have $I(\alpha, k)\neq 0$. It follows from the Birkhoff property that $\Sigma^\pm$ and $\tau_k(\Sigma^\pm)$ are disjoint. 
    
    Therefore, $\tau_k(\Sigma^\pm)$ bound the translated gap $\tau_k(G)$, which implies that $G$ and $\tau_k(G)$ are disjoint. Since this is true for every integer translation, it follows that the projection $\pi|_G$ is injective. This completes the proof. 
\end{proof}
\begin{Rem}
    In general, for a prescribed irrational class $\alpha$, there exist metrics on $\mathbb{T}^n$ for which non-recurrent minimizers do appear. The following construction of such metrics is due to Bangert in \cite{nonrecurrent}. 
    
    Fix a metric $g_0$ on $\mathbb{T}^n$ and assume that every minimizer in $\mathcal{M}_\alpha(g_0)$ is recurrent. We may further assume that $\mathcal{M}_\alpha(g_0)=\mathcal{M}_{\alpha}^{rec}(g_0)$ does not form a foliation, otherwise we are done. Let $G$ be one of the gaps, and choose a compact set $K$ contained inside $G$. We then decrease the metric $g_0$ inside $K$ to obtain a new metric $g_1$, which agrees with $g_0$ outside $K$, and for which the original minimizers with respect to $g_0$ are no longer minimizers with respect to $g_1$. 
    
    Consider the following family of interpolated metrics: 
    \[g_t=(1-t)g_0+tg_1, \quad t\in [0,1].\] 
    
    These are the affine combinations of $g_0$ and $g_1$. By the compactness property of minimizers, there exists a maximal time $0<T<1$ such that the original minimizers in $\mathcal{M}_\alpha(g_0)$ remain minimizers with respect to $g_T$. By the definition of recurrent minimizers, it follows that \[\mathcal{M}_\alpha^{rec}(g_T)=\mathcal{M}_\alpha^{rec}(g_0).\] 
    
    For every $t>T$, there must exist a minimizer $\Sigma_t\in \mathcal{M}_\alpha(g_t)$ that intersects $K$. Taking the limit as $t\to T^+$,  by the compactness property of area-minimizing hypersurfaces, the sequence $\{\Sigma_t\}$ converges to a minimizer $\Sigma_T\in \mathcal{M}_\alpha(g_T)$ that intersects $K$. This minimizer is non-recurrent. 

    This phenomenon, however, is not generic. In fact, an arbitrarily small perturbation of the metric suffices to eliminate all non-recurrent minimizers in $\mathcal{M}_\alpha$. Equivalently, $\mathcal{M}_\alpha=\mathcal{M}_\alpha^{rec}$ generically. Nonetheless, recurrent minimizing hypersurfaces are limits of periodic minimizers.
\end{Rem}
\begin{Prop}[Bangert, \cite{uniquenessbangert}]
    For any recurrent minimizer $\Sigma \in \mathcal{M}^{rec}_\alpha$ and any sequence of integer classes $r_k$ converging to $\alpha$, there exists a subsequence (still denoted by $r_k$) and $\Sigma_k\in \mathcal{M}^{per}_{r_k}$ such that $\Sigma_k\to \Sigma$. 
\end{Prop}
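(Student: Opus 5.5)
The plan is to follow Bangert's argument from the non-parametric setting: approximate $\Sigma$ by periodic minimizers built from translates of one fixed periodic minimizer, and use compactness (Theorem \ref{compactness}) together with the ordering and slab properties.

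\textbf{Step 1: a limit of periodic minimizers.} For each $k$ I would fix a periodic minimizer $\widetilde\Sigma_k\in\mathcal{M}(r_k)$. This is the lift of a homologically area-minimizing hypersurface in the primitive class $r_k$; it is connected and lies in $\mathcal{M}_{r_k}$ by Lemma \ref{lift}. Fix a point $x\in\Sigma$. Translating $\widetilde\Sigma_k$ by an integer vector, I can assume it meets the unit cube containing $x$. By Theorem \ref{compactness}, a subsequence converges to some $\Sigma_\infty\in\mathcal{M}_\alpha$, since the normalized directions $r_k/S(r_k)$ converge to $\alpha/S(\alpha)$. This $\Sigma_\infty$ is a limit of periodic minimizers, but it need not equal $\Sigma$.

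\textbf{Step 2: shift so the limit is $\Sigma$.} Here I use recurrence. The closure of the $\mathbb{Z}^n$-orbit of $\Sigma_\infty$ in $\mathcal{M}_\alpha$ is closed and invariant. By Bangert's uniqueness result quoted in the Remark after Proposition \ref{lamination}, $\mathcal{M}^{rec}_\alpha$ is the unique minimal set of $\mathcal{M}_\alpha$ under the translation action. Hence this orbit closure contains $\mathcal{M}^{rec}_\alpha$, and in particular there are integer vectors $h_j$ with $\tau_{h_j}\Sigma_\infty\to\Sigma$.

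A diagonal argument then finishes the proof. For each $j$, choose $k_j$ so large that $\tau_{h_j}\widetilde\Sigma_{k_j}$ is $1/j$-close to $\tau_{h_j}\Sigma_\infty$ in $C^m$ on the ball of radius $j$. This is possible because $C^m_{loc}$ convergence is preserved by a fixed translation. Then $\Sigma_j':=\tau_{h_j}\widetilde\Sigma_{k_j}\to\Sigma$. Each $\Sigma'_j$ is still a lift of a closed minimizer in the class $r_{k_j}$, so $\Sigma_j'\in\mathcal{M}^{per}_{r_{k_j}}$. This gives the required subsequence.

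\textbf{Main obstacle.} The delicate point is Step 2, namely the claim that every orbit closure in $\mathcal{M}_\alpha$ contains $\mathcal{M}^{rec}_\alpha$. If $\alpha$ is not totally irrational, $\mathcal{M}_\alpha$ need not be totally ordered, and the claim rests on Bangert's minimality statement.

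If I want to avoid that statement, I would argue directly as follows, at least when $\alpha$ is totally irrational. The set of $\tau_h\Sigma_\infty$ is totally ordered and disjoint from $\Sigma$, because the Birkhoff property together with Proposition \ref{lamination} (in its totally irrational variant) gives disjointness. Suppose $\Sigma$ can be approximated from below. Take the supremum of those translates $\tau_h\Sigma_\infty$ lying below $\Sigma$; this supremum exists by compactness. If it differed from $\Sigma$, then applying the recurrence \eqref{recurrent2} of $\Sigma$ would produce a translate of $\Sigma_\infty$ strictly between them, a contradiction. The case of approximation from above is symmetric.
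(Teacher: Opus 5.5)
Your argument is correct and is essentially the paper's proof. Both take a compactness limit of translated periodic minimizers and then use Bangert's theorem that $\mathcal{M}^{rec}_\alpha$ is the unique minimal set of $\mathcal{M}_\alpha$ to place $\Sigma$ in the orbit closure; the paper does this through the closed translation-invariant set $\mathcal{O}=\overline{\bigcup_k \mathcal{O}_k}$, while your explicit diagonal choice of $h_j$ and increasing $k_j$ spells out the subsequence extraction that the paper leaves implicit.
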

\begin{proof}
    For each $k$, fix a periodic minimizer $R_k\in \mathcal{M}^{per}_{r_k}$, then consider the discrete set 
    \[\mathcal{O}_k=\{\tau_s R_k: s\in \pi_1(\mathbb{T}^n)\cong \mathbb{Z}^n\}.\]
    Let $\mathcal{O}$ be the closure of the union of all $\mathcal{O}_k$. Choose a sequence of minimizers $S_k \in \mathcal{O}_k$ intersecting a fixed compact set. By Theorem \ref{compactness}, we have \[S_k\to S\in \mathcal{M}_\alpha\cap \mathcal{O}. \] 

    Since the metric is periodic, $\mathcal{O}$ is closed and invariant under integer translations. Therefore $\overline{\{\tau_s S: s\in \mathbb{Z}^n\}}\subseteq \mathcal{O}$. Since $\mathcal{M}_\alpha^{rec}$ is the unique minimal set of $\mathcal{M}_\alpha$ under the action of integer translations, we conclude that \[\mathcal{M}^{rec}_\alpha\subseteq\overline{\{\tau_s S: s\in \mathbb{Z}^n\}}\subseteq \mathcal{O}.\]
    This completes the proof of the proposition.
\end{proof}

It was conjectured by Moser \cite{moserreport} that the above inclusions should be equalities, meaning all limits of sequences of periodic minimizers are recurrent. This is not known to be true yet. The following proposition gives an obstruction for the alternative in terms of the number of closed stable hypersurfaces in a sequence of integer classes approaching $\alpha$.

\begin{Prop}
    Let $r_k\in H_{n-1}(\mathbb{T}^n, \mathbb{Z})$ be a sequence of primitive integer classes converging to a totally irrational class $\alpha$. Suppose that there exists periodic minimizers $\Sigma_k \in \mathcal{M}_{r_k}^{per}$ such that 
    \[\Sigma_k\to \Sigma\in \mathcal{M}_\alpha \setminus \mathcal{M}_\alpha^{rec} \]
    locally smoothly in $\mathbb{R}^n$.
    Assume further that $\Sigma$ is isolated in $\mathcal{M}_\alpha$. Then there exists $K\in \mathbb{N}$ such that for all $k\geq K$, there exists another connected stable minimal hypersurface $\widetilde{\Sigma}_k$ different from $\pi(\Sigma_k)$, representing the class $r_k$. In particular, if for every positive integer $k$, $\pi(\Sigma_k)$ is the unique connected stable minimal hypersurface representing the class $r_k$, then $\Sigma$ is recurrent or a non-isolated non-recurrent minimizer in $\mathcal{M}_\alpha$.
\end{Prop}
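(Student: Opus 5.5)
We argue by contradiction and combine the gap structure of the recurrent lamination with a mountain-pass or pinching argument on the torus. Since $\Sigma$ is non-recurrent and $\alpha$ is totally irrational, $\mathcal{M}_\alpha$ is totally ordered and $\mathcal{M}_\alpha^{rec}$ is a Cantor-type lamination. Hence $\Sigma$ lies inside a gap $G$ bounded by consecutive recurrent minimizers $\Sigma^-<\Sigma^+$. By Lemma \ref{gap}, $\pi|_G$ is injective, so all translates $\tau_s\Sigma$ lie in the pairwise disjoint gaps $\tau_s G$. Moreover, $\tau_s\Sigma\to\Sigma^\pm$ as $s\cdot\alpha\to\pm\infty$ along suitable directions, since the closure of the orbit of $\Sigma$ contains the minimal set $\mathcal{M}^{rec}_\alpha$.

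Because $\Sigma$ is isolated in $\mathcal{M}_\alpha$, we may fix a neighborhood of $\Sigma$, namely a thin region $U$ between two leaves of $\mathcal{M}_\alpha$ (or a tubular neighborhood inside $G$), containing no other element of $\mathcal{M}_\alpha$ besides $\Sigma$. Now consider the periodic minimizers $\Sigma_k\to\Sigma$. On the other hand, by the proposition of Bangert stated just above, after passing to a subsequence there are also periodic minimizers $\Sigma'_k\in\mathcal{M}^{per}_{r_k}$ with $\Sigma'_k\to\Sigma^-$, a recurrent minimizer. Let $\widetilde\Sigma_k:=\pi(\Sigma'_k)$. If $\widetilde\Sigma_k\neq\pi(\Sigma_k)$ for all large $k$, we are done, since both are connected (primitive class) stable minimal hypersurfaces in $r_k$.

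Suppose instead that $\pi(\Sigma'_k)=\pi(\Sigma_k)$ for infinitely many $k$. Then $\Sigma'_k=\tau_{s_k}\Sigma_k$ for some $s_k\in\mathbb{Z}^n$, so the single periodic orbit of $\Sigma_k$ contains lifts near $\Sigma$ and lifts near $\Sigma^-$. We use the ordering of the orbit $\{\tau_s\Sigma_k\}$: it is totally ordered by the Birkhoff property, and its ordering is governed by $s\cdot r_k$. Consider the translates of $\Sigma_k$ lying in the region between $\Sigma^-$ and $\Sigma$. Near the fixed compact set these converge, and any limit is an element of $\mathcal{M}_\alpha$ lying between $\Sigma^-$ and $\Sigma$. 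Choosing, for each $k$, the translate of $\Sigma_k$ that is the immediate predecessor of $\Sigma_k$ in the orbit order, we obtain a sequence whose limit lies in $\mathcal{M}_\alpha$, is $\le \Sigma$, and is distinct from $\Sigma$. Since periodic orbits become dense in the orbit-closure as $r_k\to\alpha$, the consecutive translates must accumulate at $\Sigma$ itself. This contradicts isolation. Making this precise requires a uniform statement: the consecutive gaps in the orbit of $\Sigma_k$ shrink near $\Sigma$, because the translation number $s\cdot r_k/S(r_k)$ ranges over a set whose spacing tends to $0$, while $\tau_s\Sigma_k$ is close to $\tau_s\Sigma$ for bounded $s$. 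The limit of predecessors is then an element of $\mathcal{M}_\alpha$ that approaches $\Sigma$ in $U$, contradicting the isolation of $\Sigma$.

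The main obstacle is this last step: controlling the relative positions of predecessors of $\Sigma_k$ in its orbit, so as to guarantee a limit inside $U$ that is distinct from $\Sigma$. We will try to use the fact that predecessors have the form $\tau_{s}\Sigma_k$ with $0<-s\cdot r_k$ minimal, together with the fact that $\tau_{-s}$ of the nearby recurrent structure forces crossing into $U$. If this fails, we will fall back on a Lusternik–Schnirelmann argument. In that approach, the isolated minimizer $\Sigma$ and the recurrent $\Sigma^-$ both correspond to strict local minima of area in the class $r_k$ for large $k$, and we would produce a distinct stable component. The "in particular" statement is then the contrapositive of the main claim.
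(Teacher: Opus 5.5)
\textbf{Verdict: there is a genuine gap.} The second branch of your dichotomy fails, and it aims at a stronger statement than the proposition.

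\textbf{The first branch does no work.} If you argue by contradiction, assume that for infinitely many $k$ every connected stable minimal hypersurface in $r_k$ is $\pi(\Sigma_k)$. Then the periodic minimizers $\Sigma'_k\to\Sigma^-$ from Bangert's proposition are automatically translates of $\Sigma_k$. So everything rests on the second branch, and that branch does not go through.

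\textbf{Why the predecessor argument fails.} The limit $P$ of the immediate predecessors of $\Sigma_k$ in its orbit is a single element of $\mathcal{M}_\alpha$ with $P\le\Sigma$.
If $P=\Sigma$, nothing is contradicted: periodic minimizers accumulating at $\Sigma$ are not elements of $\mathcal{M}_\alpha$, and $\Sigma_k\to\Sigma$ is already a hypothesis.
If $P\neq\Sigma$, then $P$ is disjoint from $\Sigma$, hence at positive distance from it on compact sets, so isolation is again not violated. Contradicting isolation requires a sequence of \emph{distinct elements of} $\mathcal{M}_\alpha$ converging to $\Sigma$, and your construction never produces one.

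The spacing heuristic does not help. That the values $s\cdot r_k/S(r_k)$ have mesh tending to $0$ does not imply that consecutive leaves of $\{\tau_s\Sigma_k\}$ are close on compact sets. Consecutive leaves bound a fundamental domain of volume $\operatorname{Vol}(\mathbb{T}^n)$. They can converge to two distinct elements of $\mathcal{M}_\alpha$ bounding a region inside $G$, exactly as the consecutive lifts $M_k<N_k$ may do in Section \ref{section6}. At the resolution of translation numbers, $\Sigma^-$, $\Sigma$ and $\Sigma^+$ are indistinguishable. Run honestly with a small ball around a point of $\Sigma$ that meets no other element of $\mathcal{M}_\alpha$, your argument only shows that, for large $k$, compact pieces of that ball on either side of $\Sigma$ lie in gaps of the single orbit $\{\tau_s\Sigma_k\}$. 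Nothing in the order structure rules this out.

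\textbf{The deeper problem.} In the second branch you are trying to prove that $\pi(\Sigma_k)$ cannot be the unique homologically area-minimizing hypersurface in $r_k$. When it is the unique minimizer, the hypersurface $\widetilde{\Sigma}_k$ required by the statement is stable but \emph{not} minimizing. So it is not an element of $\mathcal{M}(r_k)$ and cannot come out of any argument about that set.

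\textbf{The missing idea} is a barrier that converts the isolation of $\Sigma$ into a local minimizer avoiding $\Sigma$. The paper proceeds as follows.
First, it picks a relatively compact $\Omega\subset G$ meeting $\Sigma$ such that no other element of $\mathcal{M}_\alpha(g)$ meets $\overline{\Omega}$; this is the only place isolation is used.
Second, it replaces $g$ by $g_\varphi=e^{2\varphi}g$, with $\varphi\ge 0$ supported in, and positive on, $\pi(\Omega)$, and notes that $\Sigma^\pm$ remain minimizing.
Third, a comparison argument using $\operatorname{Vol}(G)<\infty$ and Lemma \ref{asymptotic} shows that no $g_\varphi$-minimizer with direction $\alpha$ meets $\overline{\Omega}$.
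Finally, by compactness, for large $k$ no lift of a $g_\varphi$-homological minimizer $\widetilde{\Sigma}_k$ in $r_k$ meets $\Omega$. Hence $\widetilde{\Sigma}_k$ is $g$-minimal, $g$-stable and connected, and it differs from $\pi(\Sigma_k)$, which crosses $\pi(\Omega)$.

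Your Lusternik--Schnirelmann fallback points in this direction, but it simply asserts that $\Sigma$ yields a strict local minimum of area in the class $r_k$. $\Sigma$ is a non-compact hypersurface in $\mathbb{R}^n$, not a critical point on $\mathbb{T}^n$. Manufacturing such a local minimum is precisely what the metric perturbation does.
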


\begin{proof}
Since $\Sigma$ is not recurrent, Bangert \cite{uniquenessbangert} showed that $\Sigma$ lies in a gap $G$ bounded by two recurrent minimizers 
 $\Sigma^- < \Sigma^+$ in $\mathcal{M}_\alpha^{rec}(g)$, where 
 \[\Sigma^+=\inf \{\tau_s \Sigma: s\in \pi_1(\mathbb{T}^n), s\cdot \alpha>0\},\]
 and \[\Sigma^-=\sup \{\tau_s\Sigma: s\in \pi_1(\mathbb{T}^n), s\cdot \alpha<0\}.\]
 
 By Lemma \ref{gap}, the projection $ \pi:G\to \mathbb{T}^n$  is injective, and $G$ has finite $g$-volume. 
Choose a point $p\in \Sigma$. Since $\Sigma$ is isolated in $\mathcal{M}_\alpha(g)$, we may choose a relatively compact open set $ \Omega \subset G $ with smooth boundary such that $p\in \Omega$, $\Omega\cap \Sigma\neq\emptyset$, and no element of $\mathcal{M}_\alpha(g)$ other than $\Sigma$ intersects $\overline{\Omega}$. 

Let $\varphi\in C^\infty(T^n)$ be nonnegative, supported in $\pi(\Omega)$, and positive on $\pi(\Omega)$. This lifts to a $\mathbb{Z}^n$-periodic function on $\mathbb{R}^n$. Define a new metric 
\[ g_\varphi := e^{2\varphi}g \quad \text{on } \mathbb{T}^n. \] 
The lifted metric on $\mathbb{R}^n$ will still be denoted by $g_\varphi$. 
By construction, $g_\varphi\ge g$, and $g_\varphi=g$ outside $\pi(\Omega)$. Since $\Sigma^\pm$ are disjoint from the support of the perturbation, they remain area-minimizing with respect to $g_\varphi$. Indeed, for every compactly supported competitor $T$: 
\[ \mathbf M_{g_\varphi}(T\llcorner W) \ge \mathbf M_g(T\llcorner W) \ge \mathbf M_g(\Sigma^\pm\llcorner W) = \mathbf M_{g_\varphi}(\Sigma^\pm\llcorner W). \]

We claim that no element of $\mathcal{M}_\alpha(g_\varphi)$ intersects the region $\overline{\Omega}$. Suppose, to the contrary, that $\Gamma\in \mathcal{M}_\alpha(g_\varphi)$ intersects this region. If $\Gamma$ does not meet $\Omega$, then it must intersect $\partial \Omega$ and belong to $\mathcal{M}_\alpha(g)$, contradicting the assumption that no other minimizer in $\mathcal{M}_\alpha(g)$ intersects $\overline{\Omega}$. Hence we can assume $\Gamma \cap \Omega\neq \emptyset$.

Since $\Sigma^\pm$ are $g_\varphi$-minimizing and have the same homological direction as $\Gamma$, the Birkhoff property implies that $\Gamma$ lies between $\Sigma^-$ and $\Sigma^+$, hence inside the same gap $G$. Let $B_R=B_R(p)$, where the radius is taken with respect to the auxiliary proper metric defined in Section \ref{section6}. 

Since $G$ has finite volume, Lemma \ref{asymptotic} gives 
\[ \operatorname{Vol}^g_{n-1}(\partial B_R\cap G)\to 0 \quad\text{as }R\to\infty. \] 

Consequently, the restrictions of $\Gamma$ and $\Sigma^\pm$ to $B_R$ can be compared after adding a filling current supported in $\partial B_R\cap G$, whose mass tends to zero as $R\to\infty$. In particular, since $\Sigma^\pm$ are $g$-area-minimizing, there exists an error term $\eta_R\to 0$ such that 
\[ \mathbf M_{g}(\Gamma\llcorner B_R) \ge \mathbf M_{g}(\Sigma^\pm\llcorner B_R)-\eta_R . \] 

On the other hand, because $\Gamma$ meets the region where $\varphi>0$, there exists $\lambda>0$ such that, for all sufficiently large $R$, we have
\[ \mathbf M_{g_\varphi}(\Gamma\llcorner B_R) \ge \mathbf M_g(\Gamma\llcorner B_R)+2\lambda. \]
Combining the preceding two estimates and using $g_\varphi=g$ outside $\pi(\Omega)$, we obtain 
\[ \mathbf M_{g_\varphi}(\Gamma\llcorner B_R) \ge \mathbf M_{g_\varphi}(\Sigma^\pm\llcorner B_R) +2\lambda-\eta_R. \]

For $R$ sufficiently large, this is strictly larger than $\mathbf M_{g_\varphi}(\Sigma^\pm\llcorner B_R)+\lambda$, since $\eta_R\to 0$. However, the same small filling current on $\partial B_R\cap G$ gives an admissible competitor for $\Gamma\llcorner B_R$ whose $g_\varphi$-mass is at most  $\mathbf M_{g_\varphi}(\Sigma^\pm\llcorner B_R)+\eta_R$. This contradicts the fact that $\Gamma$ is area-minimizing with respect to $g_\varphi$ and proves the claim. 

Now, for each $k$, let $\widetilde{\Sigma}_k$ be a closed area-minimizing hypersurface in the class $r_k$ with respect to $g_\varphi$, and choose a lift 
\[ \Sigma_k'\in \mathcal{M}^{per}_{r_k}(g_\varphi). \]

We argue that, for all sufficiently large $k$, no lift of $\widetilde{\Sigma}_k$ intersects $\Omega$. Otherwise, after passing to a subsequence and choosing appropriate integer translates, we would obtain a sequence $\Sigma_k'$ intersecting $\Omega$. By the compactness theorem \ref{compactness}, a subsequence converges locally smoothly to  $\Gamma\in \mathcal{M}_\alpha(g_\varphi)$ which intersects $\overline{\Omega}$, contradicting the claim. Therefore, for all sufficiently large $k$, the hypersurface $\widetilde{\Sigma}_k$ is disjoint from $\Omega$. Hence $\widetilde{\Sigma}_k$ is not only minimal and stable with respect to $g_\varphi$, but also with respect to the original metric $g$. Since $r_k$ is primitive, the minimizing representative $\widetilde{\Sigma}_k$ is connected.

Finally, because $\Sigma_k\to \Sigma$ and $\Sigma$ intersects the region where $\varphi>0$, the projected hypersurface $\pi(\Sigma_k)$ also intersects this region for all sufficiently large $k$. By contrast, $\widetilde\Sigma_k$ is disjoint from it. Hence $ \widetilde\Sigma_k \neq \pi(\Sigma_k).$ This completes the proof. \end{proof}

\section{Necessary condition for minimal foliation}\label{section6}
In this section, we will prove the implication $(3) \rightarrow (1)$ in the statement of Theorem \ref{maintheorem1}, together with Theorem \ref{maintheorem3}. We restate them here for convenience.
\begin{Th}
    Suppose that $\mathcal{M}_\alpha$ foliates $\mathbb{R}^n$ for some totally irrational $\alpha \in H_{n-1}(\mathbb{T}^n, \mathbb{R})$. Then
    \[\Delta W_\alpha=\lim_{k\to \infty} (\omega(r_k)-S(r_k))=0\]
    for any sequence of integer classes $\{r_k\}_k$ converging to $\alpha$. If $\alpha$ is rationally dependent, the same conclusion holds whenever $\mathcal{M}(\alpha)$ foliates $\mathbb{R}^n$.
\end{Th}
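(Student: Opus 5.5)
Since $\omega(r_k)\ge S(r_k)$ always holds, we only need an upper bound: for every $\epsilon>0$ and all large $k$, we will build a sweep-out $\Phi_k\in\mathcal{P}_{r_k}$ with $\sup_\theta\mathbf{M}(\Phi_k(\theta))\le S(r_k)+\epsilon$. The idea is to sweep the torus by translates of a single periodic minimizer $\Sigma_k$ in the class $r_k$, and to interpolate between consecutive translates with Corollary \ref{interpolation}. The foliation hypothesis is what makes consecutive translates close in mass.

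\textbf{Step 1 (periodic minimizers approximate the foliation uniformly).} Fix a closed area-minimizing $\Sigma_k\in r_k$ and its lift $\widetilde\Sigma_k\in\mathcal{M}(r_k)$. We claim that the gaps between consecutive lifts $\tau_s\widetilde\Sigma_k$, $s\in\mathbb{Z}^n$, have volume tending to $0$ uniformly as $k\to\infty$, and likewise the projected region $U_k\subset\mathbb{T}^n$ between $\Sigma_k$ and its nearest translate $\tau_{s_k}\Sigma_k$. The argument is by contradiction. Suppose some gap $G_k$ has volume at least $c>0$. Translate it so it meets a fixed compact set, and apply Theorem \ref{compactness} to the two bounding minimizers. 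Both converge to elements of $\mathcal{M}_\alpha$ (of $\mathcal{M}(\alpha)$ in the rationally dependent case, using the maximal-periodicity structure in Theorem \ref{structure}), and these limits bound a region of positive volume containing no leaf. Positivity of the limit volume follows from the slab bound, Theorem \ref{slab}, together with the injectivity-type control of Lemma \ref{gap}. Since the gap is empty of leaves in the limit, this contradicts the foliation hypothesis. We choose $s_k$ with $s_k\cdot r_k$ equal to the smallest positive value, so consecutive translates are adjacent leaves of the periodic lamination. Their flat distance is then at most $\mathrm{Vol}(U_k)\to0$.

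\textbf{Step 2 (from flat closeness to mass closeness).} Corollary \ref{interpolation} needs $\mathbf{M}(T_0-T_1)<\delta$, not merely flat closeness. Two disjoint minimizers have $\mathbf{M}(\Sigma_k-\tau_{s_k}\Sigma_k)$ close to $2S(r_k)$, so we cannot interpolate them directly. Instead we subdivide $U_k$. Let $Q_k$ be the region in $\mathbb{T}^n$ between $\Sigma_k$ and $\tau_{s_k}\Sigma_k$, of volume $\mathrm{Vol}(\mathbb{T}^n)/|r_k|$-type size, and consider the family $\partial(E\cup(Q_k\cap\{f\le t\}))$ for a cutting function $f$ chosen by coarea or slicing. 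Each intermediate current differs from $\Sigma_k$ by pieces of the slices $Q_k\cap\{f=t\}$ together with parts of the two boundary leaves. By Step 1 and the local smooth graphical convergence of the leaves (Theorem \ref{compactness}, leaves being $C^k$-close away from a small set), we can choose $f$ so that every slice has mass $o(1)$ and the swap between $\Sigma_k$ and $\tau_{s_k}\Sigma_k$ happens through small patches. This gives a discrete family in which adjacent members differ by mass below $\delta_0$, with all masses at most $S(r_k)+o(1)$. Theorem \ref{discretetocontinuous} then upgrades it to a mass-continuous path with the same bound up to $C_0\delta$.

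\textbf{Step 3 (concatenation).} Concatenate the paths $\tau_{js_k}$ applied to the Step 2 path, over $j=0,\dots,N_k-1$, where $\tau_{N_ks_k}\Sigma_k=\Sigma_k$ on $\mathbb{T}^n$. The resulting loop in $\mathcal{Z}_{n-1}(\mathbb{T}^n,r_k)$ sweeps out the region between the lifts exactly once, so its Almgren image is $[\mathbb{T}^n]$. It has no concentration of mass because it is mass-continuous. Hence $\omega(r_k)\le S(r_k)+o(1)$.

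The main obstacle is Step 2: converting the small-volume gap into a mass-small discretization. Small volume alone does not rule out long thin fingers of large area. I would first try to exploit the fact that, on compact pieces of a fundamental domain, the two boundary leaves are smooth graphs over each other with $C^1$-small separation. This lets us take $f$ to be the normal coordinate and sweep patch by patch along a fine cubical decomposition of $\mathbb{T}^n$, with the mass error controlled by the patch boundaries times the separation.
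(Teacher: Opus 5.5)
Your Step 1 has a genuine gap, and it sits exactly where the paper does its real work. Take limits $M_k\to M$, $N_k\to N$ of two consecutive lifts, translated to meet a fixed compact set. The construction tells you only that the open region $\Omega$ between $M$ and $N$ projects injectively to $\mathbb{T}^n$; the paper proves exactly this, and in particular no integer translate of $M$ or $N$ lies in $\Omega$. Nothing prevents \emph{other} leaves of $\mathcal{M}_\alpha$ from lying in $\Omega$. So your assertion that the limit region ``contains no leaf'' is unfounded, and there is no contradiction with the foliation hypothesis: if $M\neq N$, then $\Omega$ is simply foliated by leaves of $\mathcal{M}_\alpha$.

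This case cannot be brushed aside. A recurrent leaf $L$ cannot lie in $\Omega$: a translate $\tau_sL$, $s\neq 0$, close to $L$ would meet $\Omega$ while lying in $\tau_s\Omega$, which is disjoint from $\Omega$. Hence $M\neq N$ can only occur in the Denjoy-type situation, where $\mathcal{M}^{rec}_\alpha$ is a Cantor lamination whose gaps are filled by non-recurrent leaves. The hypothesis that $\mathcal{M}_\alpha$ (rather than $\mathcal{M}^{rec}_\alpha$) foliates does not exclude this, and neither does anything in your argument. In that case $M_k$ and $N_k$ stay a definite distance apart on a set of volume bounded below. Your Step 2 relies on the two leaves being $C^1$-close off a small set, so it has nothing to work with there: any growing family of patches must at some stage cut that thick region roughly in half, and then its side walls have area bounded below.

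Your volume bookkeeping is also off. The slab between $\widetilde\Sigma_k$ and $\tau_{s_k}\widetilde\Sigma_k$ projects onto all of $\mathbb{T}^n$, so per period cell its volume is exactly $\operatorname{Vol}(\mathbb{T}^n)$. On the torus $\tau_{s_k}\Sigma_k=\Sigma_k$, so $U_k$ does not shrink, and a single pass across the slab is already the whole sweep-out; no concatenation over $N_k$ translates is needed. What is true is only that the slab is thin on average, because its period cells become large.

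The missing idea is to \emph{use} the foliation in the thick region rather than to rule that region out. The paper splits into the cases $M=N$ and $M\neq N$.
\begin{itemize}
\item When $M=N$ for every choice of translations, curvature estimates make $N_k$ a $C^2$-small graph over $M_k$ everywhere (Claim \ref{claim6.3}). Your Step 2 is then essentially the paper's Step 2: pieces $Q_{k,i}$ with small cumulative side walls $S(P_{k,j})$, the bound $\mathbf{M}(M_k+\partial P_{k,j})\le\mathbf{M}(M_k)+2\mathbf{M}(S(P_{k,j}))$ from minimality of $N_k$, and Corollary \ref{interpolation} between consecutive stages.
\item When $M\neq N$, injectivity of $\pi$ on $\Omega$ gives $\operatorname{Vol}(\Omega)<\infty$ and, by Lemma \ref{asymptotic}, thin ends. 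So $M_k$ and $N_k$ are far apart only inside boundedly many balls of bounded size. Inside those balls the paper interpolates through the leaves $\varphi_t\in\mathcal{M}_\alpha$ running from $M$ to $N$, via the currents $M_k+\partial((E_t\setminus E)\cap B)$. Their area-minimality keeps all masses within $\epsilon$ of $S(r_k)$, and only then is the patch argument run on the thin remainder.
\end{itemize}

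Your rationally dependent case relies on the same empty-gap contradiction. The paper argues differently there: the limits lie in $\mathcal{M}_\alpha=\mathcal{M}(\alpha)$, are invariant under a nonzero integer $\beta\perp\alpha$, and it compares volumes with the fundamental region between $M_k$ and $N_k$. The case $r_k=\alpha$ for all large $k$ must also be treated separately, directly from the foliation $\mathcal{M}(\alpha)$, since there consecutive lifts certainly do not converge together.
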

\begin{proof}
    The first equality automatically holds from the definition of the Mather energy barrier once we establish the second equality for every sequence of homology classes $\{r_k\}_k \to \alpha$. 
    
    We begin with the case in which $\alpha$ is totally irrational. Without loss of generality, assume that there is no foliation of $\mathbb{R}^n$ by closed, minimal hypersurfaces in the homology class $r_k$ for all $k\in \mathbb{N}$ (if such a foliation exists, then $\omega(r_k)-S(r_k)=0$, so we could simply discard those $r_k$ from the original sequence without affecting the zero limit). For any $k$, let $M_k<N_k$ be two consecutive elements in $\mathcal{M}(r_k)$ that are lifts of the same homologically area-minimizing element in the class $r_k$. Then $N_k=\tau_{t_k}M_k$ for some integer translation $\tau_{t_k}$. We will work on their restrictions to a fundamental domain in $\mathbb{R}^n$.

     For a fixed $\epsilon>0$, we want to construct a one-parameter family $\Phi_t$ interpolating from $M_k$ to $N_k$ such that 
     \[\mathbf{M}(M_k)\leq \mathbf{M}(\Phi_t) \leq \mathbf{M}(M_k)+\epsilon\] 
     for all $t$, when $k>K$ is sufficiently large.  Here, the masses are of the compact projections of $M_k$ and $N_k$ to $\mathbb{T}^n$ or of their restrictions to a fundamental domain on $\mathbb{R}^n$. We shall not distinguish between these two when no confusion can arise.

    We proceed as follows. By applying integer translations, we can assume that $M_k$ and $N_k$ always intersect the unit cube for all $k$. It follows from the compactness property of the set of minimizers $\mathcal{M}$ that 
    \[M_k, N_k\longrightarrow M, N\in \mathcal{M}_\alpha.\]
    
    We first treat the case where the limits $M$ and $N$ are distinct. Since $\mathcal{M}_\alpha$ is a foliation, the region between $M$ and $N$ is foliated by area-minimizing hypersurfaces of direction $\alpha$. Let $H_k$ be the fundamental domain bounded by $M_k$ and $ N_k$. The idea is to partition $H_k$ into two collections of open balls.
    \begin{description}
        \item[(1) Type A] balls on which $M_k$ and $N_k$ are $C^1$-close.
        
        \item[(2) Type B] balls on which $M_k$ and $N_k$ are not sufficiently close.
    \end{description}
    
    On an open ball $U$ of type $A$, we can use the interpolation lemma to connect $M_k \llcorner U$ to $N_k\llcorner U$, since they are area-minimizing and close in the flat norm. On an open ball $W$ of type $B$, we connect $M_k\llcorner W$ to $N_k\llcorner W$ by modifying the foliation in $\mathcal{M}_\alpha$. 
    
    Assume that $G$ is the open set bounded by $M$ and $N$. 

    \begin{claim}
        The restriction of the projection $\pi: \mathbb{R}^n\to \mathbb{T}^n$ to the gap $G$ is injective.
    \end{claim}
     \begin{proof}[Proof of claim]
     We prove this claim by contradiction. Suppose that $\pi(x)=\pi(y)$ for some distinct points $x, y\in G$. Choose a large compact set $K$ such that $x, y\in K\cap G$. Since $M_k$ and $N_k$ converge to $M$ and $N$ on compact sets, we have \[M_k\llcorner K\to M\llcorner K \quad \text{and} \quad N_k\llcorner K\to N\llcorner K.\] Furthermore, since $\alpha$ is totally irrational and $r_k\to \alpha$, the fundamental domains $H_k$ expand along every direction. Therefore, for a sufficiently large $k$, we have $x, y \in H_k$. This contradicts the bijectivity of $\pi$ on $H_k$ and therefore proves the claim. 
     \end{proof}
     
     Let $d_g(x)=\operatorname{dist}_g(x,0)$ be the distance function with respect to the periodic metric $g$. We can choose a smooth approximation $\Tilde{d}$ of $d$ such that
\[\sup_{x\in \mathbb{R}^n}|\Tilde{d}(x)-d(x)|\leq 1\]
and \[\operatorname{Lip}(\Tilde{d})\leq 2.\]

Let $B_r=\{x\in \mathbb{R}^n| \Tilde{d}(x)\leq r\}$.
By the co-area formula, we have
\[\int_{0}^{\infty}\operatorname{Vol}_{n-1}(G\cap \partial B_r)dr\leq \operatorname{Lip}(\Tilde{d})\operatorname{Vol}_n(G)<\infty.\]

Since $M$ and $N$ are area-minimizing, it follows from the standard curvature estimates for stable minimal hypersurfaces that outside of a sufficiently large compact set, one can write $M$ locally as a normal graph over $N$:
\[M=\{\exp_x (u(x)\nu(x)): x\in N\}\]
where $u>0$ satisfies a uniformly elliptic equation. A standard argument using Harnack's inequality yields the following decay lemma, which is useful in applying an area comparison argument later. We postpone the proof of it to Appendix \ref{appendixd}.

\begin{Lemma}\label{asymptotic}
The following holds:
\[\lim_{R\to \infty} \operatorname{Vol}_{n-1}(\partial B_{R}(0)\cap G)=0.\]
\end{Lemma}
    Fix a small constant $\eta>0$. Since $M_k$ and $N_k$ are stable, for any $x\in M_k$ such that $\operatorname{dist}(x, N_k)>2\eta$, a standard argument using curvature estimates for stable minimal hypersurfaces shows that there exists $\delta_\eta$ such that for all \[y\in B(x, \delta_\eta)\cap M_k,\] we have $\operatorname{dist}(y, N_k)>\eta$. Thus we can choose a collection $\Gamma_k$ of at most $N_\eta$ balls $B \subset H_k$ such that the following properties hold:
    \begin{enumerate}
        \item The diameters of all $B\in \Gamma_k$ are uniformly bounded independent of $k$.
        \item $\textbf{M}(M_k\llcorner B)=\textbf{M}(N_k\llcorner B)$.
        \item For any $y\in M_k \setminus \cup_{B\in\Gamma_k} B$, the distance from $y$ to $N_k$ is at most $\eta$.\\
    \end{enumerate}
    
    \underline{\textbf {Step 1:} Interpolating from $M_k$ to $N_k$ inside each ball $B\in \Gamma_k$.} \\

    We now work on the restriction $M_k\llcorner B, N_k\llcorner B$ for each $B\in \Gamma_k$. Since $B$ is uniformly bounded, by translation, we can assume that \[M_k\llcorner B, N_k\llcorner B \subset \Omega\]
    for some fixed bounded compact set $\Omega \in \mathbb{R}^n$. Since $M_k$ and $N_k$ converge to $M$ and $N$ on compact sets, we have \[M_k\llcorner B\to M\llcorner B  \text{ and } N_k\llcorner B\to N\llcorner B.\]

    Since $\mathcal{M}_\alpha$ foliates $\mathbb{R}^n$, we can find a map 
    \[\varphi_t: [0,1]\to \mathcal{M}_\alpha,\text{ } \varphi_0=M, \text{ } \varphi_1=N. \] 
    
    Let $\varphi_t=\partial E_t$ and $M_k=\partial E$. Consider the current:
    \[\Phi_t=M_k+\partial ((E_t\backslash E) \llcorner B).\]

    \begin{figure}[H]
    \centering
    \includegraphics[scale = 0.37]{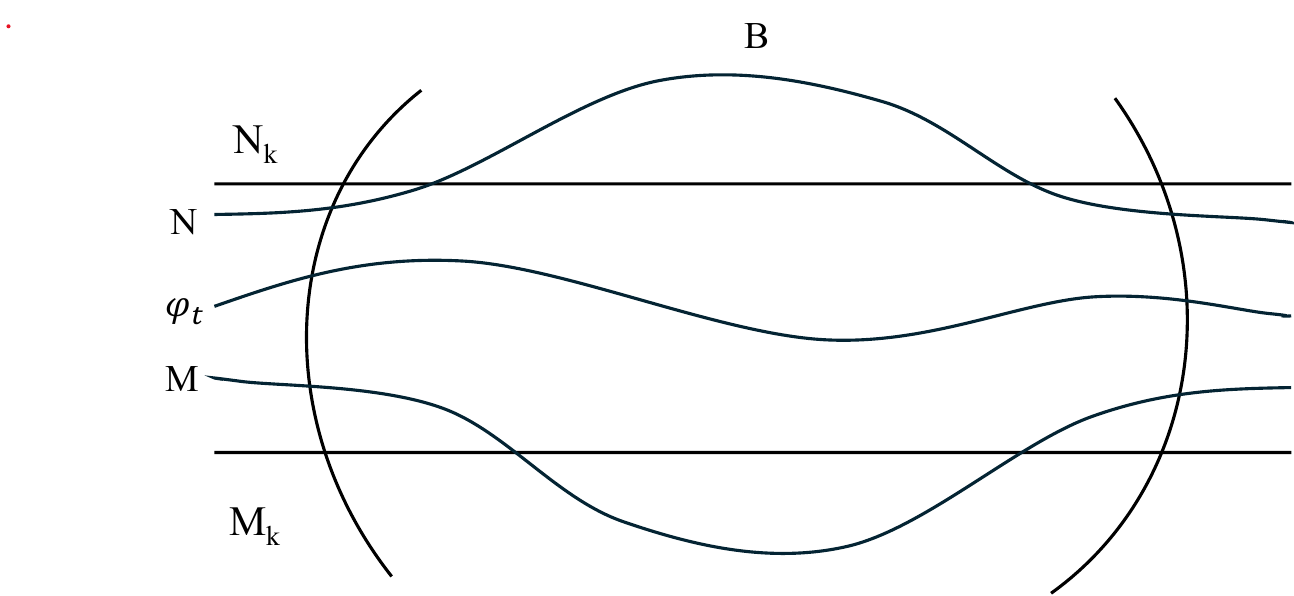}
    \caption{The local interpolation from $M_k$ to $N_k$ inside $B$ is constructed from the foliation in $\mathcal{M}_\alpha$.}
    
    \end{figure} 
    
     We will interpolate from $M_k$ to $M_k+\partial (B\cap H_k)$ in the following order:
    \[M_k \to M_k+\partial((E_0\backslash E)\cap B) \to M_k +\partial((E_1\backslash E)\cap B) \to M_k+\partial (B\cap H_k).  \]
    
    Let $\epsilon'>0$ be chosen later.
    Since $M_k, N_k \longrightarrow M, N$ on $\Omega$, for sufficiently large $k$, we can interpolate from $M_k$ to $M_k+\partial((E_0\backslash E)\cap B)$ and from $M_k+\partial((E_1\backslash E)\cap B)$ to $M_k+\partial(B\cap H_k)$ with mass changes of at most $\epsilon'$. 

    Note that both $M_k$ and the hypersurfaces $\varphi_t$ are area-minimizing for all $t$, and for small $\eta$, $\partial (B\cap H_k)$ has small area. It follows from the area comparison argument that:
    \[|\textbf{M}(M_k+\partial((E_t\backslash E)\cap B)) -\textbf{M}(M_k)|< \epsilon'.\]
    
    Therefore, we can construct a family $\Phi_t$ interpolating from $M_k$ to $M_k+\partial (B\cap H_k)$ such that:
    \[\textbf{M}(M_k)\leq \textbf{M}(\Phi_t)\leq \textbf{M}(M_k)+3\epsilon'.\]
    
    We may repeat this construction for every ball $B\in \Gamma_k$. 
    
    Since the number of such balls is at most $N_\eta$, we get a $\mathcal{F}$-continuous family $\{\Phi^1_t\}$ joining $M_k$ to $M_k+\partial (\bigcup\limits_{B\in \Gamma_k} B \cap H_k)$ and satisfying the mass estimate
    \[\textbf{M}(M_k)\leq \textbf{M}(\Phi_t^1)\leq \textbf{M}(M_k)+3N_\eta \epsilon'.\]

    Choosing $\epsilon'$ such that $3N_\eta \epsilon'=\epsilon$, we conclude that every slice $\Phi_t^1$ in the family has mass bounded by $\mathbf{M}(M_k)$ and $\mathbf{M}(M_k)+\epsilon$, for all $k>K_1$. Moreover, the family exhibits no concentration of mass. This follows from the fact that each slice is obtained by modifying smooth area-minimizing hypersurfaces in $\mathcal{M}_\alpha$ only in a controlled way.\\
    
    \underline{\textbf {Step 2}: Interpolating outside $\bigcup _{B\in \Gamma_k} B$.}\\

    We now turn to the problem of interpolating from the current \[M_k+\partial (\bigcup_{B\in \Gamma_k} B \cap H_k)\] to $N_k$. This current is naturally decomposed into three types of pieces: 
    \begin{description}
        \item[(1)] Portions of $M_k$ lying outside of the balls in $\Gamma_k$, all of which are $\eta$-close (in flat norm and $C^1$) to the corresponding portions of $N_k$;
        \item [(2)] Boundary pieces $\partial (B\cap H_k)$ for all $B\in \Gamma_k$, each of which has small area because $B\in \Gamma_k$;
        \item [(3)] Pieces of $N_k$ lying inside the union $\bigcup_{B\in \Gamma_k} B$.
    \end{description}
    
    Rather than carrying along this decomposition explicitly, we may simplify the discussion by assuming that $M_k$ and $N_k$ are uniformly $\eta$-close in $C^1$. Under this reduction, the interpolation becomes a local problem on a finite covering of the fundamental domain. 
    
    More precisely, we partition the fundamental domain into finitely many open sets:
    \[H_k=\sum_{i=1}^{J_k}Q_{k,i},\]
    \begin{figure}[H]
    \centering
    \includegraphics[scale = 0.4]{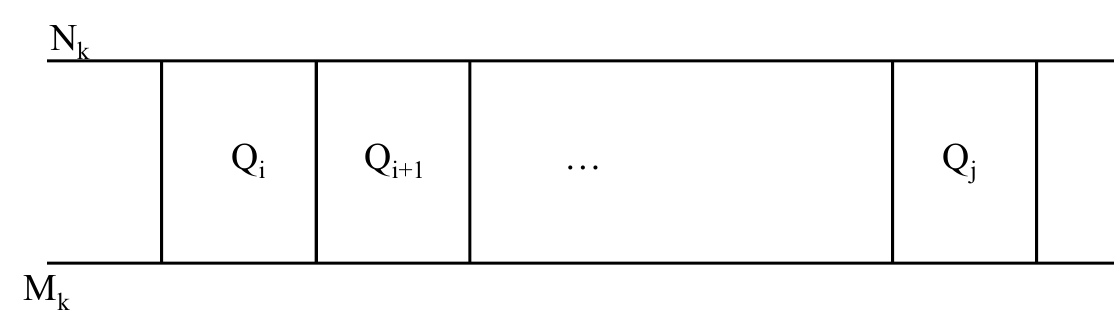}
    \caption{A partition of the fundamental domain into sets of small boundaries.}
   
    \end{figure} 
    where $\mathbf{M}(\partial Q_{k,i})<\eta^{n-1}$ is small enough to allow a controlled interpolation between the portions of $M_k$ and $N_k$ lying inside $Q_{k,i}$.
    
    The boundary of each cube $Q_{k,i}$ consists of faces intersecting $M_k, N_k$, and the sides that belong to the gap. Let $S(Q_{k,i})$ be the sides that belong to the gap.

    Since $\alpha$ is totally irrational, we can approximate a fundamental domain of $M_k$ by a rectangular cube whose sides $x^{(k)}_1, x^{(k)}_2, \ldots, x^{(k)}_{n-1}$ go to infinity. Since the volume of the fundamental domain $H_k$ is $1$, we have $\eta x^{(k)}_1 x^{(k)}_2\cdots x^{(k)}_{n-1}\approx 1$ and hence the surface area of $H_k$ excluding the two faces belonging to $M_k, N_k$ is approximately \[2\eta (x^{(k)}_1+\cdots +x^{(k)}_{n-1}) \approx \dfrac{2}{x^{(k)}_2\cdots x^{(k)}_{n-1}}+\cdots + \dfrac{2}{x^{(k)}_1\cdots x^{(k)}_{n-2}}\to 0.\] 
    
    Therefore, we can arrange $Q_{k,i}$ in such a way that:
    \[\textbf{M}(S(\sum_{i=1}^{j} Q_{k,i}))\leq \eta^{n-1}\]
    for all $1\leq j\leq J_k$. 
    Let $P_{k,j}=\sum\limits_{i=1}^{j}Q_{k,i}$. Then $P_{k,0}=0$ and $P_{k,J_k}=H_k$. Since $N_k$ minimizes the area, we have
    \begin{equation}\label{1}
    \textbf{M}(N_k\llcorner P_{k,j})\leq \textbf{M}(M_k\llcorner P_{k,j})+\mathbf{M}(S(P_{k,j})).
    \end{equation}
    
    Furthermore \[M_k+\partial P_{k,j}=(M_k-P_{k,j})+N_k\llcorner P_{k,j}+S(P_{k,j}),\] 
    it follows that:
    
    \begin{equation}\label{2}
    \textbf{M}(M_k+\partial P_{k,j})\leq \textbf{M}(M_k-P_{k,j})+\textbf{M}(N_k\llcorner P_{k,j})+\textbf{M}(S(P_{k,j})).
     \end{equation}
     
Combining (\ref{1}) and (\ref{2}), we have the following.
\begin{equation}\label{massbound}
    \begin{split}
    \textbf{M}(M_k+\partial P_{k,j}) & \leq \textbf{M}(M_k-P_{k,j})+\textbf{M}(M_k\llcorner P_{k,j})+2\textbf{M}(S(P_{k,j}))\\
    & \leq \textbf{M}(M_k)+2\eta^{n-1}.
    \end{split}
\end{equation}

We want to apply Corollary \ref{interpolation} to $T_0=M_k+\partial P_{k,j-1}$ and $T_1=M_k+\partial P_{k,j}$. Note that
    \[\mathbf{M}(T_1-T_0)=\mathbf{M}((M_k+\partial P_{k,j})-(M_k+\partial P_{k,j-1}))= \textbf{M}(\partial Q_{k,j})<\eta^{n-1}.\]
    
    We now choose $\eta$ so that \[\eta^{n-1}<\min\left\{\delta_0, \dfrac{\epsilon}{2+C_0}\right\},\]
    with $\delta_0$ and $C_0$ as chosen in the interpolation lemma, Corollary \ref{interpolation}. It follows that we can interpolate from $M_k+\partial P_{k,j-1}$ to $M_k+\partial P_{k,j}$ such that the mass of each element is at most 
    
    \[\textbf{M}(M_k+\partial P_{k,j})+C_0\eta^{n-1}\leq \mathbf{M}(M_k)+2\eta^{n-1}+C_0\eta^{n-1} \leq \mathbf{M}(M_k)+\eta^{n-1}(2+C_0),\]
    with the first inequality following from (\ref{massbound}). On the other hand, since $M_k$ is homologically area-minimizing in $r_k$, we see that the mass of each slice is at least the mass of $M_k$. By the choice of $\eta$, we have
    \[\eta^{n-1}(2+C_0)<\epsilon.\]
    Therefore, by letting $j$ range from $1$ to $J_k$, we get a mass-continuous family $\Phi^2_t$ of integer currents in $\mathcal{Z}_{n-1}(\mathbb{T}^n, r_k)$ from $M_k$ to $M_k+\partial P_{k,J_k}$ that satisfies 
    \[\textbf{M}(M_k)\leq \textbf{M}(\Phi^2_t)\leq \textbf{M}(M_k)+\epsilon\]
    for all $k>K_2$.
    Finally, since $M_k+\partial P_{k,J_k}=N_k+S(P_{k,J_k})$ and $M(S(P_{k,J_k}))$ is small, we can interpolate from $M_k+\partial P_{k,J_k}$ to $N_k$ by a mass-continuous family $\Phi_t^3$ such that
    \[\mathbf{M}(M_k)\leq \mathbf{M}(\Phi^3_t)\leq \mathbf{M}(M_k)+\epsilon, \quad \forall k>K_3.\]
    According to Lemma $2.7$ in \cite{minmaxsurvey}, $\Phi^2_t$ and $\Phi^3_t$ are continuous in the flat topology and have no mass concentration.

    As a result, we can combine $\Phi^1, \Phi^2$, and  $\Phi^3$ into a single sweep-out $\Phi$ and deduce that
    \[S(r_k)=\mathbf{M}(M_k)\leq \mathbf{M}(\Phi_t)\leq \mathbf{M}(M_k)+\epsilon =S(r_k)+\epsilon\]
    for all $k > \max \{K_1, K_2, K_3\}$. It follows that $\omega(r_k)\leq S(r_k)+\epsilon$ for all $k$ sufficiently large. 

    It remains to analyze the case in which $M=N$ in $\mathcal{M}_\alpha$, namely when the sequences $\{M_k\}_k$ and $\{N_k\}_k$ converge to the same limit. We make the following claim.\\

    \begin{claim}\label{claim6.3}
    We can assume the following: \textit{for any $\epsilon>0$, there exists $K$ such that for all $k>K$, we can write $N_k$ locally as a graph of a function $f_k$ over $M_k$, such that $|f_k|_{C^2}<\epsilon$.}  
    \end{claim}
    \begin{proof}[Proof of claim]
    We argue by contradiction. Suppose that the claim were false. Then there would exist some $\epsilon>0$ and a sequence of indices $k$ for which $N_k$ cannot be written, in a neighborhood of certain points, as a graph over $M_k$ with $C^2$-norm smaller than $\epsilon$.
    
    Since both $M_k$ and $N_k$ are stable, it follows that we can find a sequence of points $x_k \in M_k$ such that for some open ball $B_k$ containing $x_k$ of fixed radius $\delta$, the volume of the gap between $M_k$ and $ N_k$ lying inside $B_k$ is uniformly bounded from below. We can use appropriate integer translations $\tau_{t_k}$ that move $B_k$ to a fixed cube $B\in \mathbb{R}^n$ and then consider the new sequences $\widetilde{M}_k=\tau_{t_k}M_k$, $\widetilde{N}_k=\tau_{t_k}N_k$. 

    Because the volume of the region they bound inside a fixed cube $B$ is uniformly bounded from below away from zero, $\{\widetilde{M}_k\}_k$ and $\{\widetilde{N}_k\}_k$ cannot converge to the same limit. This means that we are in the situation of the case we considered previously. Therefore, we can assume the validity of Claim \ref{claim6.3}. 
    \end{proof}

    Since $M_k$ and $N_k$ converge uniformly toward one another, we can use arguments similar to Step $2$ above. Claim \ref{claim6.3} implies that the area of the boundaries of the regions $Q_i$ is uniformly small. This is exactly what is needed to interpolate from $M_k$ to $N_k$ through partitioning the fundamental domain. Because $M_k$ and $N_k$ project to the same minimizer in the torus, $\Phi$ descends to an $r_k$-sweep-out there. As a result, we have \[\lim_{k\to \infty}(\omega(r_k)-S(r_k))= 0.\]

We turn to the second statement of the theorem. Suppose that $\alpha$ is rationally dependent and that $\mathcal{M}(\alpha)$ foliates $\mathbb{R}^n$. Let $M$ and $N$ be two distinct elements in $\mathcal{M}(\alpha)$. Since they have maximal periodicity and $\alpha$ is rationally dependent, there exists $\beta\in \mathbb{Z}^n$ such that $I(\alpha, \beta)=0$. As a result, $M$ and $N$ are $\beta$-invariant. Therefore, the gap bounded by $M$ and $N$ has an infinite volume. 

Now consider a sequence of integer classes $r_k$ converging to $\alpha$. If $r_k=\alpha$ for sufficiently large $k$, by considering the one-parameter family $\varphi_t$ of area-minimizing hypersurfaces in $\mathcal{M}(\alpha)$, we have the following:
\[S(r_k)=S(\alpha)\leq \omega(\alpha)=\omega(r_k) \leq \sup \mathbf{M}(\varphi_t) = S(\alpha).\]

It follows that all inequalities become equalities. Hence, $\omega(r_k)-S(r_k)=0$ for sufficiently large $k$. Therefore, we can assume that $r_k$ are all different from $\alpha$. Let $M_k, N_k \in \mathcal{M}_{r_k}^{per}$ be two consecutive lifts of a homologically area-minimizing hypersurface in $r_k$ as before. By translating if necessary, we can assume that they intersect a fixed bounded set. Compactness theorem \ref{compactness} then gives 
\[M_k, N_k \rightarrow M,N \in \mathcal{M}_\alpha.\] 

Since $\mathcal{M}(\alpha)$ is a foliation, by the structure theorem, we must have $\mathcal{M}_\alpha=\mathcal{M}(\alpha)$. It follows that $M$ and $N$ belong to $\mathcal{M}(\alpha)$. As a result, either $M$ and $N$ are the same, or they bound a region of infinite volume.

We claim that the second alternative is impossible. Assume the contrary. Let $\Omega$ be the region of infinite volume between $M$ and $N$. We may therefore choose a large compact set $K$ such that the volume of the region between $M$ and $N$ within $K$, i.e., $\Omega\cap K$, is larger than the volume of a fundamental domain. Since $M_k$ and $N_k$ converge to $M$ and $N$ uniformly inside $K$, for sufficiently large $k$, a fundamental domain $H_k$ bounded by $M_k$ and $N_k$ will almost cover the whole region $\Omega\cap K$. Consequently, we have \[\operatorname{Vol}(H_k) > \operatorname{Vol}(\Omega\llcorner K).\] 

This gives a contradiction because $\operatorname{Vol}(H_k)=\operatorname{Vol}(\mathbb{T}^n, g)$ is a fixed constant.

Therefore, the two sequences $\{M_k\}_k$ and $\{N_k\}_k$ converge to the same area-minimizing limit in $\mathcal{M}_\alpha$. Hence, the same argument as in Step $2$ of the previous case applies. This completes the proof.
\end{proof}

\begin{Rem}
    It follows from this result that $\Delta W_\alpha>0$ is an obstruction to the existence of a minimal foliation in $\mathcal{M}_\alpha$. A more direct geometric obstruction to minimal foliation on $\mathbb{T}^n$ is the existence of a closed, contractible minimal hypersurface. This follows from the maximum principle and the fact that the lift of a closed contractible hypersurface to $\mathbb{R}^n$ is closed. A related inverse question on $\mathbb{T}^2$ was analyzed in \cite{zeroentropy}, where the authors proved the existence of foliations by minimizing geodesics in any direction under a stronger assumption of zero topological entropy. The absence of a contractible geodesic under zero topological entropy was proved in \cite{contractiblegeodesic}.
\end{Rem}
\section{Sufficient condition for minimal foliation}\label{section7}
In this section, we show that the convergence of $\Delta W_{r_k}$ to $0$ implies the existence of a foliation by elements of $\mathcal{M}_\alpha$. This is the implication $(2) \implies (3)$ in Theorem \ref{maintheorem1}. For convenience, we restate it here.
\begin{Th}\label{suffient}
    For a homological direction $\alpha\in H_{n-1}(\mathbb{T}^n, \mathbb{R})\cong \mathbb{R}^n$ and a sequence of integer classes $r_k$ converging to $\alpha$, if $\Delta W_{r_k}=\omega(r_k)-S(r_k)$ converges to $0$, then there exists a foliation of $\mathbb{R}^n$ by area-minimizing hypersurfaces with homological direction $\alpha$.
\end{Th}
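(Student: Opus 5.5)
We argue by contradiction, using a Lusternik--Schnirelmann type argument in the spirit of Appendix \ref{appendix:a}, passed to the limit $r_k\to\alpha$. Suppose $\Delta W_{r_k}\to 0$ but the minimizers in $\mathcal{M}_\alpha$ do not fill $\mathbb{R}^n$. The set $\bigcup_{\Sigma\in\mathcal{M}_\alpha}\Sigma$ is closed by Theorem \ref{compactness}. Hence there is a point $x_0$ not on any element of $\mathcal{M}_\alpha$, together with a ball $B=B(x_0,\rho)$ disjoint from every element of $\mathcal{M}_\alpha$.

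\textbf{Step 1: a uniform gap for periodic minimizers.} We claim that for $k$ large, no lift of any homologically area-minimizing hypersurface in $r_k$ meets $B$. Suppose instead that lifts $\Sigma_k\in\mathcal{M}(r_k)$ meet $\overline{B}$ along a subsequence. Then Theorem \ref{compactness} gives a limit in $\mathcal{M}_\alpha$ that meets $\overline B$, which is a contradiction after shrinking $\rho$. The same compactness argument gives more. For fixed small $\eta>0$ and large $k$, every cycle $T\in\mathcal{Z}_{n-1}(\mathbb{T}^n,r_k)$ whose mass is at most $S(r_k)+\eta$ and which is $\mathcal{F}$-close to a minimizer still has only small mass near $\pi(B)$. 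To make this precise, we will use the quantity $D(\cdot,W)$ and the lower semicontinuity argument from the proof of Lemma \ref{boundinball}: the lifted cycles converge and their defect tends to $0$, so their limit is area-minimizing with direction $\alpha$.

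\textbf{Step 2: a sweep-out must cross $B$ with a definite excess of mass.} Fix $\epsilon>0$ and take a sweep-out $\Phi_k\in\mathcal{P}_r$ for $r=r_k$ with $\sup_t\mathbf{M}(\Phi_k(t))\le S(r_k)+\Delta W_{r_k}+\epsilon$. Because $\Phi_k$ sweeps out all of $\mathbb{T}^n$, with Almgren image $[\mathbb{T}^n]$, the isoperimetric or continuity argument yields a parameter $t_k$ at which the region swept out so far contains exactly half the volume of $\pi(B)$. Consequently $\|\Phi_k(t_k)\|(\pi(B))\ge c(\rho)>0$, or at least a fixed positive amount of mass lies in a fixed neighbourhood of $\pi(B)$. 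We then lift $\Phi_k(t_k)$ to $\mathbb{R}^n$, translate so that the lifts meet a fixed compact set, and use weak compactness. Since $\mathbf{M}(\Phi_k(t_k))-S(r_k)\to 0$, the lifted currents become locally almost minimizing, in the sense that their defect $D(\cdot,W)$ tends to $0$ on every bounded $W$ after the usual slab comparison of Theorem \ref{slab} and Proposition \ref{lamination}. We will need to spread the total excess over many fundamental domains, using the fact that the fundamental domains of $r_k$ grow. By lower semicontinuity of $D$, the flat limit is then an area-minimizing boundary with direction $\alpha$. Its connected components lie in $\mathcal{M}$, have direction $\alpha$ by the limit statement in Theorem \ref{compactness}, and carry mass in $\overline{B}$. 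This contradicts the choice of $B$.

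\textbf{Main obstacle.} The hardest step is the localization in Step 2. A global excess $\Delta W_{r_k}+\epsilon$ over a domain of huge size does not immediately give vanishing local defect at the specific translate that meets $B$. The excess might concentrate exactly there.

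To handle this, we would exploit the freedom in choosing \emph{which} translate of $B$ to use. There are roughly $\operatorname{Vol}(H_k)$-many disjoint translates of $B$ inside the fundamental domain of the $r_k$-cover, and each of them must be crossed by the sweep-out. We would find a slice and a translate at which the local defect is at most (total excess)$/(\#\text{translates})$, perhaps via a discretization and pigeonhole argument over the parameter $t$. Alternatively, we would argue as in Appendix \ref{appendix:a}: if the defect stayed bounded below at every crossing, we would push the sweep-out down locally, using the tightening map and Corollary \ref{interpolation}, to obtain $\omega(r_k)-S(r_k)\ge c>0$ uniformly in $k$, contradicting $\Delta W_{r_k}\to 0$. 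We would also have to check that the limit hypersurface is a genuine boundary $\partial\llbracket E\rrbracket$ with the Birkhoff property, using Lemma \ref{lift}.
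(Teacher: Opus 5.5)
Your outline matches the paper's: take a bisecting slice of a near-optimal sweep-out, lift it, use flat compactness, and show the limit is minimizing, meets the ball, and is Birkhoff. As written, though, the central step is left open, and the conclusion is too weak for general $\alpha$.

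\textbf{The localization step.} The step you call the main obstacle is not one. The defect of the chosen sheet on a fixed bounded set $W$ is controlled by the \emph{total} excess, so you need neither spreading over translates nor tightening. Suppose the flat limit $\Psi$ fails to minimize in $W$ by $\delta>0$. Then for large $k$, by flat convergence and lower semicontinuity of mass, you can replace the sheet inside $W$ by a competitor that saves at least $\delta/4$. The change is the boundary of a current supported in $W$, so after projecting you still have a cycle in $r_k$, and
\[
S(r_k)\le \mathbf{M}(\Phi_k(t_k))-\tfrac{\delta}{4}\le S(r_k)+\Delta W_{r_k}+\epsilon_k-\tfrac{\delta}{4},
\]
which is impossible once $\Delta W_{r_k}+\epsilon_k<\delta/4$. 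This is Claim 1 of the paper's proof. Whether the excess concentrates in $W$ is irrelevant, since a saving of $\delta/4$ anywhere already beats the whole excess.

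If the sheet does not project injectively on $W$, make the replacement in a finite cover $\mathbb{R}^n/m\mathbb{Z}^n$ in which $W$ embeds. This multiplies the excess only by $m^n$, which depends on $W$ alone and not on the size of the fundamental domains of $r_k$.

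Two smaller points. Pass the bisection property, not the mass lower bound, to the limit, since mass is only lower semicontinuous. Your Step 1 is never used.

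\textbf{Disjointness.} Your contradiction argument shows only that every point of $\mathbb{R}^n$ lies on \emph{some} element of $\mathcal{M}_\alpha$. That gives a foliation when $\mathcal{M}_\alpha$ is totally ordered, i.e.\ for totally irrational $\alpha$. In that case your route is complete and slightly shorter than the paper's.

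The statement, however, is for arbitrary $\alpha$, and it is applied to a rationally dependent class in the proof of Theorem \ref{maintheorem2}. There, heteroclinic minimizers in $\mathcal{M}(\alpha,\beta)$ and $\mathcal{M}(\alpha,-\beta)$ in the same gap of $\mathcal{M}(\alpha)$ cross each other. So covering alone does not produce a foliation.

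The paper gets disjointness in three steps:
\begin{enumerate}
\item It first takes the near-optimal sweep-outs to be level sets of Morse maps $\mathbb{T}^n\to S^1$ (Chambers--Liokumovich), so distinct leaves of one sweep-out are disjoint embedded hypersurfaces.
\item For finitely many balls it picks leaves of the same sweep-outs. Their limits are disjoint or equal, since crossing limits would force crossing approximants.
\item A diagonal argument over a dense sequence of points then gives a pairwise disjoint family with dense union, whose closure is the foliation.
\end{enumerate}
The Morse reduction also gives the Birkhoff property at once, since embedded leaves have no self-intersections in $\mathbb{T}^n$. This is simpler than going through Lemma \ref{lift}.
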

\begin{proof}
     Since $\omega(r_k)-S(r_k)\to 0$, by choosing an appropriate subsequence and re-indexing, we can assume that $\omega(r_k)-S(r_k)\leq \dfrac{1}{k}$ for all $k$.
     For each $k$, consider a sweep-out $\Psi_k(t), t\in S^1$ of $\mathbb{T}^n$ by hypersurfaces in the class $r_k$ such that 
    \begin{equation} \label{bounded}
        0\leq \textbf{M}(\Psi_k(t))-S(r_k)\leq \dfrac{2}{k}.
    \end{equation}
    
    In fact, it follows from the work of Chambers and Liokumovich (Theorem $1.4$ in \cite{morse}) that we can assume that all these sweep-outs consist of level sets of Morse functions into $S^1$. In particular, all but finitely many of the leaves are embedded hypersurfaces. Moreover, any two different leaves in the same sweep-out are disjoint. 
    
    We can lift each of these sweep-outs to the universal cover to get a foliation of $\mathbb{R}^n$ by complete, non-self-intersecting hypersurfaces. We now show that this sequence of sweep-outs converges, after passing to a subsequence, to a foliation of $\mathbb{R}^n$ by area-minimizing hypersurfaces.
    
    For a small open ball $U=B(x,r)\subset \mathbb{T}^n$, consider its pre-image $\Bar{U}$ inside the unit cube $[0,1]^n$ under the projection map $\pi: \mathbb{R}^n\rightarrow \mathbb{T}^n$. For any $k$, since $\Psi_k$ is a sweep-out, there must exist a $t_k\in S^1$ such that the slice $\Psi_k(t_k)$ divides $\Bar{U}$ into two regions of equal volume. Inequality (\ref{bounded}) together with the periodicity of the metric on $\mathbb{R}^n$ implies that for any compact set $K\subset \mathbb{R}^n$, there exists a constant $C_K>0$ such that
    \[\textbf{M}(\Psi_k(t_k)\llcorner K) <C_K\]
    for all $k$.
    The Federer-Fleming compactness theorem for integral currents of bounded mass then implies that a subsequence (still denoted by $\Psi_k(t_k)$) converges in the flat topology on compact sets to $\Psi\in \mathcal{R}^{loc}_{n-1}(\mathbb{R}^n)$. We first establish the following three claims about $\Psi$:
     \begin{claim}
      $\Psi\in \mathcal{R}^{loc}_{n-1}(\mathbb{R}^n)$ is area-minimizing. 
     \end{claim}
     \begin{proof}[Proof of claim]
    If this is not the case, then there exists a ball $B$ such that $\Psi$ is not area-minimizing inside $B$. Let $X$ be an integral current in $B$ such that $\partial X=\partial (\Psi \cap B)$ and:
    \[\textbf{M}(\Psi\llcorner B)-\textbf{M}(X)=\delta >0.\]
Since $\Psi_k(t_k) \rightarrow \Psi$ in the flat topology, by the lower-semicontinuity of the mass with respect to the flat convergence, we have:
    \[\textbf{M}(\Psi\llcorner B) \leq \textbf{M}(\Psi_k(t_k)\llcorner B)+\dfrac{\delta}{2}\]
    for sufficiently large $k$. Let us write 
    \[\Psi\llcorner B=\Psi_k(t_k)\llcorner B+S+\partial(T).\] 
    It follows that $X$ decomposes as 
    \[X=\Psi_k(t_k)\llcorner B+S+\partial T'.\] 
    Because of the flat convergence in $B$, for large $k$ we have $\textbf{M}(S)<\dfrac{\delta}{4}$. Consider the following current:
    \[\Tilde{\Psi}_k=\Psi_k(t_k)+\partial T'\in r_k.\]
    \begin{figure}[H]
    \centering
    \includegraphics[scale = 0.37]{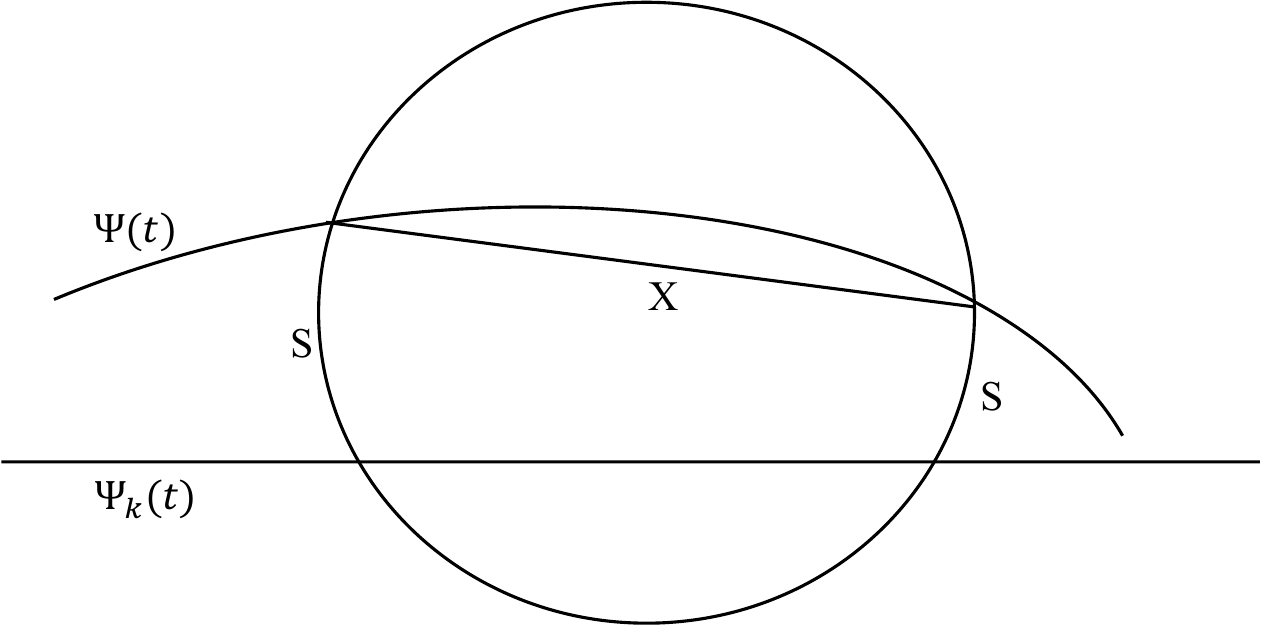}
    \caption{The local replacement of $\Psi_k(t_k)$ by $X$ inside the ball $B$ decreases the mass.}
    
    \end{figure} 
    
    We have:
    \begin{equation}
    \begin{split}
        S(r_k) \leq \textbf{M}(\Tilde{\Psi}_k) & \leq \textbf{M}(\Psi_k(t_k)\llcorner (\mathbb{R}^n\setminus B))+\textbf{M}(S)+\mathbf{M}(X) \\
        &= \textbf{M}(\Psi_k(t_k)\llcorner (\mathbb{R}^n\setminus B))+\textbf{M}(\Psi\llcorner B)-\delta +\textbf{M}(S) \\
        & \leq \textbf{M}(\Psi_k(t_k)\llcorner (\mathbb{R}^n\setminus B))+\textbf{M}(\Psi_k(t_k)\llcorner B)+\dfrac{\delta}{2}-\delta+\textbf{M}(S) \\
        & \leq \textbf{M}(\Psi_k(t_k)) +\dfrac{\delta}{2}-\delta+ \dfrac{\delta}{4}\\
        & \leq S(r_k)+\dfrac{2}{k}-\dfrac{\delta}{4}.
    \end{split}
    \end{equation}
    
    The last inequality above follows from (\ref{bounded}). This is impossible when $k$ is sufficiently large. Therefore, $\Psi$ is area-minimizing. It follows from the regularity theory for area-minimizing hypersurfaces that $\Psi$ is smooth. 
    \end{proof}

    \begin{claim}
    $\Psi$ passes through $\Bar{U}$.
    \end{claim}
    \begin{proof}[Proof of claim]
This is a simple consequence of the isoperimetric inequality. For a fixed geodesic ball $U=B(x,r)$, there is a positive constant $\delta$ such that for all smooth hypersurfaces $\Sigma$ dividing $U$ into two pieces of equal volume, we have $\textbf{M}(\Sigma \llcorner U)\geq \delta$. Since all $\Psi_k(t_k)$ bisect $U$, so does the flat limit $\Psi$, hence 
    \[\textbf{M}(\Psi \llcorner U)\geq \delta>0.\] 
    This shows that $\Psi$ intersects $\Bar{U}$.
    \end{proof}
    
    \begin{claim}
    $\Psi$ satisfies the Birkhoff property.
    \end{claim}
    \begin{proof}[Proof of claim]
This follows from the fact that $\Psi$ arises as the limit of embedded hypersurfaces. Suppose, for contradiction, that for some integer translation $\tau_\eta$, we have that 
\[\Psi\cap \tau_\eta \Psi \neq \emptyset \quad \text{and} \quad \Psi\neq \tau_\eta\Psi.\]
Moreover, this intersection is transverse, otherwise $\Psi=\tau_\eta \Psi$.
Since $\Psi=\lim_{k\to \infty} \Psi_k(t_k)$, it follows that for sufficiently large $k$:
\[\Psi_k(t_k)\cap \tau_\eta(\Psi_k(t_k))\neq \emptyset \quad \text{and} \quad \Psi_k(t_k)\neq \tau_\eta(\Psi_k(t_k)).\]
However, this contradicts the fact that the projection of $\Psi_k(t_k)$ onto the torus is embedded and therefore has no self-intersection. 
\end{proof}
 From the three claims above, we deduce that $\Psi\in \mathcal{M}$. By the continuity of homological vectors with respect to the flat convergence, it follows further that $\Psi\in \mathcal{M}_\alpha$.
 
We next show that for any finite collection of open sets $U_1, U_2, \ldots, U_k$ contained inside a fixed fundamental domain, there exist pairwise disjoint minimizers 
\[\Sigma_1, \Sigma_2, \ldots, \Sigma_k \in \mathcal{M}_\alpha\] 
such that $\Sigma_j\cap U_j\neq \emptyset$ for every $j$. Choose pairwise disjoint open balls 
\[B_j\subset U_j, 1\leq j\leq k.\]
By the three claims above, there exists a sequence of sweep-outs $\Psi_i$ by level sets of Morse functions, together with parameters $t_i$, such that $\Psi_i(t_i)\to \Sigma_1$ and $\Sigma_1\cap B_1\neq \emptyset$. Passing to subsequences if needed, we may repeat the argument to obtain a sequence of leaves converging to $\Sigma_2$ that intersects $B_2$, while ensuring that $\Sigma_1$ and $\Sigma_2$ are disjoint. Iterating this process, and each time replacing a full sequence of sweep-outs by a suitable subsequence, we obtain $\Sigma_1, \ldots, \Sigma_k$ as claimed.

Now let $\{y_i\}_{i=1}^{\infty}$ be a sequence of points dense in a fixed fundamental domain. By the claim above, for every $k$, there exist pairwise disjoint minimizers \[\Sigma^k_1, \Sigma^k_2, \ldots, \Sigma_k^k\]
such that 
\[\Sigma^k_j\cap B\left(y_j, \dfrac{1}{k}\right)\neq \emptyset,  \forall 1\leq j\leq k.\]
Passing to limits, we may assume that $\Sigma^k_j\to \Sigma_j\in \mathcal{M}_\alpha$. It follows that $y_j\in \Sigma_j$ for all $j\in \mathbb{N}$. Furthermore, suppose that for some $i\neq j$:
\[\Sigma_i\neq \Sigma_j \quad \text{and} \quad \Sigma_i\cap \Sigma_j\neq \emptyset,\]
then for sufficiently large $k$, we would have
\[\Sigma^k_i\neq \Sigma^k_j\quad \text{and}\quad \Sigma^k_i\cap \Sigma^k_j\neq \emptyset,\]
contradicting the construction of $\Sigma^k_i$ and $\Sigma^k_j$. Therefore, any two such limit minimizers either coincide or are disjoint. 

We have thus obtained a collection of disjoint minimizers in $\mathcal{M}_\alpha$ whose union is dense in a fundamental domain. Taking the closure of this collection yields a foliation by elements of $\mathcal{M}_\alpha$. This completes the proof.
\end{proof}

\begin{Rem}
    One may ask whether the convergence of $\Delta W_{r_k}$ to $0$ implies the existence of a foliation by elements of $\mathcal{M}(\alpha)$. We can view it as the converse of Theorem \ref{maintheorem3}, and it is stronger than the conclusion of Theorem \ref{suffient}. However, this implication is false, as the following example shows.
\end{Rem}

    \begin{Ex}\label{everydirectionexceptone}
    Consider $\mathbb{T}^2=\mathbb{R}^2/\mathbb{Z}^2$ equipped with the so-called \textit{Liouville metric} of the form
    \[ds^2=f(y)(dx^2+dy^2)\]
    where 
    \[ f: \mathbb{R}\to \mathbb{R}_{>0}, \quad f(x+1)=f(x)\]
    is a smooth, strictly positive $1$-periodic function. This metric is invariant under any real horizontal translation $\varphi_t(\Vec{x})=\Vec{x}+t\Vec{i}$. Therefore, for every direction $r\in H_1(\mathbb{T}^2, \mathbb{Z})$ that is not parallel to the $y$-axis, we can take a homologically area-minimizing geodesic in $r$, lift it to $\mathbb{R}^2$ and translate it by $\varphi_t$. This produces a foliation, and hence $\Delta W_r=0$ for all such $r$. Now consider the remaining direction $\alpha=[0,1]$. If $f(y)$ is not constant, the following computation shows that $\mathcal{M}(\alpha)$ consists of only finitely many affine horizontal lines of the form $y=y_0$ for some $y_0$ at which $f$ attains its global minimum. Let
    \[\gamma(t)=(x(t), y(t)), \quad t\in \mathbb{R}\] be a unit-speed parametrization of a geodesic. A standard computation shows that the geodesic equation:
    \[\ddot{x}^k+\Gamma^k_{ij}\dot{x}^i\dot{x}^j=0,\]
    in the global coordinates becomes:
    \[\ddot{x}+\dfrac{f'(y)}{f(y)}\dot{x}\dot{y}=0\]
    and 
    \[\ddot{y}-\dfrac{f'(y)}{2f(y)}\dot{x}^2+\dfrac{f'(y)}{2f(y)}\dot{y}^2=0.\]
    
    From these equations, one can check that every horizontal line passing through the critical points of $f(y)$ on the $y$-axis is a geodesic, and the distance-minimizing ones are exactly at the global minimizers of $f(y)$. To see that there is no other minimizer, note that for any minimizing geodesic $\gamma$, the real horizontal translation $\tau_t\gamma$ is also minimizing for all $t$. Thus $\gamma$ and $\tau_t\gamma$ must either coincide or be disjoint for each $t$. For a $[0,1]$-periodic geodesic, this only happens when $\gamma$ is a horizontal line. 
    
    These minimizers are $\varphi_t$ invariant. In this case, $\mathcal{M}(\alpha)$ does not form a foliation, even though $\Delta W_{r_k}= 0$ for any $r_k$ not parallel to $\alpha$. 

    Indeed, if we look at the heteroclinic minimizers in $\mathcal{M}_\alpha$, they will form a foliation: take any heteroclinic minimizer between two consecutive minimizers in $\mathcal{M}(\alpha)$, it is not horizontal. We can then use $\varphi_t$ to translate the heteroclinic and foliate the whole space. 

    This picture can be generalized to any dimension. 
    \end{Ex}

\section{Proof of Theorem \ref{maintheorem2}}\label{section8}
We want to construct a metric $g$, a rationally dependent class $\alpha$, and a sequence of integer classes $r_k\to \alpha$ such that:
\begin{enumerate}
    \item There exists a foliation of $\mathbb{T}^n$ by elements in $\mathcal{M}_\alpha$.
    \item The sequence $\Delta W_{r_k}=\omega(r_k)-S(r_k)$ is bounded away from zero.
\end{enumerate}

Roughly speaking, we want a foliation by elements of $\mathcal{M}_\alpha$ containing at least one leaf that cannot be approximated by any sequence of closed minimizers in $r_k$. 

We first need the following construction of Bangert (cf. \cite{bangertgap}, Theorem 3.1) of a metric where no minimal foliation exists.
\begin{Prop}\label{nofoliation}
    Suppose that $g_{flat}$ is the Euclidean metric on $\mathbb{R}^n$ and $V\subset \mathbb{R}^n$ is a $\mathbb{Z}^n$-periodic, non-empty open set. There exists a Riemannian metric $g$ that coincides with $g_{flat}$ outside $V$ such that $\mathbb{R}^n$ does not admit any $C^0$ foliation by minimal hypersurfaces. 
\end{Prop}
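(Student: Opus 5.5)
The plan is to build $g$ by making the metric very small along a periodic family of thin "tubes" inside $V$. Every hypersurface in every homology class is then forced to cross some tube, and crossing a tube cheaply constrains the hypersurface so strongly that it cannot sit in a foliation. I would first reduce to a simple shape for $V$. After shrinking $V$ and translating, assume $V$ contains the $\mathbb{Z}^n$-orbit of a small ball $B(x_0,\rho)$. Set $g=\phi^2 g_{flat}$, where $\phi\le 1$ is $\mathbb{Z}^n$-periodic, equals $1$ outside the orbit of $B(x_0,\rho)$, and equals a very small constant $\epsilon$ on a smaller concentric ball $B(x_0,\rho/2)$. The goal is then to pick $\epsilon$ so small that no $C^0$ foliation of $\mathbb{R}^n$ by minimal hypersurfaces can exist.

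The mechanism is that the cheap ball $B'=B(x_0,\rho/2)$ acts as a "sink". First I would take a hypothetical $C^0$ minimal foliation $\mathcal{F}$ of $\mathbb{R}^n$. The leaves are minimal, and by the standard fact that leaves of a codimension-one minimal foliation are area-minimizing (calibration by the unit normal field, as in Sullivan's and Federer's arguments), each leaf is locally area-minimizing. Next, pick a leaf $L$ through the center $x_0$. Being area-minimizing, $L$ has its area density controlled by the monotonicity formula for $g$. On the other hand, the region $B(x_0,\rho)\setminus B'$ is a shell where $g$ is comparable to flat, and $L$ must traverse this shell to get out of $B'$.

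The key comparison comes next. A leaf $L'$ passing near, but not through, $B'$ and parallel to $L$ outside $B(x_0,\rho)$ has flat-area cost of order $\rho^{n-1}$ inside $B(x_0,\rho)$. I would compare this against a competitor that replaces $L'\cap B(x_0,\rho)$ by a surface dipping into $B'$. That competitor costs roughly $\epsilon^{n-1}\rho^{n-1}$ plus a "neck" through the shell. Choosing the shell geometry appropriately, for instance by making $\phi$ small on a thin tube $B'$ and smoothly $1$ outside, lets the neck cost be made smaller than the saving. The consequence is that every area-minimizing hypersurface meeting $B(x_0,\rho)$ must pass through $B'$. Finally, a foliation has leaves through every point of the shell, so uncountably many disjoint leaves must all pass through $B'$ and meet the small central region. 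Since each minimizing leaf meeting $B'$ has area at least $c\epsilon^{n-1}\rho^{n-1}$ there by monotonicity, and disjoint leaves are stacked, this contradicts the finite volume of $B'$ once we count a suitable family of separated leaves. Equivalently, the leaves through points of $\partial B(x_0,\rho)$ on opposite sides are forced to intersect inside $B'$.

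The main obstacle is the comparison step: making precise that cheap regions attract all minimizers passing nearby, with the neck cost genuinely smaller than the saving. This is the heart of Bangert's construction in \cite{bangertgap}. I would first try to handle it with a cylindrical sink, taking $B'$ to be a thin slab segment of small flat thickness in which $\phi=\epsilon$. In that geometry the competitor can be written explicitly as a graph, and the flat area of the transition is estimated directly.
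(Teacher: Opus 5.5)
Your proposal has a genuine gap: the ``sink'' mechanism never produces a contradiction. First, you defer to Bangert the claim that every area-minimizing hypersurface meeting $B(x_0,\rho)$ must pass through $B'=B(x_0,\rho/2)$, so it is not established. It also appears false for hypersurfaces that only graze $B(x_0,\rho)$ near its boundary. Such a hypersurface has very little area near $B'$ to relocate, so there is no saving available to pay for a neck across the shell.

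Second, and more seriously, even granting that claim, nothing is contradicted. Pairwise disjoint hypersurfaces can all cross a ball. Inside $B'$ the metric is $\epsilon^2 g_{flat}$, so a bundle of disjoint, nearly parallel flat sheets passing through $B'$ is perfectly consistent with minimality there. The monotonicity bound $\operatorname{Area}_g(L\cap B')\geq c(\epsilon\rho)^{n-1}$ for each leaf does not clash with the finite volume of $B'$. Leaves have zero volume and can be arbitrarily close together, and a ball contains infinitely many disjoint hypersurfaces whose areas are bounded below. Likewise, nothing forces leaves entering from opposite sides of $\partial B(x_0,\rho)$ to intersect in $B'$. A cheap region attracts leaves, but leaves can bunch up, so attraction is compatible with a foliation.

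The missing idea is to use the defining property of a foliation, that \emph{every point} lies on a leaf. For this you must make the metric large rather than small, so that no minimizer can pass through some point. The paper takes $g_\lambda=(1-\varphi)g_{flat}+\lambda^2\varphi g_{flat}$, with $\varphi=1$ on $B(x,r)$ and $\operatorname{spt}\varphi\subset B(x,2r)\subset V$, periodized.

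As in your own step, every leaf is area-minimizing, by calibration via Solomon's Lipschitz normals in the $C^0$ case. Let $\Sigma$ be the leaf through $x$. Monotonicity for the rescaled flat metric gives $\operatorname{Area}_{g_\lambda}(\Sigma\cap B(x,r))\geq\omega_{n-1}(\lambda r)^{n-1}$. On the other hand, compare $\Sigma\cap B(x,3r)$ with the smaller of the two pieces of $\partial B(x,3r)$ it cuts off, where $g_\lambda=g_{flat}$. This gives $\operatorname{Area}_{g_\lambda}(\Sigma\cap B(x,3r))\leq Cr^{n-1}$, independently of $\lambda$. Taking $\lambda$ large yields the contradiction, with no neck or attraction estimates needed at all.
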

\begin{proof}
    (Sketch). The idea is to glue $g_{flat}$ to a metric that becomes very large on a region $V$ while agreeing with $g_{flat}$ elsewhere. 

    Let $B(x, 2r)\subset V$ be an open Euclidean ball. Choose a smooth cut-off function 
    \[\varphi: \mathbb{R}^n\to [0,1]\] 
    with support contained in $B(x,2r)$ and satisfying $\varphi|_{B(x,r)}=1$. Here, all the balls are with respect to the flat Euclidean metric. 
    
   Define the glued metric
   \[g_\lambda=(1-\varphi)g_{flat}+\lambda^2\varphi g_{flat}.\]
    
    We claim that for $\lambda$ sufficiently large, there is no foliation of $\mathbb{R}^n$ by $g_\lambda$-minimal hypersurfaces. Assume the contrary, and let $\mathcal{F}$ be such a minimal foliation. Using the localized Lipschitz normals to the leaves (cf. \cite{solomon}), one sees that every leaf $\Sigma$ of $\mathcal{F}$ is area-minimizing with respect to $g_\lambda$. 

    Since $\mathcal{F}$ is a foliation, there is one leaf, say $\Sigma$, passing through $x$. By construction, the metric $g_\lambda$ is equal to $\lambda^2 g_{flat}$ inside $B(x,r)$. It follows from the monotonicity formula that
    \[\operatorname{Area}(\Sigma \cap B(x,r))\geq (\lambda r)^{n-1}\omega_{n-1},\]
    where $\omega_{n-1}$ is the volume of the unit ball in $\mathbb{R}^{n-1}$. 
    
    Let $S(x,3r)=\partial B(x,3r)$ be the boundary of the ball, which is divided into two parts $S_1$ and $S_2$ with the same boundary as $\Sigma\cap B(x,3r)$. As $\Sigma$ is area-minimizing, by using $S_1$ and $S_2$ as competitors, we have
    \begin{equation}
    \begin{split}
    \operatorname{Area}(\Sigma\cap B(x,3r)) &\leq \min\{\operatorname{Area}(S_1), \operatorname{Area}(S_2)\} \\
    & \leq \dfrac{1}{2}\operatorname{Area}(S(x,3r))\leq \dfrac{3^{n-1}r^{n-1}}{2}\omega_{n-1}.
    \end{split}
    \end{equation}
    As a result:
    \[\lambda^{n-1} r^{n-1}\omega_{n-1}\leq \operatorname{Area}(\Sigma\cap B(x,r))\leq \operatorname{Area}(\Sigma\cap B(x,3r))\leq \dfrac{3^{n-1}r^{n-1}}{2}\omega_{n-1}.\]
    By choosing $\lambda$ sufficiently large, we get a contradiction. As a result, no such $g_\lambda$-minimal foliation can exist.
\end{proof}

\textit{We can now finish the proof of Theorem \ref{maintheorem2}}: We will first construct such a metric as a product $g=h\times d\theta^2$ on $\mathbb{T}^3=\mathbb{T}^2\times S^1$. The construction can be extended to higher dimensions. According to Proposition \ref{nofoliation}, we can find a metric $h$ on the two-torus $\mathbb{T}^2$ such that no minimal foliation by geodesics exists.  

Consider the universal cover $\mathbb{R}^3=\mathbb{R}^2\times \mathbb{R}$ and fix a rationally dependent class of the form $\alpha=[\alpha_1,\alpha_2,0]$. For the torus, the homology intersection pairing \[I: H_1(\mathbb{T}^3, \mathbb{R})\times H_2(\mathbb{T}^3, \mathbb{R})\to \mathbb{R}\] is induced from the dot product. This means that for $\beta=[0,0,1]\in H_1(\mathbb{T}^3,\mathbb{Z})$ we have $I(\alpha, \beta)=0$. Therefore, for any integral current $T\in \alpha$, we have $\tau_\beta \Tilde{T}=\Tilde{T}$ for any lift $\Tilde{T}$ of $T$. 

Since the lift $\Tilde{g}$ of $g$ to $\mathbb{R}^3$ is the product of the lift of $h$ and the flat metric $d\theta^2$ on $S^1$, we see that $\Tilde{g}$ is translationally invariant in the $\beta$ direction:
\[\varphi_t^*(\Tilde{g})=\Tilde{g}, \quad \forall \text{ } t\in \mathbb{R}\]
where $\varphi_t(x)=x+t\beta$.

We first show that there is a foliation of $\mathbb{R}^3$ by the elements of \[\mathcal{M}(\alpha)\cup\mathcal{M}(\alpha, \beta).\]

Since the metric is flat in the vertical direction, any minimizer $T\in \mathcal{M}(\alpha)$ must be of the form $\gamma \times \mathbb{R}$ where $\gamma$ is a minimizer in $\mathbb{R}^2$ with respect to the direction $[\alpha_1, \alpha_2]$. Since $h$ is chosen so that there is no minimal foliation on $\mathbb{R}^2$, we see that $\mathcal{M}(\alpha)$ is not a foliation of $\mathbb{R}^3$. 

Consider a gap $G$ bounded by neighboring periodic $\Sigma^-, \Sigma^+\in \mathcal{M}(\alpha)$. By the structure theorem, Theorem \ref{structure}, we can find a heteroclinic minimizer \[\Sigma\in \mathcal{M}(\alpha, \beta)\]
approaching $\Sigma^-$ in the negative $\beta$ direction and $ \Sigma^+$ in the positive $\beta$ direction. Since the metric is vertically invariant, $\tau_t\Sigma$ is area-minimizing for all $t\in \mathbb{R}$. Furthermore, by the characterization of minimizers in $\mathcal{M}(\alpha, \beta)$, all these translations are disjoint. As a result, they foliate the gap $G$. Repeating this for every gap, we get a foliation of the entire space.

Next, consider a sequence of integer classes $r_k=[a_k,b_k, 0]$ converging to $\alpha$. We claim that the sequence:
\[\Delta W_{r_k}=\omega(r_k)-S(r_k)\]
does not converge to $0$. Assume the contrary that \[\lim_{r_k\to \alpha} \Delta W_{r_k}= 0.\] We then proceed as in the proof of Theorem \ref{maintheorem1} to obtain a foliation of $\mathbb{R}^3$ by area-minimizing hypersurfaces in $\mathcal{M}_\alpha$. In fact, from the proof, any leaf $\Psi$ of this foliation is a limit of leaves $\Psi_k(t_k)$ of sweep-outs in $r_k$. Since 
\[I(r_k, \beta)=r_k\cdot \beta=0\]
for all $k$, Lemma \ref{lift} applied to the closed normal current $\Psi_k(t_k)\in r_k$ shows that its lift is invariant under $\tau_\beta$. Consequently, the limit $\Psi$ is also $\tau_\beta$-invariant. This means that the limiting foliation is formed by elements in $\mathcal{M}(\alpha)$. This contradicts the previous paragraph. 

Therefore, the sequence $\Delta W_{r_k}$ is uniformly bounded from below away from $0$. A similar construction extends to any $\mathbb{T}^n$ for $n>3$. This completes the proof of Theorem \ref{maintheorem2}.

\section{Minimal hypersurface inside the gap}\label{section9}

In this section, we prove Theorem \ref{nonminimizer} and Theorem \ref{areaofnonminimizer}. Our goal is to construct a new minimal hypersurface lying inside the gap formed by two consecutive elements in $\mathcal{M}_{\alpha}$, when $\alpha$ is totally irrational. Assume that two such consecutive minimizers $\Sigma_0$ and $\Sigma_1$ in $\mathcal{M}_\alpha$ bound a gap $G$. Without loss of generality, assume further that $0\in G$. We now work under the following assumption:
\begin{equation}
\textit{ There is no closed minimal hypersurface inside the gap $G$.}
\tag{$\star$}\label{assumption:star}
\end{equation}

This assumption reduces the number of cases in the min-max argument. At the end of the section, we will remove this assumption.

From the analysis in Section \ref{section6}, we know that $G$ projects injectively to $\mathbb{T}^n$ under the covering map and therefore has finite volume. Consequently, $\Sigma_0$ and $\Sigma_1$ are asymptotic outside a sufficiently large compact ball.

The idea of the proof of Theorem \ref{nonminimizer} is as follows. We will first construct a compact minimal hypersurface $\Gamma_R$, possibly with boundary contained in $\partial B_R(0)$, that lies between 
\[\Sigma^R_0=\Sigma_0\cap B_R \text{ and } \Sigma^R_1=\Sigma_1\cap B_R.\] 

We then consider the limit of the sequence of hypersurfaces $\Gamma_R$ as $R\to \infty$. A subtle difficulty arises from the possibility that the hypersurfaces $\Gamma_R$ may collapse to one of the minimizers, either $\Sigma_0$ or $\Sigma_1$. 

To exclude this possibility, we obtain a uniform area bound on a fixed compact set. This is done in the following proposition:

\begin{Prop}\label{widthgap}
    There exists a positive constant $C>0$ and a sufficiently large radius $R_0$ such that the following holds. For all $R>R_0$ and for every one-parameter family $\{\Sigma^R_t\}_{t=0}^{1}$ of generalized hypersurfaces with boundary contained in $\partial B_R$, connecting $\Sigma_0^R$ and $\Sigma_1^R$, we have the following:
    \begin{equation}\label{widthgap1}
    \sup_{t\in [0,1]} \mathbf{M}(\Sigma^R_t)-\max\{\mathbf{M}(\Sigma_0^R), \mathbf{M}(\Sigma_1^R)\} >C.
    \end{equation}
\end{Prop}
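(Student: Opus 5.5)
We argue by contradiction and turn the hypothesis into a sweep-out of the torus in a class $r_k$ close to $\alpha$. The goal is to contradict the assumption that $\mathcal{M}_\alpha$ does not foliate. Suppose no such $C$ and $R_0$ exist. Then there are radii $R_j\to\infty$ and families $\{\Sigma^{R_j}_t\}$ joining $\Sigma_0^{R_j}$ to $\Sigma_1^{R_j}$, with boundaries in $\partial B_{R_j}$, such that
\[
\sup_t \mathbf{M}(\Sigma^{R_j}_t)\le \max\{\mathbf{M}(\Sigma_0^{R_j}),\mathbf{M}(\Sigma_1^{R_j})\}+\tfrac1j .
\]
By Lemma \ref{asymptotic}, $\operatorname{Vol}_{n-1}(\partial B_R\cap G)\to0$. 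Since both $\Sigma_0$ and $\Sigma_1$ are area-minimizing, an area comparison using a filling in $\partial B_R\cap G$ gives $|\mathbf{M}(\Sigma_0^R)-\mathbf{M}(\Sigma_1^R)|\to0$. So each family is almost area-nonincreasing relative to the minimizers, with excess $o(1)$.

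\textbf{Step 1: a near-minimizing family inside $G$.} First replace $\Sigma^{R_j}_t$ by a family confined to $\overline{G}\cap B_{R_j}$. We do this by cutting and pasting with $\Sigma_0$ and $\Sigma_1$: writing $\Sigma_t=\partial E_t$, replace $E_t$ by $(E_t\cup E_0)\cap E_1$. Because $\Sigma_0$ and $\Sigma_1$ minimize area, the standard lattice inequality $\mathbf{M}(\partial(A\cup B))+\mathbf{M}(\partial(A\cap B))\le \mathbf{M}(\partial A)+\mathbf{M}(\partial B)$, applied with a competitor comparison, shows the mass does not increase beyond $o(1)$. Flat continuity is preserved.

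\textbf{Step 2: build a sweep-out of $\mathbb{T}^n$.} Choose integer classes $r_k\to\alpha$ with periodic minimizers $M_k<N_k$ as in Section \ref{section6}. One of them converges to $\Sigma_0$ and the other to $\Sigma_1$ on compact sets. This is possible because $\Sigma_0$ and $\Sigma_1$ are recurrent boundary leaves of a gap, by Bangert's approximation proposition. The fundamental domain $H_k$ between $M_k$ and $N_k$ then contains the region $G\cap B_{R_j}$ up to small error. Transplant the family from Step 1 into $H_k$ inside $B_{R_j}$. Outside $B_{R_j}$, the leaves $M_k$ and $N_k$ are $C^1$-close to each other, since $G$ has finite volume and, by Lemma \ref{gap}, the projection of $G$ is injective. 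There we interpolate exactly as in Step 2 of the proof in Section \ref{section6}, using Corollary \ref{interpolation}. Gluing the two pieces along $\partial B_{R_j}$ costs $o(1)$ in mass. This produces an $r_k$-sweep-out with $\sup\mathbf{M}\le S(r_k)+o(1)$, hence $\Delta W_{r_k}\to0$ along a subsequence. By Theorem \ref{suffient}, $\mathcal{M}_\alpha$ then foliates $\mathbb{R}^n$, contradicting the existence of the gap $G$.

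\textbf{Main obstacle.} The hard step is Step 2, specifically the matching of the far-field regions. We need uniform control showing that, outside $B_{R_j}$, the region between $M_k$ and $N_k$ contains only thin pieces and no other part of $G$ or its translates carrying definite volume. We also need the boundary errors on $\partial B_{R_j}$ to vanish uniformly in $k$. I would first fix $j$ and let $k\to\infty$ so that $M_k\cap B_{R_j}$ and $N_k\cap B_{R_j}$ converge to $\Sigma_0$ and $\Sigma_1$. The remainder of $H_k$ projects onto $\mathbb{T}^n\setminus\pi(G\cap B_{R_j})$, whose part lying in gaps has volume $\operatorname{Vol}(G\setminus B_{R_j})\to0$. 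The argument from Claim \ref{claim6.3} should then give the needed closeness.

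An alternative to handling this directly is a purely local lower bound: a Lemma \ref{boundinball}-type argument showing that any path must pass a slice with excess at least $\epsilon$ near the origin. I would keep this as a fallback.
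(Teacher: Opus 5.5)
Your Step 2 has a genuine gap, and it sits exactly where your ``main obstacle'' paragraph passes over it.

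The fundamental domain $H_k$ between $M_k$ and $N_k$ projects bijectively onto $\mathbb{T}^n$. Its part outside $B_{R_j}$ therefore has volume about $\operatorname{Vol}(\mathbb{T}^n)-\operatorname{Vol}(G\cap B_{R_j})$, which is not small. Suppose a point of $H_k$ lies well inside a gap $G'$ belonging to another $\mathbb{Z}^n$-orbit of gaps. After translating that point to a fixed compact set, the limits of $M_k,N_k$ lie in $\mathcal{M}_\alpha$ and cannot enter gaps. So $H_k$ contains an approximate copy of all of $G'$ on compact sets there, and $M_k$ and $N_k$ stay a definite distance apart in that region.

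Your volume count (``the part lying in gaps has volume $\operatorname{Vol}(G\setminus B_{R_j})$'') tacitly assumes $G$ is the only gap orbit, and nothing in the setting guarantees this. In Section \ref{section6}, such far-apart regions were crossed by sliding through leaves of the foliation $\mathcal{M}_\alpha$. That is exactly what is unavailable here, and your contradiction hypothesis supplies cheap paths only across $G$. Any $r_k$-sweep-out of $\mathbb{T}^n$ must cross every gap orbit. So at best you get $\Delta W_{r_k}$ asymptotically bounded by the barriers of the other gap orbits, which your hypothesis does not control; indeed the statement itself, applied to $G'$, says they are positive. Hence $\Delta W_{r_k}\to 0$ does not follow, and Theorem \ref{suffient} cannot be invoked.

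A smaller point: Bangert's approximation result gives some periodic $\Sigma_k\to\Sigma_0$. It does not say that the next lift converges to $\Sigma_1$. You should instead take $M_k<N_k$ to be the consecutive lifts enclosing a fixed point of $G$.

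The paper never globalizes; it argues inside $G$, much like your fallback. Assume the excess is $<1/k$ along radii $R_k\to\infty$.
\begin{enumerate}
\item Fix a small ball $U\subset G$ and a slice $\Sigma^{R_k}_{t_k}$ that bisects $U$.
\item The isoperimetric inequality keeps any limit meeting $U$. The local-replacement argument from the proof of Theorem \ref{suffient} then applies, with the lower bound $S(r_k)$ replaced by the minimality of $\Sigma_0,\Sigma_1$ up to the small filling on $\partial B_R\cap G$ from Lemma \ref{asymptotic}. So the slices converge to an area-minimizing hypersurface in $G$ meeting $U$.
\item Injectivity of $\pi|_G$ makes this limit disjoint from its integer translates, so it lies in $\mathcal{M}_\alpha$.
\item Doing this for every $U$ foliates $G$ by elements of $\mathcal{M}_\alpha$, contradicting that $\Sigma_0,\Sigma_1$ are consecutive.
\end{enumerate}
Your Step 1 (confining the family to $\overline{G}$) is a useful ingredient for this local argument. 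Step 2 should be discarded.
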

\begin{proof}
    We argue by contradiction. Suppose that no such constants exist. Then we can find a sequence of radii $R_k \to \infty$ and a sequence of one-parameter families of hypersurfaces $\{\Sigma_t^{R_k}\}_k$ such that, after passing to a subsequence if necessary, the following holds:
    \[\sup_{t\in [0,1]} \textbf{M}(\Sigma^{R_k}_t)-\max\{\textbf{M}(\Sigma_0^{R_k}), \textbf{M}(\Sigma_1^{R_k})\} <\dfrac{1}{k}\]
    for all $k \in \mathbb{N}$.
    We now show that this forces the gap $G$ to be foliated by area-minimizing hypersurfaces. Let $U\subset G$ be a small open ball. For all large $k$, we have $U\subset G\cap B_{R_k}$. Since $\Sigma^{R_k}_t$ is a sweep-out, there exists $t_k\in [0,1]$ such that $\Sigma^{R_k}_{t_k}$ divides $U$ into two regions of equal volume. It follows from the isoperimetric inequality that any local limit of the sequence $\Sigma^{R_k}_{t_k}$ must intersect $U$. Similar arguments as in the proof of Theorem \ref{suffient} establish that $\Sigma^{R_k}_{t_k}$ converges on compact sets to an area-minimizing hypersurface $\Sigma$ inside $G$ that intersects $U$. 

     Since $\alpha$ is totally irrational, $G$ projects injectively onto the torus. Equivalently, for any integer translation $\tau_\eta$ with $\eta\neq 0$, we have $\tau_\eta(G)\cap G=\emptyset$. As a consequence, for any minimizing hypersurface $\Sigma$ inside $G$, we also have $\tau_\eta\Sigma\cap \Sigma=\emptyset$. In particular, $\Sigma\in \mathcal{M}_\alpha$. By repeating this argument for any open set $U\subset G$ and using the fact that $\mathcal{M}_\alpha$ is a lamination when $\alpha$ is totally irrational, we obtain a foliation of $G$ by elements of $\mathcal{M}_\alpha$.
    
    This contradicts the assumption that $G$ is a gap in the lamination in $\mathcal{M}_\alpha$ and completes the proof. 
\end{proof}
\begin{Rem}\label{constantC}
In fact, it follows from the same interpolation technique in Section \ref{section6} that the constant $C$ can be chosen to be $C=\Delta W_\alpha$. Indeed, if there exists a sequence of radii and a sequence of corresponding sweep-outs that satisfy
\[\sup_{t\in [0,1]} \textbf{M}(\Sigma^{R_k}_t)-\max\{\textbf{M}(\Sigma_0^{R_k}), \textbf{M}(\Sigma_1^{R_k})\} <C_0<\Delta W_\alpha,\]
then we can modify to construct sweep-outs in a sequence of homology classes $r_k\to \alpha$ with $\lim\inf \Delta W_{r_k}<\Delta W_\alpha$, contradicting the definition of the Mather energy barrier of $\alpha$. 
\end{Rem}

Let $\sigma_0^R$ and $\sigma_1^R$ be the boundaries of $\Sigma^R_0$ and $\Sigma^R_1$.

\begin{Prop}\label{2plateau}
    For any $\epsilon>0$ sufficiently small, there exists $R_\epsilon>0$ such that for all $R>R_\epsilon$, there exists a $(n-2)$-dimensional integral current $\sigma^R$ with support contained in $\partial B_R\cap G$ that satisfies the following properties:
    \begin{enumerate}
        \item There are at least two smooth, area-minimizing hypersurfaces $\Gamma_0^R$ and $\Gamma_1^R$ with the same boundary $\sigma^R$. 
        \item $\Gamma_0^R$ and $\Gamma_1^R$ meet only along their boundary, and hence bound an open domain.
        \item The following inequality holds:
        \[\min \{\mathbf{M}(\Sigma_0^R), \mathbf{M}(\Sigma_1^R)\}-\epsilon < \mathbf{M}(\Gamma_0^R), \mathbf{M}(\Gamma_1^R) <\max\{\mathbf{M}(\Sigma_0^R), \mathbf{M}(\Sigma_1^R)\}+\epsilon.\]
        \item For any one-parameter family $\{\phi_t\}_{t\in [0,1]}$ of hypersurfaces with $\partial \phi_t=\sigma^R$ connecting $\Gamma_0^{R}$ and $\Gamma_1^R$, it holds that:
        \begin{equation}\label{widthgap2}
        \sup_{t\in [0,1]} \mathbf{M}(\phi_t)\geq \max\{\mathbf{M}(\Sigma_0^R), \mathbf{M}(\Sigma_1^R)\}+ C,
        \end{equation}
        where $C$ is the same constant found in Proposition \ref{widthgap}.
    \end{enumerate}
\end{Prop}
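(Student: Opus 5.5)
The strategy is to replace the boundaries $\sigma_0^R,\sigma_1^R$, which lie on $\Sigma_0,\Sigma_1$ and hence on the edge of $G$, by a single boundary $\sigma^R$ lying on $\partial B_R\cap G$. We then produce two distinct Plateau solutions by minimizing in two different "sides" of the gap. The key input is Lemma \ref{asymptotic}: $\operatorname{Vol}_{n-1}(\partial B_R\cap G)\to 0$. This lets us move boundaries within $\partial B_R\cap G$ at arbitrarily small cost in area, once $R>R_\epsilon$.

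Concretely, we fix a generic slice (by the coarea formula we may perturb $R$) so that $\partial B_R\cap G$ is a region of $\partial B_R$ with boundary $\sigma_0^R\cup\sigma_1^R$ and with $(n-1)$-volume less than $\epsilon/4$. Inside this region we choose a hypersurface $\sigma^R$ separating $\sigma_0^R$ from $\sigma_1^R$. Let $A_0,A_1\subset\partial B_R\cap G$ be the two pieces, with $\partial A_i=\sigma^R-\sigma_i^R$. Then $\Sigma_0^R+A_0$ and $\Sigma_1^R+A_1$ are currents with boundary $\sigma^R$, and their masses differ from $\mathbf{M}(\Sigma_i^R)$ by less than $\epsilon/4$.

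Next we let $D_R$ be the closure of $G\cap B_R$. This is a compact domain with boundary $\Sigma_0^R\cup\Sigma_1^R\cup(\partial B_R\cap G)$. Its boundary is weakly mean convex along the minimal pieces; along the spherical piece we use the convexity of $\tilde d$-balls, possibly after smoothing the corners. We then solve two constrained Plateau problems with boundary $\sigma^R$ in $D_R$:
\begin{enumerate}
\item minimize mass among currents homologous in $D_R$ (rel $\sigma^R$) to $\Sigma_0^R+A_0$, obtaining $\Gamma_0^R$;
\item minimize mass among currents homologous to $\Sigma_1^R+A_1$, obtaining $\Gamma_1^R$.
\end{enumerate}
The barriers, together with the maximum principle, keep the minimizers inside $D_R$ and away from $\Sigma_0,\Sigma_1$ except possibly at the boundary. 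This gives the upper bound in (3). For the lower bound we argue as follows. Since $\Sigma_i$ are area-minimizing in $\mathbb{R}^n$, any current with boundary $\sigma^R$, completed by a piece of $\partial B_R\cap G$ of mass $<\epsilon/4$, is a competitor for $\Sigma_i^R$. Hence $\mathbf{M}(\Gamma_i^R)\ge\mathbf{M}(\Sigma_i^R)-\epsilon/2$. Interior regularity holds in dimensions $3\le n\le 7$. Boundary regularity near $\sigma^R$ requires $\sigma^R$ to be smooth, which we arrange by a generic choice.

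Property (2) comes from a cut-and-paste argument. If $\Gamma_0^R$ and $\Gamma_1^R$ crossed in the interior, then replacing them by $\min$ and $\max$ of the bounded regions yields minimizers in the same two classes with the same total mass. These touch their originals, so by the maximum principle they coincide, which makes $\Gamma_0^R\le\Gamma_1^R$ with interior contact impossible unless they are equal.

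For property (4) we argue by contradiction. Suppose a family $\phi_t$ from $\Gamma_0^R$ to $\Gamma_1^R$ has $\sup\mathbf{M}(\phi_t)<\max_i\mathbf{M}(\Sigma_i^R)+C$. Then we concatenate three pieces: a path from $\Sigma_0^R$ to $\Gamma_0^R$, the family $\phi_t$, and a path from $\Gamma_1^R$ to $\Sigma_1^R$. Each end path adds $\pm A_i$ and then interpolates within the region between $\Sigma_i^R+A_i$ and $\Gamma_i^R$ (e.g. via Corollary \ref{interpolation} on a fine partition, as in Step 2 of Section \ref{section6}), at mass cost $O(\epsilon)$. The result is a sweep-out of the kind considered in Proposition \ref{widthgap}, but with excess $<C+O(\epsilon)$.

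Since Proposition \ref{widthgap} gives strict inequality with a uniform $C$, this closes only up to $O(\epsilon)$. So we should either shrink the constant slightly or rerun the compactness argument of Proposition \ref{widthgap} directly for these families, which yields the same constant. I expect this to be the main obstacle: the end-path interpolation needs mass control on the region between $\Sigma_i^R+A_i$ and $\Gamma_i^R$. This region need not be thin, so the mass control must come from a minimization/foliation argument rather than from smallness. Assumption (\ref{assumption:star}) and the absence of minimizers inside $G$ are where I would look for this control.

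If it fails, the fallback is to replace $\Gamma_0^R$ and $\Gamma_1^R$ by a different pair: the least-area and the greatest-area-side minimizers in the whole class with boundary $\sigma^R$. For this pair the mountain-pass barrier between them is inherited from Proposition \ref{widthgap} by the same contradiction argument.
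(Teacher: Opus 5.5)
\textbf{The gap: your construction never produces two distinct Plateau solutions.} That is the real content of the proposition, and your two constrained problems are one and the same problem. With the orientations forced by $\partial(\Sigma_i^R+A_i)=\sigma^R$, one has $(\Sigma_1^R+A_1)-(\Sigma_0^R+A_0)=\pm\partial\llbracket G\cap B_R\rrbracket$. So the two competitors are homologous in $D_R$ rel $\sigma^R$, whatever the topology of $D_R$, and both minimizations return the same set of least-area currents in $D_R$ spanning $\sigma^R$. For a separating $\sigma^R$ fixed in advance, nothing forces this set to have two elements, and typically it has one. Then (1) and (2) fail, and your fallback pair of lowest/highest minimizers collapses for the same reason.

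The obstacle you flagged in (4) is a symptom of this, not a technicality that \eqref{assumption:star} could repair. Suppose the Plateau solution $\Gamma^R$ for your $\sigma^R$ is unique. Then Proposition \ref{widthgap}, applied to your concatenated path, pushes the mountain pass into one of the end paths from $\Sigma_i^R+A_i$ to $\Gamma^R$. So those end paths cannot both have small excess.

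\textbf{The missing idea: $\sigma^R$ must be selected by a continuity argument, not fixed beforehand.} Since $\mathbf{M}(\partial B_R\cap G)<\epsilon$, slide a family of boundaries $\sigma_t^R\subset\partial B_R\cap G$ from $\sigma_0^R$ to $\sigma_1^R$. Let $\Gamma_t^R$ be a Plateau minimizer spanning $\sigma_t^R$. Your own area-comparison argument for (3) gives, for every $t$,
$\min\{\mathbf{M}(\Sigma_0^R),\mathbf{M}(\Sigma_1^R)\}-\epsilon<\mathbf{M}(\Gamma_t^R)<\max\{\mathbf{M}(\Sigma_0^R),\mathbf{M}(\Sigma_1^R)\}+\epsilon$.

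Now argue as follows.
\begin{enumerate}
\item If the minimizer were unique for all $t$, compactness of area-minimizing currents would make $t\mapsto\Gamma_t^R$ continuous. This gives a path from $\Sigma_0^R$ to $\Sigma_1^R$ with excess below $\epsilon<C$, contradicting Proposition \ref{widthgap}.
\item Hence some $\sigma_t^R$ bounds two distinct minimizers. Your cut-and-paste argument then shows they meet only along $\sigma_t^R$.
\item Property (4) comes the same way. Suppose that at every parameter with non-unique minimizers, the minimizers could be joined by a boundary-fixed family with $\sup\mathbf{M}<\max\{\mathbf{M}(\Sigma_0^R),\mathbf{M}(\Sigma_1^R)\}+C$. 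Inserting these bridges at the jumps of $t\mapsto\Gamma_t^R$ would again violate Proposition \ref{widthgap}.
\end{enumerate}

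In this scheme your troublesome end paths never appear. Everything on either side of a jump consists of Plateau minimizers for intermediate boundaries, whose masses are controlled by area comparison. The constant $C$ is therefore obtained with no $O(\epsilon)$ loss.
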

\begin{proof}
    By Lemma \ref{asymptotic}, we can choose $R_\epsilon$ such that $\textbf{M}(\partial B_R\cap G)<\epsilon$ for all $R>R_\epsilon$. Assume for contradiction that there exists a sequence of radii $\{R_k\}_k$ for which no such $\sigma^{R_k}$ exists. 

    For any $k$, since $\sigma_0^{R_k}$ and $\sigma_1^{R_k}$ bound an open set on $\partial B_{R_k}$ and $\mathcal{F}(\sigma_0^{R_k}, \sigma_1^{R_k})<\epsilon$, we can find a one-parameter family of smooth boundaries $\{\sigma_t^{R_k}\}_{t=0}^1$ connecting them in such a way that:
    \begin{equation}\label{flat}
    \mathcal{F}(\sigma_t^{R_k}, \sigma_s^{R_k})<\epsilon
    \end{equation}
    for all $s,t \in [0,1]$. For each $t$, by solving the Plateau problem with boundary $\sigma_t^{R_k}$, we obtain at least one area-minimizing hypersurface $\Gamma_t^{R_k}$. It follows from (\ref{flat}) and an area comparison argument that:
    
    \[\min \{\textbf{M}(\Sigma_0^{R_k}), \textbf{M}(\Sigma_1^{R_k})\}-\epsilon< \textbf{M}(\Gamma_t^{R_k})<\max\{\textbf{M}(\Sigma_0^{R_k}), \textbf{M}(\Sigma_1^{R_k})\}+\epsilon.\]
    
    Therefore, assuming the contradiction, $\Gamma_t^{R_k}$ is unique for every $t$. As a result, by compactness for area-minimizing currents, the family $\{\Gamma_t^{R_k}\}_t$ gives a continuous sweep-out of hypersurfaces connecting $\Sigma_0^{R_k}$ and $\Sigma_1^{R_k}$ with mass bounded from above by \[\max\{\textbf{M}(\Sigma_0^{R_k}), \textbf{M}(\Sigma_1^{R_k})\}+\epsilon.\] This contradicts Proposition \ref{widthgap} when $\epsilon<C$. 
    
    As a result, for sufficiently large $R$, there exists $\sigma^R$, together with two distinct Plateau solutions $\Gamma_0^R$ and $\Gamma_1^R$ with boundary $\sigma^R$. The standard cut-and-paste argument for area-minimizing hypersurfaces shows that they only intersect along their boundary. 
    
    If the conclusion $(4)$ does not hold for any such $\sigma^R$ when $R$ is sufficiently large, we can then construct a one-parameter family of hypersurfaces from $\Sigma^R_0$ to $\Sigma^R_1$ that violates (\ref{widthgap1}) as follows. First, choose a flat-continuous one-parameter family of boundaries $\{\sigma^R_t\}_t$ on $\partial B_R\cap G$ that connects $\sigma_0^R$ and $\sigma_1^R$, then find an area-minimizing hypersurface for each of them. By area comparison, for sufficiently large $R$, any such hypersurface $\Gamma^R_t$ must have mass bounded from above by \[\max\{\textbf{M}(\Sigma_0^R), \textbf{M}(\Sigma_1^R)\}+C.\] 
    
    For those $\sigma_t^R$ that bound multiple area-minimizing solutions, by the assumption, we can interpolate from one solution to another by a boundary-constrained family of integral currents of mass also bounded by the same constant. Combining these facts, we get a contradiction. This proves the proposition.
\end{proof}

We can now finish the proof of Theorem \ref{nonminimizer} under the assumption ~\eqref{assumption:star} as follows.
\begin{proof}
    By Proposition \ref{2plateau}, we can find a sequence of radii $R_k\to \infty$ and a sequence of boundaries $\{\sigma^{R_k}\}_k$ contained in $\partial B_{R_k}$ that satisfies $(1), (2), (3)$ and $(4)$ above. We have that $\Gamma_0^{R_k}$ and $ \Gamma_1^{R_k}$ stay inside the gap between $\Sigma_0^{R_k}$ and $\Sigma_1^{R_k}$. Since they are stable, we can perturb them in an arbitrarily small neighborhood to create a mean-convex open ball $U_k$, with $\sigma^{R_k}\subset \partial U_k$ and $\Gamma^{R_k}_0, \Gamma_1^{R_k} \subset \operatorname{Int}(U_k)$.
    It follows from (\ref{widthgap2}) that 
    \[W_{\sigma^{R_k}}(\Gamma_0^{R_k}, \Gamma_1^{R_k}) \geq \max \{\textbf{M}(\Sigma_0^{R_k}), \textbf{M}(\Sigma_1^{R_k})\}+C. \]
    
    We can now apply Montezuma's min-max theorem, Theorem \ref{camillo}, to produce a hypersurface $\Gamma^{R_k}$ with boundary $\sigma^{R_k}$ and of mass equal to $W_{\sigma^{R_k}}(\Gamma_0^{R_k}, \Gamma_1^{R_k})$. Note that to obtain the boundary regularity of the min-max varifold along $\sigma^{R_k}$, we need that $\sigma^{R_k}$ is contained in the boundary of a region with a strictly convex boundary. In fact, the argument only requires local convexity of the region near $\sigma^{R_k}$. In our setting, this can be guaranteed by a small perturbation of $\partial B_{R_k}$, because the metric is periodic (hence bounded) and $\mathbf{M}(\partial B_{R_k}\cap G)\to 0$. 

    Furthermore, under the assumption that $G$ contains no closed minimal hypersurfaces, we must have that $\Gamma^{R_k}$ consists of only one connected component with boundary $\sigma^{R_k}$. It also follows from Theorem \ref{camillo} that $\Gamma^{R_k}$ has multiplicity one. 
    
     We now prove the following claim:
    \begin{claim}\label{keyclaim}
     There exists $R'$ such that for all $R_k>R'$, we have 
    \begin{equation}\label{widthgapcompact}
       \mathbf{M}(\Gamma^{R_k}\llcorner B_{R'}) \geq \max \{\mathbf{M}(\Sigma_0^{R'}), \mathbf{M}(\Sigma_1^{R'})\}+C.
    \end{equation}
    \end{claim}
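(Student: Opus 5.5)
We argue by contradiction, comparing $\Gamma^{R_k}$ with the minimizers $\Sigma_0,\Sigma_1$ outside a fixed ball. Suppose the claim fails. Then for every $R'$ there are arbitrarily large $R_k$ with $\mathbf{M}(\Gamma^{R_k}\llcorner B_{R'})<\max\{\mathbf{M}(\Sigma_0^{R'}),\mathbf{M}(\Sigma_1^{R'})\}+C$. The idea is that the excess mass $C$ of $\Gamma^{R_k}$ over the boundary-value minimizers must be located somewhere. The region $B_{R_k}\setminus B_{R'}$ contains only a portion of the gap of small volume, where $\Sigma_0$ and $\Sigma_1$ are asymptotic (Lemma \ref{asymptotic}). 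So the excess cannot sit far away without $\Gamma^{R_k}$ paying for it.

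The first step is to control the mass of $\Gamma^{R_k}$ in the annulus $A=B_{R_k}\setminus B_{R'}$. Since $\Gamma^{R_k}$ lies in $G$, we slice along $\partial B_{R'}$ (coarea, choosing a good radius near $R'$). We then compare $\Gamma^{R_k}\llcorner A$ with $\Sigma_0\llcorner A$ after gluing with pieces of $\partial B_{R'}\cap G$ and $\partial B_{R_k}\cap G$, whose masses are $o(1)$ as $R'\to\infty$ by Lemma \ref{asymptotic}. This does not give an upper bound for $\Gamma^{R_k}$ on the annulus, since $\Gamma^{R_k}$ is not minimizing. Instead we use it the other way: by the mountain-pass characterization, $\mathbf{M}(\Gamma^{R_k})=W_{\sigma^{R_k}}(\Gamma_0^{R_k},\Gamma_1^{R_k})$. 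We also use the upper bound $W\le \max\mathbf{M}(\Sigma_i^{R_k})+\Delta W_\alpha+o(1)$ coming from Proposition \ref{upperbound} and the interpolation of Section \ref{section6}. Together these give a uniform bound on the total excess of $\Gamma^{R_k}$.

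The second and main step is to turn the failure of \eqref{widthgapcompact} into a contradiction with Proposition \ref{widthgap}, applied on the fixed ball $B_{R'}$. The plan is as follows. The optimal min-max family for $\Gamma^{R_k}$ has every slice of mass at most $\mathbf{M}(\Gamma^{R_k})+o(1)$. Restricting each slice to $B_{R'}$, and capping with small pieces of $\partial B_{R'}\cap G$, gives a family in $B_{R'}$ from $\Sigma_0^{R'}$ to $\Sigma_1^{R'}$. On the annulus, each slice has mass at least that of $\Sigma_0\llcorner A$ minus $o(1)$, because $\Sigma_0$ is minimizing and the caps are small. Hence the slice masses inside $B_{R'}$ are bounded by $\mathbf{M}(\Gamma^{R_k})-\mathbf{M}(\Sigma_0\llcorner A)+o(1)$.

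It remains to bound this by $\mathbf{M}(\Gamma^{R_k}\llcorner B_{R'})+o(1)$, which requires $\mathbf{M}(\Gamma^{R_k}\llcorner A)\le \mathbf{M}(\Sigma_0\llcorner A)+o(1)$. I would obtain this from index-one curvature estimates (Sharp compactness, as used with Theorem \ref{camillo}). Take limits of $\Gamma^{R_k}$ on compact sets. Outside a large ball the limit is stable, lies in $G$, and is trapped between asymptotic minimizers, so it is a graph over $\Sigma_0$ with mass matching $\Sigma_0$ up to $o(1)$. This is the same Harnack argument as in Appendix \ref{appendixd}.

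Once that is in place, the negated inequality gives a family in $B_{R'}$ with sup mass below $\max\mathbf{M}(\Sigma_i^{R'})+C$. This contradicts Proposition \ref{widthgap} for $R'>R_0$.

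The hardest step is the annulus estimate: showing that the index-one $\Gamma^{R_k}$ does not carry excess mass out to infinity uniformly in $k$. If the limiting argument fails there, the fallback is to use the uniform smallness of $\operatorname{Vol}(G\setminus B_{R'})$ together with monotonicity. Any portion of excess mass $\ge c$ far out would force a region of definite volume in $G$ at large radius, which is impossible since the volume of $G$ outside $B_{R'}$ tends to $0$.
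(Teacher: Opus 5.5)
Your localization step is the same mechanism the paper uses: you restrict an optimal family to $B_{R'}$, cap along $\partial B_{R'}\cap G$, and compare with Proposition \ref{widthgap}. You also see correctly that if the only link between the sweep-outs and $\Gamma^{R_k}$ is $\mathbf{M}(\Gamma^{R_k})=W_{\sigma^{R_k}}(\Gamma_0^{R_k},\Gamma_1^{R_k})$, then you need the uniform annulus bound $\mathbf{M}(\Gamma^{R_k}\llcorner(B_{R_k}\setminus B_{R'}))\le\mathbf{M}(\Sigma_0\llcorner(B_{R_k}\setminus B_{R'}))+o(1)$. Neither of your arguments for this bound works.

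Sharp's compactness controls $\Gamma^{R_k}$ only on fixed compact sets, while your annulus grows with $k$. Local convergence says nothing about mass near $\partial B_{R_k}$, or about an unstable region that drifts to infinity as $k\to\infty$. That drifting scenario, where $\Gamma^{R_k}$ converges locally to $\Sigma_0$ or $\Sigma_1$ while the excess $C$ escapes, is exactly what the claim must exclude. So invoking stability and graphicality of the limit outside a large ball is circular: it describes the limit, not $\Gamma^{R_k}$ uniformly in $k$.

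Your fallback fails as stated, because excess area does not by itself force enclosed volume. A hypersurface can carry area $c$ in a region of arbitrarily small volume, for instance as several nearly parallel sheets in a thin slab. Monotonicity gives area lower bounds in balls, not volume lower bounds for the part of $G$ the surface occupies.

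The missing idea is that the location of the excess must be forced through the choice of sweep-outs, not recovered afterwards from properties of $\Gamma^{R_k}$. The paper's (brief) proof stays entirely on the sweep-out side. Since $\mathcal{F}(\Gamma_0^{R_k}\setminus B_{R'},\Gamma_1^{R_k}\setminus B_{R'})<\epsilon$ and $\mathbf{M}(\partial B_{R'}\cap G)<\epsilon$, any competing family can be modified outside $B_{R'}$ and reconnected along $\partial B_{R'}\cap G$, interpolating cheaply between the nearly coincident outer pieces. The excess of the width over the outer minimizing mass is then carried by slices inside $B_{R'}$, where Proposition \ref{widthgap} bounds it below.

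The natural way to transfer this to $\Gamma^{R_k}$ is to produce it from min-max sequences of such localized near-optimal families, whose slices are nearly minimizing on the annulus for every $t$. The lower bound on the compact set $B_{R'}$ then passes to the limit varifold by upper semicontinuity of mass on compact sets. The tightening step needs some care here.

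Two smaller points:
1. The bound $W\le\max_i\mathbf{M}(\Sigma_i^{R_k})+\Delta W_\alpha+o(1)$ does not follow from Proposition \ref{upperbound}, which concerns closed cycles in integer classes on $\mathbb{T}^n$. A uniform excess bound is available instead from the gluing construction in Claim \ref{widthbound1}.
2. Each comparison in your argument costs $o(1)$, so your final family only has excess below $C+o(1)$. You must therefore work with a constant slightly smaller than the one in Proposition \ref{widthgap}.
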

    \begin{proof}[Proof of claim]
    Since the Plateau solutions $\Gamma_0^{R_k}, \Gamma_1^{R_k}$ lie inside the gap $G$, for any $\epsilon$, there is $R'$ such that for sufficiently large $R_k$:
    \[\mathcal{F}(\Gamma_0^{R_k} \setminus B_{R'}, \Gamma_1^{R_k} \setminus B_{R'})<\epsilon \]
    and 
    \[\textbf{M}(\partial B_{R'} \cap G)<\epsilon.\]
    
    For any sweep-out $\{\Gamma_t^{R_k}\}_t$ from $\Gamma_0^{R_k}$ to $\Gamma_1^{R_k}$, we can modify the slices outside of $B_{R'}$, then reconnect along $\partial B_{R'}\cap G$ to obtain a new sweep-out whose main contribution to the width comes from the mass of the slices inside $B_{R'}$.

    Since $\partial B_{R'}\cap G$ has small mass, we can modify a sweep-out from $\Gamma_0^{R_k}\llcorner B_{R'}$ to $\Gamma_1^{R_k}\llcorner B_{R'}$ to a sweep-out from $\Gamma_0^{R'}$ to $\Gamma_1^{R'}$. Therefore, combining this with Proposition \ref{widthgap}, we establish (\ref{widthgapcompact}) for sufficiently small $\epsilon$.
    \end{proof}

    We thus arrive at a sequence of minimal hypersurfaces $\{\Gamma^{R_k} \}_k$ with boundaries $\{\sigma^{R_k}\}_k$, whose Morse indices are bounded by one. Because the metric is periodic and therefore uniformly bounded, the sequence $\{\Gamma^{R_k}\}_k$ has uniformly bounded mass on every bounded compact set. Therefore, by Sharp's compactness theorem \cite{sharp}, $\Gamma^{R_k}$ converges in the varifold sense on compact sets to a smooth minimal hypersurface $\Gamma$ lying between $\Sigma_0$ and $\Sigma_1$. In fact, the convergence is smooth away from at most one point. Since there is no loss of mass in the varifold convergence, the bound (\ref{widthgapcompact}) is preserved in the limit, and we have:
    \begin{equation}\label{estimate}
    \textbf{M}(\Gamma \llcorner B_{R'}) \geq  \max \{\textbf{M}(\Sigma_0^{R'}), \textbf{M}(\Sigma_1^{R'})\}+C. 
    \end{equation}
    This implies that $\Gamma$ is not area-minimizing and hence distinct from $\Sigma_0$ and $\Sigma_1$. This is the desired hypersurface inside $G$.
    
    We now show that $\Gamma$ satisfies the estimate in the first statement of Theorem \ref{areaofnonminimizer} as follows. For any $\epsilon>0$, as above we can choose $R'$ such that (\ref{estimate}) holds and that
    \[\mathcal{F}(\Gamma\cap \partial B_{R'}, \Sigma_0\cap \partial B_{R'})<\epsilon.\]
    By Lemma \ref{asymptotic}, we have for all sufficiently large $R$
    \[\mathcal{F}(\Gamma\cap \partial {B_R}, \Sigma_0\cap \partial B_R)<\epsilon.\]
    Since $\Sigma_0$ is area-minimizing, area comparison gives:
    \[\mathbf{M}(\Gamma\llcorner(B_R-B_{R'}))\geq \mathbf{M}(\Sigma_0\llcorner(B_R-B_{R'}))-2\epsilon.\]
    Together with (\ref{estimate}), we have
    \[\mathbf{M}(\Gamma\llcorner B_R)\geq \mathbf{M}(\Sigma_0\llcorner B_R)+C-2\epsilon.\]
    Since this holds for all sufficiently large $R$, take $\epsilon\to 0$ and note that by Remark \ref{constantC}, $C=\Delta W_\alpha$, we conclude that 
   \[\limsup_{R\to \infty} (\mathbf{M}(\Gamma\llcorner B_R) -\mathbf{M}(\Sigma_0 \llcorner B_R))\geq \Delta W_\alpha.\]
\end{proof}

We now turn to the general case. From the preceding argument, if $\Gamma^{R_k}$ contains no closed component for all $k$ sufficiently large, then the conclusion already follows. We therefore assume that this fails, so that closed components do appear along a subsequence.  

Consider the sequence of boundaries $\sigma^{R_k}$ as before, together with area-minimizing hypersurfaces $\Gamma_0^{R_k}$ and $\Gamma_1^{R_k}$ spanning $\sigma^{R_k}$. Together they bound an open subset $G_{R_k}$ of $G$. We have that $G_{R_k}\to G$. \\

The strategy is to cut along a \textit{maximal} collection of closed stable minimal hypersurfaces and then perform the min-max construction in the remaining region. The key point is that the number of stable minimal hypersurfaces that need to be removed is finite. 
\begin{Prop}\label{cutstable}
    There exists a finite collection of pairwise disjoint open sets $U_{S_1}, \ldots, U_{S_j}\subset G$ such that for every $1\leq i \leq j, \partial U_{S_i}=S_i$ is a closed, connected, stable minimal hypersurface inside $G$, and such that when $k$ is sufficiently large, for every closed minimal hypersurface $\Sigma$ contained inside the interior of \[\Omega_{R_k}:=G_{R_k}\setminus (\cup_{i=1}^{j} U_{S_i}),\] we have
    \[\mathbf{M}(\Sigma)\geq \mathbf{M}(\Gamma_0^{R_k})+\mathbf{M}(\Gamma_1^{R_k}). \]
\end{Prop}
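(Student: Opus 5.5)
We first fix the mass threshold. Since $\mathbf{M}(\Gamma_i^{R_k})$ is comparable to $\mathbf{M}(\Sigma_0^{R_k})$ up to $\epsilon$ by Proposition \ref{2plateau}(3), and $\Sigma_0^{R_k}$ is a piece of an area-minimizing hypersurface in a ball of radius $R_k$, the threshold $\Lambda_k:=\mathbf{M}(\Gamma_0^{R_k})+\mathbf{M}(\Gamma_1^{R_k})$ tends to infinity. Hence the task is to remove finitely many regions of $G$ so that every closed minimal hypersurface in the remaining part of $G_{R_k}$ has mass at least $\Lambda_k$. The difficulty is that $\Lambda_k\to\infty$. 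We therefore look for a \emph{$k$-independent} collection $S_1,\dots,S_j$ after whose removal $G$ contains no closed minimal hypersurface of \emph{any} mass. A bound of the form $\mathbf{M}(\Sigma)\ge\Lambda_k$ then holds vacuously for closed minimal $\Sigma\subset\Omega_{R_k}$.

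The construction proceeds in three steps.

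\textbf{Step 1 (closed minimal hypersurfaces avoid the asymptotic region).} Every closed minimal hypersurface in $G$ lies in a fixed compact set $K_0\subset G$. Outside a large ball, $\Sigma_0$ and $\Sigma_1$ are asymptotic graphs (Lemma \ref{asymptotic} and the graphical description before it), so $G\setminus B_{R_0}$ is a thin slab foliated by hypersurfaces of small mean curvature whose sign can be arranged via the Harnack argument of Appendix \ref{appendixd}. The maximum principle then forces a closed minimal $\Sigma\subset G$ to touch this foliation from one side at a farthest point, which is impossible. If the sign of the mean curvature cannot be controlled, we instead use that $\pi|_G$ is injective (Lemma \ref{gap}), so closed minimal hypersurfaces in $G$ project to closed minimal hypersurfaces of $\mathbb{T}^n$ lying in the precompact region $\pi(G)$, and we argue by compactness.

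\textbf{Step 2 (finiteness under a bumpy metric).} Since $g$ is bumpy, by White \cite{white} and the compactness theory of Sharp \cite{sharp}, for each $\Lambda$ there are only finitely many closed embedded minimal hypersurfaces in $K_0$ with mass at most $\Lambda$ and index at most one. We choose the $S_i$ as follows. Inside the compact region $\overline{G}\cap K_0$, whose boundary pieces are minimizing and hence barriers, we minimize area in the isotopy and homology classes enclosing clusters of closed minimal hypersurfaces. Each such minimization produces a stable closed minimal hypersurface $S$ bounding an open set $U_S\subset G$ that contains the cluster; by the maximum principle, $S$ is disjoint from the other $S$'s or nested with them, and we keep only the outermost ones. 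This yields pairwise disjoint $U_{S_i}$ with stable connected boundaries. The key point is that a closed minimal $\Sigma$ lying outside all $U_{S_i}$ would enclose, together with $\Sigma_0$ or $\Sigma_1$ acting as a barrier, a region in which area minimization produces a new stable hypersurface outside the $U_{S_i}$, contradicting maximality. Finiteness of the maximal family follows because stable minimal hypersurfaces in $K_0$ have uniformly bounded area: each is homologically trivial in $G$ and bounds a region of volume at most $\operatorname{Vol}(\mathbb{T}^n)$, and one compares with the boundary of a neighbourhood. Their curvature is bounded by stability, so bumpiness forces them to be isolated and hence finitely many.

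\textbf{Step 3 (conclusion and main obstacle).} With this choice, $\Omega_{R_k}$ contains no closed minimal hypersurface at all for large $k$, and the inequality holds. The main obstacle is the area bound in Step 2. If it cannot be obtained, the fallback is to use the threshold directly: unstable closed minimal hypersurfaces of mass at most $\Lambda_k$ cannot accumulate because of bumpiness. In that case we would take the $S_i$ to be the stable hypersurfaces arising as boundaries of minimal enclosures of the finitely many closed minimal hypersurfaces in $K_0$ whose mass is at most $2\sup_k\mathbf{M}(\Gamma^{R_k}\llcorner K_0)+1$, a quantity that is bounded independently of $k$.
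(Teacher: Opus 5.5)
Aiming for a vacuous inequality is legitimate: since $\mathbf{M}(\Gamma_0^{R_k})+\mathbf{M}(\Gamma_1^{R_k})\to\infty$, the proposition effectively says that no closed minimal hypersurface survives in $G\setminus\bigcup_i\overline{U_{S_i}}$. The gap is in the finiteness of your family in Step 2.

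You rest finiteness on a uniform area bound for closed stable minimal hypersurfaces in $K_0$. You justify it by their enclosing volume at most $\operatorname{Vol}(\mathbb{T}^n)$ and by comparison ``with the boundary of a neighbourhood''. This does not work. A stable minimal hypersurface is not area-minimizing, so no competitor bounds its area, and a bound on enclosed volume says nothing about the area of the boundary. Without an area bound, bumpiness and Sharp's compactness (which need area \emph{and} index bounds) give no finiteness. That applies both to the stable hypersurfaces and to the ``clusters'' of closed minimal hypersurfaces, of which there may be infinitely many with unbounded area and index.

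Step 1 is also unsupported. The leaves of the graphical foliation have mean curvature of uncontrolled sign, and precompactness of $\pi(G)$ in the compact torus is vacuous. So your argument does not confine closed minimal hypersurfaces to a fixed $K_0$.

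The missing idea is the paper's, and it needs neither $K_0$ nor area bounds. Every closed connected stable $S=\partial U_S\subset G$ has $\operatorname{Vol}(U_S)\ge v_0$ (Lemma \ref{volumebound}, via the curvature estimate and an inradius bound). Pairwise disjointness and $\operatorname{Vol}(G)<\infty$ therefore stop the greedy construction after finitely many steps. At each step a maximal $U_{S_i}$ exists, because an infinite nested chain would converge graphically to a limit carrying a nontrivial Jacobi field, contradicting bumpiness.

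The ``key point'' of Step 2 is asserted rather than proved. Minimizing area outside a closed minimal $\Sigma$ inside $\overline G\cap K_0$ need not produce a new closed stable hypersurface: $\partial K_0$ is not a barrier, and the minimizer can run into the outer boundary instead.

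The paper confronts exactly this. By maximality $\Sigma$ is unstable. Minimizing in its homology class inside the component $\Omega'_{R_k}$ of $\Omega_{R_k}\setminus\Sigma$ containing $\Gamma_0^{R_k},\Gamma_1^{R_k}$ gives $\widetilde\Sigma$ with $\mathbf{M}(\widetilde\Sigma)<\mathbf{M}(\Sigma)$. Either $\widetilde\Sigma$ has a closed stable component in the interior, contradicting maximality, or it is the rest of $\partial\Omega'_{R_k}$, which contains $\Gamma_0^{R_k}\cup\Gamma_1^{R_k}$. That second alternative is precisely where the inequality $\mathbf{M}(\Sigma)\ge\mathbf{M}(\Gamma_0^{R_k})+\mathbf{M}(\Gamma_1^{R_k})$ comes from. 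To make the statement vacuous you would have to exclude it, e.g.\ by a perimeter minimization over all of $G$ using lower semicontinuity and the infinite area of $\Sigma_0,\Sigma_1$, which you do not do.

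Your fallback does not rescue the argument. It replaces the threshold $\mathbf{M}(\Gamma_0^{R_k})+\mathbf{M}(\Gamma_1^{R_k})\to\infty$ by a $k$-independent constant, so it proves a different statement. Its finiteness claim also again ignores the index. (Minimization in an isotopy class, moreover, is only available in dimension three.)
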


We postpone the proof of this lemma to Appendix \ref{appendix:e}.\\

Let $\mathcal{S}$ be the collection $\{S_1, S_2, \ldots, S_j\}$ of stable minimal hypersurfaces constructed in Proposition \ref{cutstable}. Since $G_{R_k}\to G$, $G_{R_k}$ contains all elements of $\mathcal{S}$ for sufficiently large $k$. For each $R_k$, if there exists a stable hypersurface $X_{R_k}$ with boundary $\sigma^{R_k}$ that does not lie completely on one side (above or below) with respect to every closed hypersurface in $\mathcal{S}$, then we can take the limit of $X_{R_k}$ to produce a new minimal hypersurface inside the gap $G$. As a result, we can assume that every stable hypersurface inside $G_{R_k}$ lies completely on one side with respect to $\mathcal{S}$. 

By replacing $\Gamma_0^{R_k}$ by
\[\sup\{X\subset G_{R_k}: \partial X=\sigma^{R_k}, X \text{ is stable}, X\text{ lies below }\mathcal{S}\} \]
and $\Gamma_1^{R_k}$ by
\[\inf\{X\subset G_{R_k}: \partial X=\sigma^{R_k}, X \text{ is stable}, X\text{ lies above }\mathcal{S}\}, \]
we can assume that there is no stable minimal hypersurface with boundary $\sigma^{R_k}$ lying completely inside $G_{R_k}$.

For each $k$, we can apply Theorem \ref{camillo} to \[ [\Gamma_0^{R_k}]= \left[\Gamma_1^{R_k}+\sum_{i=1}^{j} S_i\right]\in H_{n-1}(\Omega_{R_k}, \partial \Omega_{R_k}) \] 
whose representatives have the same boundary $\sigma^{R_k}$ to get another minimal hypersurface $\Gamma^{R_k}$ satisfying the following:

\begin{equation}\label{weak1}
\mathbf{M}(\Gamma^{R_k}) > \mathbf{M}(\Gamma_1^{R_k})+\sum_{i=1}^j\mathbf{M}(S_i).
\end{equation}

\begin{claim}\label{widthbound1}
There exists a positive constant $C_0$ independent of $k$ such that the following upper bound holds: \[\mathbf{M}(\Gamma^{R_k})\leq \mathbf{M}(\Gamma_0^{R_k})+\sum_{i=1}^{j}\mathbf{M}(S_i)+C_0.\]
\end{claim}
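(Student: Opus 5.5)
Since $\Gamma^{R_k}$ is produced by Theorem \ref{camillo}, its mass equals the width $W_{\sigma^{R_k}}$ of the relevant homotopy class in $\Omega_{R_k}$. So it is enough to build one explicit path of currents with boundary $\sigma^{R_k}$, from $\Gamma_1^{R_k}+\sum_i S_i$ to $\Gamma_0^{R_k}$, whose maximal mass is at most $\mathbf{M}(\Gamma_0^{R_k})+\sum_i\mathbf{M}(S_i)+C_0$ with $C_0$ independent of $k$. I will glue such a path from a boundedly many \emph{local} sweep-outs.

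\textbf{Step 1: the cost of the stable pieces is $k$-independent.} The collection $\mathcal{S}$ and the open sets $U_{S_i}$ from Proposition \ref{cutstable} are fixed, and $\overline{U_{S_i}}$ is compact in $G$. For each $i$, fix once and for all a flat-continuous sweep-out of $U_{S_i}$ without mass concentration, going from $S_i$ to $0$ (for instance the family $\partial(U_{S_i}\cap\{f<t\})$ for a Morse function $f$ on $\overline{U_{S_i}}$). Let $c_i$ be its maximal mass. This path lets us add or delete $S_i$ at cost at most $c_i$, and $c_i$ does not depend on $k$.

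\textbf{Step 2: the cost of going from $\Gamma_1^{R_k}$ to $\Gamma_0^{R_k}$ is uniformly bounded.} Here I reuse the decomposition from Claim \ref{keyclaim}. Choose a fixed $R'$ with $\bigcup_i\overline{U_{S_i}}\subset B_{R'}$ and $\mathbf{M}(\partial B_{R'}\cap G)<1$. Outside $B_{R'}$, the surfaces $\Gamma_0^{R_k}$ and $\Gamma_1^{R_k}$ are flat-close; they are squeezed between $\Sigma_0$ and $\Sigma_1$, which are asymptotic by Lemma \ref{asymptotic}. On that region I interpolate as in Step 2 of Section \ref{section6}: partition $(G_{R_k}\setminus B_{R'})$ into pieces $Q_{k,i}$ with small boundary, add $\partial Q_{k,i}$ one at a time, and use Corollary \ref{interpolation} together with the minimizing property of $\Gamma_0^{R_k}$ to bound each slice by $\mathbf{M}(\Gamma_0^{R_k})+O(1)$.

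Inside the compact region $G\cap B_{R'}$, the portions of $\Gamma_0^{R_k}$ and $\Gamma_1^{R_k}$ converge (after passing to a subsequence) to fixed stable limits. I sweep across the bounded region between them using a fixed Morse-function sweep-out of $G\cap B_{R'}$. Its maximal mass is some constant $c'$ depending only on $R'$ and $g$, and the error terms are absorbed by the small mass of $\partial B_{R'}\cap G$.

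\textbf{Step 3: concatenation.} The concatenated path is as follows: start at $\Gamma_1^{R_k}+\sum_i S_i$; delete the $S_i$ one at a time using Step 1; then move from $\Gamma_1^{R_k}$ to $\Gamma_0^{R_k}$ using Step 2. Each slice has mass at most
\[\max\{\mathbf{M}(\Gamma_0^{R_k}),\mathbf{M}(\Gamma_1^{R_k})\}+\sum_i\mathbf{M}(S_i)+\sum_i c_i+c'+O(1).\]
By Proposition \ref{2plateau}(3), $|\mathbf{M}(\Gamma_0^{R_k})-\mathbf{M}(\Gamma_1^{R_k})|\le \mathbf{M}(\Sigma_0^{R_k}\triangle\ldots)$ is bounded; more precisely, both masses lie within a bounded amount of $\max\{\mathbf{M}(\Sigma_0^{R_k}),\mathbf{M}(\Sigma_1^{R_k})\}$, and the $\Sigma_j^{R_k}$ differ in mass only by a bounded amount since they are asymptotic. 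So the whole bound takes the required form, with $C_0$ independent of $k$.

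\textbf{Main obstacle.} The hard step is the middle part of Step 2. I must make sure the compact-region sweep-out stays inside $\Omega_{R_k}$ and lies in the correct relative homotopy class, so that it is admissible for the width that produced $\Gamma^{R_k}$. The difficulty is routing the sweep around the $U_{S_i}$ (or through them, after removing the $S_i$ first). I would do this by first handling each $U_{S_i}$ separately and then sweeping only $G\cap B_{R'}\setminus\bigcup_i U_{S_i}$.
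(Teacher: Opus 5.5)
Your reduction is correct: $\mathbf{M}(\Gamma^{R_k})$ is the width of the boundary-constrained min-max problem in $\Omega_{R_k}=G_{R_k}\setminus\bigcup_i U_{S_i}$. So a single admissible path from $\Gamma_1^{R_k}+\sum_i S_i$ to $\Gamma_0^{R_k}$ with controlled mass would suffice.

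The path you build, however, is not admissible. Step~1 removes $S_i$ by sweeping through $U_{S_i}$. The fixed Morse-function sweep-out of $G\cap B_{R'}$ in Step~2 also passes through every $U_{S_i}$, since you chose $R'$ so that $\overline{U_{S_i}}\subset B_{R'}$. Those slices are supported outside $\Omega_{R_k}$. Your concatenation therefore only bounds the corresponding width computed in $G_{R_k}$, which can be strictly smaller than the width realized by $\Gamma^{R_k}$. Restricting the min-max to $\Omega_{R_k}$ is the whole point of Proposition~\ref{cutstable}.

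You flag this as the main obstacle, but the remedy is not there. Going ``through them, after removing the $S_i$ first'' is never admissible. ``First handling each $U_{S_i}$ separately'' reuses exactly the inadmissible Step~1.

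The missing idea is a cutting operation that makes Step~1 unnecessary.
\begin{itemize}
\item First build a sweep-out $\Phi(t)$ of the \emph{whole} region between $\Gamma_0^{R_k}$ and $\Gamma_1^{R_k}$ with $\mathbf{M}(\Phi(t))\le\mathbf{M}(\Gamma_0^{R_k})+C_0$. Your Step~2 essentially does this, and so does the paper: a fixed sweep-out in a fixed ball is glued along $\partial B_R\cap G$, whose mass tends to zero, to an interpolation via Corollary~\ref{interpolation} outside the ball.
\item Write $\Phi(t)$ as the relative boundary of a region $E_t$, and replace it by the relative boundary of $E_t\setminus\bigcup_i U_{S_i}$. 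Concretely, the new slice is $\Phi(t)\llcorner\Omega_{R_k}$ plus the portion of $\bigcup_i S_i$ lying in $E_t$.
\end{itemize}
This new family stays in $\Omega_{R_k}$ and runs from $\Gamma_0^{R_k}$ to $\Gamma_1^{R_k}+\sum_i S_i$, so it lies in the right class. Each slice gains at most $\sum_i\mathbf{M}(S_i)$ in mass, which is exactly where that term in the claim comes from. No sweep-outs of the $U_{S_i}$ and no constants $c_i$ are needed; their widths $W_\partial(U_{S_i})$ enter only later, in the proof of Theorem~\ref{areaofnonminimizer}.
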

\begin{proof}[Proof of claim]
    Since $G$ has finite volume and $\Sigma_0$ and $\Sigma_1$ are area-minimizing, it follows from the interpolation theorem, Corollary \ref{interpolation}, that for any $\epsilon$ sufficiently small, there exists $R$ such that we can interpolate from $(\Sigma_0 \llcorner (B_{R_k}\setminus B_R))$ to $\Sigma_1\llcorner(B_{R_k}\setminus B_R))$ by a sweep-out whose slices have mass at most \[\mathbf{M}(\Sigma_0 \llcorner (B_{R_k}\setminus B_R))+\epsilon.\] 
    
    Furthermore, $\mathbf{M}(\partial B_R\cap G)\to 0$. Therefore, for every $k$ sufficiently large, starting from a fixed sweep-out from $\Sigma_0^R$ to $\Sigma_1^R$, we can change all the slices along $\partial B_R\cap G$ and then glue in a sweep-out from $\Sigma_0 \llcorner (B_{R_k}\setminus B_R)$ to $\Sigma_1\llcorner(B_{R_k}\setminus B_R))$. It follows that there is a constant $C_0$ such that there exists a sweep-out $\Phi$ from $\Sigma_0^{R_k}$ to $\Sigma_1^{R_k}$ such that
    \[\mathbf{M}(\Phi(t))\leq \mathbf{M}(\Sigma_0^{R_k})+C_0, \quad \forall t\in [0,1].\]
    Since $\Gamma_0^{R_k}$ and $\Gamma_1^{R_k}$ converge to $\Sigma_0$ and $\Sigma_1$, respectively, we see that a similar estimate holds for some sweep-outs between them:
    \begin{equation}\label{9.6.1}
    \mathbf{M}(\Phi(t))\leq \mathbf{M}(\Gamma_0^{R_k})+C_0, \quad \forall t\in [0,1].
    \end{equation}
    Fix one such sweep-out $\Phi$. 
    We can now modify $\Phi$ to obtain a sweep-out of $G_{R_k}\setminus \cup_{i=1}^{j}U_{S_i}$ as follows: for each $i=1, \ldots, j$, the restriction of $\Phi$ to $U_{S_i}$ is a sweep-out of $U_{S_i}$ by open sets $V_i^t$, where
    \[\partial V_i^t=\Phi(t)\llcorner U_{S_i}+\text{a portion of $S_i$ cut by $\Phi(t)$}.\]
    As a result, we can form the following sweep-out
    \[\Phi'(t)=\Phi(t)+\sum_{i=1}^{j}\partial V_i^t, \quad t\in [0,1].\]
    It follows that
    \[\mathbf{M}(\Phi'(t))\leq \mathbf{M}(\Phi(t))+\sum_{i=1}^{j}\mathbf{M}(S_i).\]
    Since the width is obtained by taking $\inf\sup$ over all sweep-outs, we have
    \begin{equation}\label{9.6.2}
    \mathbf{M}(\Gamma^{R_k})\leq \sup_{t\in [0,1]}\mathbf{M}(\Phi'(t))\leq \sup_{t\in [0,1]} \mathbf{M}(\Phi(t))+\sum_{i=1}^{j}\mathbf{M}(S_i).
    \end{equation}
    
    The desired inequality follows by combining (\ref{9.6.1}) and (\ref{9.6.2}).
\end{proof}

Note that $\partial \Gamma^{R_k}=\sigma^{R_k}$, it follows from the min-max theorem \ref{camillo} that we have the following decomposition:
\begin{equation}\label{indexx}
\Gamma^{R_k}=W_k+\sum_{i=1}^j n_iS_i+\sum_{i=1}^{m}p_i\Sigma_i,
\end{equation}
where $\partial W_k=\partial \Gamma^{R_k}=\sigma^{R_k}, \Sigma_i\subset \operatorname{Int}(\Omega_k)$ are closed minimal hypersurfaces and $n_i, p_i\in \mathbb{Z}_{\geq0}$. Since $\Gamma_0^{R_k}$ is a least area hypersurface with boundary $\sigma^{R_k}$, we have 
\[\mathbf{M}(W_k)\geq \mathbf{M}(\Gamma_0^{R_k}).\]
It follows from Claim \ref{widthbound1} that for any $i$ such that $p_i>0$, we have
\[\mathbf{M}(\Sigma_i)\leq \sum_{i=1}^{j}\mathbf{M}(S_i)+C_0.\]
Since $\Sigma_i$ is a closed minimal hypersurface in the interior of $\Omega_{R_k}$, the above upper bound violates Proposition \ref{cutstable}, when $k$ is sufficiently large. Therefore, $m=0$ and
\[\Gamma^{R_k}=W_k+\sum_{i=1}^j n_iS_i\]

Since all $S_i$ are stable, we have $\operatorname{Ind}(W_k)=1$. Under the assumption that the metric is bumpy, it follows from the proof of the multiplicity one conjecture for one-parameter min-max families in \cite{MN2021} (see also the introduction of \cite{morsetheory}) that $n_i \in \{0,1\}$ for all $i$.

\begin{claim}
   There exists a positive constant $C$ independent of $k$ such that for a subsequence of $R_k\to \infty$, we have
   \[\mathbf{M}(W_k)\geq \mathbf{M}(\Gamma_1^{R_k})+C.\]
\end{claim}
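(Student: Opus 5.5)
The plan is to argue by contradiction and reduce to the mountain-pass lower bound already available in the gap, namely Proposition \ref{2plateau}(4) and Claim \ref{keyclaim}. Suppose no such $C$ exists, so that along the whole sequence
\[\mathbf{M}(W_k)-\mathbf{M}(\Gamma_1^{R_k})\longrightarrow \delta_k\le o(1).\]
We already know $\mathbf{M}(W_k)\ge \mathbf{M}(\Gamma_0^{R_k})$. By Proposition \ref{2plateau}(3), $\mathbf{M}(\Gamma_0^{R_k})$ and $\mathbf{M}(\Gamma_1^{R_k})$ differ by at most $2\epsilon$. Hence $W_k$ is an almost-minimizer among hypersurfaces with boundary $\sigma^{R_k}$, with excess tending to zero.

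\textbf{Step 1: the limit of $W_k$ is a minimizer in $\mathcal{M}_\alpha$.} $W_k$ has index one, multiplicity one, and locally bounded mass because the metric is periodic. Sharp's compactness theorem therefore gives, after passing to a subsequence, a limit $W$ in $\overline{G}$. The convergence is smooth away from at most one point, and there is no mass loss on compact sets. I would then repeat the cut-and-paste argument from the first claim in the proof of Theorem \ref{suffient}. If $W$ failed to minimize in some ball $B$ by an amount $\delta>0$, replacing $W_k\llcorner B$ by the better competitor would give a hypersurface with boundary $\sigma^{R_k}$ of mass below $\mathbf{M}(\Gamma_0^{R_k})-\delta/4$ for large $k$. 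This contradicts the minimality of $\Gamma_0^{R_k}$. So $W$ is area-minimizing and lies in $G$. Since $\pi|_G$ is injective (Lemma \ref{gap}), $W$ is self-intersection free, so $W\in\mathcal{M}_\alpha$. Because $\Sigma_0,\Sigma_1$ are consecutive, $W$ must equal $\Sigma_0$ or $\Sigma_1$.

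\textbf{Step 2: rule out collapse onto $\Sigma_0$ or $\Sigma_1$.} This is the main obstacle. The idea is to show that $W_k$ retains a definite mass excess on a fixed ball $B_{R'}$, exactly as in Claim \ref{keyclaim}. That requires a lower bound of the form
\[\mathbf{M}(\Gamma^{R_k})\ \ge\ \mathbf{M}(\Gamma_1^{R_k})+\sum_{i=1}^j\mathbf{M}(S_i)+C.\]
Given this, the decomposition $\Gamma^{R_k}=W_k+\sum n_iS_i$ with $n_i\in\{0,1\}$ yields $\mathbf{M}(W_k)\ge\mathbf{M}(\Gamma_1^{R_k})+C$ directly, and Step 1 serves only as a backup. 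To obtain the lower bound, I would take an arbitrary path $\{\phi_t\}$ in $\Omega_{R_k}$ from $\Gamma_1^{R_k}+\sum S_i$ to $\Gamma_0^{R_k}$. Write $\phi_t=\partial E_t$ relative to $\sigma^{R_k}$ and set $\psi_t=\partial\bigl(E_t\cup\bigcup_i U_{S_i}\bigr)$. Since the $U_{S_i}$ are glued along their boundaries $S_i$, the portion of $\phi_t$ lying on $S_i$ cancels. This produces a path in $G_{R_k}$ from $\Gamma_1^{R_k}$ to $\Gamma_0^{R_k}$ whose masses satisfy $\mathbf{M}(\psi_t)\le\mathbf{M}(\phi_t)-\sum\mathbf{M}(S_i)$, up to an error controlled by the stability of the $S_i$.

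To make this inequality clean, I would first deform $\phi_t$ near each $S_i$ so that it meets $S_i$ in a controlled way. The tool here is the strict stability of $S_i$, which follows from the bumpy metric: it gives a mean-convex foliation near $S_i$, and projecting along that foliation does not increase mass. Applying Proposition \ref{2plateau}(4) to $\{\psi_t\}$ then gives $\sup_t\mathbf{M}(\phi_t)\ge\max\{\mathbf{M}(\Sigma_0^{R_k}),\mathbf{M}(\Sigma_1^{R_k})\}+C+\sum\mathbf{M}(S_i)$. Together with Proposition \ref{2plateau}(3) and a choice of $\epsilon$ small relative to $C$, this yields the displayed bound after shrinking $C$ by $2\epsilon$.

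I expect the delicate point to be the mass bookkeeping when $\phi_t$ crosses $S_i$ transversally. The fallback is to run Step 1 with the localized excess estimate of Claim \ref{keyclaim} applied to $W_k$ in place of $\Gamma^{R_k}$.
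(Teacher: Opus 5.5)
There is a genuine gap in Step 2. It sits exactly at the bookkeeping you flagged, but it is topological rather than a matter of local error terms. Your path $\psi_t$ bounds the regions $E_t\cup\bigcup_i U_{S_i}$, and these contain every $U_{S_i}$ for every $t$. So $\psi_t$ never sweeps the interior of any $U_{S_i}$, and it cannot join $\Gamma_1^{R_k}$ to $\Gamma_0^{R_k}$ inside $G_{R_k}$.

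Concretely, the cancellation along $S_i$ only happens on the part of $S_i$ that $\phi_t$ still contains. At the endpoint $\phi_1=\Gamma_0^{R_k}$ you get $\psi_1=\Gamma_0^{R_k}+\sum_i\partial\llbracket U_{S_i}\rrbracket$. Hence $\mathbf{M}(\psi_1)=\mathbf{M}(\phi_1)+\sum_i\mathbf{M}(S_i)$, not $\mathbf{M}(\phi_1)-\sum_i\mathbf{M}(S_i)$; with the complementary choice of $E_t$ the same defect appears at $t=0$. A discrepancy of $2\sum_i\mathbf{M}(S_i)$ cannot be absorbed by projecting along the mean-convex foliation near $S_i$.

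To close the path you must also sweep out each $U_{S_i}$, at a cost of about $W_\partial(U_{S_i})$. Then Proposition \ref{2plateau}(4) only yields
\[\max\Bigl\{\sup_t\mathbf{M}(\phi_t),\ \mathbf{M}(\Gamma_1^{R_k})+\sum_{i=1}^{j}W_\partial(U_{S_i})+j\epsilon\Bigr\}\ \geq\ \max\bigl\{\mathbf{M}(\Sigma_0^{R_k}),\mathbf{M}(\Sigma_1^{R_k})\bigr\}+C.\]
This is precisely the dichotomy of Theorem \ref{areaofnonminimizer}(2). It is not the unconditional bound $\mathbf{M}(\Gamma^{R_k})\geq\mathbf{M}(\Gamma_1^{R_k})+\sum_i\mathbf{M}(S_i)+C$ that you need.

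Step 1 does not rescue the argument either. Collapse of the limit of $W_k$ onto $\Sigma_0$ or $\Sigma_1$ is not a contradiction; it is exactly what has to be excluded. The localized excess estimate for $W_k$ that you name as a fallback is derived in the paper from this very claim, so invoking it is circular.

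The paper never seeks a uniform lower bound on the width of $\Omega_{R_k}$. Its easy case already follows from your own bookkeeping together with (\ref{weak1}). If some $n_j=0$, then $\mathbf{M}(W_k)\geq\mathbf{M}(\Gamma_1^{R_k})+\mathbf{M}(S_j)\geq\mathbf{M}(\Gamma_1^{R_k})+\min_i\mathbf{M}(S_i)$, which is uniform in $k$.

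When all $n_j=1$, the paper argues by contradiction:
\begin{enumerate}
\item It pushes $W_k$ off itself using its instability.
\item It runs Theorem \ref{camillo} between $W_k^{\epsilon}$ and $\Gamma_0^{R_k}+\sum_iS_i$. This gives a second index-one hypersurface $\widetilde W_k$ spanning $\sigma^{R_k}$ and disjoint from $W_k$.
\item Using $W_k$ and $\widetilde W_k$ as barriers, it produces a stable hypersurface with boundary $\sigma^{R_k}$ inside $G_{R_k}$.
\end{enumerate}
This contradicts the normalization of $\Gamma_0^{R_k},\Gamma_1^{R_k}$ made just before the claim: no stable hypersurface spanning $\sigma^{R_k}$ lies inside $G_{R_k}$. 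Your proposal never uses that normalization. It is the ingredient that lets the paper avoid estimating the cost of sweeping out the $U_{S_i}$.
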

\begin{proof}[Proof of claim]
    If at least one $n_j=0$ in (\ref{indexx}), we have
\[\mathbf{M}(W_k)\geq \mathbf{M}(\Gamma_1^{R_k})+\mathbf{M}(S_{j})\geq \mathbf{M}(\Gamma_1^{R_k})+\delta_\mathcal{S},\]
where $\delta_\mathcal{S}=\min\{\mathbf{M}(S_i)| S_i\in \mathcal{S}\}$. 
If this is the case for a subsequence (still denoted by $R_k$) of $R_k\to \infty$, we obtain a sequence of index-one minimal hypersurfaces $W_k$ such that \[\mathbf{M}(W_k)\geq \mathbf{M}(\Gamma_1^{R_k})+\delta_\mathcal{S}.\] 

Assume now that $n_j=1$ for every $j$ and that $W_k$ and $\Gamma_1^{R_k}$ lie on the same side of $\mathcal{S}$ (otherwise we can take the limit of $W_k$ again). Since $W_k$ is unstable, there exists a small neighborhood of $W_k$ foliated by hypersurfaces \[W_k^{t}, \quad t\in [-\epsilon, \epsilon]\]
of the same boundary $\sigma^{R_k}$, such that their mean curvature vectors point away from $W_k$ and:
\[\mathbf{M}(W_k^t)<\mathbf{M}(W_k), \quad \forall t\in [-\epsilon, \epsilon].\] 

Suppose that the claim were false for all large $R_k$. Then we must necessarily have that 
\[\mathbf{M}(W_k)<\mathbf{M}(\Gamma_0^{R_k})+\sum_{i=1}^j \mathbf{M}(S_i).\]

Together with the strict stability of $\Gamma_0^{R_k}+\sum_{i=1}^j S_i$, we can apply Theorem \ref{camillo} for $W_k^\epsilon$ and $\Gamma_0^{R_k}+\sum_{i=1}^j S_i$ to obtain another index-one hypersurface $\widetilde{W}_k$ disjoint from $W_k$ (by the maximum principle) with $\partial \widetilde{W}_k=\sigma^{R_k}$.
 \begin{figure}[H]
    \centering
    \includegraphics[scale = 0.5]{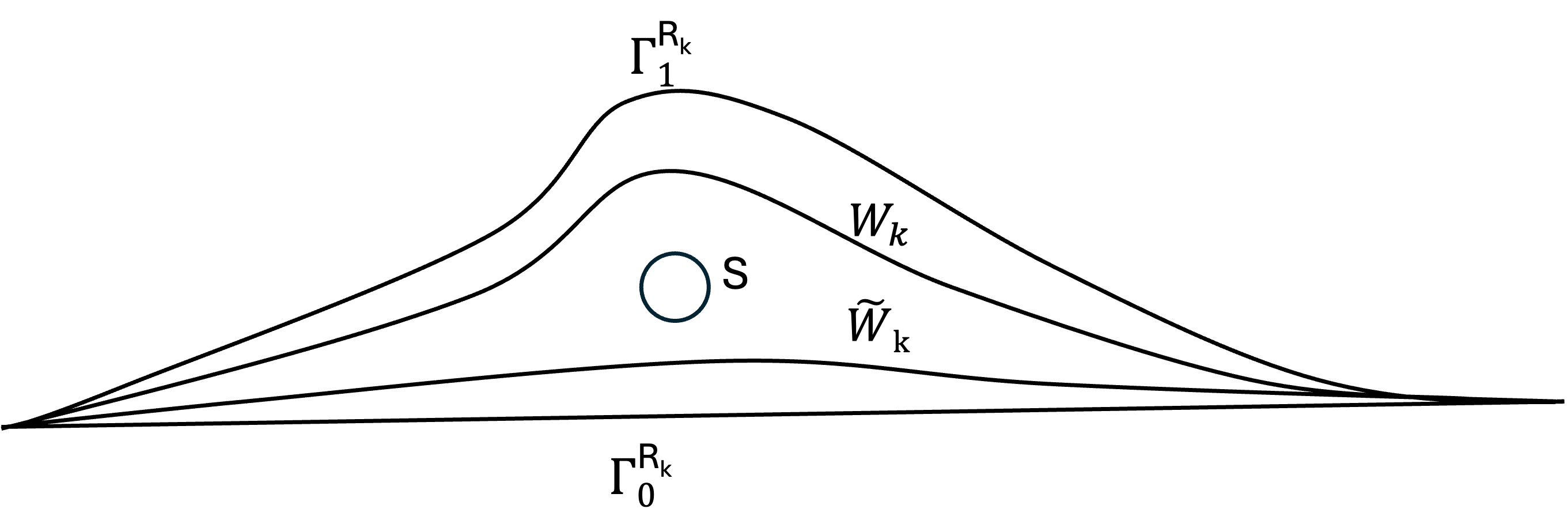}
    \caption{The hypersurfaces $W_k$ and $\widetilde{W}_k$ act as barriers, forcing the existence of another stable hypersurface with boundary $\sigma^{R_k}$.}
    \end{figure} 
    
Now by using $W_k$ and $\widetilde{W}_k$ as barriers, we can construct a stable hypersurface with the same boundary $\sigma^{R_k}$ lying completely inside $G_{R_k}$, contradicting the choices of $\Gamma_0^{R_k}$ and $\Gamma_1^{R_k}$. This completes the proof of the claim. 
\end{proof}

We can now proceed similarly as in the proof of Claim \ref{keyclaim} to obtain an analogue of the inequality (\ref{widthgapcompact}) with $W_k$ in place of $\Gamma^{R_k}$: there are positive constants $C, R'>0$ such that for all $k$ with $R_k>R'$ we have
\[\mathbf{M}(W_k\llcorner B_{R'})\geq \max \{\mathbf{M}(\Sigma_0^{R'}), \mathbf{M}(\Sigma_1^{R'})\}+C.\]

Note that by the construction, $W_k$ has index at most one and uniformly bounded area on compact subsets. Therefore, Sharp's compactness theorem \cite{sharp} applies to the sequence $\{W_k\}_k$, yielding a limiting hypersurface $\Gamma$ that satisfies the same estimate as above. In particular, $\Gamma$ is different from $\Sigma_0$ and $\Sigma_1$ and lies entirely inside $G$. This completes the proof of Theorem \ref{nonminimizer}.
\\

We are now in the position to prove the second statement of Theorem \ref{areaofnonminimizer}. Fix a positive constant $\epsilon$. For each open set $U_{S_i}$ with strictly stable boundary $S_i\in \mathcal{S}$, we can find a one-parameter sweep-out that deforms $S_i$ to the zero current:
\[\Phi_i: [0,1] \to \mathcal{Z}_{n-1}(U_{S_i}; \mathbf{F}, \mathbb{Z}), \quad \Phi_i(0)=S_i, \Phi_i(1)=0\]
such that \[W_\partial(U_i)+\epsilon \geq \sup_{t\in [0,1]} \mathbf{M}(\Phi_i(t)).\]

In the set-up above, for any sweep-out $\Phi^{R_k}$ from $\Gamma_0^{R_k}$ to $\Gamma_1^{R_k}+S_1+\ldots+S_j$, we can combine with $\Phi_1, \ldots, \Phi_j$ to make a sweep-out $\Phi^k$ from $\Gamma_0^{R_k}$ to $\Gamma_1^{R_k}$ of the entire region between them:
\[\Gamma_0^{R_k} \xrightarrow{\Phi^{R_k}} \Gamma_1^{R_k}+S_1+\cdots+S_j \xrightarrow{\Gamma_1^{R_k}+\Phi_1+\cdots+\Phi_j} \Gamma_1^{R_k}.\]

Since $\Gamma_1^{R_k}$ is disjoint from $\Phi_i(t)$ for all $i, t$, we have
\[\max\{\sup \mathbf{M}(\Phi^{R_k}(t)), \sum_{i=1}^{j}\sup \mathbf{M}(\Phi_i(t))+\mathbf{M}(\Gamma_1^{R_k})\}\geq \sup \mathbf{M}(\Phi^k(t)).\]

Choose $\Phi^{R_k}$ such that 
\[\mathbf{M}(\Gamma^{R_k})+\epsilon\geq \sup \mathbf{M}(\Phi^{R_k}(t)).\]

It follows that 
\[\max\{\mathbf{M}(\Gamma^{R_k})+\epsilon, \mathbf{M}(\Gamma_1^{R_k})+\sum_{i=1}^{j} W_\partial(U_{S_i})+j\epsilon\} \geq \sup \mathbf{M}(\Phi^k(t)).\]

By Proposition \ref{widthgap}, we have 
\[\sup\mathbf{M}(\Phi^k(t))\geq \Delta W_\alpha+\mathbf{M}(\Gamma_0^{R_k})\] for sufficiently large $k$. Since $\Gamma^{R_k}$ consists of $W_k$ together with some of the $S_i$, each with multiplicity one, it follows that
\[\mathbf{M}(W_k)+\sum_{i=1}^{j} \mathbf{M}(S_i)\geq \mathbf{M}(\Gamma^{R_k}).\]

As a result
\[\max \{\mathbf{M}(W_k)+\sum_{i=1}^{j} \mathbf{M}(S_i)+\epsilon, \mathbf{M}(\Gamma_1^{R_k})+\sum_{i=1}^{j} W_\partial(U_{S_i})+j\epsilon\} \geq \Delta W_\alpha+\mathbf{M}(\Gamma_0^{R_k}). \]

By taking $k\to \infty$ and sending $\epsilon\to 0$, we see that if $\sum_{i=1}^j W_\partial(U_{S_i})< \Delta W_\alpha$, then
\[\limsup_{R\to \infty} \left(\mathbf{M}(\Gamma\llcorner B_R)-\mathbf{M}(\Sigma_0\llcorner B_R)\right)\geq \Delta W_\alpha -\sum_{i=1}^j \mathbf{M}(S_i).\]
This completes the proof of Theorem \ref{areaofnonminimizer}.

\section{Open problems}\label{section11}
We conclude with several open questions that naturally emerge from our work.
\subsection{The function $\Delta W_\alpha$} 
For a totally irrational class $\alpha\in H_{n-1}(\mathbb{T}^n, \mathbb{R})$, we conjecture that the limit \[\Delta W_\alpha=\lim_{r_k\to \alpha} \Delta W_{r_k}=\lim_{r_k\to \alpha}(\omega(r_k)-S(r_k))\] 
exists and is independent of the sequence $r_k$. Equivalently, the asymptotic min-max excess in rational directions approaching $\alpha$ should be an intrinsic quantity associated only with $\alpha$ rather than a particular sequence of rational classes approximating it.  If this is the case, we can then extend the discrete function \[r\rightarrow \Delta W_{r}, \quad r\in H_{n-1}(T^n, \mathbb{Z})\] 
continuously to totally irrational classes, with the property that it vanishes if and only if there is a foliation by minimal hypersurfaces with direction $\alpha$. In the case of the Hamiltonian twist map and geodesics on tori, the corresponding Mather energy barrier and Peierls's barrier were shown to be continuous at irrational frequencies, see \cite{mather} and \cite{mathermodulus}. An even stronger form of the question is whether the extended function admits a modulus of continuity depending only on the geometry of the ambient metric. The same question regarding the Peierls's barrier was analyzed in \cite{mathermodulus}.\\

We also expect that the collection of quantities $\{\Delta W_r\}, r\in H_{n-1}(\mathbb{T}^n, \mathbb{Z})$ may contain global geometric information about the ambient metric. For instance, the simultaneous vanishing of barriers would imply that there exists a foliation of $\mathbb{T}^n$ by closed, homologically area-minimizing hypersurfaces in all primitive homology classes. Bangert \cite{bangerticm} conjectured that this property implies the flatness of the Riemannian metric. 

\subsection{Minimal hypersurfaces inside gaps} Theorem \ref{nonminimizer} shows that for a generic metric, in the totally irrational setting, whenever a minimal lamination by area-minimizing hypersurfaces contains gaps, one can find complete, non-compact minimal hypersurfaces inside these gaps that are not area-minimizing. The genericity of the metric is used in the proof to ensure that the number of disjoint, stable hypersurfaces in the set $S$ that we remove is finite. We conjecture that the same conclusion should remain valid even without the bumpiness assumption.
A natural further question is to investigate the Morse index of such non-compact hypersurfaces constructed by Theorem \ref{nonminimizer} inside gaps. The construction gives a non-area-minimizing hypersurface $\Gamma$ of index at most one, but it is not clear whether $\Gamma$ must actually be unstable. \\

Another closely related open problem is to analyze the gap in minimal lamination in the case where $\alpha$ is rationally dependent. In the proof of Theorem \ref{nonminimizer}, we rely crucially on the fact that when $\alpha$ is rationally independent, the corresponding gaps necessarily have finite volume and hence their boundaries converge to each other uniformly. This phenomenon is not guaranteed in the rationally dependent setting. It would be interesting if one could produce a non-minimizing hypersurface in this case. A natural way to overcome this difficulty is to work not in the universal cover $\mathbb{R}^n$, but in a suitable intermediate normal cover adapted to the rational dependencies of $\alpha$, where the gap has finite volume, and then repeat the constructions carried out here. 

\subsection{The Birkhoff property} The main objects studied in this paper are area-minimizing hypersurfaces that satisfy Conditions (\ref{birkoff1}) and (\ref{birkoff2}). These conditions arise naturally from our goal of studying minimal laminations on the torus. In the non-parametric setting, for variational problems with an elliptic integrand, Bangert raised the question of whether the Birkhoff property can be forced by a weaker geometric assumption on the minimizer. In particular, he proved that minimizers lying entirely between two parallel affine hyperplanes with a totally irrational normal vector are Birkhoff. It would be interesting to investigate whether a similar phenomenon occurs in the parametric setting of minimal hypersurfaces. 

\subsection{Generalization to other manifolds} We expect that many of the arguments developed in this paper can be carried over to a broader class of manifolds with abelian fundamental groups, for example, on the product $\mathbb{T}^n\times S^m$. 

There are also related results in the non-abelian setting. It is shown in \cite{planelike} that when the fundamental group is \textit{residually finite} and there exists a function (called the cocycle function) $\varphi: \pi_1(M)\to \mathbb{R}$ such that
\[\varphi(gg')=\varphi(g)+\varphi(g'),\]
one can still prove the existence of minimal laminations in any direction governed by the corresponding level set of the cocycle function. One natural obstruction for the method to carry over is that, in general, area-minimizing hypersurfaces in a given homology class might not be connected, even when the class is primitive. 

\appendix 
\section{Connectedness of area-minimizing hypersurfaces on the torus}\label{appendix:c}
\begin{Prop}
    Suppose that $3\leq n \leq 7$ and let $g$ be a Riemannian metric on $\mathbb{T}^n$. Then every homologically area-minimizing hypersurface in a primitive integer homology class is connected and of multiplicity one. 
\end{Prop}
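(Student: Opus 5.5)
Let $\Sigma$ be a homologically area-minimizing current in a primitive class $r\in H_{n-1}(\mathbb{T}^n,\mathbb{Z})$. By Theorem \ref{homology} and the assumption $3\leq n\leq 7$, we can write
\[\Sigma=\sum_{i=1}^k m_i\Sigma_i\]
with $\Sigma_i$ disjoint, closed, connected, smooth embedded minimal hypersurfaces and $m_i\in\mathbb{Z}_{>0}$. The plan is to show first that any two components are homologous up to sign. This will force $k=1$ and $m_1=1$ by primitivity and minimality.

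\textbf{Step 1: disjoint components are homologous up to sign.} Take two distinct components $\Sigma_i,\Sigma_j$ and suppose their classes $a=[\Sigma_i]$, $b=[\Sigma_j]$ are not proportional. Then there is $k\in H_1(\mathbb{T}^n,\mathbb{Z})$ with $I(a,k)$ and $I(b,k)$ nonzero in a configuration that prevents disjointness. The cleanest route is on the universal cover. By Lemma \ref{lift}, a lift $\widetilde\Sigma_i$ separates $\mathbb{R}^n$ into two components with $\tau_k\widetilde\Sigma_i<\widetilde\Sigma_i$ or $>$ according to the sign of $k\cdot a$, and $\widetilde\Sigma_i=\tau_k\widetilde\Sigma_i$ exactly when $k\cdot a=0$.

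Choose $k\in\mathbb{Z}^n$ with $k\cdot a=0$ and $k\cdot b\neq0$; this is possible when $a,b$ are independent. Then $\widetilde\Sigma_i$ is $\tau_k$-invariant. On the other hand, Theorem \ref{slab} (or more simply compactness of $\Sigma_j$) confines $\widetilde\Sigma_j$ to a slab of bounded width transverse to $b$. Translating by $\tau_{mk}$ moves across this slab by an amount going to infinity. Since $\widetilde\Sigma_i$ is invariant under $\tau_{mk}$, it contains points on both sides of every translate of $\widetilde\Sigma_j$, so it must cross $\widetilde\Sigma_j$. This contradicts disjointness of $\Sigma_i$ and $\Sigma_j$.

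If instead $a=0$ for some component, then $\Sigma_i$ is null-homologous. Removing it strictly decreases mass while preserving the class, which contradicts minimality. Hence all $[\Sigma_i]$ are nonzero and pairwise proportional, and so they all lie in $\mathbb{Q}r$.

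\textbf{Step 2: the components are positive multiples of $r$.} Since $r$ is primitive, each $[\Sigma_i]=c_i r$ with $c_i\in\mathbb{Z}\setminus\{0\}$. If some $c_i<0$ and some $c_j>0$, the sum $m_i\Sigma_i+m_j\Sigma_j$ can be replaced by a current of smaller mass in the same class, for instance by dropping a cancelling pair $|c_j|m\Sigma_i+|c_i|m\Sigma_j$. That pair has positive mass and zero class, again contradicting minimality. So all $c_i\geq1$ after orientation, and
\[r=\Big(\sum_i m_ic_i\Big)r,\]
which gives $\sum_i m_ic_i=1$. Since every $m_i\geq1$ and $c_i\geq1$, this forces $k=1$ and $m_1=c_1=1$. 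This is the conclusion.

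\textbf{Main obstacle.} The main obstacle is making Step 1 rigorous. One must show that a connected closed hypersurface whose class is proportional to neither of the other classes cannot be disjoint from the others; this is the assertion "independent classes intersect" used earlier in the paper.

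The plan is to use Lemma \ref{lift}: each component $\Sigma_i$ has a lift given as the boundary of a superlevel set of a function $f_i$ satisfying $f_i(\tau_kx)=f_i(x)-k\cdot a$. Disjointness of $\Sigma_i$ and $\Sigma_j$ in the torus makes the lifts of $\Sigma_i$ and of $\Sigma_j$ form a single family of pairwise disjoint hypersurfaces. These are totally ordered by the Birkhoff-type argument, and the ordering must be compatible with both the $a$-action and the $b$-action. An ordering compatible with both is impossible unless $a\parallel b$.

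If this ordering argument proves delicate, the fallback is the convexity inequality (\ref{convex}). Since $\Sigma$ is minimizing, each piece minimizes in its own class, so
\[S\Big(\sum_i m_i[\Sigma_i]\Big)=\sum_i m_iS([\Sigma_i]).\]
Strict convexity of $S$ rules this out whenever two of the classes are linearly independent, so all classes are proportional. Step 2 then finishes the argument as before.
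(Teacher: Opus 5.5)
Your Step 2 has a concrete hole in the sign argument. The ``cancelling pair'' $|c_j|m\Sigma_i+|c_i|m\Sigma_j$ can be removed from $\Sigma$ only if it is a sub-current. That requires $m|c_j|\le m_i$ and $m|c_i|\le m_j$ for some integer $m\ge 1$. Nothing you have proved rules out, say, $m_i=m_j=1$, $c_i=-1$, $c_j=2$, and in that case no such $m$ exists.

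The cleanest repair also makes the sign discussion unnecessary. By Federer, $\mathbf{M}(\Sigma)=S(r)$ for the real stable norm. Since $S$ is a norm, $\mathbf{M}(\Sigma_i)\ge S(c_ir)=|c_i|S(r)$. Summing over the disjoint components gives $S(r)=\sum_i m_i\mathbf{M}(\Sigma_i)\ge \bigl(\sum_i m_i|c_i|\bigr)S(r)$. Because $S(r)>0$, this forces $\sum_i m_i|c_i|\le 1$, hence $k=1$ and $m_1=|c_1|=1$, and then $c_1=1$. Alternatively, a connected two-sided $\Sigma_i$ with $[\Sigma_i]\neq 0$ is non-separating, so some loop crosses it exactly once and $[\Sigma_i]$ is primitive. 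Then $c_i=\pm1$, and the pair $\Sigma_i+\Sigma_j$ is always a removable null-homologous sub-current. The paper's own ending (``$\alpha=N\beta$, hence $N=1$'') passes over this orientation issue entirely, so your Step 2 is more careful than the paper once it is repaired.

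With that fixed, the proof is correct, and your Step 1 takes a genuinely different route from the paper's. The paper argues through the intersection product: disjointness makes $[\Sigma_i]\cdot[\Sigma_j]\in H_{n-2}(\mathbb{T}^n)$ vanish, whereas affine representatives of independent classes meet in a nontrivial subtorus. You instead prove ``independent classes intersect'' directly on the universal cover. The lift $\widetilde\Sigma_i$ is $\tau_k$-invariant for some $k$ with $k\cdot a=0\neq k\cdot b$. So the connected set $\widetilde\Sigma_i$ contains points $p+mk$ lying far on both sides of the bounded slab that contains $\widetilde\Sigma_j$, and it must therefore cross $\widetilde\Sigma_j$. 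This needs only Lemma \ref{lift} and compactness of $\Sigma_j$, with no minimality and no intersection theory, at the cost of a longer argument. The paper's version is one line but leaves the topological input implicit.

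In particular, the ``main obstacle'' you flag is already resolved by what you wrote, and the ordering argument is unnecessary. Your convexity fallback is valid, but it would replace an elementary step with the much deeper Auer--Bangert strict convexity, which is the alternative the paper itself sketches in Section \ref{section4}.
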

\begin{proof}
    Suppose that $\alpha\in H_{n-1}(\mathbb{T}^n, \mathbb{Z})$ is a primitive class and let $\Sigma\in \alpha$ be an area-minimizing hypersurface in $\alpha$, which exists and is smooth by Federer \cite{federerbook}. Assume that $\Sigma=\Sigma_1+\Sigma_2+\cdots+\Sigma_k$ and each $\Sigma_i$ is closed. 

    Let $\alpha_i$ denote the homology class represented by $\Sigma_i$. It follows that $\alpha=\alpha_1+\cdots+\alpha_k$. Since $\Sigma$ is area-minimizing, none of the $\alpha_i$ can be zero (otherwise one could remove $\Sigma_i$ and decrease the mass of $\Sigma$ while staying inside $\alpha$), and we have 
    \[\mathbf{M}(\Sigma)=\sum_{i=1}^{k}\mathbf{M}(\Sigma_i).\]
    
    Also, by Federer, $\Sigma$ is embedded. It follows that the distinct components $\Sigma_i$ are pairwise disjoint. In other words, for $i\neq j$, we have $\Sigma_i\cap \Sigma_j=\emptyset$. Consequently, the homological intersection product $\alpha_i\cdot \alpha_j$ vanishes. On $\mathbb{T}^n$, each $\alpha_i$ can be represented by the projection of an affine plane, and any two affine planes with linearly independent normal vectors intersect. Therefore we must have that $\alpha_i$ and $\alpha_j$ are linearly dependent for all $1\leq i, j \leq k$.

    It follows that there exist $\beta\in H_{n-1}(\mathbb{T}^n, \mathbb{Z})$ and a positive integer $N$ such that $\alpha=N\beta$. However, since $\alpha$ is primitive, we have $N=1$, and it follows that $\Sigma$ has only one connected component of multiplicity one. 
\end{proof}

\section{Minimal foliation in an integer homology class}\label{appendix:a}
\begin{Prop}
    Assume that $g$ is a Riemannian metric on $\mathbb{T}^n$ and $\alpha\in H_{n-1}(\mathbb{T}^n, \mathbb{Z})$ is a primitive integer class. Then the following are equivalent:
    \begin{enumerate}
        \item There is a minimal foliation of $\mathbb{T}^n$ by closed, homologically area-minimizing hypersurfaces in $\alpha$.
        \item $\omega(\alpha)=S(\alpha)$.
    \end{enumerate}
\end{Prop}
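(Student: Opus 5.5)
For $(1)\Rightarrow(2)$ I would build a sweep-out directly from the foliation. Suppose $\mathbb{T}^n$ is foliated by closed homologically area-minimizing hypersurfaces in $\alpha$. Lift to $\mathbb{R}^n$; since $\alpha$ is primitive, each leaf is connected by Appendix \ref{appendix:c}. The lifts are then totally ordered elements of $\mathcal{M}(\alpha)$. Choose a leaf $\Sigma$ with lift $\widetilde\Sigma=\partial E_0$, and let $\tau_k\widetilde\Sigma$ be the next translate, where $k\cdot\alpha=1$. The lifted leaves between $\widetilde\Sigma$ and $\tau_k\widetilde\Sigma$ then form a family $\widetilde\Sigma_t=\partial E_t$, $t\in[0,1]$, with $E_t$ increasing. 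I would parametrize it by enclosed volume, $t=\operatorname{Vol}(E_t\setminus E_0)/\operatorname{Vol}(\mathbb{T}^n)$.

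Projecting gives a map $\Phi(t)=\pi_\#\widetilde\Sigma_t$ into $\mathcal{Z}_{n-1}(\mathbb{T}^n,\alpha)$ with $\Phi(0)=\Phi(1)=\Sigma$. Subtracting the fixed cycle $\Sigma$ gives a loop in $\mathcal{Z}_{n-1}(\mathbb{T}^n)$ based at $0$. Its image under the Almgren isomorphism is $\pi_\#\llbracket E_1\setminus E_0\rrbracket=[\mathbb{T}^n]$, because $E_1\setminus E_0$ is a fundamental domain. Flat continuity follows from continuity of the volume parametrization, since $\mathcal{F}(\Phi(t),\Phi(s))\le \operatorname{Vol}(E_t\triangle E_s)$. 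No concentration of mass holds because every leaf is area-minimizing with mass $S(\alpha)$, so the monotonicity formula gives uniform density bounds. Every slice has mass exactly $S(\alpha)$, hence $\omega(\alpha)\le S(\alpha)$. Combined with $\omega(\alpha)\ge S(\alpha)$, this gives equality.

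For $(2)\Rightarrow(1)$ I would follow the proof of Theorem \ref{suffient} with the constant sequence $r_k=\alpha$. Assume $\omega(\alpha)=S(\alpha)$. Take sweep-outs $\Psi_k$ in the class $\alpha$ with $\sup_t\mathbf{M}(\Psi_k(t))\le S(\alpha)+1/k$; by Chambers--Liokumovich these may be taken to consist of level sets of Morse functions. Given any point $x$ and a small ball $U$ around it, choose $t_k$ so that $\Psi_k(t_k)$ bisects $U$. Federer--Fleming compactness on $\mathbb{T}^n$ gives a limit cycle $T\in\alpha$ with $\mathbf{M}(T)\le S(\alpha)$. By lower semicontinuity of mass, $T$ is homologically area-minimizing. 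By the isoperimetric argument in that proof, $T$ meets $\overline U$. Since $\alpha$ is primitive, Appendix \ref{appendix:c} shows $T$ is a connected, smooth, multiplicity-one hypersurface. Shrinking $U$, every point of $\mathbb{T}^n$ therefore lies on some closed minimizer in $\alpha$.

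It remains to show that these minimizers form a foliation. Two closed minimizers in $\alpha$ have lifts in $\mathcal{M}(\alpha)$, so they are totally ordered. By the maximum principle (as in Lemma \ref{boundinball}), their lifts are either equal or disjoint. The union of all minimizers in $\alpha$ is closed by compactness (Theorem \ref{compactness}), and by the previous paragraph it is all of $\mathbb{T}^n$. Hence through each point passes exactly one leaf, and the leaves are pairwise disjoint.

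The main obstacle is regularity of the resulting partition. I would show it is a $C^0$ foliation using the total order on lifts and the smooth compactness of area-minimizers: the leaves vary continuously, and they are ordered like the points of an interval, so local product charts exist. I would settle for a continuous foliation by smooth leaves, which is what statement (1) requires.
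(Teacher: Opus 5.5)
Your proposal is correct and follows the paper's proof. In (1)$\Rightarrow$(2) the foliation itself is the sweep-out, and every slice has mass $S(\alpha)$; the paper first re-derives minimality of the leaves via the calibration $\iota_\nu\mathrm{vol}_g$, which the hypothesis already makes unnecessary, and it does not verify flat continuity, the Almgren class, or absence of mass concentration, all of which you check. In (2)$\Rightarrow$(1) you use the same bisection, compactness and isoperimetric argument, replacing the paper's citation of Hass's theorem with the total order of $\mathcal{M}(\alpha)$ plus the maximum principle, which carries the same content.

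One small fix: each lift is invariant under the rank-$(n-1)$ lattice $\alpha^\perp\cap\mathbb{Z}^n$, so $\operatorname{Vol}(E_t\setminus E_0)$ and $\pi_\#\llbracket E_1\setminus E_0\rrbracket$ are infinite or undefined in $\mathbb{R}^n$. Compute them instead in the cyclic cover $\mathbb{R}^n/(\alpha^\perp\cap\mathbb{Z}^n)$, where the region between consecutive lifts is compact and has volume $\operatorname{Vol}(\mathbb{T}^n)$.
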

\begin{proof}
    If $\mathbb{T}^n$ admits a foliation by a family $\Phi_t$ of closed minimal hypersurfaces in $\alpha$, the following standard calibration argument shows that each leaf $\Phi_t$ minimizes the area in the homology class $\alpha$: consider the following $(n-1)$-form: 
    \[w:=\iota_\nu\text{vol}_g,\]
    where $\nu$ is the globally chosen unit normal vector field to the foliation and $\iota_\nu$ is the interior product. 
    
    A standard computation shows that this is a closed calibration form. By Stokes' theorem, for any integral cycle $T$ with $[T]=\alpha$, we have:
    \[\mathbf{M}(T)\geq \int_Tw=\int_{\Phi_t}w=\mathbf{M}(\Phi_t).\]
    Hence, each $\Phi_t$ minimizes area among all cycles representing $\alpha$.
    Therefore, $\mathbf{M}(\Phi_t)=S(\alpha)$ for all $t$. It follows that
\[S(\alpha)\leq \omega(\alpha)\leq \sup\mathbf{M}(\Phi(t))=S(\alpha)\]
implying $\omega(\alpha)=S(\alpha).$ 

Conversely, assume that $\omega(\alpha)=S(\alpha)$. By the definition of $\omega(\alpha)$, we can find a sequence of continuous maps $\Phi_i: S^1\to \mathcal{Z}_{n-1}(M, \alpha)$ (the space of all cycles in $\alpha$) such that
\[\lim_{i \to \infty} \sup_{t\in S^1} \mathbf{M}(\Phi_i(t)) =\omega(\alpha)=S(\alpha).\]

We want to take the limit of the leaves of these sweep-outs. For any geodesic ball $U\subset \mathbb{T}^n$, since $\Phi_i$ is a one-parameter sweep-out, there exists $t_i\in S^1$ such that the current $\Phi_i(t_i)$ divides $U$ into two regions of equal volume. Consider the sequence $\{\Phi_i(t_i)\}_i$ of integral cycles with uniformly bounded mass. By compactness, there is a subsequence (still denoted by $\Phi_i(t_i)$) that converges in the flat norm to $\Sigma\in \alpha$. By the lower semicontinuity of mass:
\[S(\alpha)\leq \mathbf{M}(\Sigma) \leq \liminf \mathbf{M}(\Phi_i(t_i))=S(\alpha).\]

Therefore, $\Sigma$ is homologically area-minimizing in $\alpha$. Furthermore, it follows from the flat convergence that $\Sigma$ also divides $U$ into two parts of equal volume. Hence, the isoperimetric inequality implies that there is a positive constant $\delta$ such that $\mathbf{M}(\Sigma\llcorner U)\geq \delta>0$. Consequently, $\Sigma$ passes through $U$. Taking the limit, we see that the union of all homologically area-minimizing hypersurfaces in $\alpha$ is $\mathbb{T}^n$. The fact that it is a minimal foliation follows from a result of Hass in \cite{hass}, who proved that two area-minimizing hypersurfaces in the same homology class $\alpha$ do not intersect:
\begin{Th}
    Let $M^n$ be an orientable Riemannian manifold. Let $G_1, G_2$ be hypersurfaces in $M$, each of which is area-minimizing in its homology class and which are homologous up to orientation. Let $F_1$ and $F_2$ be connected components of $G_1$ and $G_2$, respectively. Then either $F_1\cap F_2$ is empty or $F_1$ and $F_2$ coincide. 
\end{Th}
\end{proof}

\section{The uniform continuity of the energy barrier}\label{appendix:b}
\begin{Lemma}\label{widthcontinuous}
    For any metric $g$ on $\mathbb{T}^n$ and any two positive constants $C_1<C_2$, there exists $K=K(g, C_1, C_2)$ such that for any $C_1g\leq g', g''\leq C_2g$ and $r\in H_{n-1}(\mathbb{T}^n, \mathbb{Z})$, we have
    \[|\Delta W_{g'}(r)-\Delta W_{g''}(r)|\leq K |r||g'-g''|_{g}\]
    where $|r|$ denotes the Euclidean norm of the vector $r\in \mathbb{R}^n$ under some identification $H_{n-1}(\mathbb{T}^n, \mathbb{R})=\mathbb{R}^n$.
\end{Lemma}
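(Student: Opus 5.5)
The plan is to reduce the statement to a comparison of masses of a fixed current under two metrics, applied separately to $\omega$ and to $S$. For metrics $g',g''$ with $C_1g\le g',g''\le C_2 g$, the $(n-1)$-dimensional area element of a rectifiable current depends smoothly on the metric. Write $\varepsilon=|g'-g''|_g$. Then for every integral current $T$ we expect a pointwise bound on area densities of the form
\[\bigl|\mathbf{M}_{g'}(T)-\mathbf{M}_{g''}(T)\bigr|\le K_0\,\varepsilon\,\mathbf{M}_g(T),\]
with $K_0=K_0(n,C_1,C_2)$. This comes from differentiating $\sqrt{\det(h|_{P})}$ along the segment $h_s=(1-s)g'+sg''$, which stays in the same $C_1,C_2$-pinched cone. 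Consequently $\mathbf{M}_{g'}(T)\le (1+K_0'\varepsilon)\mathbf{M}_{g''}(T)$, using $\mathbf{M}_g\le C_1^{-(n-1)/2}\mathbf{M}_{g''}$.

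First I would treat the stable norm. Take a $g''$-minimizer $T\in r$ and use it as a $g'$-competitor; the symmetric argument then gives $|S_{g'}(r)-S_{g''}(r)|\le K_1\varepsilon\, S_g(r)$. Since the stable norm of $g$ is a norm on $\mathbb{R}^n$, it is bounded by $c_g|r|$, which gives the required form.

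Second I would treat the width. Any $r$-sweep-out $\Phi$ for $g''$ with $\sup\mathbf{M}_{g''}(\Phi)\le\omega_{g''}(r)+\delta$ is admissible for $g'$, since the sweep-out conditions (flat continuity, no mass concentration, Almgren class) are metric-independent up to bi-Lipschitz equivalence. Then
\[\omega_{g'}(r)\le (1+K_0'\varepsilon)(\omega_{g''}(r)+\delta).\]
Symmetrizing and letting $\delta\to0$ gives $|\omega_{g'}(r)-\omega_{g''}(r)|\le K_0'\varepsilon\max(\omega_{g'}(r),\omega_{g''}(r))$.

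The main obstacle is bounding $\omega_{g'}(r)$ linearly in $|r|$ uniformly over the cone. For this I use Proposition \ref{upperbound}: $\omega_{g'}(r)\le S_{g'}(r)+\omega_{g'}(\mathbb{T}^n)$. The width $\omega_{g'}(\mathbb{T}^n)$ is at most $C_2^{(n-1)/2}\omega_g(\mathbb{T}^n)$ by the same comparison. Since $r\neq0$ is integral, $|r|\ge1$, so this constant is absorbed into a multiple of $|r|$, and $S_{g'}(r)\le C_2^{(n-1)/2}c_g|r|$. Combining the two estimates with $\Delta W=\omega-S$ and the triangle inequality yields the stated bound with $K$ depending on $g,C_1,C_2$ (and $n$). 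The case $r=0$ is trivial, since then both barriers equal the width difference and $|r|=0$ needs separate handling or exclusion, so I would note that $r$ is taken nonzero.
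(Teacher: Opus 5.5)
Your proposal is correct and follows the same route as the paper. Both arguments use a multiplicative comparison of masses under $g'$ and $g''$ (you derive it from the area element, while the paper cites a continuity result for widths), apply it to sweep-outs for $\omega$ and to minimizers for $S$, bound the width linearly in $|r|$, and finish with the triangle inequality. The only difference is the linear bound: the paper gets $\omega_g(r)\lesssim |r|$ from the explicit sweep-out by parallel flat subtori, while you use Proposition~\ref{upperbound} together with $|r|\ge 1$, which works equally well.

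One correction: the case $r=0$ is not trivial but false as stated. For $g'=ag$, $g''=bg$ with $C_1\le a<b\le C_2$, the left side is $|a^{(n-1)/2}-b^{(n-1)/2}|\,\omega_g(\mathbb{T}^n)>0$ while the right side vanishes. So $r\neq 0$ must be assumed, as the paper's proof also does implicitly.
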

\begin{proof}
    It follows exactly as in \cite{widthcontinuous} that:
    \begin{equation}\label{wid1}
    |\omega_{g'}(r)-\omega_{g''}(r)|\leq \left(\left(1+C_1^{-1}|g'-g''|_{g}\right)^{\frac{n-1}{2}}-1\right)C_2^{\frac{n-1}{2}}C\omega_{g}(r)
    \end{equation}
    whenever $C_1g\leq g', g'' \leq C_2g$.
    We want to bound $\omega_g(r)$ in terms of $|r|$. Let $g_{flat}$ be the flat metric induced from the Euclidean metric on $\mathbb{T}^n$. Then $g\leq cg_{flat}$ for some $c>0$. 
    Consider the sweep-out $\Phi(t)$ of $\mathbb{T}^n$ by parallel flat sub-tori with normal direction $r$. We have
    \[\mathbf{M}_{g_{flat}}(\Phi(t))=S_{g_{flat}}(r)=|r|.\]
    It follows that 
    \[\mathbf{M}_{g}(\Phi(t))\leq c^{n-1}|r|, \quad \forall \text{ } t.\]
    Hence, by the definition of the width, we have
    \begin{equation}\label{wid2}
    \omega_{g}(r)\leq \sup \mathbf{M}_{g}(\Phi(t)) \leq c^{n-1}|r|.
    \end{equation}
    Combining (\ref{wid1}) and (\ref{wid2}), we have
    \[|\omega_{g'}(r)-\omega_{g''}(r)|\leq K_1 |r||g'-g''|_{g}.\]
    A similar inequality holds for the stable norm:
    \[|S_{g'}(r)-S_{g''}(r)|\leq K_2|r||g'-g''|_g.\]
    The desired inequality now follows from the triangle inequality.
\end{proof}

\section{A decay lemma for a finite-volume gap}\label{appendixd}
In this section, we prove Lemma \ref{asymptotic}. We first point out that the finite-volume assumption on the gap:
\[\operatorname{Vol}_n(G)<\operatorname{Vol}_n(\mathbb{T}^n,g),\]
gives only a subsequential decay. This means that:
\[\operatorname{Vol}_{n-1}(G\cap\partial B_{R_k})\to 0\]
along a suitably chosen sequence of radii $R_k\to \infty$. To get the full convergence as $R\to \infty$, we use Harnack's inequality to show that the $(n-1)$-dimensional area of each spherical slice is controlled from above by the volume of an annulus of fixed width. In the proof below, all the constants $C_i$ depend only on $M, N, \rho, r$, and the ambient metric. 

\begin{proof}[Proof of Lemma \ref{asymptotic}]
Since $M$ and $N$ are area-minimizing, it follows from the standard curvature estimates for stable minimal hypersurfaces that outside of a sufficiently large compact set, one can write $M$ locally as a normal graph over $N$:
\[M=\{\exp_x (u(x)\nu(x)): x\in N\}\]
where $u>0$. Moreover, $u$ satisfies a uniformly elliptic equation with uniformly controlled coefficients. Since $G$ has finite volume, we have that \[\int_{N}u d\mathcal{H}^{n-1}<\infty.\] 
Assume by contradiction that there exists a positive constant $\epsilon>0$ together with a sequence $R_k\to \infty$ such that \[\operatorname{Vol}_{n-1}(G\cap \partial B_{R_k}) \geq \epsilon, \quad \forall k\in \mathbb{N}.\]

Fix small positive constants $\rho$ and $r$. For sufficiently large $k$, $G\cap \partial B_{R_k}$ is contained in the normal tubular region over $N\cap(B_{R_k+r}\setminus B_{R_k-r})$. Cover $N\cap (B_{R_k+r}\setminus B_{R_k-r})$ by intrinsic balls $B_{\rho}^N(p_j)$ with uniformly bounded overlaps depending on $\rho$ and $N$. Let $\pi: G\to N$ be the normal projection outside a sufficiently large compact set.
It follows that
\begin{equation}\label{harnack1}
\operatorname{Vol}_{n-1}(G\cap \partial B_{R_k} \cap \pi^{-1}(B^N_\rho(p_j)))\leq C_1\rho^{n-2}\sup_{B^N_\rho(p_j)} u.
\end{equation}
Applying Harnack's inequality to $u$ inside each ball $B^N_{\rho}(p_j)$, we get
\begin{equation}\label{harnack2}
\int_{B_\rho^N(p_j)}ud\mathcal{H}^{n-1}\geq C_2\rho^{n-1}\inf_{B^N_\rho(p_j)} u\geq C_3\rho^{n-1}\sup_{B^N_\rho(p_j)} u .
\end{equation}
Combining (\ref{harnack1}) and (\ref{harnack2}), we have
\[\operatorname{Vol}_{n-1}(G\cap \partial B_{R_k}\cap \pi^{-1}(B^N_\rho(p_j)))\leq C_4 \int_{B^N_\rho(p_j)}ud\mathcal{H}^{n-1}.\]
Summing over $j$ and using the uniformly bounded overlap on the covering:
\[N\cap (B_{R_k+r}\setminus B_{R_k-r})\subset \bigcup_{j}B^N_\rho(p_j),\]
we have
\[\epsilon \leq \operatorname{Vol}_{n-1}(G\cap \partial B_{R_k})\leq C_5\int_{N\cap (B_{R_k+r}\setminus B_{R_k-r})}ud\mathcal{H}^{n-1}.\]
Passing to a subsequence, we may assume that all $N\cap (B_{R_k+r}\setminus B_{R_k-r})$ are disjoint. It follows that
\[\int_N ud\mathcal{H}^{n-1}\geq \sum_{k=1}^{\infty}\int_{N\cap (B_{R_k+r}\setminus B_{R_k-r})}ud\mathcal{H}^{n-1}\geq \sum_{k=1}^{\infty}C_5^{-1}\epsilon=\infty.\]
This contradicts the fact that $\int_Nud\mathcal{H}^{n-1}<\infty$ and completes the proof. 
\end{proof}

\section{Finiteness of stable minimal hypersurfaces in the gap}\label{appendix:e}
In this section, we prove Proposition \ref{cutstable}. We first establish that the volume of an open set bounded by a closed, connected, stable minimal hypersurface inside $G$ cannot be arbitrarily small.
\begin{Lemma}\label{volumebound}
    There exists a positive constant $v_0$ depending only on the metric $g$, such that for any closed, connected, stable minimal hypersurface $S$ inside $G$, if $S=\partial U_S$, then $\operatorname{Vol}_n(U_S)\geq v_0$.
\end{Lemma}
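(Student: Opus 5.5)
We argue by contradiction combined with compactness. Suppose there is a sequence of closed, connected, stable minimal hypersurfaces $S_k\subset G$ with $S_k=\partial U_{S_k}$ and $\operatorname{Vol}_n(U_{S_k})\to 0$. By Lemma \ref{gap}, $\pi|_G$ is injective. We may therefore regard each $S_k$ as a closed stable minimal hypersurface in $(\mathbb{T}^n,g)$; it is contractible, since it bounds a region $U_{S_k}\subset G$ that lifts injectively. This removes any dependence on the position of $S_k$ inside $G$ and lets us use uniform estimates that depend only on $g$.

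The first step is a lower bound on the area. By the monotonicity formula, as in the proof of the proposition giving the constant $C(g)$ in Section \ref{section2}, every closed minimal hypersurface in $\mathbb{T}^n$ satisfies $\mathbf{M}(S_k)>c_0(g)>0$. Since $3\le n\le 7$ and the $S_k$ are stable, the Schoen--Simon curvature estimates give $|A_{S_k}|\le C(g)$ uniformly.

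The second step uses this curvature bound to get a uniform $r_0=r_0(g)>0$ such that each $S_k$ is, at scale $r_0$, a graph with small gradient over its tangent plane. Pick any point $p\in S_k$. The two sides of $S_k$ near $p$ are then half-balls of volume comparable to $r_0^n$, and one of them lies in $U_{S_k}$. This would give $\operatorname{Vol}(U_{S_k})\ge c\,r_0^n$ immediately, except for one possibility: other sheets of $S_k$ may pass through $B_{r_0}(p)$ and cut the half-ball into thin slabs.

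This sheeting is the main obstacle. I would handle it as follows. Using the curvature bound together with an area bound on $B_{r_0}(p)$, the number of sheets in the ball is controlled. If the area is not bounded, we rescale or pass to a subsequence: by compactness of stable minimal hypersurfaces with bounded curvature, the $S_k$ converge locally smoothly, possibly with multiplicity, to a stable minimal lamination $\mathcal{L}$ in $\mathbb{T}^n$. The enclosed volume tending to zero forces adjacent sheets of $S_k$ that bound $U_{S_k}$ to collapse onto the same leaf. Because the $S_k$ are two-sided and separating, the standard construction applies: take the normalized height difference between consecutive sheets. This produces a positive Jacobi field on a leaf $L$ of $\mathcal{L}$. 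The leaf $L$ is then stable and lies in $\overline{G}$. If $L$ is closed, a positive Jacobi field contradicts bumpiness of $g$. If $L$ is noncompact, then $L$ is a complete stable minimal hypersurface in $\overline G$. We would then argue that $L$ has the Birkhoff property (since $\pi|_G$ is injective) and must be asymptotic to $\Sigma_0$ or $\Sigma_1$, yet $L$ is a limit of contractible closed hypersurfaces in a finite-volume region. This is incompatible with Lemma \ref{asymptotic}, because the area of $L$ in a large ball would be forced to be much smaller than that of the boundary minimizers, or it contradicts the maximum principle against $\Sigma_0,\Sigma_1$.

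If the leaf-analysis becomes too delicate, my fallback is to work with the local argument directly. A uniform lower bound on the distance between consecutive sheets bounding $U_{S_k}$ follows from the one-sided curvature bound together with the fact that consecutive sheets have opposite orientation relative to $U_{S_k}$. Opposite orientation means a thin slab between them would force their mean-convex comparison to contradict minimality via the maximum principle once they are closer than $c(g)$. That slab then has volume at least $c\,r_0^{n-1}\cdot c(g)$, which gives the constant $v_0$.
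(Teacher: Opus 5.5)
You correctly isolate the real difficulty: other sheets of $S_k$ can cut the half-ball at $p$ into thin slabs. Neither of your ways of handling it works, however.

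The Jacobi-field argument only gives a contradiction when the limit leaf $L$ is closed. Even there bumpiness is not needed, since graphical convergence with multiplicity at least two onto a closed leaf would split the connected $S_k$ into several closed components. The case that matters is a noncompact leaf, and there a positive Jacobi field contradicts nothing, because bumpiness only constrains closed minimal hypersurfaces. For instance, suppose the $S_k$ drift to infinity inside $G$, where $\Sigma_0$ and $\Sigma_1$ become asymptotic. After integer translations, the limit leaf can be an element of $\mathcal{M}_\alpha$ onto which the translates of $\Sigma_0$ and $\Sigma_1$ also collapse. That leaf is area-minimizing and already carries positive Jacobi fields. Your assertions that $L$ must be asymptotic to $\Sigma_0$ or $\Sigma_1$, that its area in large balls is forced to be small, or that it violates the maximum principle, are unsupported and yield no contradiction.

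Your fallback is simply false. Two disjoint minimal sheets bounding a slab of $U_S$ from opposite sides can be arbitrarily close, as with parallel planes or neighbouring leaves of a lamination. The maximum principle rules out touching, not proximity, so no purely local argument can exclude thin slabs.

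What is missing is a global input. A closed hypersurface $S=\partial U_S$ whose enclosed region is thin everywhere must turn around somewhere, and there its curvature is of order one over the thickness. The paper takes this from an inradius theorem (\cite{bishop}, Theorem~1.2). The Schoen--Simon bound $|A_S|\leq C$ forces $U_S$ to contain a $g$-ball of radius $\rho(C,g)$. Periodicity of $g$ then bounds the volume of such balls from below, with no use of bumpiness.

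You could also close the gap within your own local picture, at least in outline. Suppose every slab of $U_S$ has thickness below a small $\delta(C,g)$. Then $U_S$ is an interval bundle over a closed embedded midsurface in $\mathbb{R}^n$. That midsurface is two-sided, so $\partial U_S$ would be disconnected, contradicting the connectedness of $S$. Otherwise some slab has thickness at least $\delta$, and the small slopes of the sheets at scale $r_0$ give $\operatorname{Vol}_n(U_S)\geq c(\delta,r_0)>0$.
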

\begin{proof}
    By the standard curvature estimate for stable minimal hypersurfaces \cite{curvatureestimate}, there exists a positive constant $C$ such that for all closed stable minimal hypersurfaces $S$, it holds that \[\sup_{x\in S} |A_S(x)|\leq C.\] 
    
    In particular, the normal curvatures of $S$ are uniformly bounded. It follows from \cite{bishop}, Theorem 1.2, that there exists a positive constant $\rho$, depending only on $C$ and $g$, such that 
    \[\sup_{x\in U_S} d_g(x, S)\geq \rho.\]
    
    In other words, $U_S$ contains a ball of radius $\rho$. Since the metric is uniformly bounded on $\mathbb{R}^n$, there is a uniformly positive lower bound $v_0$ on the volume of a ball of radius $\rho$. We conclude that
    \[\operatorname{Vol}_n(U_S)\geq \inf_{x}\operatorname{Vol}_n(B_g(x, \rho)):=v_0.\]
    
    This proves the lemma.
\end{proof}

We now construct the desired collection of stable hypersurfaces inductively as follows. Start with a closed, connected, stable minimal hypersurface \[\widetilde{S}_1=\partial U_{\widetilde{S}_1}\subset G.\] By Lemma \ref{volumebound}, 
\[\operatorname{Vol}_n(U_{\widetilde{S}_1})\geq v_0>0.\]

We claim that there exists a maximal open set $U_{S_1}$ inside $G$ containing $U_{\widetilde{S}_1}$ such that $S_1$ is a closed, connected, stable minimal hypersurface. Suppose not. Then one can find an infinite sequence of nested open sets \[U_{\widetilde{S}_1}\subset U_{\widetilde{S}_2}\subset U_{\widetilde{S}_3} \subset \cdots \]
such that $\partial U_{\widetilde{S}_i}=\widetilde{S}_i$ is a closed, connected, stable minimal hypersurface. 

As in the argument of \cite{nested}, Lemma 6, there exists $i_0\in \mathbb{N}$ such that for all $j>i_0$, the hypersurface $\widetilde{S}_j$ can be written as a graph locally over $\widetilde{S}_{i_0}$, with uniformly bounded slopes. Consequently, $\operatorname{Vol}_{n-1}(\widetilde{S}_j)$ is uniformly bounded in terms of $\operatorname{Vol}_{n-1}(\widetilde{S}_{i_0})$. 

By the compactness theorem for minimal hypersurfaces with uniformly bounded area and curvature, a subsequence of $\widetilde{S}_i$ converges smoothly (with possible multiplicity) to a closed minimal hypersurface $S_\infty\subset G$. For sufficiently large $k$, we can then write $\widetilde{S}_k$ as a graph of a smooth function $u_k: S_\infty \to \mathbb{R}$, where $u_k\to 0$ in $C^\infty(S_\infty)$. The normalizing sequence 
\[v_k:=\dfrac{u_k}{||u_k||_{C^{2, \alpha}(S_\infty)}}\]
converges and produces a nontrivial Jacobi field along $S_\infty$, contradicting the bumpiness of $g$.\\

Therefore, such an infinite nested sequence cannot exist, and we have the maximal open set $U_{S_1}$ as claimed. We then remove $U_{S_1}$ from $G$ and repeat the same construction to $G\setminus U_{S_1}$, which still has finite volume. Since we remove an open set of volume at least $v_0$ at every step and $G$ has finite volume, the process must terminate after finitely many steps, and we obtain an optimal collection $\{S_1, S_2, \ldots, S_j\}$ of pairwise disjoint closed, connected, stable minimal hypersurfaces, with the corresponding open sets $\{U_{S_1}, U_{S_2}, \ldots, U_{S_j}\}$ such that $S_i=\partial U_{S_i}$.
\begin{claim}
    For any closed, minimal hypersurface $\Sigma$ contained in the interior of $\Omega_{R_k}=G_{R_k}\setminus \left(\cup_{i=1}^{j} U_{S_i}\right)$, we have
    \[\mathbf{M}(\Sigma)\geq \mathbf{M}(\Gamma_0^{R_k})+\mathbf{M}(\Gamma_1^{R_k}).\]
\end{claim}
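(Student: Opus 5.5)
The plan is to argue by contradiction and use a hypothetical light closed minimal hypersurface as a barrier. That barrier produces a closed, connected, stable minimal hypersurface in $\Omega_{R_k}$, which the maximal construction of $\{S_1,\ldots,S_j\}$ above has already ruled out. The inequality $\mathbf{M}(\Sigma)<\mathbf{M}(\Gamma_0^{R_k})+\mathbf{M}(\Gamma_1^{R_k})$ is used in exactly one place: to stop the area minimization from collapsing onto $\partial G_{R_k}=\Gamma_0^{R_k}\cup\Gamma_1^{R_k}$.

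\textbf{Step 1 (setup).} Suppose $\Sigma\subset\operatorname{Int}(\Omega_{R_k})$ is a closed minimal hypersurface with $\mathbf{M}(\Sigma)<\mathbf{M}(\Gamma_0^{R_k})+\mathbf{M}(\Gamma_1^{R_k})$. Since $H_{n-1}(\mathbb{R}^n)=0$, each component of $\Sigma$ bounds a bounded open set $U_\Sigma\subset G$. By the maximum principle with respect to the $S_i$, for each $i$ either $U_{S_i}\subset U_\Sigma$ or $U_{S_i}\cap U_\Sigma=\emptyset$.

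\textbf{Step 2 (the stable case).} If some component $\Sigma'$ of $\Sigma$ is stable, there are two cases. If $U_{\Sigma'}$ contains some $U_{S_i}$, this contradicts the maximality of $U_{S_i}$ in the inductive construction. If $U_{\Sigma'}$ is disjoint from all the $U_{S_i}$, it lies in $G\setminus\bigcup_i U_{S_i}$, which contradicts the termination of the construction. So we may assume every component of $\Sigma$ is unstable.

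\textbf{Step 3 (the unstable case).} Push a component $\Sigma'$ outward along its first eigenfunction to obtain $\Sigma'_s$, with strictly smaller area and mean curvature vector pointing away from $U_{\Sigma'}$. Let $D$ be the region of $\Omega_{R_k}$ outside $\Sigma'_s$. Its boundary pieces are:
\begin{itemize}
\item $\Sigma'_s$, which is mean-convex toward $D$;
\item the $S_i$ lying outside $U_{\Sigma'}$, which are minimal;
\item $\Gamma_0^{R_k}\cup\Gamma_1^{R_k}$, which are area-minimizing with boundary $\sigma^{R_k}$.
\end{itemize}
In $D$, minimize mass among closed integral cycles that enclose $\overline{U_{\Sigma'}}$, i.e. boundaries $\partial V$ with $U_{\Sigma'_s}\subset V\subset G_{R_k}$ and $V$ avoiding the remaining $U_{S_i}$. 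The competitor $\Sigma'_s$ gives
\[
\inf\le\mathbf{M}(\Sigma'_s)<\mathbf{M}(\Sigma)<\mathbf{M}(\Gamma_0^{R_k})+\mathbf{M}(\Gamma_1^{R_k}),
\]
whereas the trivial competitor $V=G_{R_k}$ has mass exactly $\mathbf{M}(\Gamma_0^{R_k})+\mathbf{M}(\Gamma_1^{R_k})$. So a minimizer $T$ cannot coincide with, or contain, all of $\partial G_{R_k}$.

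\textbf{Step 4 (the minimizer avoids the boundary).} By the strict maximum principle, and since $\Sigma'_s$ is a strict barrier, $T$ does not touch $\Sigma'_s$. It also cannot touch $\Gamma_0^{R_k}$ or $\Gamma_1^{R_k}$ at interior points, since those are minimal with the barrier on the correct side. If $T$ touches some $S_i$, then $T$ contains $S_i$ as a component. Discarding such components, we get a closed minimal hypersurface with a component $S'$ lying in $\operatorname{Int}(\Omega_{R_k})$. This $S'$ is smooth (since $3\le n\le 7$), stable (as a one-sided minimizer), and encloses $U_{\Sigma'}$. It is connected after taking the component that separates $U_{\Sigma'}$ from $\partial G_{R_k}$. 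Now $U_{S'}\subset G$ either contains some $U_{S_i}$ or is disjoint from all of them, and we reach the same contradiction as in Step 2.

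\textbf{Main obstacle.} The hardest step is Step 4: ensuring the minimizer stays in $\operatorname{Int}(\Omega_{R_k})$ and does not slide onto $\Gamma_0^{R_k}\cup\Gamma_1^{R_k}$ near the boundary curve $\sigma^{R_k}$, where those two hypersurfaces meet. The plan is to rely on the strict mass inequality together with the maximum principle along interior points. Near $\sigma^{R_k}$, I would use the perturbation to a locally convex region already employed before applying Theorem \ref{camillo} to keep the minimizer away.
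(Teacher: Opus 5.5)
Your proposal is correct and is essentially the paper's argument in contrapositive form. The paper likewise uses the maximality of $\{S_1,\ldots,S_j\}$ to force $\Sigma$ to be unstable, then minimizes area in the homology class of $\Sigma$ within the outer region containing $\Gamma_0^{R_k}\cup\Gamma_1^{R_k}$. It then splits into two cases: either the minimizer has an interior stable component, which contradicts maximality, or it is the remaining boundary, which gives the mass bound. Your explicit push-off barrier, maximum-principle step and treatment of the corner along $\sigma^{R_k}$ fill in details the paper leaves implicit.
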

\begin{proof}[Proof of claim]
    
     Since $H_{n-1}(\mathbb{R}^n, \mathbb{Z})=0$, $\Sigma$ is separating, we can cut along $\Sigma$ and consider the component $\Omega'_{R_k}$ that contains both $\Gamma_0^{R_k}$ and $\Gamma_1^{R_k}$. 
     
     By the maximality property of the collection $S_1, S_2, \ldots, S_j$, the hypersurface $\Sigma$ is unstable. Hence, it admits a neighborhood foliated by hypersurfaces with strictly smaller area than $\Sigma$. Therefore, by minimizing the area in the homology class of $\Sigma$ inside $\Omega'_{R_k}$, we obtain a stable minimal hypersurface $\widetilde{\Sigma}$ such that
     \[\mathbf{M}(\widetilde{\Sigma})<\mathbf{M}(\Sigma).\]
     In particular, $\widetilde{\Sigma}\neq \Sigma$. There are two cases:\\
        
     \textbf{Case 1}: If $\widetilde{\Sigma}$ contains a stable component $\Sigma'\subset \operatorname{\Omega}'_{R_k}$. This minimal hypersurface is disjoint from $S_1, S_2, \ldots, S_j$. Suppose that inside $G$ we have $\Sigma'=\partial U_{\Sigma'}$. Then $U_{\Sigma'}$ is either disjoint from all the $U_{S_i}$, or contains some of the $U_{S_i}$. Nonetheless, both cases contradict the maximality of the collection $\{S_1, S_2, \ldots, S_j\}$. \\
     
     \textbf{Case 2}: If $\widetilde{\Sigma}$ contains no closed, stable minimal hypersurfaces in the interior of $\Omega'_{R_k}$, then it must be the whole boundary of $\Omega'_{R_k}$, minus $\Sigma$. Since $\Gamma_0^{R_k}$ and $\Gamma_1^{R_k}$ are parts of the boundary of $\Omega'_{R_k}$, we have
     \[\mathbf{M}(\widetilde{\Sigma})\geq \mathbf{M}(\Gamma_0^{R_k})+\mathbf{M}(\Gamma_1^{R_k}).\]
     This proves the claim and completes the proof of Proposition \ref{cutstable}.
\end{proof}
\printbibliography

\end{document}